\let\csname ver@amsthm.sty\endcsname\relax
\let\theoremstyle\relax
\let\qedhere\relax
\def\swappedhead#1#2#3{%
  \thmnumber{\@upn{\the\thm@headfont\normalfont(#2)\@ifnotempty{#1}{}}}%
  \thmname{\@ifnotempty{#2}{~}#1}%
  \thmnote{ {\the\thm@notefont(#3)}}}
\newtheorem{theorem}[equation]{Theorem}\crefname{theorem}{Theorem}{Theorems}
\newtheorem{lemma}[equation]{Lemma}\crefname{lemma}{Lemma}{Lemmas}
\newtheorem{proposition}[equation]{Proposition}\crefname{proposition}{Proposition}{Propositions}
\newtheorem{corollary}[equation]{Corollary}\crefname{corollary}{Corollary}{Corollaries}
\theoremstyle{definition}
\newtheorem{definition}[equation]{Definition}\crefname{definition}{Definition}{Definitions}
\newtheorem{example}[equation]{Example}\crefname{example}{Example}{Examples}
\newtheorem{remark}[equation]{Remark}\crefname{remark}{Remark}{Remarks}
\newtheorem*{remark*}{Remark}
\newtheorem*{example*}{Example}
\DeclareMathOperator{\tdim}{tdim}
\DeclareMathOperator{\edim}{edim}
\DeclareMathOperator{\kdim}{kdim}
\DeclareMathOperator{\flag}{Flag}
\DeclareMathOperator{\GL}{GL}
\DeclareMathOperator{\SL}{SL}
\DeclareMathOperator{\grass}{Gr}
\DeclareMathOperator{\im}{im}
\DeclareMathOperator{\Pos}{Pos}
\DeclareMathOperator{\kPos}{kPos}
\DeclareMathOperator{\tr}{tr}
\DeclareMathOperator{\linspan}{span}
\DeclareMathOperator{\Hom}{Hom}
\DeclareMathOperator{\Good}{Good}
\DeclareMathOperator{\Kirwan}{Kirwan}
\DeclareMathOperator{\Subsets}{Subsets}
\DeclareMathOperator{\Intersecting}{Intersecting}
\DeclareMathOperator{\Horn}{Horn}
\DeclareMathOperator{\id}{id}
\newcommand{\ot}{\otimes}
\newcommand{\op}{\oplus}
\newcommand{\gl}{\mathfrak{gl}}
\newcommand{\p}{\mathfrak p}
\renewcommand{\t}{\mathfrak t}
\newcommand{\h}{\mathfrak h}
\newcommand{\C}{\mathbb C}
\newcommand{\R}{\mathbb R}
\newcommand{\Z}{\mathbb Z}
\renewcommand{\P}{\mathbb P}
\newcommand{\CE}{\mathcal E}
\newcommand{\CF}{\mathcal F}
\newcommand{\CG}{\mathcal G}
\newcommand{\CI}{\mathcal I}
\newcommand{\CJ}{\mathcal J}
\newcommand{\CK}{\mathcal K}
\newcommand{\CL}{\mathcal L}
\newcommand{\CM}{\mathcal M}
\newcommand{\CO}{\mathcal O}
\newcommand{\CX}{\mathcal X}
\newcommand{\CY}{\mathcal Y}
\newcommand{\CZ}{\mathcal Z}
\newcommand{\Pa}{\mathrm P}
\newcommand{\Pt}{\mathrm P_{\mathrm t}}
\newcommand{\Bt}{\mathrm B_{\mathrm t}}
\newcommand{\Pk}{\mathrm P_{\mathrm k}}
\newcommand{\Mk}{\mathrm M_{\mathrm k}}
\newcommand{\Pkt}{\mathrm P_{\mathrm{kt}}}
\newcommand{\Bkt}{\mathrm B_{\mathrm{kt}}}
\newcommand{\Mkp}{\mathrm M_{\mathrm{kp}}}
\newcommand{\Pkp}{\mathrm P_{\mathrm{kp}}}
\newcommand{\Pkpt}{\mathrm P_{\mathrm{kpt}}}
\newcommand{\Bkpt}{\mathrm B_{\mathrm{kpt}}}
\newcommand{\PF}{\mathrm P_\CF}
\newcommand{\PFt}{\mathrm P_{\CF,\mathrm t}}
\newcommand{\BFt}{\mathrm B_{\CF,\mathrm t}}
\newcommand{\PFkpt}{\mathrm P_{\CF,\mathrm{kpt}}}
\newcommand{\BFkpt}{\mathrm B_{\CF,\mathrm{kpt}}}
\newcommand{\PFkp}{\mathrm P_{\CF,\mathrm{kp}}}
\newcommand{\PFkpC}{\mathrm P_{\CF,\mathrm{kp},C}}
\newcommand{\PFkpCF}{\mathrm P_{\CF,\mathrm{kp},C_\CF}}
\newcommand{\cint}{c_{\mathrm{int}}}
\newcommand{\one}{\mathbbm 1}
\begin{document}

\title{The Horn inequalities from a geometric point of view}
\author{Nicole Berline}
\address{Centre de Math\'ematiques Laurent Schwartz, Ecole Polytechnique}
\email{nicole.berline@math.cnrs.fr}
\author{Mich\`ele Vergne}
\address{Institut de Math\'ematiques de Jussieu, Paris Rive Gauche}
\email{michele.vergne@imj-prg.fr}
\author{Michael Walter}
\address{Korteweg-de Vries Institute for Mathematics, University of Amsterdam \& QuSoft \\
Stanford Institute for Theoretical Physics, Stanford University}
\email{m.walter@uva.nl}

\begin{abstract}
  We give an exposition of the Horn inequalities and their triple role characterizing tensor product invariants, eigenvalues of sums of Hermitian matrices, and intersections of Schubert varieties.
  We follow Belkale's geometric method, but assume only basic representation theory and algebraic geometry, aiming for self-contained, concrete proofs.
  In particular, we do not assume the Littlewood-Richardson rule nor an a priori relation between intersections of Schubert cells and tensor product invariants.
  Our motivation is largely pedagogical, but the desire for concrete approaches is also motivated by current research in computational complexity theory and effective algorithms.
\end{abstract}
\maketitle
\tableofcontents

\counterwithin{equation}{section}
\section{Introduction}\label{sec:intro}

The possible eigenvalues of Hermitian matrices $X_1,\dots,X_s$ such that $X_1+\dots+X_s = 0$ form a convex polytope.
They can thus be characterized by a finite set of linear inequalities, most famously so by the inductive system of linear inequalities conjectured by Horn~\cite{horn1962eigenvalues}.
The very same inequalities give necessary and sufficient conditions on highest weights $\lambda_1,\dots,\lambda_s$ such that the tensor product of the corresponding irreducible $\GL(r)$-representations $L(\lambda_1),\dots,L(\lambda_s)$ contains a nonzero invariant vector, i.e.,
$c(\vec\lambda) := \dim (L(\lambda_1) \ot \cdots \ot L(\lambda_s))^{\GL(r)} > 0.$
For $s=3$, the multiplicities $c(\vec\lambda)$ can be identified with the \emph{Littlewood-Richardson coefficients}.
Since the Horn inequalities are linear, $c(\vec\lambda) > 0$ if and only if $c(N \vec\lambda) > 0$ for any integer $N > 0$.
This is the celebrated \emph{saturation property} of $\GL(r)$, first established combinatorially by Knutson and Tao~\cite{knutson1999honeycomb} building on work by Klyachko~\cite{klyachko1998stable}.
Some years after, Belkale has given an alternative proof of the Horn inequalities and the saturation property~\cite{belkale2006geometric}.
His main insight is to `geometrize' the classical relationship between the invariant theory of $\GL(r)$ and the intersection theory of Schubert varieties of the Grassmannian.
In particular, by a careful study of the tangent space of intersections, he shows how to obtain a geometric basis of invariants.

The aim of this text is to give a self-contained exposition of the Horn inequalities, assuming only linear algebra and some basic representation theory and algebraic geometry, similar in spirit to the approach taken in~\cite{vergne2014inequalities}.
We also discuss a proof of Fulton's conjecture which asserts that $c(\vec\lambda)=1$ if and only if $c(N\vec\lambda)=1$ for any integer $N\geq1$.
We follow Belkale's geometric method~\cite{belkale2004invariant,belkale2006geometric,belkale2007geometric}, as recently refined by Sherman~\cite{sherman2015geometric}, and do not claim any originality.
Instead, we hope that our text might be useful by providing a more accessible introduction to these topics, since we tried to give simple and concrete proofs of all results.
In particular, we do not use the Littlewood-Richardson rule for determining $c(\vec\lambda)$, and we do not discuss the relation of a basis of invariants to the integral points of the hive polytope~\cite{knutson1999honeycomb}.
Instead, we describe a basis of invariants that can be identified with the Howe-Tan-Willenbring basis, which is constructed using determinants associated to Littlewood-Richardson tableaux, as we explained in~\cite{vergne2014inequalities}.
We will come back to this subject in the future.
We note that Derksen and Weyman's work~\cite{derksen2000semi} can be understood as a variant of the geometric approach in the context of quivers.
For alternative accounts we refer to the work by Knutson and Tao~\cite{knutson1999honeycomb} and Woodward~\cite{knutson2004honeycomb}, Ressayre~\cite{ressayre2010geometric,ressayre2009short} and to the expositions by Fulton and Knutson~\cite{fulton2000eigenvalues,knutson1999symplectic}.

The desire for concrete approaches to questions of representation theory and algebraic geometry is also motivated by recent research in computational complexity and the interest in efficient algorithms.
Indeed, the saturation property implies that deciding the nonvanishing of a Littlewood-Richardson coefficient can be decided in polynomial time~\cite{mulmuley2012geometric}.
In contrast, the analogous problem for the Kronecker coefficients, which are not saturated, is NP-hard, but believed to simplify in the asymptotic limit~\cite{ikenmeyer2015vanishing,burgisser2015membership}.
We refer to~\cite{mulmuley2011p,burgisser2011overview} for further detail.

These notes are organized as follows:
In \cref{sec:easy}, we start by motivating the triple role of the Horn inequalities characterizing invariants, eigenvalues, and intersections.
Then, in \cref{sec:linalg}, we collect some useful facts about positions and flags. 
This is used in \cref{sec:horn necessary,sec:horn sufficient} to establish Belkale's theorem characterizing intersecting Schubert varieties in terms of Horn's inequalities.
In \cref{sec:invariants}, we explain how to construct a geometric basis of invariants from intersecting Schubert varieties.
This establishes the Horn inequalities for the Littlewood-Richardson coefficients, and thereby the saturation property, as well as for the eigenvalues of Hermitian matrices that sum to zero.
In \cref{sec:fulton}, we sketch how Fulton's conjecture can be proved geometrically by similar techniques.
Lastly, in the \hyperref[app:examples horn]{appendix}, we have collected the Horn inequalities for three tensor factors and low dimensions.

\subsubsection*{Notation}

We write $[n] := \{1,\dots,n\}$ for any positive integer $n$.
For any group $G$ and representation $M$, we write $M^G$ for the linear subspace of $G$-invariant vectors.
For any subgroup $H\subseteq G$, we denote by $G/H = \{ gH \}$ the right coset space.
If $F$ is an $H$-space, we denote by $G \times_H F$ the quotient of $G\times F$ by the equivalence relation $(g,f) \sim (gh^{-1}, hf)$ for $g\in G$, $f\in F$, $h\in H$.
Note that $G \times_H F$ is a $G$-space fibered over $G/H$, with fiber~$F$.
If~$F$ if a subspace of a $G$-space $X$, then $G\times_H F$ is identified by the $G$-equivariant map $[g,f] \mapsto (gH,gf)$ with the subspace of $G/H \times X$ (equipped with the diagonal $G$-action) consisting of the~$(gH,x)$ such that $g^{-1}x\in F$.

\counterwithin{equation}{section}
\section{A panorama of invariants, eigenvalues, and intersections}\label{sec:easy}

In this section we give a panoramic overview of the relationship between invariants, eigenvalues, and intersections.
Our focus is on explaining the intuition, connections, and main results.
To keep the discussion streamlined, more difficult proofs are postponed to later sections (in which case we use the numbering of the later section, so that the proofs can easily be found).
The rest of this article, from \cref{sec:linalg} onwards, is concerned with developing the necessary mathematical theory and giving these proofs.

We start by recalling the basic representation theory of the general linear group $\GL(r) := \GL(r,\C)$. 
Consider $\C^r$ with the ordered standard basis $e(1),\dots,e(r)$ and standard Hermitian inner product.
Let $H(r)$ denote the subgroup of invertible matrices $t \in \GL(r)$ that are diagonal in the standard basis, i.e., $t\,e(i) = t(i)\cdot e(i)$ with all $t(i)\neq0$.
We write $t = (t(1),\dots,t(r))$ and thereby identify $H(r) \cong (\C^*)^r$.
To any sequence of integers $\mu = (\mu(1),\dots,\mu(r))$, we can associate a character of $H(r)$ by $t \mapsto t^\mu := t(1)^{\mu(1)}\cdots t(r)^{\mu(r)}$.
We say that $\mu$ is a weight and call $\Lambda(r) = \Z^r$ the weight lattice.
A weight is dominant if $\mu(1)\geq\dots\geq\mu(r)$, and the set of all dominant weights form a semigroup, denoted by $\Lambda_+(r)$.
We later also consider antidominant weights $\omega$, which satisfy $\omega(1)\leq\dots\leq\omega(r)$.

For any dominant weight $\lambda \in \Lambda_+(r)$, there is an unique irreducible representation $L(\lambda)$ of $\GL(r)$ with highest weight $\lambda$.
That is, if $B(r)$ denotes the group of upper-triangular invertible matrices (the standard Borel subgroup of $\GL(r)$) and $N(r) \subseteq B(r)$ the subgroup of upper-triangular matrices with all ones on the diagonal (i.e., the corresponding unipotent), then $L(\lambda)^{N(r)} = \C v_\lambda$ is a one-dimensional eigenspace of $B(r)$ of $H(r)$-weight $\lambda$.
We say that $v_\lambda$ is a highest weight vector of $L(\lambda)$.
In \cref{subsec:borel-weil} we describe a concrete construction of $L(\lambda)$ due to Borel and Weil. 
Now let $U(r)$ denote the group of unitary matrices, which is a maximally compact subgroup of $\GL(r)$.
We can choose an $U(r)$-invariant Hermitian inner product $\braket{\cdot,\cdot}$ (by convention complex linear in the second argument) on each $L(\lambda)$ so that the representation $L(\lambda)$ restricts to an irreducible unitary representation of $U(r)$. Any two such representations of $U(r)$ are pairwise inequivalent, and, by Weyl's trick, any irreducible unitary representation can be obtained in this way.
Let us now decompose their Lie algebras as $\gl(r) = \mathfrak u(r) \op i \mathfrak u(r)$, where $i = \sqrt{-1}$, and likewise $\h(r) = \t(r) \op i \t(r)$, where we write $\t(r)$ for the Lie algebra of $T(r)$, the group of diagonal unitary matrices, and similarly for the other Lie groups.
Here, $i \mathfrak u(r)$ denotes the space of Hermitian matrices and $i \t(r)$ the subspace of diagonal matrices with real entries.
We freely identify vectors in $\R^r$ with the corresponding diagonal matrices in $i\t(r)$ and denote by $(\cdot,\cdot)$ the usual inner product of $i\t(r)\cong\R^r$.
For a subset $J\subseteq[r]$, we write $T_J$ for the vector (diagonal matrix) in $i\t(r)$ that has ones in position $J$, and otherwise zero.

Now let $\CO_\lambda$ denote the set of Hermitian matrices with eigenvalues $\lambda(1)\geq\dots\geq\lambda(r)$.
By the spectral theorem, $\CO_\lambda$ is a $U(r)$-orbit with respect to the adjoint action, $u \cdot X := u X u^*$, and so $\CO_\lambda = U(r) \cdot \lambda$, where we identify $\lambda$ with the diagonal matrix with entries $\lambda(1)\geq\dots\geq\lambda(r)$.
On the other hand, recall that any invertible matrix $g \in \GL(r)$ can be written as a product $g = u b$, where $u \in U(r)$ is unitary and $b \in B(r)$ upper-triangular.
Since $v_\lambda$ is an eigenvector of $B(r)$, it follows that, in projective space $\P(L(\lambda))$, the orbits of $[v_\lambda]$ for $\GL(r)$ and $U(r)$ are the same!
Moreover, it is not hard to see that the $U(r)$-stabilizers of $\lambda$ and of $[v_\lambda]$ agree, so we obtain a $U(r)$-equivariant diffeomorphism
\begin{equation}
\label{eq:coadjoint orbit}
  \CO_\lambda \to U(r) \cdot [v_\lambda] = \GL(r) \cdot [v_\lambda] \subseteq \P(L(\lambda)), \quad u \cdot \lambda \mapsto u \cdot [v_\lambda] = [u \cdot v_\lambda]
\end{equation}
which also allows us to think of the adjoint orbit $\CO_\lambda$ as a complex projective $\GL(r)$-variety. An important observation is that
\begin{equation}
\label{eq:moment map equivariance}
  \tr\bigl((u \cdot \lambda) A\bigr) = \frac {\braket{u \cdot v_\lambda, \rho_\lambda(A) (u \cdot v_\lambda)}} {\lVert v_\lambda \rVert^2}
\end{equation}
for all complex $r \times r$-matrices $A$, i.e., elements of the Lie algebra $\mathfrak{gl}(r)$ of $\GL(r)$; $\rho_\lambda$ denotes the Lie algebra representation on $L(\lambda)$.
To see that~\eqref{eq:moment map equivariance} holds true, we may assume that $\lVert v_\lambda \rVert = 1$ as well as that $u = 1$, the latter by $U(r)$-equivariance.
Now $\tr(A \lambda) = \braket{v_\lambda, \rho_\lambda(A) v_\lambda}$ is easily be verified by decomposing $A = L + H + R$ with $L$ strictly lower triangular, $R$ strictly upper triangular, and $H \in \h(r)$ diagonal and comparing term by term.
These observations lead to the following fundamental connection between the eigenvalues of Hermitian matrices and the invariant theory of the general linear group:

\begin{proposition}[Kempf-Ness,~\cite{kempf1979length}]
\label{prp:kempf-ness}
  Let $\lambda_1,\dots,\lambda_s$ be dominant weights for $\GL(r)$ such that $(L(\lambda_1) \ot \cdots \ot L(\lambda_s))^{\GL(r)} \neq \{0\}$.
  Then there exist Hermitian matrices $X_k \in \mathcal O_{\lambda_k}$ such that $\sum_{k=1}^s X_k = 0$.
\end{proposition}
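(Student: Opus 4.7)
The plan is to exploit a nonzero invariant vector $\phi \in L^{\GL(r)}$, with $L := L(\lambda_1) \ot \cdots \ot L(\lambda_s)$, by maximizing a well-chosen function on the compact product of orbits $\Omega := \CO_{\lambda_1} \times \cdots \times \CO_{\lambda_s}$; the moment-map identity~\eqref{eq:moment map equivariance} will then upgrade the first-order optimality condition into the desired equation $\sum_k X_k = 0$. After normalizing $\lVert v_{\lambda_k}\rVert = 1$, I would define, for $X_k = u_k \cdot \lambda_k$ with $u_k \in U(r)$,
\[
  f(X_1, \ldots, X_s) := \bigl\lvert \braket{\phi,\, (u_1 \cdot v_{\lambda_1}) \ot \cdots \ot (u_s \cdot v_{\lambda_s})} \bigr\rvert^2.
\]
This is well defined on $\Omega$ because replacing $u_k$ by $u_k t_k$ with $t_k \in T(r)$ multiplies $u_k \cdot v_{\lambda_k}$ by the unimodular scalar $t_k^{\lambda_k}$. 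Equivalently, $f$ is the pullback along~\eqref{eq:coadjoint orbit} and the Segre embedding of the function $[w] \mapsto \lvert \braket{\phi, w}\rvert^2 / \lVert w\rVert^2$ on $\P(L)$.

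Since $\Omega$ is compact, $f$ attains a maximum at some $(X_1^\dagger, \ldots, X_s^\dagger)$; pick lifts $\hat u_k \in U(r)$ and set $\hat w := (\hat u_1 \cdot v_{\lambda_1}) \ot \cdots \ot (\hat u_s \cdot v_{\lambda_s})$, so $\lVert \hat w \rVert = 1$. This maximum is strictly positive: as $L(\lambda_k)$ is irreducible under $U(r)$, the orbit $U(r) \cdot v_{\lambda_k}$ spans $L(\lambda_k)$, whence the corresponding rank-one tensors span $L$, and $\phi \neq 0$ cannot be orthogonal to all of them.

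The main step is the variational calculation. For an arbitrary Hermitian matrix $A$, the one-parameter subgroup $\exp(tA) \in \GL(r)$ acts diagonally on $L$, and the resulting curve remains in $\Omega$ since each $\CO_{\lambda_k}$ is $\GL(r)$-invariant under~\eqref{eq:coadjoint orbit}. By $\GL(r)$-invariance of $\phi$ and self-adjointness of $\exp(tA)$, the pairing $\braket{\phi, \exp(tA)\, \hat w} = \braket{\phi, \hat w}$ is constant in $t$; meanwhile, factor by factor,
\[
  \frac{d}{dt}\bigg|_{t=0} \lVert \exp(tA)\, \hat u_k \cdot v_{\lambda_k}\rVert^2 = 2\braket{\hat u_k \cdot v_{\lambda_k}, \rho_{\lambda_k}(A)\, \hat u_k \cdot v_{\lambda_k}} = 2\tr(X_k^\dagger A)
\]
by~\eqref{eq:moment map equivariance}, so by the product rule $\frac{d}{dt}|_{t=0} \lVert \exp(tA)\hat w\rVert^2 = 2 \tr\bigl((\sum_k X_k^\dagger) A\bigr)$. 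The logarithmic derivative of $f$ along this curve therefore equals $-2\tr((\sum_k X_k^\dagger) A)$, and vanishing at the critical point $\hat w$ for every Hermitian $A$ forces $\sum_k X_k^\dagger = 0$ by nondegeneracy of the trace pairing on Hermitian matrices.

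The main subtlety lies in the choice of function. Maximizing the naive candidate $\lvert \braket{\phi, \cdot}\rvert^2$ directly over the compact group $U(r)^s$ only yields vanishing of the derivative in the $\mathfrak u(r)$-directions, recovering merely the ``imaginary part'' of the desired Hermitian identity. Dividing by $\lVert w \rVert^2$ so that $f$ descends to $\P(L)$ makes it legal to vary also in the non-compact $i\mathfrak u(r)$-directions while staying inside $\Omega$, and it is exactly this that allows~\eqref{eq:moment map equivariance} to deliver the full equation $\sum_k X_k = 0$.
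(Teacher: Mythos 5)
Your proof is correct, and it takes a genuinely different route from the paper's. The paper follows Kempf--Ness directly: it considers the invariant linear functional $P = \braket{w,\cdot}$, shows the $\GL(r)$-orbit closure $\overline{\GL(r)\cdot v}$ lies in the cone of decomposable highest-weight tensors and misses the origin, picks a minimal-norm vector $v'$ there, and applies $\partial_{t=0}\lVert\exp(tA)\cdot v'\rVert^2 = 0$. You instead push everything to the compact product $\Omega = \CO_{\lambda_1}\times\cdots\times\CO_{\lambda_s}$ and maximize the function $[w]\mapsto\lvert\braket{\phi,w}\rvert^2/\lVert w\rVert^2$, letting the $\GL(r)$-invariance of $\phi$ annihilate the numerator's contribution to the logarithmic derivative. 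The two are dual: along a $\GL(r)$-orbit of decomposable highest-weight tensors, $\lvert\braket{\phi,w}\rvert^2$ is constant, so maximizing your ratio amounts to minimizing $\lVert w\rVert^2$ --- but your packaging makes the existence of the extremum immediate from compactness of $\Omega$, eliminating the paper's orbit-closure and nonvanishing-at-the-origin arguments, at the cost of having to verify that $f$ is well-defined and smooth on $\Omega$ (which you handle via the Segre-embedding description; your first justification via $T(r)$ alone is slightly too weak when some $\lambda_k$ is non-regular, since the $U(r)$-stabilizer of $[v_{\lambda_k}]$ is then larger than $T(r)$, but the scalar is still unimodular for any unitary stabilizer element, so the conclusion stands). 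The core variational computation and the use of~\eqref{eq:moment map equivariance} are the same in both proofs.
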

\begin{proof}
  Let $0 \neq w \in (L(\lambda_1)\ot\cdots\ot L(\lambda_s))^{\GL(r)}$ be a nonzero invariant vector.
  Then, $P(v) := \braket{w, v}$ is a nonzero linear function on $L(\lambda_1)\ot\cdots\ot L(\lambda_s)$ that is invariant under the diagonal action of $\GL(r)$; indeed, $\braket{w, g \cdot v} = \braket{g^* \cdot w, v} = \braket{w, v}$.
  Since the $L(\lambda_k)$ are irreducible, they are spanned by the orbits $U(r) v_{\lambda_k}$.
  Thus we can find $u_1, \dots, u_s \in U(r)$ such that $P(v) \neq 0$ for $v = (u_1 \cdot v_{\lambda_1}) \ot \cdots \ot (u_s \cdot v_{\lambda_s})$.

  Consider the class $[v]$ of $v$ in the corresponding projective space $\P(L(\lambda_1)\ot\cdots\ot L(\lambda_s))$.
  The orbit of $[v]$ under the diagonal $\GL(r)$-action is contained in the $\GL(r)^s$-orbit, which is the closed set $[U(r) \cdot v_{\lambda_1}\ot\cdots\ot U(r) \cdot v_{\lambda_s}]$ according to the discussion preceding~\eqref{eq:coadjoint orbit}.
  It follows that $\GL(r) \cdot v$ and its closure, $\overline{\GL(r) \cdot v}$ (say, in the Euclidean topology), are contained in the closed set $\{ \kappa (u'_1 \cdot v_{\lambda_1}) \ot \cdots \ot (u'_s \cdot v_{\lambda_s}) \}$ for $\kappa\in\C$ and $u'_1,\dots,u'_s\in U(r)$.

  Since $P$ is $\GL(r)$-invariant, $P(v') = P(v) \neq 0$ for any vector $v'$ in the diagonal $\GL(r)$-orbit of $v$.
  By continuity, this is also true in the orbits' closure, $\overline{\GL(r) \cdot v}$.
  On the other hand, $P(0) = 0$.
  It follows that $0\not\in\overline{\GL(r) \cdot v}$, i.e., the origin does not belong to the orbit closure.
  Consider then a nonzero vector $v'$ of minimal norm in $\overline{\GL(r) \cdot v}$.
  By the discussion in the preceding paragraph, this vector is of the form $v' = \kappa (u'_1 \cdot v_{\lambda_1}) \ot \cdots \ot (u'_s \cdot v_{\lambda_s})$ for some $0 \neq \kappa \in \C$ and $u'_1,\dots,u'_s \in U(r)$. 
  By rescaling $v$ we may moreover assume that $\kappa = 1$, so that $v'$ is a unit vector.

  The vector $v'$ is by construction a vector of minimal norm in its own $\GL(r)$-orbit.
  It follows that, for any Hermitian matrix $A$,
  \begin{align*}
  0 &= \frac12 \partial_{t=0} \lVert (e^{At}\ot\cdots\ot e^{At}) \cdot v' \rVert^2 \\
  &= \braket{v', \bigl(\rho_{\lambda_1}(A) \ot I\ot\cdots\ot I + \dots + I\ot\cdots\ot I\ot\rho_{\lambda_s}(A)\bigr) v'} \\
  &= \sum_{k=1}^s \braket{u'_k \cdot v_{\lambda_k}, \rho_{\lambda_k}(A) (u'_k \cdot v_{\lambda_k})}
  = \sum_{k=1}^s \tr\bigl(A (u'_k \cdot \lambda_k)\bigr) = \sum_{k=1}^s \tr(A X_k),
  \end{align*}
  where we have used \cref{eq:moment map equivariance} and set $X_k := u'_k \cdot \lambda_k$ for $k\in[s]$.
  This implies at once that $\sum_{k=1}^s X_k = 0$.
\end{proof}

The adjoint orbits $\CO_\lambda = U(r) \cdot \lambda$ (but not the map~\eqref{eq:coadjoint orbit}) can be defined not only for dominant weights $\lambda$ but in fact for arbitrary Hermitian matrices. 
Conversely, any Hermitian matrix is conjugate to a unique element $\xi \in i\t(r)$ such that $\xi(1)\geq\dots\geq\xi(r)$.
The set of all such $\xi$ is a convex cone, known as the positive Weyl chamber $C_+(r)$, and it contains the semigroup of dominant weights.
Throughout this text, we only ever write $\CO_\xi = U(r)\cdot\xi$ for $\xi$ that are in the positive Weyl chamber.
For example, if $\xi \in C_+(r)$ then $-\xi \in \CO_{\xi^*}$, where $\xi^* = (-\xi(r),\dots,-\xi(1)) \in C_+(r)$.
If $\lambda$ is a dominant weight then $\lambda^*=(-\lambda(d),\dots,-\lambda(1))$ is the highest weight of the dual representation of $L(\lambda)$, i.e., $L(\lambda^*) \cong L(\lambda)^*$.

\begin{remark*}
  Using the inner product $(A,B) := \tr(AB)$ on Hermitian matrices we may also think of $\lambda$ as an element in $i\t(r)^*$ and of $\CO_\lambda$ as a \emph{co}adjoint orbit in $i\mathfrak u(r)^*$.
  From the latter point of view, the map $(X_1,\dots,X_s)\mapsto\sum_{k=1}^s X_k$ is the moment map for the diagonal $U(r)$-action on the product of Hamiltonian manifolds $\CO_{\lambda_k}$, $k\in[s]$.
  \Cref{prp:kempf-ness} thus relates the existence of nonzero invariants to the statement that the zero set of the corresponding moment map is nonempty.
  This is a general fact of Mumford's geometric invariant theory.
\end{remark*}

\begin{definition}
\label{def:kirwan cone}
  The \emph{Kirwan cone} $\Kirwan(r,s)$ is defined as the set of $\vec\xi = (\xi_1,\dots,\xi_s) \in C_+(r)^s$ such that there exist $X_k\in\CO_{\xi_k}$ with $\sum_{k=1}^s X_k=0$.
\end{definition}

Using this language, \cref{prp:kempf-ness} asserts that if the generalized Littlewood-Richardson coefficient $c(\vec\lambda) := \dim (L(\lambda_1) \ot \cdots \ot L(\lambda_s))^{\GL(r)} > 0$ is nonzero then $\vec\lambda$ is a point in the Kirwan cone $\Kirwan(r,s)$.

\begin{remark*}
We will see in \cref{sec:invariants} that, conversely, if $\vec\lambda\in\Kirwan(r,s)$, then $c(\vec\lambda)>0$ (by constructing an explicit nonzero invariant).
As a consequence, it will follow that $c(\vec\lambda) > 0$ if and only if $c(N\vec\lambda) > 0$ for some integer $N > 0$.
This is the remarkable saturation property of the Littlewood-Richardson coefficients.
In fact, we will show that the Horn inequalities give a complete set of conditions for nonvanishing $c(\vec\lambda)$ as well as for $\vec\xi\in\Kirwan(r,s)$, which in particular establishes that $\Kirwan(r,s)$ is indeed a convex polyhedral cone.
We will come back to these points at the end of this section.
\end{remark*}

If there exist permutations $w_k$ such that $\sum_{k=1}^s w_k \cdot \xi_k = 0$ then $\vec\xi \in \Kirwan(r,s)$
(choose each $X_k$ as the diagonal matrix $w_k \cdot \xi_k$).
This suffices to characterize the Kirwan cone for $s\leq2$:

\begin{example*}
  For $s=1$, it is clear that $\Kirwan(r,1) = \{0\}$.
  When $s=2$, then $\Kirwan(r,2) = \{(\xi,\xi^*)\}$.
  Indeed, if $X_1 \in \CO_{\xi_1}$ and $X_2 \in \CO_{\xi_2}$ with $X_1 + X_2 = 0$, then $X_2 = -X_1 \in \CO_{\xi_1^*}$.
\end{example*}

In general, however, it is quite delicate to determine if a given $\vec\xi \in C_+(r)^s$ is in $\Kirwan(r,s)$ or not.
Clearly, one necessary condition is that $\sum_{k=1}^s \lvert \xi_k \rvert = 0$, where we have defined $\lvert \mu \rvert := \sum_{j=1}^r \mu(j)$ for an arbitrary $\mu\in\h(r)$.
This follows by taking the trace of the equation $\sum_{k=1}^s X_k = 0$.
In fact, it is clear that by adding or subtracting appropriate multiples of the identity matrix we can always reduce to the case where each $\lvert \xi_k \rvert = 0$.

\begin{example*}
  Let $X_k \in \CO_{\xi_k}$ such that $\sum_{k=1}^s X_k = 0$.
  For each $k$, let $v_k$ denote a unit eigenvector of $X_k$ with eigenvalue $\xi_k(1)$.
  Then we have
  \begin{equation}
  \label{eq:triangle ieqs}
    0 = \braket{v_k, (\sum_{l=1}^s X_l) v_k} = \xi_k(1) + \sum_{l\neq k} \braket{v_k, X_l v_k} \geq \xi_k(1) + \sum_{l\neq k} \xi_l(r)
  \end{equation}
  since $\xi_l(r) = \min_{\lVert v \rVert = 1} \braket{v, X_l v}$ by the variational principle for the minimal eigenvalue of a Hermitian matrix $X_l$.
  These inequalities, together with $\sum_{k=1}^s \lvert\xi_k\rvert = 0$, characterize the Kirwan cone for $r=2$, as can be verified by brute force.

  There is also a pleasant geometric way of understanding these inequalities in the case $r=2$.
  As discussed above, we may assume that the $X_k$ are traceless, i.e., that $\xi_k = (j_k,-j_k)$ for some $j_k\geq0$.
  Recall that the traceless Hermitian matrices form a three-dimensional real vector space, spanned by the Pauli matrices.
  Thus each $X_k$ identifies with a vector $x_k \in \R^3$, and the condition that $X_k \in \CO_{\xi_k}$ translates into $\lVert x_k \rVert = j_k$.
  Thus we seek to characterize necessary and sufficient conditions on the lengths $j_k$ of vectors $x_k$ that sum to zero, $\sum_{k=1}^s x_k = 0$.
  By the triangle inequality, $j_k = \lVert x_k \rVert \leq \sum_{l\neq k} \lVert x_l \rVert = \sum_{l\neq k} j_l$, which is equivalent to the above.
  It is instructive to observe that $j_k \leq \sum_{l\neq k} j_l$ is precisely the Clebsch-Gordan rule for $\SL(2)$ when the $j_k$ are half-integers.
\end{example*}

The proof of \cref{eq:triangle ieqs}, which was valid for any $s$ and $r$, suggests that a more general variational principle for eigenvalues might be useful to produce linear inequalities for the Kirwan cones.

\begin{definition}
\label{def:flag and adapted basis}
  A \emph{(complete) flag} $F$ on a vector space $V$, $\dim V = r$, is a chain of subspaces
  \[ \{0\} = F(0) \subset F(1) \subset \cdots \subset F(j) \subset F({j+1}) \subset \dots \subset F(r) = V, \]
  such that $\dim F(j) = j$ for all $j=0,\dots,r$.
  Any ordered basis $f = (f(1),\dots,f(r))$ of $V$ determines a flag by $F(j) = \linspan \{ f(1), \dots, f(j) \}$. We say that $f$ is \emph{adapted} to $F$.
\end{definition}

Now let $X \in \CO_\xi$ be a Hermitian matrix with eigenvalues $\xi(1)\geq\dots\geq\xi(r)$.
Let $(f_X(1),\dots,f_X(r))$ denote an orthonormal eigenbasis, ordered correspondingly, and denote by $F_X$ the corresponding \emph{eigenflag} of $X$, defined as above.
Note that $F_X$ is uniquely defined if the eigenvalues $\xi(j)$ are all distinct.
We can quantify the position of a subspace with respect to a flag in the following way:

\begin{definition}
\label{def:schubert pos}
  The \emph{Schubert position} of an $d$-dimensional subspace $S \subseteq V$ with respect to a flag $F$ on $V$ is the strictly increasing sequence $J$ of integers defined by
  \[ J(b) := \min \{ j \in [r], \,\dim F(j) \cap S = b \} \]
  for $b \in [d]$.
  We write $\Pos(S,F) = J$ and freely identify $J$ with the subset $\{ J(1) < \dots < J(d) \}$ of $[r]$.
  In particular, $\Pos(S,F)=\emptyset$ for $S=\{0\}$ the zero-dimensional subspace.
\end{definition}

The upshot of these definitions is the following variational principle:

\begin{lemma}
\label{lem:variational}
  Let $\xi\in C_+(r)$, $X \in \CO_\xi$ with eigenflag $F_X$, and $J \subseteq [r]$ a subset of cardinality $d$.
  Then,
  \[ \min_{S : \Pos(S, F_X) = J} \tr(P_S X) = \sum_{j \in J} \xi(j) = (T_J, \xi), \]
  where $P_S$ denotes the orthogonal projector onto an $d$-dimensional subspace $S \subseteq \C^r$.
\end{lemma}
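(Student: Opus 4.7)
The plan is to exhibit a subspace that achieves the claimed value and then prove a matching lower bound via a variational argument that exploits the chain of intersections $F_X(j) \cap S$ supplied by the Schubert position.

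For achievability, I would take $S_0 := \linspan\{f_X(j) : j \in J\}$, where $f_X(1), \dots, f_X(r)$ is the orthonormal eigenbasis underlying $F_X$. Writing $J = \{j_1 < \dots < j_d\}$, one checks directly that $F_X(j) \cap S_0 = \linspan\{f_X(j_b) : j_b \leq j\}$, so $\dim F_X(j_b) \cap S_0 = b$ and $\dim F_X(j_b - 1) \cap S_0 = b-1$, giving $\Pos(S_0, F_X) = J$. Evaluating the trace in the orthonormal basis $\{f_X(j)\}_{j \in J}$ of $S_0$ yields $\tr(P_{S_0} X) = \sum_{j \in J} \xi(j) = (T_J, \xi)$.

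For the lower bound, given any $S$ with $\Pos(S, F_X) = J$, I would set $S_b := F_X(j_b) \cap S$. By the definition of Schubert position, $\dim S_b = b$, so the $S_b$ form a strict chain $S_1 \subset \cdots \subset S_d$; the dimension count $\dim S_d = d = \dim S$ forces $S_d = S$. Choose an orthonormal basis $s_1, \dots, s_d$ of $S$ adapted to this chain, so that $s_b \in S_b \subseteq F_X(j_b)$. Expanding each $s_b$ in the eigenbasis, $s_b = \sum_{j \leq j_b} c_{b,j} f_X(j)$, one gets
\[ \braket{s_b, X s_b} = \sum_{j \leq j_b} \lvert c_{b,j} \rvert^2 \xi(j) \geq \xi(j_b), \]
using $\xi(j) \geq \xi(j_b)$ for $j \leq j_b$ and $\sum_j \lvert c_{b,j} \rvert^2 = 1$. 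Summing over $b$ produces $\tr(P_S X) = \sum_{b=1}^d \braket{s_b, X s_b} \geq \sum_{b=1}^d \xi(j_b) = (T_J, \xi)$, as required.

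There is no serious obstacle here; the proof essentially unwinds the definitions. The key observation is that the Schubert position condition supplies exactly the right filtration $\{S_b\}$ on $S$, and pairing each basis vector $s_b$ with the subspace $F_X(j_b)$ produces the tight eigenvalue estimate $\xi(j_b)$ because the eigenvalues are ordered decreasingly along the eigenflag; the only mild care needed is the dimension count that forces $S_d = S$.
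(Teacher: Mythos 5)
Your proof is correct and follows essentially the same strategy as the paper's: it constructs an orthonormal basis $(s_1,\dots,s_d)$ of $S$ with $s_b \in F_X(J(b))$ and estimates $\braket{s_b, X s_b} \geq \xi(J(b))$ term by term, then exhibits $S_0 = \linspan\{f_X(j): j\in J\}$ to show the bound is attained. The only difference is cosmetic: you expand $s_b$ explicitly in the eigenbasis to justify the termwise inequality, where the paper invokes that the restriction of $X$ to $F_X(J(b))$ has least eigenvalue $\xi(J(b))$.
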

\begin{proof}
  Recall that $F_X(j) = \linspan \{ f_X(1), \dots, f_X(j) \}$, where $(f_X(1),\dots,f_X(r))$ is an orthonormal eigenbasis of $X$, ordered according to $\xi(1)\geq\dots\geq\xi(r)$.
  Given a subspace $S$ with $\Pos(S, F_X) = J$, we can find an ordered orthonormal basis $(s(1),\dots,s(d))$ of $S$ where each $s(a) \in F_X(J(a))$.
  Therefore,
  \[ \tr(P_S X) = \sum_{a=1}^d \braket{s(a), X s(a)} \geq \sum_{a=1}^d \xi(J(a)) = \sum_{j \in J} \xi(j). \]
  The inequality holds term by term, as the Hermitian matrix obtained by restricting $X$ to the subspace $F_X(J(a))$ has smallest eigenvalue $\xi(J(a))$. 
  Since $\tr(P_S X) = \sum_{j \in J} \xi(j)$ for $S = \linspan \{ f_X(j) : j \in J \}$, this establishes the lemma.
\end{proof}

Recall that the Grassmannian $\grass(d,V)$ is the space of $d$-dimensional subspaces of $V$.
We may partition $\grass(d,V)$ according to the Schubert position with respect to a fixed flag:

\begin{definition}
\label{def:schubert cell and variety}
  Let $F$ be a flag on $V$, $\dim V = r$, and $J \subseteq [r]$ a subset of cardinality $d$.
  The \emph{Schubert cell} is
  \[ \Omega^0_J(F) = \{ S \subseteq V : \dim S = d, \, \Pos(S, F) = J \}. \]
  The \emph{Schubert variety} $\Omega_J(F)$ is defined as the closure of $\Omega^0_J(F)$ in the Grassmannian $\grass(d,V)$. 
\end{definition}

The closures in the Euclidean and Zariski topology coincide; the $\Omega_J(F)$ are indeed algebraic varieties.
Using these definitions, \cref{lem:variational} asserts that
$\min_{S \in \Omega^0_J(F_X)} \tr(P_S X) = \sum_{j \in J} \xi(j)$
for any $X \in \CO_\xi$.
Since the orthogonal projector $P_S$ is a continuous function of $S \in \grass(d,V)$ (in fact, the Grassmannian is homeomorphic to the space of orthogonal projectors of rank $d$), it follows at once that
\begin{equation}
\label{eq:variational closure}
  \min_{S \in \Omega_J(F_X)} \tr(P_S X) = \sum_{j \in J} \xi(j) = (T_J,\xi).
\end{equation}
As a consequence, intersections of Schubert varieties imply linear inequalities of eigenvalues of matrices summing to zero:

\begin{lemma}
\label{lem:eigenflag ieqs}
  Let $X_k \in \CO_{\xi_k}$ be Hermitian matrices with $\sum_{k=1}^s X_k = 0$.
  If $J_1,\dots,J_s \subseteq [r]$ are subsets of cardinality $d$ such that $\bigcap_{k=1}^s \Omega_{J_k}(F_{X_k}) \neq \emptyset$, then $\sum_{k=1}^s (T_{J_k},\xi_k) 
  \leq 0$.
\end{lemma}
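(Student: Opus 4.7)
The plan is very short: this lemma is an immediate application of the variational principle in the closure form \eqref{eq:variational closure}, combined with linearity of the trace.

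First, I would pick any $S$ in the (nonempty) intersection $\bigcap_{k=1}^s \Omega_{J_k}(F_{X_k})$. For each $k\in[s]$, the subspace $S$ lies in $\Omega_{J_k}(F_{X_k})$, so the variational principle \eqref{eq:variational closure} applied to $X_k$ gives the pointwise inequality
\[
  \tr(P_S X_k) \;\geq\; \min_{S'\in\Omega_{J_k}(F_{X_k})} \tr(P_{S'} X_k) \;=\; (T_{J_k},\xi_k).
\]

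Second, I would sum these $s$ inequalities and use linearity of the trace together with the hypothesis $\sum_{k=1}^s X_k = 0$:
\[
  \sum_{k=1}^s (T_{J_k},\xi_k) \;\leq\; \sum_{k=1}^s \tr(P_S X_k) \;=\; \tr\!\Bigl(P_S \sum_{k=1}^s X_k\Bigr) \;=\; \tr(P_S\cdot 0) \;=\; 0,
\]
which is exactly the desired inequality.

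There is no real obstacle here; the only ingredient beyond one-line bookkeeping is the extension of \cref{lem:variational} from the open cell $\Omega^0_{J}(F_X)$ to its closure $\Omega_J(F_X)$, which is already recorded as \eqref{eq:variational closure} and follows from the continuity of $S\mapsto P_S$ on the Grassmannian. So the proof reduces to: choose a common point $S$, apply the variational principle for each $k$, and sum.
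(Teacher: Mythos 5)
Your proof is correct and is essentially identical to the paper's one-line argument: pick a common $S$ in the intersection, apply \eqref{eq:variational closure} for each $k$, and sum using $\sum_k X_k = 0$. You have merely spelled out the steps the paper compresses into a single display.
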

\begin{proof}
  Let $S \in \bigcap_{k=1}^s \Omega_{J_k}(F_{X_k})$. Then, $0 = \sum_{k=1}^s \tr(P_S X_k) \geq 
  \sum_{k=1}^s (T_{J_k}, \xi_k)$ by~\eqref{eq:variational closure}.
\end{proof}

Remarkably, we will find that it suffices to consider only those $J_1,\dots,J_s$ such that $\bigcap_{k=1}^s \Omega_{J_k}(F_k)\neq\emptyset$ for all flags $F_1,\dots,F_s$.
We record the corresponding eigenvalue inequalities, together with the trace condition, in \cref{cor:klyachko kirwan} below.
Following~\cite{belkale2006geometric}, we denote $s$-tuples by calligraphic letters, e.g., $\CJ = (J_1,\dots,J_s)$, $\CF=(F_1,\dots,F_s)$, etc.
In the case of Greek letters we continue to write $\vec\lambda=(\lambda_1,\dots,\lambda_s)$, etc., as above.

\begin{definition}
\label{def:intersecting}
  We denote by $\Subsets(d,r,s)$ the set of $s$-tuples $\CJ$, where each $J_k$ is a subset of $[r]$ of cardinality~$d$.
  Given such a $\CJ$, let $\CF$ be an $s$-tuple of flags on $V$, with $\dim V = r$. 
  Then we define
  \[ \Omega^0_\CJ(\CF) := \bigcap_{k=1}^s \Omega^0_{J_k}(F_k), \qquad \Omega_\CJ(\CF) := \bigcap_{k=1}^s \Omega_{J_k}(F_k). \]
  We shall say that $\CJ$ is \emph{intersecting} if $\Omega_\CJ(\CF)\neq\emptyset$ for every $s$-tuple of flags $\CF$, and we denote
  denote the set of such~$\CJ$ by $\Intersecting(d,r,s) \subseteq \Subsets(d,r,s)$.
\end{definition}

\begin{corollary}[Klyachko,~\cite{klyachko1998stable}]
\label{cor:klyachko kirwan}
  If $\vec\xi \in \Kirwan(r,s)$ then $\sum_{k=1}^s \lvert \xi_k \rvert = 0$, and for any $0<d<r$ and any $s$-tuple $\CJ \in \Intersecting(d,r,s)$ we have that
  $\sum_{k=1}^s (T_{J_k}, \xi_k) \leq 0$.
\end{corollary}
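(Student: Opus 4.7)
The plan is to combine the definition of the Kirwan cone with \cref{lem:eigenflag ieqs} in the most direct way possible, using the fact that $\CJ$ being intersecting means every tuple of flags (in particular the eigenflags of the $X_k$) produces a nonempty intersection.

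First I would dispatch the trace condition: by \cref{def:kirwan cone}, there exist Hermitian matrices $X_k \in \CO_{\xi_k}$ with $\sum_{k=1}^s X_k = 0$. Taking the trace of this identity and noting that $\tr(X_k) = \lvert \xi_k \rvert$ because $X_k$ is conjugate to the diagonal matrix with entries $\xi_k(1),\dots,\xi_k(r)$, yields $\sum_{k=1}^s \lvert \xi_k \rvert = 0$.

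For the main inequality, I would pick an orthonormal eigenbasis $(f_{X_k}(1),\dots,f_{X_k}(r))$ of each $X_k$, ordered so that the eigenvalues decrease as $\xi_k(1)\geq\dots\geq\xi_k(r)$. This defines an eigenflag $F_{X_k}$ (not necessarily unique if there are degenerate eigenvalues, but any such choice works). Assembling $\CF := (F_{X_1},\dots,F_{X_s})$, the hypothesis $\CJ \in \Intersecting(d,r,s)$ from \cref{def:intersecting} gives $\Omega_\CJ(\CF) = \bigcap_{k=1}^s \Omega_{J_k}(F_{X_k}) \neq \emptyset$. Then \cref{lem:eigenflag ieqs} immediately produces $\sum_{k=1}^s (T_{J_k},\xi_k) \leq 0$, which is the desired Horn-type inequality.

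There is no real obstacle here: the corollary is essentially a formal consequence of the variational principle already packaged into \cref{lem:eigenflag ieqs}, together with the defining property of $\Intersecting(d,r,s)$. The only conceptual point worth flagging is that the eigenflag need not be uniquely determined by $X_k$ when spectra are degenerate, but the argument only uses the existence of some ordered eigenbasis, so this ambiguity is harmless. The deeper content (that these particular intersecting tuples suffice, and that the resulting list of inequalities is complete) is a separate matter developed in the sections that follow; here we are only extracting necessity.
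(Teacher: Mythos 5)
Your proof is correct and is exactly the argument the paper intends: the trace identity is stated in the paragraph preceding \cref{eq:triangle ieqs}, and the Horn-type inequalities follow by applying \cref{lem:eigenflag ieqs} to the eigenflags $F_{X_k}$, using that $\CJ$ being intersecting guarantees $\Omega_\CJ(F_{X_1},\dots,F_{X_s})\neq\emptyset$. Your remark about the non-uniqueness of eigenflags for degenerate spectra is a correct and worthwhile clarification, but as you note it causes no difficulty.
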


\begin{example*}
  If $J = \{1,\dots,d\} \subseteq [r]$ then $\Omega^0_J(F) = \{ F(d) \}$ is a single point.
  On the other end, if $J = \{r-d+1,\dots,r\}$ then $\Omega^0_J(F)$ is dense in $\grass(r, V)$, so that $\Omega_J(F) = \grass(r, V)$.
  It follows that $\CJ = (J_1, \{r-d+1,\dots,r\},\dots,\{r-d+1,\dots,r\}) \in \Intersecting(d, r, s)$ is intersecting for any $J_1$ (and likewise for permutations of the $s$ factors).

  For $d=1$, this means that $\Omega_{\{r\}}(F) = \P(V)$, so that~\eqref{eq:variational closure} reduces to the variational principle for the minimal eigenvalue, $\xi(r) = \min_{\lVert v \rVert = 1} \braket{v, X v}$, which we used to derive~\eqref{eq:triangle ieqs} above.
  Indeed, since $(\{a\},\{r\},\dots,\{r\})$ is intersecting for any $a$, we find that~\eqref{eq:triangle ieqs} is but a special case of \cref{cor:klyachko kirwan}.
\end{example*}

In order to understand the linear inequalities in \cref{cor:klyachko kirwan}, we need to understand the sets of intersecting tuples.
In the remainder of this section we thus motivate Belkale's inductive system of conditions for an $s$-tuple to be intersecting.
For reasons that will become clear shortly, we slightly change notation:
$E$ will be a complete flag on some $n$-dimensional vector space $W$, $I$ will be a subset of $[n]$ of cardinality $r$, and hence $\Omega^0_I(E)$ will be a Schubert cell in the Grassmannian $\grass(r,W)$.
We will describe $\grass(r,W)$ and $\Omega^0_I(E)$ in detail in \cref{sec:linalg}.
For now, we note that the dimension of $\grass(r,W)$ is $r(n-r)$.
In fact, $\grass(r,W)$ is covered by affine charts isomorphic to $\C^{r(n-r)}$.
The dimension of a Schubert cell and the corresponding Schubert variety (its Zariski closure) is given by
\newcommand{\printdimschubertcell}[3]{
\begin{#1}
#2
  \dim\Omega^0_I(E) = \dim\Omega_I(E) = #3 \sum_{a=1}^r \bigl( I(a) - a \bigr) =: \dim I.
\end{#1}}
\printdimschubertcell{equation*}{\tag{\ref{eq:dim schubert cell}}}{}
Indeed, $\Omega^0_I(E)$ is contained in an affine chart $\C^{r(n-r)}$ and is isomorphic to a vector subspace of dimension $\dim I$.
So locally $\Omega^0_I(E)$ is defined by $r(n-r)-\dim I$ equations.
This is easy to see and we give a proof in \cref{sec:linalg}.

\begin{definition}
\label{def:edim}
  Let $\CI\in\Subsets(r,n,s)$.
  The \emph{expected dimension} associated with $\CI$ is 
  \[ \edim \CI := r(n-r) - \sum_{k=1}^s \bigl( r(n-r) - \dim I_k \bigr). \]
\end{definition}

This definition is natural in terms of intersections, as the following lemma shows:

\begin{lemma}
\label{lem:dim intersection}
  Let $\CE$ be an $s$-tuple of flags on $W$, $\dim W = n$, and $\CI\in\Subsets(r,n,s)$.
  If $\Omega^0_{\CI}(\CE) \neq \emptyset$ then its irreducible components (in the sense of algebraic geometry) are all of dimension at least $\edim \CI$.
\end{lemma}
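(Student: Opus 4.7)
The lemma is a standard dimension estimate for intersections of closed subvarieties in a smooth ambient variety, applied here to the Grassmannian $\grass(r,W)$, which is smooth of dimension $N = r(n-r)$. My plan is to first reduce to the corresponding statement for Schubert \emph{varieties} (the closures), and then invoke the general intersection dimension bound.

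Each Schubert cell $\Omega^0_{I_k}(E_k)$ is open in its closure $\Omega_{I_k}(E_k)$. Hence $\Omega^0_\CI(\CE)$ is open in $\Omega_\CI(\CE) = \bigcap_k \Omega_{I_k}(E_k)$, and every irreducible component of $\Omega^0_\CI(\CE)$ is open and dense in an irreducible component of $\Omega_\CI(\CE)$ of the same dimension. It therefore suffices to show that every irreducible component of the intersection of the closed Schubert varieties that meets $\Omega^0_\CI(\CE)$ has dimension at least $\edim \CI$.

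For this I would invoke the following standard fact: in a smooth irreducible variety $X$ of dimension $N$, every nonempty irreducible component of the intersection $\bigcap_{k=1}^s Y_k$ of closed subvarieties has dimension at least $N - \sum_k \codim_X Y_k$. Specialized to $X = \grass(r,W)$ and $Y_k = \Omega_{I_k}(E_k)$, whose codimension is $r(n-r) - \dim I_k$ by \cref{eq:dim schubert cell}, this immediately yields the desired bound $\edim \CI$.

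The main obstacle is justifying this general fact in a form consistent with the paper's elementary style. The cleanest route is via the diagonal embedding $\Delta\colon \grass(r,W) \hookrightarrow \grass(r,W)^s$: smoothness of the Grassmannian makes $\Delta$ a regular embedding of codimension $(s-1) N$, locally cut out in $\grass(r,W)^s$ by $(s-1)N$ regular functions forming a regular sequence. Since $\bigcap_k Y_k$ is identified via $\Delta$ with $\Delta(\grass(r,W)) \cap \prod_k Y_k$, Krull's Hauptidealsatz applied $(s-1) N$ times bounds the codimension of each irreducible component in $\prod_k Y_k$ by $(s-1) N$, giving dimension at least $\sum_k \dim Y_k - (s-1) N = N - \sum_k \codim Y_k$. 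Alternatively, one could work concretely in a single affine chart $U \simeq \C^{r(n-r)}$ of $\grass(r,W)$ around a point of $\Omega^0_\CI(\CE)$, provided one shows that each $\Omega_{I_k}(E_k) \cap U$ is locally cut out by $r(n-r) - \dim I_k$ regular functions near that point; this is accessible via the rank-condition description of Schubert varieties developed in \cref{sec:linalg}, and then the standard one-equation-at-a-time form of Krull's theorem finishes the argument.
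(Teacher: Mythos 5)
Your proof is correct and rests on the same core principle as the paper's: each Schubert cell is locally cut out by $r(n-r) - \dim I_k$ equations, so by Krull's dimension theorem the intersection has codimension at most $\sum_k \bigl( r(n-r)-\dim I_k\bigr)$. The paper's proof is a three-line version of your ``alternative'' route: near any $V\in\Omega^0_\CI(\CE)$, the cell $\Omega^0_{I_k}(E_k)$ is (by \cref{lem:dim schubert cell}, via the affine chart of \cref{eq:affine chart of grassmannian}) a linear subspace of $\C^{r(n-r)}$ of dimension $\dim I_k$, hence locally defined by $r(n-r)-\dim I_k$ equations, and one just applies Krull to the combined system at $V$. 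Your extra steps --- passing to the closed Schubert varieties and then invoking the smooth-ambient intersection bound via the diagonal embedding --- are correct but not needed: since one works at a point of $\Omega^0_\CI(\CE)$, the cells and the varieties coincide in a neighborhood, and the diagonal trick repackages the same $(s-1)r(n-r)$ count that one gets directly. The diagonal formulation has the advantage of being a clean citation to a standard theorem, at the cost of being less self-contained; the paper opts for the concrete version since the linear-subspace description of the cells is already available from \cref{sec:linalg}.
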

\begin{proof}
  Each Schubert cell $\Omega^0_{I_k}(E_k)$ is locally defined by $r(n-r) - \dim I_k$ equations.
  It follows that any irreducible component $\CZ \subseteq \Omega^0_{\CI}(\CE) = \bigcap_{k=1}^s \Omega^0_{I_k}(E_k)$ is locally defined by $\sum_{k=1}^s (r(n-r) - \dim I_k)$ equations.
  These equations, however, are not necessarily independent. Thus the codimension of $\CZ$ is at most that number, and we conclude that $\dim \CZ \geq \edim \CI$.
\end{proof}

Belkale's first observation is that the expected dimension of an intersecting tuple $\CI \in \Intersecting(r, n, s)$ is necessarily nonnegative,
\newcommand{\printedimnonnegative}[2]{
\begin{#1}
#2
  \edim \CI = r(n-r) - \sum_{k=1}^s (r(n-r) - \dim I_k)\geq0.
\end{#1}}
\printedimnonnegative{equation*}{\tag{\ref{eq:edim nonnegative}}}
This inequality, as well as some others, will be proved in detail in \cref{sec:horn necessary}.
For now, we remark that the condition is rather natural from the perspective of Kleiman's moving lemma.
Given $\CI \in \Intersecting(r,n,s)$, it not only implies that the intersection of the Schubert \emph{cells}, $\Omega^0_{\CI}(\CE) = \bigcap_{k=1}^s \Omega^0_{I_k}(E_k)\neq \emptyset$, is nonempty for generic flags, but in fact transverse, so that the dimensions of its irreducible components are exactly equal to the expected dimension; hence, $\edim\CI\geq0$.

We now show that~\eqref{eq:edim nonnegative} gives rise to an inductive system of conditions.
Given a flag $E$ on $W$ and a subspace $V\subseteq W$, we denote by $E^V$ the flag obtained from the distinct subspaces in the sequence $E(i)\cap V$, $i=0,\dots,n$.
Given subsets $I \subseteq [n]$ of cardinality $r$ and $J \subseteq [r]$ of cardinality $d$, we also define their \emph{composition} $IJ$ as the subset $IJ = \{ I(J(1)) < \dots < I(J(d)) \} \subseteq [n]$.
(For $s$-tuples $\CI$ and $\CJ$ we define $\CI\CJ$ componentwise.)
Then we have the following `chain rule' for positions:
If $S \subseteq V \subseteq W$ are subspaces and $E$ is a flag on $W$ then
\newcommand{\printchainrulepos}[3]{
\begin{#1}
#2
  \Pos(S, E) = #3 \Pos(V, E) \Pos(S, E^V).
\end{#1}}
\printchainrulepos{equation*}{\tag{\ref{eq:chain rule pos}}}{}
We also have the following description of Schubert varieties in terms of Schubert cells:
\newcommand{\printschubertvarietycharacterization}[2]{
\begin{#1}
#2
  \Omega_I(E) = \bigcup_{I' \leq I} \Omega^0_{I'}(E),
\end{#1}}
\printschubertvarietycharacterization{equation*}{\tag{\ref{eq:schubert variety characterization}}}
where the union is over all subsets $I' \subseteq [n]$ of cardinality $r$ such that $I'(a) \leq I(a)$ for $a\in[r]$.
Both statements are not hard to see; we will give careful proofs in \cref{sec:linalg} below.
We thus obtain a corresponding chain rule for intersecting tuples:

\begin{lemma}
\label{lem:chain rule intersecting}
  If $\CI \in \Intersecting(r,n,s)$ and $\CJ \in \Intersecting(d,r,s)$, then we have $\CI\CJ \in \Intersecting(d,n,s)$.
\end{lemma}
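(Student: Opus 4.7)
The goal is to show that for any $s$-tuple of complete flags $\CE = (E_1, \dots, E_s)$ on an $n$-dimensional space $W$, the intersection $\Omega_{\CI\CJ}(\CE) = \bigcap_{k=1}^s \Omega_{I_k J_k}(E_k)$ is nonempty. My plan is to produce a witness in two stages: first, use the assumption that $\CI$ is intersecting in $\grass(r,W)$ to get an intermediate $r$-dimensional subspace $V\subseteq W$ in $\Omega_\CI(\CE)$; second, restrict each $E_k$ to $V$ via the construction $E\mapsto E^V$ and use that $\CJ$ is intersecting in $\grass(d,V)$ to get a $d$-dimensional $S\subseteq V$ in $\Omega_\CJ(\CE^V)$. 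The chain rule \eqref{eq:chain rule pos} and monotonicity of subset composition with respect to the partial order from \eqref{eq:schubert variety characterization} will then promote $S$ into the desired intersection $\Omega_{\CI\CJ}(\CE)$.

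Concretely, fix $\CE$ on $W$. Since $\CI\in\Intersecting(r,n,s)$, pick any $V\in\Omega_\CI(\CE)$; by \eqref{eq:schubert variety characterization} the positions $I'_k := \Pos(V,E_k)$ satisfy $I'_k\leq I_k$ componentwise for each $k$. The induced data $E_k^V$ is a \emph{complete} flag on $V$: since $\dim V = r$ and the chain $E_k(0)\cap V \subseteq \cdots \subseteq E_k(n)\cap V$ starts at $\{0\}$, ends at $V$, and never jumps by more than one, each integer dimension from $0$ to $r$ is realized exactly once. Applying the hypothesis $\CJ\in\Intersecting(d,r,s)$ to the flags $\CE^V = (E_1^V,\dots,E_s^V)$ on $V\cong\C^r$ yields some $S\in\Omega_\CJ(\CE^V)$, and hence $J'_k := \Pos(S, E_k^V)$ satisfies $J'_k\leq J_k$ componentwise.

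It then remains to check that $\Pos(S, E_k)\leq I_k J_k$ for each $k$, since this gives $S\in\Omega_{I_k J_k}(E_k)$ by \eqref{eq:schubert variety characterization}, whence $S\in\Omega_{\CI\CJ}(\CE)$. The chain rule \eqref{eq:chain rule pos} gives $\Pos(S, E_k) = I'_k J'_k$, and monotonicity of composition in both arguments is a one-line verification: for each $b\in[d]$,
\[ (I'_k J'_k)(b) = I'_k(J'_k(b)) \leq I_k(J'_k(b)) \leq I_k(J_k(b)) = (I_k J_k)(b), \]
using $I'_k \leq I_k$ pointwise in the first step and that $I_k$ is strictly increasing together with $J'_k(b)\leq J_k(b)$ in the second. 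The main point to keep straight is conceptual rather than technical: because the intersecting property concerns Schubert varieties rather than cells, neither $V$ nor $S$ need land in the open strata, so the whole argument must be phrased in terms of the closure description \eqref{eq:schubert variety characterization} and the componentwise partial order on subsets. Once that framework is adopted, the chain rule and the composition-monotonicity check close the proof cleanly.
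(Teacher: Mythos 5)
Your proof is correct and follows essentially the same approach as the paper: pick $V\in\Omega_\CI(\CE)$, induce flags on $V$, pick $S\in\Omega_\CJ(\CE^V)$, then use the chain rule \eqref{eq:chain rule pos} together with \eqref{eq:schubert variety characterization} to conclude $S\in\Omega_{\CI\CJ}(\CE)$. The only (immaterial) difference is the intermediate term in the monotonicity chain: you interpolate through $I_k(J'_k(b))$, while the paper passes through $I'_k(J_k(b))$.
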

\begin{proof}
  Let $\CE$ be an $s$-tuple of flags on $W=\C^n$.
  Since $\CI$ is intersecting, there exists $V \in \Omega_\CI(\CE)$.
  Let $\CE^V$ denote the $s$-tuple of induced flags on $V$.
  Likewise, since $\CJ$ is intersecting, we can find $S \in \Omega_\CJ(\CE^V)$.
  In particular, $\Pos(V, E_k)(a) \leq I_k(a)$ for $a\in[r]$ and $\Pos(S, E_k^V) \leq J_k(b)$ for $b\in[d]$ by~\eqref{eq:schubert variety characterization}.
  Thus~\eqref{eq:chain rule pos} shows that $\Pos(S, E_k)(b) = \Pos(V, E_k)\bigl(\Pos(S, E_k^V)(b)\bigr) \leq \Pos(V, E_k)(J_k(b)) \leq I_k(J_k(b))$.
  Using~\eqref{eq:schubert variety characterization} one last time, we conclude that $S \in \Omega_{\CI\CJ}(\CE)$.
\end{proof}

As an immediate consequence of Inequality~\eqref{eq:edim nonnegative} and \cref{lem:chain rule intersecting} we obtain the following set of necessary conditions for an $s$-tuple $\CI$ to be intersecting:

\begin{corollary}
\label{cor:belkale inductive}
  If $\CI \in \Intersecting(r,n,s)$ then for any $0<d<r$ and any $s$-tuple $\CJ \in \Intersecting(d,r,s)$ we have that $\edim \CI\CJ \geq 0$.
\end{corollary}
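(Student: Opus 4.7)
The proof is essentially a direct combination of the two ingredients already assembled just before the statement: the chain rule for intersecting tuples (\cref{lem:chain rule intersecting}) and the expected-dimension inequality~\eqref{eq:edim nonnegative} for intersecting tuples. So the plan is simply to compose them.

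First I would take the given $\CI \in \Intersecting(r,n,s)$ and $\CJ \in \Intersecting(d,r,s)$ and apply \cref{lem:chain rule intersecting} to produce the $s$-tuple $\CI\CJ \in \Intersecting(d,n,s)$. Here the role of $r$ in the hypothesis of~\eqref{eq:edim nonnegative} is played by $d$, and the role of $n$ by $n$, i.e., we are now working in the Grassmannian $\grass(d,W)$ rather than $\grass(r,W)$; the constraint $0<d<r<n$ from the corollary ensures these parameters are in the admissible range where~\eqref{eq:edim nonnegative} applies.

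Second I would invoke~\eqref{eq:edim nonnegative} for this composed tuple, which immediately gives $\edim(\CI\CJ) \geq 0$, as required. No further computation is needed: the definition of composition $\CI\CJ$ was arranged componentwise precisely so that the notion of expected dimension of the composed tuple is exactly the quantity whose nonnegativity follows from being intersecting.

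There is really no hard step here, since both of the underlying facts are treated carefully elsewhere. The only thing to be slightly cautious about is dimensional bookkeeping: one should verify that $\CI\CJ$ genuinely is an element of $\Subsets(d,n,s)$ (i.e., that each $I_k J_k$ is a subset of $[n]$ of cardinality~$d$, which follows since $J_k \subseteq [r]$ has cardinality $d$ and the composition map $J \mapsto IJ$ is injective on strictly increasing sequences), so that the statement $\edim(\CI\CJ) \geq 0$ makes sense and~\eqref{eq:edim nonnegative} indeed applies to it. The substance of the corollary therefore lies entirely in the chain rule and in~\eqref{eq:edim nonnegative}, which are the results whose proofs are postponed to the later sections.
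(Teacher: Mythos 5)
Your proposal is correct and matches the paper's proof exactly: the paper states the corollary as an immediate consequence of the chain rule for intersecting tuples (\cref{lem:chain rule intersecting}) together with the nonnegativity of expected dimension for intersecting tuples~\eqref{eq:edim nonnegative}, which is precisely the two-step composition you describe.
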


Belkale's theorem asserts that these conditions are also sufficient. In fact, it suffices to restrict to intersecting $\CJ$ with $\edim\CJ = 0$:

\begin{definition}
\label{def:horn}
  Let $\Horn(r,n,s)$ denote the set of $s$-tuples $\CI\in\Subsets(r,n,s)$ defined by the conditions that $\edim\CI\geq0$ and, if $r>1$, that
  \[ \edim\CI\CJ \geq 0 \]
  for all $\CJ \in \Horn(d, r, s)$ with $0<d<r$ and $\edim\CJ=0$.
\end{definition}

\newcommand{\printbelkaletheorem}[3]{
\begin{#1}[Belkale,~\cite{belkale2006geometric}#3]
#2
  For $r\in[n]$ and $s\geq2$,
  $\Intersecting(r, n, s) = \Horn(r, n, s)$.
\end{#1}}
\printbelkaletheorem{belkaletheorem}{}{}

We will prove \cref{thm:belkale} in \cref{sec:horn sufficient}.
The inequalities defining $\Horn(r,n,s)$ are in fact tightly related to those constraining the Kirwan cone $\Kirwan(r,s)$ and the existence of nonzero invariant vectors.
To any $s$-tuple of dominant weights $\vec\lambda$ for $\GL(r)$ such that $\sum_{k=1}^s \lvert\lambda_k\rvert=0$, we will associate an $s$-tuple $\CI\in\Subsets(r,n,s)$ for some $[n]$ such that $\edim\CI=0$.
Furthermore, if $\vec\lambda$ satisfies the inequalities in \cref{cor:klyachko kirwan} then $\CI\in\Horn(r,n,s)$.
In \cref{sec:invariants} we will explain this more carefully and show how Belkale's considerations allow us to construct a corresponding nonzero $\GL(r)$-invariant in $L(\lambda_1)\ot\cdots\ot L(\lambda_s)$.
By \cref{prp:kempf-ness}, 
we will thus obtain at once a characterization of the Kirwan cone as well as of the existence of nonzero invariants in terms of Horn's inequalities:

\newcommand{\printhorncorollary}[3]{
\begin{#1}[Knutson-Tao,~\cite{knutson1999honeycomb}#3]
#2
(a) \emph{Horn inequalities:} The Kirwan cone $\Kirwan(r,s)$ is the convex polyhedral cone of $\vec\xi\in C_+(r)^s$ such that $\sum_{k=1}^s \lvert \xi_k \rvert = 0$, and for any $0<d<r$ and any $s$-tuple $\CJ \in \Horn(d,r,s)$ with $\edim\CJ=0$ we have that
$\sum_{k=1}^s (T_{J_k},\xi_k) \leq 0$.

(b) \emph{Saturation property:} For a dominant weight $\vec\lambda\in \Lambda_+(r)^s$, the space of invariants $(L(\lambda_1) \ot \cdots \ot L(\lambda_s))^{\GL(r)}$ is nonzero if and only if $\vec\lambda \in \Kirwan(r,s)$.

In particular, $c(\vec\lambda) := \dim (L(\lambda_1) \ot \cdots \ot L(\lambda_s))^{\GL(r)} > 0$ if and only if $c(N \vec\lambda) > 0$ for some integer $N > 0$.
\end{#1}}
\printhorncorollary{horncorollary}{}{}

The proof of \cref{cor:horn and saturation} will be given in \cref{sec:invariants}.
In \cref{app:examples horn,app:examples kirwan}, we list the Horn triples as well as the Horn inequalities for the Kirwan cones up to $r=4$.

\counterwithin{equation}{subsection}
\section{Subspaces, flags, positions}\label{sec:linalg}

In this section, we study the geometry of subspaces and flags in more detail and supply proofs of some linear algebra facts used previously in \cref{sec:easy}.

\subsection{Schubert positions}

We start with some remarks on the Grassmannian $\grass(r,W)$, which is an irreducible algebraic variety on which the general linear group $\GL(W)$ acts transitively.
The stabilizer of a subspace $V \in \grass(r, W)$ is equal to the parabolic subgroup $P(V, W) = \{ \gamma \in \GL(W) : \gamma V \subseteq V \}$, with Lie algebra $\p(V, W) = \{ x \in \gl(W) : x V \subseteq V \}$.
Thus we obtain that
\[ \grass(r,W) = \GL(W) \cdot V \cong \GL(W)/P(V,W), \]
and we can identify the tangent space at $V$ with
\[ T_V \grass(r,W) = \gl(W) \cdot V \cong \gl(W)/\p(V,W) \cong \Hom(V,W/V). \]
If we choose a complement $Q$ of $V$ in $W$ then
\begin{equation}\label{eq:affine chart of grassmannian}
  \Hom(V,Q)\to\grass(r,W), \quad \phi\mapsto(\id+\phi)(V)
\end{equation}
parametrizes a neighborhood of $V$. 
This gives a system of affine charts in $\grass(r,W)$ isomorphic to $\C^{r(n-r)}$.
In particular, $\dim\grass(r,W)=r(n-r)$, a fact we use repeatedly in this article.

We now consider Schubert positions and the associated Schubert cells and varieties in more detail (\cref{def:schubert pos,def:schubert cell and variety}).
For all $\gamma \in \GL(W)$, we have the following equivariance property:
\begin{equation}
\label{eq:position equivariance}
  \Pos(\gamma^{-1} V, E) = \Pos(V, \gamma E),
\end{equation}
which in particular implies that
\begin{equation}
\label{eq:schubert cell equivariance}
  \gamma \Omega^0_I(E) = \Omega^0_I(\gamma E).
\end{equation}
Thus $\Omega^0_I(E)$ is preserved by the Borel subgroup $B(E) = \{ \gamma\in\GL(W) : \gamma E(i)\subseteq E(i) \; (\forall i) \}$, which is the stabilizer of the flag $E$.
We will see momentarily that $\Omega^0_I(E)$ is in fact a single $B(E)$-orbit.
We first state the following basic lemma, which shows that adapted bases (\cref{def:flag and adapted basis}) provide a convenient way of computing Schubert positions:

\begin{lemma}
\label{lem:adapted basis}
  Let $E$ be a flag on $W$, $\dim W = n$, $V \subseteq W$ an $r$-dimensional subspace, and $I \subseteq [n]$ a subset of cardinality $r$, with complement $I^c$.
  The following are equivalent:
  \begin{enumerate}
  \item \label{item:adapted basis i} $\Pos(V, E) = I$. 
  \item \label{item:adapted basis ii} For any ordered basis $(f(1),\dots,f(n))$ adapted to $E$, there exists a (unique) basis $(v(1),\dots,v(r))$ of $V$ of the form 
  \[ v(a) \in f(I(a)) + \linspan \{ f(i) : i \in I^c, i < I(a) \}. \]
  \item \label{item:adapted basis iii} There exists an ordered basis $(f(1),\dots,f(n))$ adapted to $E$ such that $\{f(I(1)),\dots,f(I(r))\}$ is a basis of $V$.
  \end{enumerate}
\end{lemma}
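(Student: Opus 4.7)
The plan is to prove the cyclic chain (i) $\Rightarrow$ (ii) $\Rightarrow$ (iii) $\Rightarrow$ (i). The closing implication is essentially computational, while the opening one carries the real content. The guiding idea throughout is to translate the dimensional data defining $\Pos(V,E)$ into triangular normal forms for bases of $V$.

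For (i) $\Rightarrow$ (ii), I would fix the adapted basis $(f(1),\dots,f(n))$ and construct $v(1),\dots,v(r)$ inductively on $a$. The assumption $\Pos(V,E)=I$ gives $\dim(V\cap E(I(a)))=a$ and $\dim(V\cap E(I(a)-1))=a-1$, so some $u\in V\cap E(I(a))$ has a nonzero $f(I(a))$-coefficient. After rescaling so that this coefficient equals $1$, I would subtract suitable multiples of the already-constructed $v(1),\dots,v(a-1)$ to kill the $f(I(b))$-coefficients for $b<a$; this works because each $v(b)$ has a $1$ in position $f(I(b))$ and otherwise only components in $\linspan\{f(i):i\in I^c,\,i<I(b)\}$, so subtracting a multiple of $v(b)$ does not disturb the coefficient at any other $f(I(b'))$. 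For uniqueness, I would note that $v(1),\dots,v(a-1),v(a)$ form a basis of $V\cap E(I(a))$: their leading coefficients at the distinct positions $f(I(1)),\dots,f(I(a))$ force linear independence, and this is $a$ vectors in an $a$-dimensional space. Any other candidate $v'(a)$ then expands uniquely in this basis; matching the $f(I(a))$-coefficient forces the coefficient of $v(a)$ to be $1$, and matching the vanishing of the $f(I(b))$-coefficients for $b<a$ forces all other coefficients to be $0$.

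For (ii) $\Rightarrow$ (iii), I would start from any adapted basis $(f(1),\dots,f(n))$, apply (ii) to obtain $v(1),\dots,v(r)$, and form $(f'(1),\dots,f'(n))$ by setting $f'(I(a))=v(a)$ and leaving $f'(i)=f(i)$ for $i\in I^c$. The form prescribed in (ii) shows that, for every $j$, the transition matrix from $(f(1),\dots,f(j))$ to $(f'(1),\dots,f'(j))$ is lower unitriangular, which simultaneously proves that the $f'(i)$ are linearly independent and that $\linspan\{f'(1),\dots,f'(j)\}=E(j)$; by construction, $\{f'(I(1)),\dots,f'(I(r))\}$ is a basis of $V$. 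For (iii) $\Rightarrow$ (i), given an adapted basis $f$ with $V=\linspan\{f(I(a)):a\in[r]\}$, the intersection $V\cap E(j)$ is spanned by those $f(I(a))$ with $I(a)\leq j$, so $\dim(V\cap E(j))=\lvert\{a:I(a)\leq j\}\rvert$; this step function jumps from $a-1$ to $a$ precisely at $j=I(a)$, giving $\Pos(V,E)=I$.

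The main obstacle is keeping the triangular bookkeeping straight in the inductive step of (i) $\Rightarrow$ (ii): one must check that each subtraction involves only indices in $I^c$ strictly less than $I(a)$ and that no subtraction can reintroduce components outside $E(I(a))$. Once this is handled carefully, the remaining implications reduce to routine linear algebra with upper/lower triangular change-of-basis matrices.
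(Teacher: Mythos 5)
The paper leaves this lemma as an exercise to the reader, so there is no reference proof to compare against. Your cyclic chain is a correct and standard route: the inductive construction for (i) $\Rightarrow$ (ii) correctly exploits the jumps $\dim(V\cap E(I(a)))=a>\dim(V\cap E(I(a)-1))=a-1$ together with the fact that each $v(b)$ contributes to exactly one $I$-coordinate, namely $f(I(b))$, so the clearing step neither disturbs earlier positions in $I$ nor escapes $E(I(a))$; the unitriangular change of basis from $f$ to $f'$ handles (ii) $\Rightarrow$ (iii); and (iii) $\Rightarrow$ (i) is a direct read-off of the step function $j\mapsto\dim(V\cap E(j))$, since for a basis split as $\{f(I(a))\}\cup\{f(i):i\in I^c\}$ one has $V\cap E(j)=\linspan\{f(I(a)):I(a)\leq j\}$.
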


The proof of \cref{lem:adapted basis} is left as an exercise to the reader.
%
Clearly, $B(E)$ acts transitively on the set of ordered bases adapted to $E$.
Thus, \cref{lem:adapted basis},~\ref{item:adapted basis iii} shows that $\Omega^0_I(E)$ is a single $B(E)$-orbit.
That is, just like Grassmannian itself, each Schubert cell is a homogeneous space.
In particular, $\Omega^0_I(E)$ and its closure $\Omega_I(E)$ (\cref{def:schubert cell and variety}) are both irreducible algebraic varieties.

\begin{example*}
  Consider the flag $E$ on $W=\C^4$ with adapted basis $(f(1),\dots,f(4))$, where
  $f(1) = e(1) + e(2) + e(3)$,
  $f(2) = e(2) + e(3)$,
  $f(3) = e(3) + e(4)$,
  $f(4) = e(4)$.
  If $V = \linspan \{ e(1), e(2) \}$ then $\Pos(V, E) = \{2,4\}$, while $\Pos(V, E_0) = \{1,2\}$ for the standard flag $E_0$ with adapted basis $(e(1),e(2),e(3),e(4))$.

  Note that the basis $(v(1),v(2))$ of $V$ given by
  $v(1) = f(2) - f(1) = e(1)$ and
  $v(2) = f(4) - f(3) + f(1) = e(1) + e(2)$
  satisfies the conditions in \cref{lem:adapted basis},~\ref{item:adapted basis ii}.
  It follows that $(f(1),v(1),f(3),v(2))$
  is an adapted basis of $E$ that satisfies the conditions in~\ref{item:adapted basis iii}.
\end{example*}

The following lemma characterizes each Schubert variety explicitly as a union of Schubert cells:

\begin{lemma}
\label{lem:schubert variety characterization}
  Let $E$ be a flag on $W$, $\dim W = n$, and $I \subseteq [n]$ a subset of cardinality $r$.
  Then,
  \printschubertvarietycharacterization{equation}{\label{eq:schubert variety characterization}}
  where the union is over all subsets $I' \subseteq [n]$ of cardinality $r$ such that $I'(a) \leq I(a)$ for $a\in[r]$.
\end{lemma}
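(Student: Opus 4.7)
The plan is to prove the two inclusions separately.

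\emph{Inclusion $\supseteq$.} Fix $I' \leq I$ and $V' \in \Omega^0_{I'}(E)$. By \cref{lem:adapted basis}, pick a basis $(f(1),\dots,f(n))$ adapted to $E$ such that $V' = \linspan\{f(I'(a)) : a \in [r]\}$ and consider the one-parameter family
\[
V_t := \linspan\bigl\{\,f(I'(a)) + t\,f(I(a)) : a \in [r]\,\bigr\}, \qquad t \in \C.
\]
This is $r$-dimensional for $t$ near $0$ and satisfies $V_0 = V'$. To compute the rank of the projection $V_t \to W/E(i)$, note first that columns with $I(a) \leq i$ vanish modulo $E(i)$, since then also $I'(a) \leq i$. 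For the remaining columns indexed by $A_i := \{a : I(a) > i\}$, the submatrix on rows $\{I(a) : a\in A_i\}$ is upper-triangular in the natural ordering of $a$, with diagonal entries $t$ or $t+1$; the triangularity uses that if $I'(a) = I(b)$ with $b \neq a$, then $I(b) = I'(a) \leq I(a)$ forces $b < a$. The determinant is $t^{k_1}(t+1)^{k_2}$ with $k_1+k_2=\#A_i$, nonzero on $\C\setminus\{0,-1\}$, so $\dim(E(i) \cap V_t) = \#\{a : I(a) \leq i\}$ for every $i$, i.e., $\Pos(V_t, E) = I$. Letting $t\to 0$ in $\C\setminus\{0,-1\}$ yields $V' \in \overline{\Omega^0_I(E)} = \Omega_I(E)$.

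\emph{Inclusion $\subseteq$.} The function $V \mapsto \dim(E(i)\cap V)$ is upper semi-continuous on $\grass(r,W)$, since $\dim(E(i)\cap V)\geq k$ amounts to the vanishing of all $(r-k+1)$-minors of the matrix representing the projection $V \to W/E(i)$ in any affine chart \eqref{eq:affine chart of grassmannian}. Given $V \in \Omega_I(E)$ and $I' := \Pos(V,E)$, pick a sequence $V_n \in \Omega^0_I(E)$ with $V_n \to V$; then
\[
\#\{a : I'(a) \leq i\} = \dim(E(i) \cap V) \geq \dim(E(i)\cap V_n) = \#\{a : I(a) \leq i\}
\]
for every $i\in[n]$ and $n$ large. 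Specializing $i = I(a)$ shows that at least $a$ of the strictly increasing values $I'(1) < \cdots < I'(r)$ lie at or below $I(a)$, forcing $I'(a) \leq I(a)$, so $I' \leq I$.

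\emph{Main obstacle.} The triangularity computation is the only nontrivial step; everything else is formal once upper semi-continuity of intersection dimensions is granted. A cleaner alternative avoids explicit determinants: since $\Omega^0_I(E)$ is a single $B(E)$-orbit, $\Omega_I(E)$ is $B(E)$-invariant, and as $B(E)$ acts transitively on $\Omega^0_{I'}(E)$ as well, it suffices to realize a single element of $\Omega^0_{I'}(E)$ as a limit of elements of $\Omega^0_I(E)$ for each $I' \leq I$, which the family $V_t$ accomplishes at a single generic value of $t$.
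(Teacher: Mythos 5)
Your proof is correct. The $\subseteq$ inclusion is essentially the same argument as in the paper: upper semicontinuity of $\dim(E(i)\cap V)$ under limits in the Grassmannian. The $\supseteq$ inclusion, however, takes a genuinely different route. The paper proceeds by induction on $a'$, the largest index at which $I'$ and $I$ disagree, deforming a \emph{single} basis vector $v'(a') \mapsto v'(a') + \varepsilon f'(I(a'))$ at each step and invoking \cref{lem:adapted basis} to verify the new position; the limit exhibits $V'$ as a limit of subspaces in position $\{I'(1),\dots,I'(a'-1),I(a'),\dots,I(r)\}$, which lies in $\Omega_I(E)$ by the inductive hypothesis. You instead deform all $r$ basis vectors simultaneously via the one-parameter family $V_t = \linspan\{f(I'(a)) + t\,f(I(a))\}$ and establish $\Pos(V_t,E) = I$ for $t \in \C\setminus\{0,-1\}$ directly, by checking that the relevant $\#A_i \times \#A_i$ submatrix is upper-triangular with diagonal entries $t$ or $1+t$. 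Your key observation — that $I'(a) = I(b)$ and $I'(a)\le I(a)$ force $b\le a$ — is exactly the combinatorics that makes the triangularity go through. This avoids the induction but requires the determinant computation; the paper's version avoids the determinant but requires the step-by-step argument. Both are about the same length; yours has the minor advantage that the full degeneration is exhibited in a single algebraic curve, which is conceptually cleaner. The closing remark about reducing to a single representative of each $\Omega^0_{I'}(E)$ via $B(E)$-invariance is a legitimate streamlining, though as written your main argument already fixes an arbitrary $V' \in \Omega^0_{I'}(E)$ and so does not actually need it.
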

\begin{proof}
  Recall that $\Omega_I(E)$ can be defined as the Euclidean closure of $\Omega^0_I(E)$.
  Thus let $(V_k)$ denote a convergent sequence of subspaces in $\Omega^0_I(E)$ with limit some $V \in \grass(r, W)$.
  Then $\dim E(I(a)) \cap V \geq \dim E(I(a)) \cap V_k$ for sufficiently large $k$, since intersections can only become larger in the limit, but $\dim E(I(a)) \cap V_k = a$ for all $k$.
  It follows that $\Pos(V, E)(a) \leq I(a)$.

  Conversely, suppose that $V' \in \Omega^0_{I'}(E)$, where $I'(a) \leq I(a)$ for all $a$.
  Let $a'$ denote the minimal integer such that $I'(a) = I(a)$ for $a=a'+1,\dots,r$.
  We will show that $V' \in \Omega_I(E)$ by induction on $a'$.
  If $a' = 0$ then $I' = I$ and there is nothing to show. 
  Otherwise, let $(f'(1),\dots,f'(n))$ denote an adapted basis for $E$ such that $v'(a) = f'(I'(a))$ is a basis of $V'$ (as in~\ref{item:adapted basis iii} of \cref{lem:adapted basis}).
  For each $\varepsilon>0$, consider the subspace $V_\varepsilon$ with basis vectors $v_\varepsilon(a) = v'(a)$ for all $a\neq a'$ together with $v_\varepsilon(a') := v'(a') + \varepsilon f'(I(a'))$.
  Then the space $V_\varepsilon$ is of dimension $r$ and in position $\{ I'(1),\dots,I'(a'-1),I(a'),\dots,I(r) \}$ with respect to $E$.
  By the induction hypothesis, $V_\varepsilon \in \Omega_I(E)$ for any $\varepsilon > 0$, and thus $V' \in \Omega_I(E)$ as $V_\varepsilon\to V'$ for $\varepsilon\to0$.
\end{proof}

We now compute the dimensions of Schubert cells and varieties.
This is straightforward from \cref{lem:adapted basis}, however it will be useful to make a slight detour and introduce some notation.
This will allow us to show that we can exactly parametrize $\Omega^0_I(E)$ by a unipotent subgroup of $B(E)$, which in particular shows that it is an affine space.

Choose an ordered basis $(f(1),\dots,f(n))$ that is adapted to $E$.
Then $V := \linspan \{ f(i) : i \in I \} \in \Omega^0_I(E)$.
By \Cref{lem:adapted basis},~\ref{item:adapted basis ii} any $V\in\Omega^0_I(E)$ is of this form.
Now define
\begin{align*}
  &\quad\; \Hom_E(V,W/V) := \{ \phi\in\Hom(V,W/V) : \phi(E(i)\cap V) \subseteq (E(i)+V)/V \text{~for~}i\in[n] \} \\
  &= \{ \phi\in\Hom(V,W/V) : \phi(f(I(a))) \subseteq \linspan \{ f(I^c(b)) + V : b \in [I(a)-a] \} \text{~for~}a\in[r] \}
\end{align*}
where the $f(j) + V$ for $j\in I^c$ form a basis of $W/V$.
In particular, $\Hom_E(V,W/V)$ is of dimension $\sum_{a=1}^r (I(a) - a)$.
Using this basis, we can identify $W/V$ with $Q := \linspan \{ f(j) : j\in I^c \}$.
Then $W = V \op Q$ and we can identify $\Hom_E(V,W/V)$ with
\[ H_E(V,Q) := \{ \phi\in\Hom(V,Q) : \phi(f(I(a))) \subseteq \linspan \{ f(I^c(b)) : b \in [I(a)-a] \} \text{~for~}a\in[r] \}. \]
\Cref{lem:adapted basis},~\ref{item:adapted basis ii} shows that for any $\phi\in H_E(V,Q)$, we obtain a distinct subspace $(\id+\phi)(V)$ in $\Omega^0_I(E)$, and that all subspaces in $\Omega^0_I(E)$ can obtained in this way.
Thus, $\Omega^0_I(E)$ is contained in the affine chart $\Hom(V,Q)$ of the Grassmannian described in~\eqref{eq:affine chart of grassmannian} and isomorphic to the linear subspace $H_E(V,Q)$ of dimension $\dim I$.
We define a corresponding unipotent subgroup,
\[ U_E(V,Q) := \{ u_\phi = \id + \phi = \begin{pmatrix}\id_V & 0 \\ \phi & \id_Q \end{pmatrix} \in \GL(W) : \phi\in H_E(V,Q) \}. \]
Thus we obtain the following lemma:

\begin{lemma}
\label{lem:dim schubert cell}
  Let $E$ be a flag on $W$, $\dim W = n$, $I \subseteq [n]$ a subset of cardinality $r$, $V\in\Omega^0_I(E)$, and $Q$ as above.
  Then we can parametrize $H_E(V,Q) \cong U_E(V,Q) \cong \Omega^0_I(E) = U_E(V,Q)V$, hence $H_E(V,Q)\cong T_V \Omega^0_I(E)$ and
\printdimschubertcell{equation}{\label{eq:dim schubert cell}}{\dim H_E(V,Q)=}
\end{lemma}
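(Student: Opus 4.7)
The plan is to establish the three claimed identifications in turn, read the dimension off the third, and then transfer to the closure by a standard fact about irreducible varieties.

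First, I would observe that $\phi \mapsto u_\phi = \id_W + \phi$ (with $\phi$ extended by zero on $Q$) is manifestly a bijection from the vector space $H_E(V,Q)$ onto the set $U_E(V,Q)$. Since $\phi\circ\phi' = 0$ for any two such maps (as $\phi$ lands in $Q$ and $\phi'$ vanishes on $Q$), one has $u_\phi u_{\phi'} = u_{\phi+\phi'}$, so $U_E(V,Q)$ is in fact a commutative unipotent subgroup of $\GL(W)$ and the bijection is even a group isomorphism onto the additive group underlying $H_E(V,Q)$.

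Next, to show $U_E(V,Q)\cdot V = \Omega^0_I(E)$ bijectively, I would apply \cref{lem:adapted basis}. By part~\ref{item:adapted basis iii}, fix a basis $(f(1),\dots,f(n))$ adapted to $E$ with $V = \linspan\{f(I(a)) : a\in[r]\}$. For any $\phi\in H_E(V,Q)$, the vectors $(\id+\phi)f(I(a)) = f(I(a)) + \phi(f(I(a)))$ satisfy the shape condition of part~\ref{item:adapted basis ii} directly from the definition of $H_E(V,Q)$, so $(\id+\phi)V\in\Omega^0_I(E)$. Conversely, any $V'\in\Omega^0_I(E)$ has, by the uniqueness in part~\ref{item:adapted basis ii}, a unique basis of the prescribed form, and reading off that basis yields a unique $\phi\in H_E(V,Q)$ with $(\id+\phi)V = V'$. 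These two constructions are mutually inverse, which gives the asserted identification $U_E(V,Q)\cong\Omega^0_I(E) = U_E(V,Q)V$.

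Under the affine chart~\eqref{eq:affine chart of grassmannian} centered at $V$, which identifies a neighborhood of $V$ in $\grass(r,W)$ with $\Hom(V,Q)$, the previous paragraph shows that $\Omega^0_I(E)$ corresponds precisely to the linear subspace $H_E(V,Q)$. The identifications $H_E(V,Q) \cong T_V \Omega^0_I(E)$ and $\dim\Omega^0_I(E) = \dim H_E(V,Q)$ are then immediate, and counting free parameters in $\phi(f(I(a)))\in\linspan\{f(I^c(b)) : b\in[I(a)-a]\}$ gives $\sum_{a=1}^r (I(a)-a)$. Since $\Omega_I(E)$ is by definition the closure of the irreducible quasi-affine space $\Omega^0_I(E)$, its dimension agrees with that of $\Omega^0_I(E)$.

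The main obstacle, modest as it is, lies in the converse direction of the second step: one must verify that the $\phi$ extracted from the unique adapted basis of $V'\in\Omega^0_I(E)$ really lies in $H_E(V,Q)$ rather than in the larger space $\Hom(V,Q)$. This amounts to matching the constraint \emph{$i\in I^c$, $i<I(a)$} coming from \cref{lem:adapted basis},~\ref{item:adapted basis ii} with the indexing \emph{$b\in[I(a)-a]$} appearing in the definition of $H_E(V,Q)$; once one notices that the number of elements of $I^c$ strictly less than $I(a)$ is exactly $I(a)-a$, this is a purely combinatorial bookkeeping check.
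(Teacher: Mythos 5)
Your proposal is correct and follows essentially the same route as the paper: both arguments use \cref{lem:adapted basis},~\ref{item:adapted basis ii} to exhibit $\phi\mapsto(\id+\phi)(V)$ as a bijection from $H_E(V,Q)$ onto $\Omega^0_I(E)$, then read the dimension and tangent space off the linear subspace inside the affine chart~\eqref{eq:affine chart of grassmannian}. Your extra remark that $U_E(V,Q)$ is a commutative unipotent subgroup and your explicit bookkeeping check that $\#\{i\in I^c : i<I(a)\}=I(a)-a$ are both accurate and merely spell out details the paper leaves implicit.
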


It will be useful to rephrase the above to obtain a parametrization of $\Omega^0_I(E)$ in terms of the fixed subspaces
\begin{equation}\label{eq:V_0 and Q_0}
\begin{aligned}
  V_0 &:= \linspan \{ f(1), \dots, f(r) \} = E(r), \\
  Q_0 &:= \linspan \{ \bar f(1), \dots, \bar f(n-r) \},
\end{aligned}
\end{equation}
where the $\bar f(i) := f(r+i)$ for $i\in[n-r]$ form a basis of $Q_0$.
Then $W = V_0 \op Q_0$.

\begin{definition}
\label{def:shuffle permutation}
  Let $I \subseteq [n]$ be a subset of cardinality $r$.
  The \emph{shuffle permutation} $\sigma_I \in S_n$ is defined by
  \[
    \sigma_I(a) = \begin{cases}
     I(a) & \text{for $a=1,\dots,r$}, \\
     I^c(a-r) & \text{for $a=r+1,\dots,n$}.
  \end{cases}
  \]
  and $w_I \in \GL(W)$ is the corresponding permutation operator with respect to the adapted basis $(f(1),\dots,f(n))$, defined as $w_I \, f(i) := f(\sigma_I^{-1}(i))$ for $i\in[n]$.
\end{definition}

Then $V_0 = w_I V$, where $V = \linspan \{ f(i) : i\in I \} \in \Omega^0_I(E)$ as before, and so
\begin{equation*}
  V_0 \in w_I \Omega^0_I(E) = \Omega^0_I(w_I E)
\end{equation*}
using~\eqref{eq:schubert cell equivariance}.
The translated Schubert cell can be parametrized by
\begin{align*}
  H_{w_I E}(V_0, Q_0) = \{ \phi\in\Hom(V_0,Q_0) : \phi(f(a)) \subseteq \linspan \{ \bar f(1), \dots, \bar f(I(a) - a)\} \text{~for~} a\in[r]\},
\end{align*}
where we identify $Q_0\cong W/V_0$.
We thus obtain the following consequence of \cref{lem:dim schubert cell}:

\begin{corollary}
\label{cor:schubert cell parametrization}
  Let $E$ be a flag on $W$, $\dim W = n$, $I \subseteq [n]$ of cardinality $r$, and $V\in\Omega^0_I(E)$.
  Moreover, define $w_I$ as above for an adapted basis.
  Then,
  \[ \Omega^0_I(E) = w_I^{-1} \Omega^0_I(w_I E) = w_I^{-1} U_{w_I E}(V_0, Q_0) V_0. \]
\end{corollary}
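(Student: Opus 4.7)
The plan is to combine two ingredients that were already essentially established in the preceding discussion, so the argument should be very short.

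First, I would derive the left equality $\Omega^0_I(E) = w_I^{-1} \Omega^0_I(w_I E)$ as a direct instance of the $\GL(W)$-equivariance property~\eqref{eq:schubert cell equivariance}. Setting $\gamma = w_I$ gives $w_I \Omega^0_I(E) = \Omega^0_I(w_I E)$, and left-multiplying by $w_I^{-1}$ yields the desired identity.

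Second, I would obtain the right equality $\Omega^0_I(w_I E) = U_{w_I E}(V_0,Q_0)V_0$ by applying \cref{lem:dim schubert cell} to the flag $w_I E$, with the roles of $V$ and $Q$ played by the fixed subspaces $V_0$ and $Q_0$ introduced in~\eqref{eq:V_0 and Q_0}. The only point requiring verification is that $V_0$ does in fact lie in $\Omega^0_I(w_I E)$, so that \cref{lem:dim schubert cell} is applicable. This was already observed just before the corollary: starting from $V = \linspan\{f(i):i\in I\} \in \Omega^0_I(E)$ and using $w_I f(i) = f(\sigma_I^{-1}(i))$ together with $\sigma_I^{-1}(I(a)) = a$ for $a\in[r]$, one checks that $w_I V = V_0$, and then by equivariance $V_0 = w_I V \in w_I \Omega^0_I(E) = \Omega^0_I(w_I E)$.

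Chaining the two equalities then gives the statement. There is no real obstacle to speak of; the substantive content has been packaged into~\eqref{eq:schubert cell equivariance} and \cref{lem:dim schubert cell}, and the corollary is merely the specialization of that packaging to the particularly convenient standard subspaces $V_0$ and $Q_0$, which puts every Schubert cell into the normal form of a unipotent orbit through a fixed basepoint.
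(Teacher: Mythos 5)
Your argument is correct and is exactly the paper's route: the left equality is the equivariance identity~\eqref{eq:schubert cell equivariance} specialized to $\gamma = w_I$, and the right equality is \cref{lem:dim schubert cell} applied to the flag $w_I E$ (for which an adapted basis is $(w_I f(1),\dots,w_I f(n))$, so the associated subspaces are precisely $V_0$ and $Q_0$). The only hypothesis needing checking, $V_0 \in \Omega^0_I(w_I E)$, is verified just as the paper does, via $w_I V = V_0$.
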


\begin{example*}[$r=3$,$n=4$]
  Let $I=\{1,3,4\}$ and $E_0$ the standard flag on $W=\C^4$, with its adapted basis $(e(1),\dots,e(4))$.
  Then $\sigma_I = \bigl(\begin{smallmatrix}1 & 2 & 3 & 4 \\ 1 & 3 & 4 & 2\end{smallmatrix}\bigr)$,
  \begin{align*}
    w_I^{-1} &= \begin{pmatrix}1 & 0 & 0 & 0 \\ 0 & 0 & 0 & 1 \\ 0 & 1 & 0 & 0 \\ 0 & 0 & 1 & 0\end{pmatrix} \\
  \end{align*}
  and $V = w_I^{-1} V_0 = \linspan \{ e(1), e(3), e(4) \}$ is indeed in position $I$ with respect to $E_0$, in agreement with the preceding discussion.
  Moreover,
  \begin{align*}
    H_{w_I E_0}(V_0,Q_0) &= \{ \begin{pmatrix}
      0 & * & *
    \end{pmatrix} \} \subseteq \Hom(\C^3, \C^1), \\
    U_{w_I E_0}(V_0,Q_0) &= \{ \begin{pmatrix}
        1 & 0 & 0 & 0 \\
        0 & 1 & 0 & 0 \\
        0 & 0 & 1 & 0 \\
        0 & * & * & 1
    \end{pmatrix} \} \subseteq \GL(4),
  \end{align*}
  and so \cref{cor:schubert cell parametrization} asserts that
  \[
     \Omega^0_I(E_0)
     = w_I^{-1} U_{w_I E_0}(V_0,Q_0) \linspan \{ e(1), e(2), e(3) \}
     = \linspan \{
       \begin{pmatrix}1 \\ 0 \\ 0 \\ 0\end{pmatrix},
       \begin{pmatrix}0 \\ * \\ 1 \\ 0\end{pmatrix},
       \begin{pmatrix}0 \\ * \\ 0 \\ 1\end{pmatrix}
      \},
  \]
  which agrees with \cref{lem:adapted basis}.
\end{example*}


\subsection{Induced flags and positions}

The space $\Hom_E(V,W/V)$ can be understood more conceptually as the space of homomorphisms that respect the filtrations $E(i) \cap V$ and $(E(i) + V)/V$ induced by the flag $E$.
Here we have used the following concept:

\begin{definition}
  A \emph{(complete) filtration} $F$ on a vector space $V$ is a chain of subspaces
  \[ \{0\} = F(0) \subseteq F(1) \subseteq \cdots \subseteq F(i) \subseteq F({i+1}) \subseteq \dots \subseteq F(l) = V, \]
  such that the dimensions increase by no more than one, i.e., $\dim F(i+1)\leq\dim F(i)+1$ for all $i=0,\dots,l-1$.
  Thus distinct subspaces in a filtration determine a flag.
\end{definition}

Given a flag $E$ on $W$ and a subspace $V\subseteq W$, we thus obtain an induced flag $E^V$ on $V$ from the distinct subspaces in the sequence $E(i) \cap V$, $i=0,\dots,n$.
We may also induce a flag $E_{W/V}$ on the quotient $W/V$ from the distinct subspaces in the sequence $(E(i) + V)/V$.
These flags can be readily computed from the Schubert position of $V$:

\begin{lemma}
\label{lem:induced flags}
  Let $E$ be a flag on $W$, $\dim W = n$, and $V \subseteq W$ an $r$-dimensional subspace in position $I = \Pos(V, E)$.
  Then the induced flags $E^V$ on $V$ and $E_{W/V}$ on $W/V$ are given by
  \begin{align*}
    E^V(a) &= E(I(a)) \cap V, \\
    E_{W/V}(b) &= (E(I^c(b)) + V)/V
  \end{align*}
  for $a\in[r]$ and $b\in[n-r]$, where $I^c$ denotes the complement of $I$ in $[n]$.
\end{lemma}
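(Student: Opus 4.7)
The plan is straightforward bookkeeping based on how dimensions behave along the filtration $E(i)\cap V$ and how Schubert positions are defined. The main input is the observation that for any flag~$E$ the quotients $E(i+1)/E(i)$ are one-dimensional, so that intersecting with the fixed subspace~$V$ gives $\dim(E(i+1)\cap V)\leq\dim(E(i)\cap V)+1$. Consequently the sequence $\dim(E(i)\cap V)$ is non-decreasing with jumps of size at most one. Since it starts at~$0$ (when $i=0$) and ends at~$r$ (when $i=n$), the jumps occur at exactly $r$ values of~$i$.

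First I would identify these jump indices. By \cref{def:schubert pos}, $I(a)=\min\{j : \dim E(j)\cap V = a\}$, so $\dim(E(I(a))\cap V)=a$ while $\dim(E(I(a)-1)\cap V)<a$, hence necessarily $\dim(E(I(a)-1)\cap V)=a-1$. Thus the $r$ jump indices are precisely $I(1),\dots,I(r)$, and the distinct subspaces appearing in the filtration $(E(i)\cap V)_{i=0}^n$ are
\[ \{0\} = E(I(0))\cap V \subsetneq E(I(1))\cap V \subsetneq \cdots \subsetneq E(I(r))\cap V = V, \]
with the convention $I(0)=0$. By definition of the induced flag $E^V$ on $V$, this yields $E^V(a)=E(I(a))\cap V$ for $a\in[r]$.

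For the induced flag on the quotient, I would use the identity
\[ \dim\bigl((E(i)+V)/V\bigr) = \dim(E(i)+V)-\dim V = \dim E(i) + \dim V - \dim(E(i)\cap V) - \dim V = i-\dim(E(i)\cap V). \]
Comparing $i$ to $i+1$, this dimension goes up by exactly one when $\dim(E(i)\cap V)$ does \emph{not} increase and stays constant otherwise. By the previous paragraph, the non-jump indices for $E(i)\cap V$ are exactly the elements of $I^c$. Therefore the $n-r$ jump indices for $(E(i)+V)/V$ are precisely $I^c(1)<\dots<I^c(n-r)$, and the distinct subspaces in $((E(i)+V)/V)_{i=0}^n$ form the chain
\[ \{0\}\subsetneq (E(I^c(1))+V)/V\subsetneq\cdots\subsetneq (E(I^c(n-r))+V)/V = W/V, \]
giving $E_{W/V}(b)=(E(I^c(b))+V)/V$ for $b\in[n-r]$, as claimed.

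There is no real obstacle here; the argument is purely combinatorial once one notes that $\dim(E(i)\cap V)$ increments by at most one as $i$ advances. If desired, one can render the counting even more transparent by fixing an ordered basis $(f(1),\dots,f(n))$ adapted to $E$ and chosen as in \cref{lem:adapted basis}\,\ref{item:adapted basis iii}, so that $\{f(I(1)),\dots,f(I(r))\}$ is a basis of~$V$; then $E(I(a))\cap V=\linspan\{f(I(1)),\dots,f(I(a))\}$ directly, and the images $\{f(I^c(b))+V\}$ form a basis of $W/V$ realising the claimed quotient flag.
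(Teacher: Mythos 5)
Your proof is correct and is essentially the same counting argument as the paper's: both identify the indices where $\dim(E(i)\cap V)$ jumps as exactly the elements of $I$, and deduce that $\dim\bigl((E(i)+V)/V\bigr)$ jumps exactly at $I^c$. The only difference is cosmetic — the paper obtains $\dim(E(i)\cap V)=\lvert[i]\cap I\rvert$ directly from the adapted basis of \cref{lem:adapted basis}~\ref{item:adapted basis iii}, while you derive the jump pattern abstractly from the definition of Schubert position and the fact that intersecting a flag with a fixed subspace increases dimension by at most one per step, invoking the adapted basis only as an optional clarification at the end.
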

\begin{proof}
  Using an adapted basis as in \cref{lem:adapted basis},~\ref{item:adapted basis iii}, it is easy to see that $\dim E(i) \cap V = \lvert [i]\cap I \rvert$ and therefore that $\dim (E(i) + V)/V = \lvert [i]\cap I^c \rvert$.
  Now observe that $\lvert [i]\cap I\rvert = a$ if and only if $I(a) \leq i < I(a+1)$, while $\lvert [i]\cap I^c \rvert = b$ if and only if $I^c(b) \leq i < I^c(b+1)$.
  Thus we obtain the two assertions.
\end{proof}

We can use the preceding result to describe $\Hom_E(V,W/V)$ in terms of flags rather than filtrations and without any reference to the ambient space $W$.

\begin{definition}\label{def:H_I for flags}
  Let $V$ and $Q$ be vector spaces of dimension $r$ and $n-r$, respectively, $I \subseteq[n]$ a subset of cardinality $r$, $F$ a flag on $V$ and $G$ a flag on $Q$.
  We define
  \[ H_I(F,G) := \{\phi\in\Hom(V,Q) : \phi(F(a)) \subseteq G(I(a)-a) \}, \]
  which we note is well-defined by
  \begin{equation}
  \label{eq:nondecreasing}
    0 \leq I(a) - a \leq I(a+1) - (a+1) \leq n-r
    \qquad (a=1,\dots,r-1).
  \end{equation}
\end{definition}
It now easily follows from \cref{lem:dim schubert cell,lem:induced flags} that  
\begin{equation}
\label{eq:tangent space schubert cell}
  T_V \Omega^0_I(E) \cong \Hom_E(V, W/V) = H_I(E^V, E_{W/V}).
\end{equation}
As a consequence: 
\begin{equation}\label{eq:H_wIE}
  H_{w_I E}(V_0, Q_0) = H_I((w_I E)^{V_0}, (w_I E)_{Q_0}) = H_I(E^{V_0}, E_{Q_0})
\end{equation}
We record the following equivariance property:
\begin{lemma}
\label{lem:H_I borel invariance}
  Let $F$ be a flag on $V$ and $G$ a flag on $Q$.
  If $\phi\in H_I(F,G)$, $a\in GL(V)$ and $d\in GL(Q)$, then $d\phi a^{-1}\in H_I(aF,dG)$.
  In particular, $H_I(F, G)$ is stable under right multiplication by the Borel subgroup $B(F)$ and left multiplication by the Borel subgroup $B(G)$.
\end{lemma}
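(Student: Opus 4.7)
The plan is a direct verification from the definition. First, I would unwind the notation: the action of $GL(V)$ on flags on $V$ is the componentwise action $(aF)(i) = a(F(i))$, and similarly for $GL(Q)$ on flags on $Q$. Under this convention, $B(F) = \{ a \in GL(V) : aF = F \}$ and $B(G) = \{ d \in GL(Q) : dG = G \}$ are precisely the stabilizers.

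Second, for arbitrary $\phi \in H_I(F,G)$, $a \in GL(V)$, $d \in GL(Q)$, and each $i \in [r]$, I would compute
\[
  (d \phi a^{-1})\bigl((aF)(i)\bigr) = (d \phi a^{-1})(a F(i)) = d\bigl(\phi(F(i))\bigr) \subseteq d\bigl(G(I(i)-i)\bigr) = (dG)(I(i)-i),
\]
where the middle inclusion is exactly the defining hypothesis $\phi \in H_I(F,G)$ from \cref{def:H_I for flags}. This is the defining condition for $d \phi a^{-1}$ to lie in $H_I(aF, dG)$, so the first claim is immediate.

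Third, the ``in particular'' clause follows by specialization. Taking $d = \id_Q$ and $a \in B(F)$, we have $aF = F$, so $\phi a^{-1} \in H_I(F, G)$, which is stability under right multiplication by $B(F)$. Symmetrically, taking $a = \id_V$ and $d \in B(G)$ gives $dG = G$ and hence $d\phi \in H_I(F, G)$, which is stability under left multiplication by $B(G)$.

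I do not anticipate any real obstacle: the lemma simply expresses the naturality of the defining condition of $H_I$ under changes of flag, and the proof is a one-line substitution once the actions are spelled out. The only point worth being careful about is the order of composition (using $a^{-1}$ on the right rather than $a$), which is dictated by the contravariant way a linear map transforms under a change of basis of its source.
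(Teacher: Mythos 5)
The paper states this lemma without proof, leaving it as an immediate consequence of the definition of $H_I(F,G)$. Your direct verification is correct and is precisely the argument one would expect: unwind the definition of $H_I$, substitute, and use that $B(F)$ and $B(G)$ are the flag stabilizers (and closed under inversion) for the ``in particular'' clause. Using a fresh index $i$ also avoids the clash with the group element $a$, which is a sensible precaution.
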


We now compute the position of subspaces and subquotients with respect to induced flags.
Given subsets $I \subseteq [n]$ of cardinality $r$ and $J \subseteq [r]$ of cardinality $d$, we recall that we had defined their \emph{composition} $IJ$ in \cref{sec:easy} as the subset
\[ IJ = \bigl\{ I(J(1)) < \dots < I(J(d)) \bigr\} \subseteq [n]. \]
We also define their \emph{quotient} to be the subset
\[ I/J = \bigl\{ I(J^c(b)) - J^c(b) + b \;:\; b \in [r-d] \bigr\} \subseteq [n-d], \]
where $J^c$ denotes the complement of $J$ in $[r]$.
It follows from~\eqref{eq:nondecreasing} that $I/J$ is indeed a subset of $[n-d]$.


The following lemma establishes the `chain rule' for positions:

\begin{lemma}
\label{lem:chain rule}
  Let $E$ be a flag on $W$, $S \subseteq V \subseteq W$ subspaces, and $I = \Pos(V, E)$, $J = \Pos(S, E^V)$ their relative positions.
  Then there exists an adapted basis $(f(1),\dots,f(n))$ for $E$ such that $\{f(I(a))\}$ is a basis of $V$ and $\{f(IJ(b))\}$ a basis of $S$.
  In particular,
\printchainrulepos{equation}{\label{eq:chain rule pos}}{IJ=}
\end{lemma}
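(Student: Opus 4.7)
The plan is to produce the required adapted basis by a two-step process: first pick one adapted to $E$ that highlights $V$, then modify it inside $V$ so that it also highlights $S$, without destroying the adaptedness to $E$. The claim $\Pos(S,E)=IJ$ then drops out immediately from \cref{lem:adapted basis}~\ref{item:adapted basis iii}.

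First, since $\Pos(V,E)=I$, \cref{lem:adapted basis}~\ref{item:adapted basis iii} gives an ordered basis $(f(1),\dots,f(n))$ adapted to $E$ such that $\{f(I(a)) : a\in[r]\}$ is a basis of $V$. I next observe that $(f(I(1)),\dots,f(I(r)))$ is adapted to the induced flag $E^V$ on $V$: by \cref{lem:induced flags} we have $E^V(a)=E(I(a))\cap V$, and the $a$ vectors $f(I(1)),\dots,f(I(a))$ all lie in $E(I(a))\cap V$, which has dimension $a$. Applying \cref{lem:adapted basis}~\ref{item:adapted basis iii} once more, this time to $S\subseteq V$ with the flag $E^V$ and $\Pos(S,E^V)=J$, I obtain an ordered basis $(g(1),\dots,g(r))$ of $V$ adapted to $E^V$ such that $\{g(J(b)) : b\in[d]\}$ is a basis of $S$.

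Expanding $g(a)\in E^V(a)=\linspan\{f(I(1)),\dots,f(I(a))\}$ gives
\[ g(a)=\sum_{a'\le a} c_{a,a'}\, f(I(a')), \qquad c_{a,a}\ne 0, \]
the non-vanishing of the diagonal coefficient following from the fact that $(g(1),\dots,g(a))$ is a basis of $E^V(a)$ and not merely contained in $E^V(a-1)$. Now define
\[ f'(i) := \begin{cases} g(a) & \text{if } i = I(a), \\ f(i) & \text{if } i \notin I. \end{cases} \]
The change-of-basis matrix from $(f(1),\dots,f(n))$ to $(f'(1),\dots,f'(n))$ is upper triangular in the natural ordering of $[n]$: the column corresponding to index $i\notin I$ is $e(i)$, and the column corresponding to $i=I(a)$ has support in the rows $I(1)<\dots<I(a)=i$, with diagonal entry $c_{a,a}\ne 0$. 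Hence it is invertible, and each $f'(j)$ lies in $E(j)$ by construction, so counting dimensions shows $(f'(1),\dots,f'(j))$ is a basis of $E(j)$ for every $j$. Thus $(f'(1),\dots,f'(n))$ is adapted to $E$.

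By construction $\{f'(I(a))\}=\{g(a)\}$ is a basis of $V$, and $\{f'(IJ(b))\}=\{f'(I(J(b)))\}=\{g(J(b))\}$ is a basis of $S$. A final appeal to \cref{lem:adapted basis}~\ref{item:adapted basis iii}, applied to $S$ with the flag $E$ and the adapted basis $f'$, yields $\Pos(S,E)=IJ$. The only step that requires any care is verifying that the replacement $f\mapsto f'$ does not spoil adaptedness to $E$; this is the purpose of the upper-triangularity argument above.
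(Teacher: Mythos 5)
Your proof is correct and follows essentially the same route as the paper: pick an $E$-adapted basis that highlights $V$ via \cref{lem:adapted basis}~\ref{item:adapted basis iii}, observe that its $I$-indexed subcollection is adapted to $E^V$ (via \cref{lem:induced flags}), produce from it a new basis inside $V$ highlighting $S$, splice that back into the ambient basis, and apply \cref{lem:adapted basis}~\ref{item:adapted basis iii} once more. The small difference is that the paper invokes \cref{lem:adapted basis}~\ref{item:adapted basis ii} at the second step, obtaining vectors $v(b) \in f(IJ(b)) + \linspan\{f(I(a)) : a \in J^c,\ a < J(b)\}$, so that replacing \emph{only} the $d$ vectors $f(IJ(b))$ by the $v(b)$ manifestly preserves adaptedness to $E$ with no further argument; you instead invoke~\ref{item:adapted basis iii} again, obtain a full adapted basis $(g(1),\dots,g(r))$ for $E^V$, replace \emph{all} $r$ vectors $f(I(a))$ by $g(a)$, and then must verify adaptedness by the upper-triangularity-plus-dimension-count argument. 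Both are sound; the paper's choice of~\ref{item:adapted basis ii} makes the splice-preservation automatic, while yours is slightly more uniform but requires the extra verification, which you carry out correctly.
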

\begin{proof}
  According to \cref{lem:adapted basis},~\ref{item:adapted basis iii}, there exists an adapted basis $(f(1),\dots,f(n))$ for $E$ such that $(f(I(1)), \dots, f(I(r)))$ is a basis of $V$, where $r=\dim V$.
  By \cref{lem:induced flags}, this ordered basis is in fact adapted to the induced flag $E^V$.
  Thus we can apply \cref{lem:adapted basis},~\ref{item:adapted basis ii} to $E^V$ and the subspace $S \subseteq V$ to obtain a basis $(v(1),\dots,v(s))$ of $S$ of the form
  \[ v(b) \in f(IJ(b)) + \linspan \{ f(I(a)) : a \in J^c, a < J(b) \}. \]
  It follows that the ordered basis $(f'(1),\dots,f'(n))$ obtained from $(f(1),\dots,f(n))$ by replacing $f(IJ(b))$ with $v(b)$ has all desired properties.
  We now obtain the chain rule, $\Pos(S,E)=IJ$, as a consequence of \cref{lem:adapted basis},~\ref{item:adapted basis iii} applied to $f'$ and $S\subseteq W$.
\end{proof}

We can visualize the subsets $IJ, IJ^c \subseteq [n]$ and $I/J \subseteq [n-d]$ as follows.
Let $L$ denote the string of length $n$ defined by putting the symbol $s$ at the positions in $IJ$, $v$ at those in $I \setminus IJ = IJ^c$, and $w$ at all other positions.
This mirrors the situation in the preceding \cref{lem:chain rule}, where the adapted basis $(f(1),\dots,f(n))$ can be partitioned into three sets according to membership in $S$, $V \setminus S$, and $W \setminus V$.
Now let $L'$ denote the string of length $n-d$ obtained by deleting all occurrences of the symbol $s$.
Thus the remaining symbols are either $v$ or $w$, i.e., those that were at locations $(IJ)^c$ in $L$.
We observe that the $b$-th occurrence of $v$ in $L$ was at location $IJ^c(b)$, where it was preceded by $J^c(b) - b$ occurrences of $s$.
Thus the occurrences of $v$ in $L'$ are given precisely by the quotient position, $(I/J)(b) = IJ^c(b) - (J^c(b) - b)$.

\begin{example*}
  If $n=6$, $I = \{1,3,5,6\}$ and $J = \{2,4\}$, then $IJ = \{3,6\}$ and $L = (v,w,s,w,v,s)$.
  It follows that $L' = (v,w,w,v)$ and hence the symbols $v$ appear indeed at positions $I/J = \{1,4\}$.
\end{example*}

We thus obtain the following recipe for computing positions of subquotients:

\begin{lemma}
\label{lem:quotient rule}
  Let $E$ be a flag on $W$ and $S\subseteq V\subseteq W$ subspaces.
  Then,
  \[ \Pos(V/S, E_{W/S}) = \Pos(V, E) / \Pos(S, E^V). \]
\end{lemma}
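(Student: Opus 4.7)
The plan is to reduce everything to the adapted basis picture provided by the chain rule, and then to read off the position of $V/S$ directly from the combinatorics. Write $I = \Pos(V,E)$ and $J = \Pos(S,E^V)$.

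First, I would invoke \cref{lem:chain rule} to obtain an ordered basis $(f(1),\dots,f(n))$ adapted to $E$ such that $\{f(I(a)) : a\in[r]\}$ is a basis of $V$ and $\{f(IJ(b)) : b\in[d]\}$ is a basis of $S$. In particular, $\Pos(S,E) = IJ$.

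Second, I would describe $E_{W/S}$ and $V/S$ in terms of this basis. The image $f(i)+S$ in $W/S$ vanishes precisely when $i\in IJ$, and the remaining images are linearly independent. Combined with \cref{lem:induced flags}, this shows that $(f((IJ)^c(k))+S : k\in[n-d])$ is an ordered basis adapted to $E_{W/S}$. On the other hand, the images of the $f(I(a))$ for $a\in J^c$ span $V/S$, so that $V/S$ has the basis $\{f(IJ^c(b))+S : b\in[r-d]\}$, where $IJ^c(b) = I(J^c(b))$.

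Third, I would match these two descriptions. The point is to identify, for each $b\in[r-d]$, the index $K(b)\in[n-d]$ characterized by $(IJ)^c(K(b)) = IJ^c(b)$ --- in other words, the position of $I(J^c(b))$ in the increasing enumeration of $(IJ)^c$ --- and to verify that $K(b) = (I/J)(b)$. By definition, $K(b)$ equals the number of elements of $(IJ)^c$ no larger than $I(J^c(b))$, which is $I(J^c(b))$ minus the number of elements of $IJ$ no larger than $I(J^c(b))$. Since $I$ is strictly increasing, this latter count equals $\lvert\{a\in J : a\leq J^c(b)\}\rvert = J^c(b) - b$, so that $K(b) = I(J^c(b)) - J^c(b) + b = (I/J)(b)$. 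Applying \cref{lem:adapted basis}\,\ref{item:adapted basis iii} to $V/S\subseteq W/S$ with the adapted basis above then yields $\Pos(V/S, E_{W/S}) = I/J$.

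The only real obstacle is the combinatorial bookkeeping in the third step, but this is precisely the string visualization with symbols $s,v,w$ sketched immediately before the lemma: deleting the $s$-positions from $[n]$ and tracking what happens to the $v$-positions is exactly the computation of $K(b)$ above.
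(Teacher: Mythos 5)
Your proof is correct and follows essentially the same route as the paper: invoke \cref{lem:chain rule} to get a simultaneously adapted basis, identify $\{f(IJ^c(b))+S\}$ as a basis of $V/S$ and $(f((IJ)^c(k))+S)$ as an adapted basis of $E_{W/S}$ via \cref{lem:induced flags}, locate the former inside the latter, and conclude with \cref{lem:adapted basis}\,\ref{item:adapted basis iii}. The only difference is cosmetic: where the paper defers the index-shift computation to the $s,v,w$-string discussion preceding the lemma, you carry it out arithmetically — and correctly, since $|\{a\in J : a\leq J^c(b)\}| = J^c(b)-b$ gives exactly $K(b)=(I/J)(b)$.
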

\begin{proof}
  Let $I = \Pos(V, E)$ and $J = \Pos(S, E^V)$.
  According to \cref{lem:chain rule}, there exists an adapted basis $(f(1),\dots,f(n))$ of $E$ such that $\{f(I(a))\}$ is a basis of $V$ and $\{f(IJ(b))\}$ a basis of $S$.
  This shows not only that $\{f(IJ^c(b))\}$ is a basis of $V/S$, but also, by \cref{lem:induced flags}, that $(f((IJ)^c(b)))$ is an adapted basis for $E_{W/S}$.
  Clearly, $IJ^c \subseteq (IJ)^c$, and the preceding discussion showed that the location of the $IJ^c$ in $(IJ)^c$ is exactly equal to the quotient position $I/J$.
  Thus we conclude from \cref{lem:adapted basis},~\ref{item:adapted basis iii} that $\Pos(V/S, E_{W/S}) = I/J$.
\end{proof}

One last consequence of the preceding discussion is the following lemma:

\begin{lemma}
\label{lem:IJ^c}
  Let $E$ be a flag on $W$, $\dim W = n$, $S \subseteq V \subseteq W$ subspaces, and $I = \Pos(V, E)$, $J = \Pos(S, E^V)$.
  Then $F(i) := \bigl((E(i) \cap V) + S\bigr)/S$ is a filtration on $V/S$, and
  \[ IJ^c(b) = \min \{ i \in [n] : \dim F(i) = b \} \]
  for $b=1,\dots,\dim V/S$.
\end{lemma}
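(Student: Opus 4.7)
The plan is to apply the adapted basis guaranteed by \cref{lem:chain rule} and read off everything by inspection. Let $(f(1),\dots,f(n))$ be an ordered basis adapted to $E$ such that $\{f(I(a)) : a\in[r]\}$ is a basis of $V$ and $\{f(IJ(b)) : b\in[\dim S]\}$ is a basis of $S$. Then for each $i\in\{0,\dots,n\}$,
\[ E(i)\cap V = \linspan\{f(I(a)) : I(a)\leq i\}, \]
so
\[ \bigl(E(i)\cap V\bigr) + S = \linspan\{f(I(a)) : I(a)\leq i\} + \linspan\{f(IJ(b)) : b\in[\dim S]\}. \]

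Now pass to $V/S$. The images of the vectors $f(I(a))$ with $a\in J$ vanish modulo $S$, while the images of the $f(I(a))$ with $a\in J^c$ form a basis of $V/S$. Hence
\[ F(i) = \linspan\bigl\{\, f(I(a)) + S \;:\; a\in J^c,\; I(a)\leq i \,\bigr\}, \]
and these generators are linearly independent in $V/S$. Therefore
\[ \dim F(i) = \bigl\lvert \{a\in J^c : I(a)\leq i\} \bigr\rvert. \]

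As $i$ increases by one, this cardinality grows by at most one (it grows by one precisely when $i\in\{I(a) : a\in J^c\}$), so $F$ is a filtration on $V/S$. The jumps occur at the values $I(J^c(1))<I(J^c(2))<\dots$, i.e., at the elements of $IJ^c$ in increasing order. Consequently the minimal $i$ with $\dim F(i)=b$ is the $b$-th such value, which by definition of the composition is $IJ^c(b)$. This is the claimed formula. There is no real obstacle here; the only thing to be careful about is making sure the generators of $F(i)$ are genuinely independent modulo $S$, which is immediate since the $f(I(a))+S$ for $a\in J^c$ form a basis of $V/S$.
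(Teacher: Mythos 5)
Your proof is correct and follows essentially the same route as the paper: take the adapted basis from Lemma \ref{lem:chain rule}, observe that the images $f(I(a))+S$ for $a\in J^c$ form a basis of $V/S$, and read off $F(i)$ and its dimension directly. The paper simply states the end result $F(i)=\linspan\{f(IJ^c(b)) : IJ^c(b)\leq i\}$ more tersely, whereas you fill in the intermediate computation of $E(i)\cap V$ and $(E(i)\cap V)+S$; the content is the same.
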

\begin{proof}
  As in the preceding proof, we use the adapted basis $(f(1),\dots,f(n))$ from \cref{lem:chain rule}.
  Then $\{f(IJ^c(b))\}$ is a basis of $V/S$ and $F(i) = \linspan \{ f(IJ^c(b)) : b\in[q], IJ^c(b) \leq i \}$, and this implies the claim.
\end{proof}

The following corollary uses \cref{lem:IJ^c} to compare filtrations for a space that is isomorphic to a subquotient in two different ways, $(S_1+S_2)/S_2 \cong S_1/(S_1\cap S_2)$.

\begin{corollary}
\label{cor:slope preliminary}
  Let $E$ be a flag on $W$, $\dim W = n$, and $S_1,S_2\subseteq W$ subspaces.
  Furthermore, let $J=\Pos(S_1,E)$, $K=\Pos(S_1\cap S_2, E^{S_1})$, $L=\Pos(S_1+S_2,E)$, and $M=\Pos(S_2, E^{S_1+S_2})$.
  Then both $JK^c$ and $LM^c$ are subsets of $[n]$ of cardinality $q := \dim S_1/(S_1\cap S_2) = \dim (S_1+S_2)/S_2$, and
  \[ JK^c(b) \leq LM^c(b) \]
  for $b\in[q]$.
\end{corollary}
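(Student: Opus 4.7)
The plan is to translate both $JK^c$ and $LM^c$ into the filtration min-characterization supplied by the preceding lemma (the $IJ^c$ lemma), then exploit the second isomorphism theorem $S_1/(S_1\cap S_2)\simeq (S_1+S_2)/S_2$ to place the two filtrations on the same (canonically identified) underlying vector space, and finally convert a pointwise dimensional comparison into the inequality on the minimum indices.

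Concretely, I would first apply the $IJ^c$ lemma to the nested pair $S_1\cap S_2\subseteq S_1\subseteq W$, with the outer position $I$ playing the role of $J=\Pos(S_1,E)$ and the inner position playing the role of $K=\Pos(S_1\cap S_2,E^{S_1})$. This identifies
\[ JK^c(b)=\min\bigl\{i\in[n] \,:\, \dim F_1(i)=b\bigr\},\quad F_1(i):=\bigl((E(i)\cap S_1)+(S_1\cap S_2)\bigr)/(S_1\cap S_2), \]
which is a filtration of $S_1/(S_1\cap S_2)$ of top dimension $q$. Then I would apply the same lemma to $S_2\subseteq S_1+S_2\subseteq W$ to get
\[ LM^c(b)=\min\bigl\{i\in[n] \,:\, \dim F_2(i)=b\bigr\},\quad F_2(i):=\bigl((E(i)\cap(S_1+S_2))+S_2\bigr)/S_2, \]
a filtration of $(S_1+S_2)/S_2$ of top dimension $q$. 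In particular this already settles that both $JK^c$ and $LM^c$ are subsets of $[n]$ of cardinality $q$.

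The second isomorphism theorem furnishes a canonical identification $\alpha\colon S_1/(S_1\cap S_2)\xrightarrow{\sim}(S_1+S_2)/S_2$, $[x]\mapsto[x]$, under which the two filtrations live on a common space. The heart of the argument is then a comparison between $\alpha(F_1(i))$ and $F_2(i)$ induced by the subspace inclusions $E(i)\cap S_1\subseteq E(i)\cap(S_1+S_2)$ and $S_1\cap S_2\subseteq S_2$. A small diagram chase (or the snake lemma applied to the natural map of short exact sequences $0\to S_1\cap S_2\to S_1\to S_1/(S_1\cap S_2)\to 0$ versus $0\to S_2\to S_1+S_2\to (S_1+S_2)/S_2\to 0$, after intersecting with $E(i)$) relates the two dimensions.

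The main obstacle is correctly tracking the direction of the inequality produced by this comparison so that it matches the asserted inequality $JK^c(b)\leq LM^c(b)$. A direct use of the inclusion $\alpha(F_1(i))\subseteq F_2(i)$ only gives $\dim F_1(i)\leq \dim F_2(i)$, from which the monotone min-characterizations read off the \emph{opposite} direction; to land on the claimed inequality one must either instead verify the reverse inclusion on the nose (e.g.\ by exhibiting, at each $i$, a preimage in $F_1(i)$ for every class in $F_2(i)$ using that $\alpha$ is an isomorphism and that the obstruction to lifting lies in $(E(i)\cap S_2)/(E(i)\cap S_1\cap S_2)$, which contributes to $M$ and not to $LM^c$), or else rephrase the comparison via the complementary/codimension filtrations $S_1/(S_1\cap S_2) \supseteq F_1(i)$ and $(S_1+S_2)/S_2\supseteq F_2(i)$ so that the inequality in the min-characterization flips into the stated direction. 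Once this directional subtlety is resolved, the inequality $JK^c(b)\leq LM^c(b)$ for each $b\in[q]$ falls out of the min-characterization at once.
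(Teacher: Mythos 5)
Your setup is exactly the paper's own proof: apply \cref{lem:IJ^c} to the pair $S_1\cap S_2\subseteq S_1\subseteq W$ and to $S_2\subseteq S_1+S_2\subseteq W$, identify $S_1/(S_1\cap S_2)\cong(S_1+S_2)/S_2$ by the second isomorphism theorem, and compare the two filtrations. The inclusion you obtain, $\alpha(F_1(i))=\bigl((E(i)\cap S_1)+S_2\bigr)/S_2\subseteq F_2(i)$, is the only true one, and the conclusion it yields, namely $JK^c(b)\geq LM^c(b)$, is in fact the intended statement: the paper's proof of this corollary ends with precisely that inequality, and it is this direction that is invoked later in the proof of \cref{lem:slope} (``$J_kK_k^c(b)\geq L_kM_k^c(b)$''). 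The ``$\leq$'' printed in the statement of \cref{cor:slope preliminary} is a misprint, not a directional subtlety you are obliged to overcome.

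Consequently the last paragraph of your proposal, where you try to force the stated ``$\leq$'', is the only place the argument would fail: the reverse inclusion $F_2(i)\subseteq\alpha(F_1(i))$ is false in general, and no rephrasing via complementary or codimension filtrations can rescue the inequality, because it is simply not true. For a counterexample take $W=\C^2$ with the standard flag $E$, $S_1=\C e(2)$, $S_2=\C(e(1)+e(2))$: then $J=\{2\}$ and $K=\emptyset$, so $JK^c=\{2\}$, while $L=\{1,2\}$ and $M=\{2\}$, so $LM^c=\{1\}$, giving $JK^c(1)=2>1=LM^c(1)$. So keep your first two paragraphs, observe that since both filtrations increase in steps of at most one and $\dim F_1(i)\leq\dim F_2(i)$ for all $i$, one gets $\min\{i:\dim F_1(i)=b\}\geq\min\{i:\dim F_2(i)=b\}$, and conclude $JK^c(b)\geq LM^c(b)$ for all $b\in[q]$; this is the inequality the paper actually proves and uses.
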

\begin{proof}
  Consider the filtration $F(j) := \bigl( (E(j)\cap S_1) + (S_1\cap S_2) \bigr)/(S_1\cap S_2)$ of $S_1/(S_1\cap S_2)$ and the filtration $F'(j) := \bigl( (E(j)\cap (S_1+S_2)) + S_2 \bigr)/S_2$ of $(S_1+S_2)/S_2$.
  If we identify $S_1/(S_1\cap S_2) \cong (S_1+S_2)/S_2$, then $F(j)$ gets identified with the subspace $\bigl((E(j)\cap S_1) + S_2\bigr)/S_2$ of $F'(j)$.
  It follows that
  \[ JK^c(b) = \min \{ j\in[n]: \dim F(j) = b \} \geq \min \{ j\in[n]: \dim F'(j) = b \} = LM^c(b), \]
  where we have used \cref{lem:IJ^c} twice.
\end{proof}

We now compute the dimension of quotient positions:

\begin{lemma}
\label{lem:quotient dim}
  Let $I \subseteq [n]$ be a subset of cardinality $r$ and $J \subseteq [r]$ a subset of cardinality $d$.
  Then:
  \[ \dim I/J = \dim I + \dim J - \dim IJ \]
\end{lemma}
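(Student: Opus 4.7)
The plan is to prove this purely by unpacking the definitions of $\dim I$, $\dim J$, $\dim IJ$, and $\dim I/J$ given in \cref{eq:dim schubert cell} and \cref{sec:linalg}, and then reorganizing a single sum by partitioning its index set according to membership in $J$ versus $J^c$.

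First I would rewrite $\dim I/J$ explicitly. By the definition of the quotient position, the elements of $I/J \subseteq [n-d]$ in increasing order are $(I/J)(b) = I(J^c(b)) - J^c(b) + b$ for $b \in [r-d]$. Therefore
\begin{equation*}
\dim I/J = \sum_{b=1}^{r-d}\bigl((I/J)(b) - b\bigr) = \sum_{b=1}^{r-d}\bigl(I(J^c(b)) - J^c(b)\bigr).
\end{equation*}

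Next, I would split the defining sum for $\dim I$ according to whether $a \in J$ or $a \in J^c$. Since $J \sqcup J^c = [r]$, we have
\begin{equation*}
\dim I = \sum_{a=1}^r \bigl(I(a) - a\bigr) = \sum_{b=1}^d \bigl(I(J(b)) - J(b)\bigr) + \sum_{b=1}^{r-d}\bigl(I(J^c(b)) - J^c(b)\bigr),
\end{equation*}
and the second sum is exactly $\dim I/J$ by the preceding identity.

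Finally, I would identify the first sum with $\dim IJ - \dim J$. Indeed,
\begin{equation*}
\dim IJ - \dim J = \sum_{b=1}^d \bigl(I(J(b)) - b\bigr) - \sum_{b=1}^d \bigl(J(b) - b\bigr) = \sum_{b=1}^d \bigl(I(J(b)) - J(b)\bigr).
\end{equation*}
Combining, $\dim I = (\dim IJ - \dim J) + \dim I/J$, which rearranges to the claim. There is no real obstacle here; the only thing to be careful about is the bookkeeping in the definition of $(I/J)(b)$, making sure the shift by $-J^c(b) + b$ is the correct one so that the cancellation with the outer $-b$ in $\sum_b((I/J)(b)-b)$ leaves exactly $I(J^c(b)) - J^c(b)$.
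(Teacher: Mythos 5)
Your proof is correct and uses essentially the same computation as the paper: both unpack the definitions of $\dim$ for $I$, $J$, $IJ$, $I/J$ and split the sum over $[r]$ into the parts indexed by $J$ and $J^c$. The only difference is presentational — you start from $\dim I$ and split it, whereas the paper starts from $\dim I/J$ and expands — but the algebra is identical.
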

\begin{proof}
  Straight from the definition of dimension and quotient position,
  \begin{align*}
  &\quad \dim I/J
  = \sum_{b=1}^{r-d} I(J^c(b)) - \sum_{b=1}^{r-d} J^c(b) \\
  &= \bigl( \sum_{a=1}^r I(a) - \sum_{b=1}^d I(J(b)) \bigr) - \bigl( \sum_{a=1}^r a - \sum_{b=1}^d J(b) \bigr) \\
  &= \sum_{a=1}^r (I(a) - a) + \sum_{b=1}^d (J(b) - b) - \sum_{b=1}^d (I(J(b)) - b) \\
  &= \dim I + \dim J - \dim IJ. \qedhere
  \end{align*}
\end{proof}

Lastly, given subsets $I\subseteq[n]$ of cardinality $r$ and $J\subseteq[r]$ of cardinality $d$, we define
\[ I^J = \bigl\{ I(J(b)) - J(b) + b \;:\; b \in [d] \bigr\} \subseteq [n-(r-d)]. \]
Clearly, $I^J = I/J^c$, but we prefer to introduce a new notation to avoid confusion, since the role of $I^J$ will be quite different.
Indeed, $I^J$ is related to composition, as is indicated by the following lemmas:

\begin{lemma}
\label{lem:exp dim chain rule}
  Let $I\subseteq[n]$ be a subset of cardinality $r$, $J\subseteq[r]$ a subset of cardinality $d$.
  Then,
  \[ \dim I^J K - \dim K = \dim I(JK) - \dim JK \]
  for any subset $K\subseteq[d]$.
  In particular, $\dim I^J = \dim IJ - \dim J$.
\end{lemma}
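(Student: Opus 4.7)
The plan is to prove both claims by unpacking the definitions of $\dim$, composition, and the operation $I^J$, and matching terms. Throughout, for a subset $S\subseteq[N]$ of cardinality $m$, write $\dim S = \sum_{i=1}^m(S(i)-i)$, where $S(i)$ denotes the $i$th smallest element of $S$.

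First I would dispatch the ``in particular'' statement as a warm-up, because it already exhibits all the moving parts. By definition $I^J(b) = I(J(b)) - J(b) + b$ for $b\in[d]$, so
\[ \dim I^J = \sum_{b=1}^d \bigl(I^J(b)-b\bigr) = \sum_{b=1}^d \bigl(I(J(b)) - J(b)\bigr), \]
while $\dim IJ = \sum_{b=1}^d(I(J(b))-b)$ and $\dim J = \sum_{b=1}^d (J(b)-b)$. Subtracting gives $\dim IJ - \dim J = \sum_{b=1}^d(I(J(b))-J(b)) = \dim I^J$, as claimed.

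For the general identity, the key observation is a small indexing lemma: if $K\subseteq[d]$ has cardinality $e$, then the $c$th element of $I^JK$ (where $c\in[e]$) is
\[ (I^J K)(c) = I^J(K(c)) = I(J(K(c))) - J(K(c)) + K(c) = I((JK)(c)) - (JK)(c) + K(c), \]
using associativity of composition, $J(K(c))=(JK)(c)$. This is the one place where a sanity check is warranted, but it is immediate from the definitions. From here both sides of the identity become explicit sums. On the left,
\[ \dim I^JK - \dim K = \sum_{c=1}^e \bigl((I^JK)(c)-c\bigr) - \sum_{c=1}^e \bigl(K(c)-c\bigr) = \sum_{c=1}^e \bigl(I((JK)(c)) - (JK)(c)\bigr), \]
after cancelling the $K(c)$ and $c$ terms. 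On the right, $\dim I(JK) - \dim JK = \sum_{c=1}^e(I((JK)(c))-c) - \sum_{c=1}^e((JK)(c)-c)$, which telescopes to the same sum. Equality follows.

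There is no real obstacle here beyond careful bookkeeping: the only step that requires attention is confirming that $I^JK$ is indeed a strictly increasing sequence (so that ``the $c$th element'' is well-defined and given by the formula above), which follows from the fact that $I^J$ itself is increasing, since $I^J(b+1)-I^J(b) = (I(J(b+1))-I(J(b))) - (J(b+1)-J(b)) + 1 \geq 1$ by the monotonicity of $I$ restricted to the image of $J$ — this is the same non-decrease estimate used in \eqref{eq:nondecreasing}.
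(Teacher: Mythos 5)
Your proof is correct and takes essentially the same route as the paper's: both reduce $\dim I^JK - \dim K$ to the sum $\sum_c\bigl(I^J(K(c)) - K(c)\bigr)$, then use $I^J(b) = I(J(b)) - J(b) + b$ and associativity $J(K(c)) = (JK)(c)$ to rewrite it as $\sum_c\bigl(I((JK)(c)) - (JK)(c)\bigr) = \dim I(JK) - \dim JK$. The extra checks you include (proving the \emph{in particular} first, and verifying that $I^J$ is strictly increasing) are sound but not strictly needed, since the paper has already established that $I^J$ is a well-defined subset in the discussion surrounding \cref{eq:nondecreasing}.
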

\begin{proof}
  Let $m$ denote the cardinality of $K$. Then:
  \begin{align*}
  &\quad \dim I^J K - \dim K
  = \sum_{c=1}^m \bigl( I^J(K(c)) - K(c) \bigr) \\
  &= \sum_{c=1}^m \bigl( I(J(K(c)) - J(K(c)) \bigr)
  = \dim I(JK) - \dim JK. \qedhere
  \end{align*}
\end{proof}



\begin{lemma}
\label{lem:exp vs composition}
  Let $I\subseteq[n]$ be a subset of cardinality $r$, $\phi \in \Hom(V,Q)$, and $F$ a flag on $V$.
  Let $S=\ker\phi$ denote the kernel, $J:=\Pos(S,F)$ its position with respect to $F$, and $\bar\phi\in\Hom(V/S,Q)$ the corresponding injection.

  Then $\phi\in H_I(F, G)$ if and only if $\bar\phi\in H_{I/J}(F_{V/S}, G)$.
  In this case, we have for all $\psi\in H_J(F^S, F_{V/S})$ that $\bar\phi\psi\in H_{I^J}(F^S, G)$.
\end{lemma}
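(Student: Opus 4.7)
The plan is to unravel both statements using the explicit descriptions of the induced flags from \cref{lem:induced flags}: if $d:=\dim S$ and $J=\Pos(S,F)$, then $F^S(c)=F(J(c))\cap S$ for $c\in[d]$ and $F_{V/S}(b)=(F(J^c(b))+S)/S$ for $b\in[r-d]$. Since $\phi$ vanishes on $S$, it factors as $\phi(v)=\bar\phi(v+S)$, so $\phi(F(a))=\bar\phi\bigl((F(a)+S)/S\bigr)$ for every $a\in[r]$. A quick dimension count, using that $F(J(c))\cap S=F^S(c)$ has dimension $c$, shows
\[
  \bigl(F(J(c))+S\bigr)/S = F_{V/S}(J(c)-c),\qquad \bigl(F(J^c(b))+S\bigr)/S = F_{V/S}(b),
\]
since in the second case the intersection $F(J^c(b))\cap S$ has dimension $J^c(b)-b$.

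For the first equivalence I split the $r$ conditions defining $H_I(F,G)$ according to whether the index lies in $J^c$ or in $J$. The indices $a=J^c(b)$ give exactly the conditions defining $H_{I/J}(F_{V/S},G)$: by the identity above $\phi(F(J^c(b)))=\bar\phi(F_{V/S}(b))$, while by the definition of $I/J$ we have $I(J^c(b))-J^c(b)=(I/J)(b)-b$. For the remaining indices $a=J(c)$ I must verify that the condition $\phi(F(J(c)))\subseteq G(I(J(c))-J(c))$ is automatic once $\bar\phi\in H_{I/J}(F_{V/S},G)$. Setting $b:=J(c)-c$ and $m:=J^c(b)$, we have $m\le J(c)$, so by the identity $\phi(F(J(c)))=\bar\phi(F_{V/S}(b))\subseteq G((I/J)(b)-b)=G(I(m)-m)$, and the strictly increasing property of $I$ yields $I(J(c))-I(m)\ge J(c)-m$, hence $I(m)-m\le I(J(c))-J(c)$, closing the inclusion. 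The converse direction is immediate from the same identifications.

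For the second claim I feed the first part forward. Given $\psi\in H_J(F^S,F_{V/S})$, by definition $\psi(F^S(c))\subseteq F_{V/S}(J(c)-c)$ for $c\in[d]$. Applying the previously established inclusion $\bar\phi(F_{V/S}(J(c)-c))\subseteq G(I(J(c))-J(c))$, which is valid because $\bar\phi\in H_{I/J}(F_{V/S},G)$, gives
\[
  (\bar\phi\psi)(F^S(c)) \subseteq G(I(J(c))-J(c)) = G\bigl(I^J(c)-c\bigr),
\]
which is exactly the defining condition for $\bar\phi\psi\in H_{I^J}(F^S,G)$, using $I^J(c)=I(J(c))-J(c)+c$.

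The only potentially delicate point is the bookkeeping around the index shift $J(c)\mapsto J(c)-c$ and the comparison $I(m)-m\le I(J(c))-J(c)$; everything else is a direct translation through \cref{lem:induced flags} and the kernel factorization. I expect the proof to be essentially a careful indexing exercise once the two identifications of the induced subspaces are in place.
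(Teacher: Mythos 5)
Your proof is correct and follows essentially the same route as the paper's: both rest on the induced-flag identities from \cref{lem:induced flags} and on the monotonicity of $a\mapsto I(a)-a$ to dispose of the conditions at indices $a\in J$, and both derive the second claim by composing inclusions through the subquotient. The only slight wrinkle, unaddressed in your write-up, is the degenerate case $J(c)=c$ (so $b=0$); but there $F(J(c))\subseteq S$ and $F_{V/S}(0)=\{0\}$, so all the required inclusions hold vacuously.
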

\begin{proof}
  For the first claim, note that if $\phi\in H_I(F,G)$ then
  \[ \bar\phi(F_{V/S}(b)) = \phi(F(J^c(b))) \subseteq G(I(J^c(b)) - J^c(b)) = G((I/J)(b) - b). \]
  Conversely, if $\bar\phi\in H_{I/J}(F_{V/S}, G)$, then this shows that
  \[ \phi(F(a)) \subseteq G(I(a) - a) \]
  for all $a = J^c(b)$, and hence for all $a$, since $\phi(F(J^c(b))) = \dots = \phi(F(J^c(b+1)-1))$.

  For the second, we use $H_J(F^S, F_{V/S}) = \Hom_F(S, V/S)$ (\cref{eq:tangent space schubert cell}) and compute
  \begin{align*}
  &\quad \bar\phi\psi(F^S(a))
  = \bar\phi\psi(F(J(a)) \cap S)
  \subseteq \bar\phi((F(J(a)) + S)/S) \\
  &= \phi(F(J(a))) \subseteq G(I(J(a)) - J(a)) = G(I^J(a) - a). \qedhere
  \end{align*}

\end{proof}

\subsection{The flag variety}

The Schubert cells of the Grassmannian were defined by fixing a flag and classifying subspaces according to their Schubert position.
As we will later be interested in intersections of Schubert cells for different flags, it will be useful to also consider variations of the flag for a fixed subspace.

Let $\flag(W)$ denote the \emph{(complete) flag variety}, defined as the space of (complete) flags on $W$.
It is a homogeneous space with respect to the transitive $\GL(W)$-action, so indeed an irreducible variety.

\begin{definition}
  Let $V \subseteq W$ be a subspace, $\dim V=r$, $\dim W=n$, and $I \subseteq [n]$ a subset of cardinality $r$.
  We define
  \[ \flag^0_I(V,W) = \{ E \in \flag(W) : \Pos(V, E) = I \}, \]
  and $\flag_I(V,W)$ as its closure in $\flag(W)$ (in either the Euclidean or the Zariski topology).
\end{definition}

We have the following equivariance property as a consequence of~\eqref{eq:position equivariance}:
For all $\gamma \in \GL(W)$,
\begin{equation}
\label{eq:flag equivariance}
  \gamma \flag^0_I(V, W)
  = \flag^0_I(\gamma V,W).
\end{equation}
In particular, $\flag^0_I(V,W)$ and $\flag_I(V,W)$ are stable under the action of the parabolic subgroup $P(V, W) = \{ \gamma \in \GL(W) : \gamma V \subseteq V \}$, which is the stabilizer of $V$.

We will now show that $\flag^0_I(V,W)$ is in fact a single $P(V,W)$-orbit.
This implies that both $\flag^0_I(V,W)$ and $\flag_I(V,W)$ are irreducible algebraic varieties.

\begin{definition}
\label{def:G_I}
  Let $E$ be a flag on $W$, $\dim W=n$, $V_0 = E(r)$, and $I \subseteq [n]$ a subset of cardinality $r$.
  We define
  \[ G_I(V_0,E) := \{ \gamma\in\GL(W) : \gamma E \in \flag^0_I(V_0,W) \}, \]
  so that $\flag^0_I(V_0,W) \cong G_I(V_0,E)/B(E)$.
\end{definition}

\begin{lemma}
\label{lem:G_I coarse}
  Let $E$ be a flag on $W$, $\dim W=n$, $V_0 = E(r)$, and $I \subseteq [n]$ a subset of cardinality $r$.
  Then, $G_I(V_0,E) = P(V_0,W) w_I B(E)$.
  In particular, $\flag^0_I(V_0,W) = P(V_0,W) w_I E$.
\end{lemma}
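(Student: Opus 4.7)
The plan is to prove both inclusions of $G_I(V_0,E) = P(V_0,W) w_I B(E)$ by exploiting (i) the equivariance property~\eqref{eq:position equivariance}, (ii) the fact already established in the discussion preceding \cref{cor:schubert cell parametrization} that $V_0 \in \Omega^0_I(w_I E)$, and (iii) the fact (which follows from \cref{lem:adapted basis},~\ref{item:adapted basis iii}, together with the transitivity of $B(E)$ on adapted bases of $E$) that $\Omega^0_I(E)$ is a single $B(E)$-orbit. The statement $\flag^0_I(V_0,W) = P(V_0,W) w_I E$ will then be immediate.

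For the inclusion $P(V_0,W) w_I B(E) \subseteq G_I(V_0,E)$, I would take $\gamma = p w_I b$ with $p \in P(V_0,W)$ and $b \in B(E)$. Since $b E = E$, we have $\gamma E = p w_I E$, so by~\eqref{eq:position equivariance},
\[ \Pos(V_0, \gamma E) = \Pos(V_0, p w_I E) = \Pos(p^{-1} V_0, w_I E) = \Pos(V_0, w_I E) = I, \]
using that $p$ stabilizes $V_0$ and the fact that $V_0 \in \Omega^0_I(w_I E)$. Hence $\gamma \in G_I(V_0,E)$.

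For the reverse inclusion, suppose $\gamma \in G_I(V_0,E)$, i.e., $\Pos(V_0, \gamma E) = I$. Applying~\eqref{eq:position equivariance} again, $\Pos(\gamma^{-1} V_0, E) = I$, which means $\gamma^{-1} V_0 \in \Omega^0_I(E)$. Since $\Omega^0_I(E)$ is a single $B(E)$-orbit and contains the subspace $w_I^{-1} V_0$ (as $V_0 \in \Omega^0_I(w_I E) = w_I \Omega^0_I(E)$ by~\eqref{eq:schubert cell equivariance}), there exists $b \in B(E)$ with $\gamma^{-1} V_0 = b w_I^{-1} V_0$. Equivalently, $(\gamma b w_I^{-1}) V_0 = V_0$, so $\gamma b w_I^{-1} \in P(V_0,W)$, giving $\gamma \in P(V_0,W) w_I b^{-1} \subseteq P(V_0,W) w_I B(E)$.

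For the final assertion, $\flag^0_I(V_0,W) = G_I(V_0,E) \cdot E$ by \cref{def:G_I}, and since $B(E)$ fixes $E$, this equals $P(V_0,W) w_I B(E) \cdot E = P(V_0,W) w_I E$. There is no serious obstacle in this proof: all the ingredients have been prepared in the preceding subsection, and the argument is essentially a bookkeeping exercise reducing a statement about flags to one about a single subspace, where the Bruhat-type cell decomposition of the Grassmannian has already been made explicit.
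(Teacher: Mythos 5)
Your proof is correct and uses exactly the same ingredients as the paper's: the equivariance of $\Pos$ under~\eqref{eq:position equivariance}, the identity $\Omega^0_I(E)=B(E)w_I^{-1}V_0$ (i.e., that the Schubert cell is a single $B(E)$-orbit containing $w_I^{-1}V_0$), and the definition of $G_I(V_0,E)$ as the set of $\gamma$ with $V_0\in\Omega^0_I(\gamma E)$. The paper simply compresses your two inclusions into a single chain of equivalences, $\gamma\in G_I(V_0,E)\Leftrightarrow V_0\in\gamma B(E)w_I^{-1}V_0\Leftrightarrow\gamma\in P(V_0,W)w_IB(E)$, so the arguments are the same in substance.
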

\begin{proof}
  Let $\gamma\in\GL(W)$.
  Then,
  \begin{align*}
    \gamma\in G_I(V_0,E)
  \Leftrightarrow V_0\in\Omega^0_I(\gamma E)=\gamma\Omega^0_I(E)=\gamma B(E)w_I^{-1}V_0
  \Leftrightarrow \gamma \in P(V_0,W)w_I B(E),
  \end{align*}
  where we have used that $\Omega^0_I(E) = B(E) w_I^{-1}V_0$.
\end{proof}

We now derive a more precise parametrization of $\flag^0_I(V_0,W)$.

\begin{lemma}
\label{lem:G_I precise}
  Let $E$ be a flag on $W$, $\dim W=n$, $V_0$ and $Q_0$ as in~\eqref{eq:V_0 and Q_0}, and $I \subseteq [n]$ a subset of cardinality $r$.
  Then we have that $G_I(V_0,E) = P(V_0,W) U_{w_I E}(V_0,Q_0) w_I$.
\end{lemma}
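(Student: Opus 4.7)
The plan is to apply \cref{cor:schubert cell parametrization} directly to the defining condition of $G_I(V_0,E)$, rather than to refine the coarser formula from \cref{lem:G_I coarse}. First I would rewrite membership of $\gamma$ in $G_I(V_0,E)$ purely in terms of the Schubert cell $\Omega^0_I(E)$: by \cref{def:G_I} and the equivariance \eqref{eq:schubert cell equivariance}, the condition $\gamma\in G_I(V_0,E)$ is equivalent to $V_0\in\Omega^0_I(\gamma E)=\gamma\,\Omega^0_I(E)$, and hence to $\gamma^{-1}V_0\in\Omega^0_I(E)$.

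Next I would plug in the parametrization $\Omega^0_I(E)=w_I^{-1}\,U_{w_IE}(V_0,Q_0)\,V_0$ from \cref{cor:schubert cell parametrization}. Then $\gamma^{-1}V_0\in\Omega^0_I(E)$ translates into the existence of some $u\in U_{w_IE}(V_0,Q_0)$ satisfying $\gamma^{-1}V_0=w_I^{-1}u\,V_0$, which in turn says that $u^{-1}w_I\gamma^{-1}$ stabilizes $V_0$, i.e., lies in $P(V_0,W)$. Taking inverses and using that $P(V_0,W)$ is a subgroup yields $\gamma\in P(V_0,W)\,u^{-1}w_I$, and reversing each of these equivalences gives the converse inclusion.

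To finish by identifying this set with the claimed one, I only need that $U_{w_IE}(V_0,Q_0)$ is closed under inversion, so that $u^{-1}$ ranges over $U_{w_IE}(V_0,Q_0)$ as $u$ does. This is immediate from the explicit description of $H_{w_IE}(V_0,Q_0)$: each $\phi$ is a linear map $V_0\to Q_0$ extended by zero on $Q_0$, so $\phi^2=0$, and hence $(\id+\phi)^{-1}=\id-\phi\in U_{w_IE}(V_0,Q_0)$. In particular $U_{w_IE}(V_0,Q_0)$ is an (abelian unipotent) subgroup of $\GL(W)$, isomorphic as a group to the additive group of $H_{w_IE}(V_0,Q_0)$. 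The whole argument is a short algebraic manipulation, and the main subtle point — really the only step where one must think — is this observation that $U_{w_IE}(V_0,Q_0)$ is a genuine subgroup, which allows one freely to relabel $u^{-1}$ as a new parameter running through the same set.
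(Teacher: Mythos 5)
Your proof is correct and follows essentially the same route as the paper's: start from \cref{def:G_I}, apply equivariance \eqref{eq:schubert cell equivariance} and the parametrization $\Omega^0_I(E) = w_I^{-1} U_{w_I E}(V_0,Q_0) V_0$ from \cref{cor:schubert cell parametrization}, and unwind the stabilizer condition. The one thing you make explicit that the paper glosses over is that the equivalence tacitly relabels $u^{-1}$ as $u$, which requires $U_{w_I E}(V_0,Q_0)$ to be closed under inversion; your observation that $\phi^2=0$ forces $(\id+\phi)^{-1}=\id-\phi$ is a correct and worthwhile justification of this step.
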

\begin{proof}
  Let $\gamma\in\GL(W)$. Then,
  \begin{align*}
  &\qquad \gamma\in G_I(V_0,E)
  \;\Leftrightarrow\; V_0\in\Omega^0_I(\gamma E)=\gamma\Omega^0_I(E) = \gamma w_I^{-1} U_{w_I E}(V_0, Q_0) V_0 \\
  &\Leftrightarrow \gamma\in P(V_0,W)U_{w_I E}(V_0,Q_0)w_I,
  \end{align*}
  since $\Omega^0_I(E) = w_I^{-1} U_{w_I E}(V_0, Q_0) V_0$ (\cref{cor:schubert cell parametrization}).
\end{proof}

In coordinates, using $W=V_0\op Q_0$ and~\eqref{eq:H_wIE}, we obtain that
\begin{align*}
  G_I(V_0,E)
&= \{ \begin{pmatrix}a & b\\0 & d\end{pmatrix} \begin{pmatrix}1 & 0\\\phi & 1\end{pmatrix} : a\in\GL(V_0), d\in\GL(Q_0), \phi\in H_I(E^{V_0}, E_{Q_0}) \} w_I \\
&= \{ \begin{pmatrix}a & b\\c & d\end{pmatrix} : a-bd^{-1}c\in\GL(V_0), d\in\GL(Q_0), d^{-1}c\in H_I(E^{V_0}, E_{Q_0}) \} w_I.
\end{align*}
In particular, $\dim G_I(V_0,E) = \dim P(V_0,W) + \dim I$.
This allows us to compute the dimension of the subvarieties $\flag^0_I(V,W)$ and to relate their codimension to the codimension of the Schubert cells of the Grassmannian:

\begin{corollary}
  Let $V\subseteq W$ be a subspace, $\dim W=n$, $\dim V=r$, and $I \subseteq [n]$ a subset of cardinality $r$.
  Then,
  \begin{equation}
  \label{eq:dim flag^0_I}
    \dim \flag^0_I(V,W)
  = \dim \flag_I(V,W)
  = \dim \flag(V) + \dim \flag(Q) + \dim I
  \end{equation}
  and
  \begin{equation}
   \label{eq:codim flag vs grass}
   \dim \flag(W) - \dim \flag^0_I(V,W) = \dim\grass(r,W) - \dim I.
  \end{equation}
\end{corollary}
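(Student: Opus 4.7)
The plan is to derive both dimension formulas from the explicit parametrization of $G_I(V_0,E)$ given just before the corollary, by reducing everything to known dimensions of parabolics, Borels, and the space $H_I$.

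First I would fix the flag $E$ on $W$ with $V_0 = E(r)$, so that \cref{def:G_I} identifies $\flag^0_I(V_0,W) \cong G_I(V_0,E)/B(E)$ and hence $\dim \flag^0_I(V_0,W) = \dim G_I(V_0,E) - \dim B(E)$. The identity $\dim G_I(V_0,E) = \dim P(V_0,W) + \dim I$ already recorded in the excerpt (coming from \cref{lem:G_I precise} together with $\dim U_{w_IE}(V_0,Q_0) = \dim H_I(E^{V_0},E_{Q_0}) = \dim I$ as in \cref{lem:dim schubert cell}) then reduces the problem to showing $\dim P(V_0,W) - \dim B(E) = \dim\flag(V_0) + \dim\flag(Q_0)$. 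The cleanest way to see this is to observe that $B(E) \subseteq P(V_0,W)$, that $B(E)$ acts on $V_0$ and $Q_0 = W/V_0$ as Borel subgroups stabilizing the induced flags $E^{V_0}$ and $E_{Q_0}$, and that $P(V_0,W)$ acts on these two quotients via the full groups $\GL(V_0)$ and $\GL(Q_0)$. Thus the natural map $P(V_0,W)/B(E) \to \flag(V_0)\times\flag(Q_0)$ is a $P(V_0,W)$-equivariant isomorphism, yielding the desired dimension identity. This gives the first equality $\dim\flag^0_I(V,W) = \dim\flag(V) + \dim\flag(Q) + \dim I$, since $V$ and $V_0$ are related by a $\GL(W)$-translation via~\eqref{eq:flag equivariance}. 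The equality $\dim\flag^0_I(V,W) = \dim\flag_I(V,W)$ is immediate from the general fact that a quasi-projective variety has the same dimension as its closure, $\flag^0_I(V,W)$ being open in $\flag_I(V,W)$.

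For~\eqref{eq:codim flag vs grass}, I would simply subtract: combined with~\eqref{eq:dim flag^0_I}, the statement amounts to the purely numerical identity
\[ \dim\flag(W) - \dim\flag(V) - \dim\flag(Q) = r(n-r) = \dim\grass(r,W). \]
Using $\dim\flag(W) = \binom{n}{2}$, $\dim\flag(V) = \binom{r}{2}$, $\dim\flag(Q) = \binom{n-r}{2}$, a one-line computation (or equivalently, noting the fibration $\flag(W) \to \grass(r,W)$ with fiber $\flag(V)\times\flag(Q)$) gives the equality.

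I do not anticipate a genuine obstacle: once one accepts the dimension count for $G_I(V_0,E)$ already present in the text, the only real content is identifying $P(V_0,W)/B(E)$ with the product of two smaller flag varieties, and this is standard. The mildest care needed is checking that $B(E)$ really maps surjectively onto Borels of both $\GL(V_0)$ and $\GL(Q_0)$, which follows from the description of adapted bases in \cref{lem:adapted basis} and the computation of induced flags in \cref{lem:induced flags}.
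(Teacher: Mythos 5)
Your proof is correct and follows essentially the same route as the paper: both start from $\flag^0_I(V_0,W)\cong G_I(V_0,E)/B(E)$ with $\dim G_I(V_0,E)=\dim P(V_0,W)+\dim I$, and both reduce to the identity $\dim P(V_0,W)-\dim B(E)=\dim\flag(V_0)+\dim\flag(Q_0)$. The only cosmetic difference is that you establish~\eqref{eq:dim flag^0_I} first (via the fibration $P(V_0,W)/B(E)\cong\flag(V_0)\times\flag(Q_0)$) and then subtract to get~\eqref{eq:codim flag vs grass}, whereas the paper first rewrites $\dim P(V_0,W)-\dim B(E)=\dim\flag(W)-\dim\grass(r,W)$ to obtain~\eqref{eq:codim flag vs grass} and then does the arithmetic for~\eqref{eq:dim flag^0_I}.
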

\begin{proof}
  Without loss of generality, we may assume that $V=V_0=E(r)$ for some flag $E$ on $W$.
  Then, $\flag_I^0(V_0,W)\cong G_I(V_0,E)/B(E)$ and hence
  \begin{align*}
  &\quad  \dim \flag_I^0(V_0,E)
  = \dim P(V_0,W) + \dim I - \dim B(E)\\
  &= \dim\GL(W) - \dim\grass(r,W) + \dim I - \dim B(E) \\
  &= \dim\flag(W) - \dim\grass(r,W) + \dim I
  \end{align*}
  since $\grass(r,W)\cong\GL(W)/P(V_0,W)$ and $\flag(W)=\GL(W)/B(E)$.
  This establishes~\eqref{eq:codim flag vs grass}.
  On the other hand, a direct calculation shows that
  \[
    \dim\flag(W)-\dim\grass(r,W) = \dim\flag(V)+\dim\flag(Q),
  \]
  so we also obtain~\eqref{eq:dim flag^0_I}.
\end{proof}

At last, we study the following set of flags on the target space of a given homomorphism:

\begin{definition}
\label{def:flags for injective morphism}
  Let $V$, $Q$ be vector spaces of dimension $r$ and $n-r$, respectively, and $I\subseteq[n]$.
  Moreover, let $F$ be a flag on $V$ and $\phi\in\Hom(V,Q)$ an \emph{injective} homomorphism.
  We define
  \[ \flag^0_I(F,\phi) := \{ G \in \flag(Q) : \phi\in H_I(F,G) \} \]
  where we recall that $H_I(F,G)$ was defined in~\cref{def:H_I for flags}.
\end{definition}

It is clear that $I(a)\geq 2a$ is necessary and sufficient for $\flag_I^0(F,\phi)$ to be nonempty.

\begin{example*}[r=3,n=8]
  Let $V_0\cong\C^3$, with basis $e(1),\dots,e(3)$, and $Q_0\cong\C^5$, with basis $\bar e(1),\dots,\bar e(5)$.
  Take $\phi\colon V_0\to Q_0$ to be the canonical injection and let $F_0$ denote the standard flag on $V_0$.
  For $I=\{3,4,7\}$, $G\in\flag^0_I(F_0,\phi)$ if and only if
  \[ \C\bar e(1) \subseteq G(2),  \quad \C \bar e(1) \op \C\bar e(2) \subseteq G(2), \quad \C \bar e(1)\op\C \bar e(2)\op\C \bar e(3) \subseteq G(4). \]
  For example, the standard flag $G_0$ on $Q_0$ is a point in $\flag^0_I(F_0,\phi)$.

  On the other hand, if $I=\{2,3,7\}$ then we obtain the condition $\C \bar e(1) \op \C\bar e(2) \subseteq G(1)$ which can never be satisfied.
  Thus in this case $\flag^0_I(F_0,\phi)=\emptyset$.
\end{example*}

In the following lemma we show that $\flag_I^0(F,\phi)$ is a smooth variety and compute its dimension.

\begin{lemma}
\label{lem:image flags}
  Let $V$, $Q$ be vector spaces of dimension $r$ and $n-r$, respectively, and $I\subseteq[n]$ a subset of cardinality $r$.
  Moreover, let $F$ be a flag on $V$ and $\phi\in\Hom(V,Q)$ an \emph{injective} homomorphism.
  If $\flag_I^0(F,\phi)$ is nonempty, that is, if $I(a)\geq2a$ for all $a\in[r]$, then it is a smooth irreducible subvariety of $\flag(Q)$ of dimension
  \[ \dim \flag_I^0(F,\phi) = \dim \flag(Q_0) + \dim I - r(n-r). \]
\end{lemma}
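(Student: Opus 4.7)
My plan is to realize $\flag^0_I(F,\phi)$ as a fiber of a smooth $\GL(Q)$-equivariant morphism from an irreducible incidence variety, and to read off smoothness, dimension, and irreducibility from that structure.

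First I would introduce the incidence variety
\[ \CY := \{ (G,\psi) \in \flag(Q) \times \Hom(V,Q) : \psi \in H_I(F,G) \}, \]
with its two projections $\pi_1 : \CY \to \flag(Q)$ and $\pi_2 : \CY \to \Hom(V,Q)$. In a pair of adapted bases, the defining conditions $\psi(F(a)) \subseteq G(I(a)-a)$ of \cref{def:H_I for flags} cut out a linear subspace of $\Hom(V,Q)$ of fixed dimension $\dim I = \sum_a (I(a)-a)$, and by \cref{lem:H_I borel invariance} this family is $\GL(Q)$-equivariant, so $\pi_1$ is a $\GL(Q)$-equivariant vector bundle of rank $\dim I$. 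Thus $\CY$ is smooth and irreducible of dimension $\dim\flag(Q) + \dim I$, and $\pi_2^{-1}(\phi) = \flag^0_I(F,\phi)$ by construction.

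Next I would verify that $d\pi_2$ is surjective at every point of $\pi_2^{-1}(\phi)$, under the hypothesis that $\phi$ is injective. A tangent vector to $\CY$ at $(G,\phi)$ can be represented by a pair $(A,Y) \in \gl(Q) \times \Hom(V,Q)$ arising from a deformation $G_t = \exp(tA)\,G$, $\psi_t = \phi+tY+O(t^2)$; using the identity $H_I(F,G_t) = \exp(tA)\cdot H_I(F,G)$ (immediate from the definition), the tangency condition $\psi_t \in H_I(F,G_t)$ reduces to the linear constraint $Y - A\phi \in H_I(F,G)$. Since $d\pi_2(A,Y) = Y$, the image of $d\pi_2$ equals $\gl(Q)\cdot\phi + H_I(F,G)$. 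When $\phi$ is injective, one already has $\gl(Q)\cdot\phi = \Hom(V,Q)$: extending $\phi(f(1)),\dots,\phi(f(r))$ to a basis of $Q$, any $Y \in \Hom(V,Q)$ can be written as $A\phi$ for a suitable $A \in \gl(Q)$. Hence $\pi_2$ is smooth along $\flag^0_I(F,\phi)$, which is therefore smooth of dimension
\[ \dim\CY - \dim\Hom(V,Q) = \dim\flag(Q) + \dim I - r(n-r). \]

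For irreducibility, let $\Hom^0 \subseteq \Hom(V,Q)$ denote the open locus of injective maps and $\CY^0 := \pi_2^{-1}(\Hom^0)$. The hypothesis $I(a)\geq 2a$ ensures $\CY^0$ is nonempty: for any flag $G$ with adapted basis $(g(1),\dots,g(m))$, the map defined by $\phi(f(a)) := g(a)$ lies in $H_I(F,G) \cap \Hom^0$. Thus $\CY^0$ is a nonempty open subvariety of the irreducible $\CY$, hence irreducible. Now $\GL(Q)$ acts compatibly on $\CY^0$ and $\Hom^0$ and is transitive on the latter, so $\CY^0 \cong \GL(Q) \times_{\mathrm{Stab}(\phi)} \flag^0_I(F,\phi)$ as an associated bundle. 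The stabilizer $\mathrm{Stab}(\phi) = \{g \in \GL(Q) : g|_{\phi(V)} = \id\}$ is an extension of $\GL(Q/\phi(V))$ by $\Hom(Q/\phi(V),\phi(V))$ and is in particular connected; any decomposition $\flag^0_I(F,\phi) = F_1 \sqcup F_2$ would be $\mathrm{Stab}(\phi)$-stable and would split $\CY^0$, contradicting its irreducibility. Hence $\flag^0_I(F,\phi)$ is connected, and being smooth it is irreducible.

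The main technical point is the tangent space calculation and the identity $\gl(Q)\cdot\phi = \Hom(V,Q)$ for injective $\phi$: once smoothness of $\pi_2$ along $\flag^0_I(F,\phi)$ is established, the dimension formula is immediate, and the equivariance argument reduces irreducibility of the fiber to connectedness of $\mathrm{Stab}(\phi)$, which is straightforward.
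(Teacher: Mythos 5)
Your proof is correct, but it takes a genuinely different route from the paper's. You build the incidence variety $\CY \subseteq \flag(Q)\times\Hom(V,Q)$, observe via $\GL(Q)$-equivariance that $\pi_1$ makes it a homogeneous vector bundle of rank $\dim I$ over $\flag(Q)$ (hence smooth and irreducible), and then show $\pi_2$ is smooth along $\pi_2^{-1}(\phi) = \flag^0_I(F,\phi)$ by a tangent-space computation whose only nontrivial input is that $\gl(Q)\cdot\phi = \Hom(V,Q)$ when $\phi$ is injective; this gives smoothness and the dimension formula at once. Irreducibility is then deduced by exhibiting $\CY^0 := \pi_2^{-1}(\Hom^\times)$ as a homogeneous fiber bundle over $\Hom^\times$ for the transitive $\GL(Q)$-action and noting $\mathrm{Stab}(\phi)$ is connected. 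The paper's proof is instead coordinate-based: it fixes a base point $G_0 \in \flag^0_I(F_0,\phi)$, parametrizes $M_I = \{h \in \GL(Q_0) : hG_0 \in \flag^0_I(F_0,\phi)\}$ as the open locus of invertible block matrices in $H_I(F_0,G_0)\times\Hom(R_0,Q_0)$ (here $R_0$ is a complement of $\phi(V_0)$ in $Q_0$), and quotients by $B(G_0)$. Both approaches hinge on the same two facts --- the constancy of $\dim H_I(F,G)$ and the injectivity of $\phi$ --- but the paper gets smoothness and irreducibility simultaneously from the explicit linear parametrization, whereas your version separates them into a differential argument and an equivariance argument; yours generalizes more readily to situations without convenient global coordinates, while the paper's is shorter and more self-contained. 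One small caveat: when you write that the tangency condition is $Y - A\phi \in H_I(F,G)$, you should remark (or at least keep in mind) that this is well-defined on $T_G\flag(Q) \cong \gl(Q)/\mathfrak b(G)$ because $\mathfrak b(G)\cdot\phi \subseteq H_I(F,G)$ by the infinitesimal version of \cref{lem:H_I borel invariance}; this is what ensures your description of $\im d\pi_2$ as $\gl(Q)\cdot\phi + H_I(F,G)$ is unambiguous.
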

\begin{proof}
  Without loss of generality, we may assume that $V=V_0\cong\C^r$, $Q=Q_0\cong\C^{n-r}$, that $F=F_0$ is the standard flag on $V_0$ and $\phi$ the canonical injection $\C^r\to\C^{n-r}$.
  Then the standard flag $G_0$ on $Q_0$ is an element of $\flag^0_I(F_0,\phi)$.
  We will show that
  \[ M_I := \{ h\in\GL(Q_0) : h \, G_0 \in \flag^0_I(F_0,\phi) \} \]
  is a subvariety of $\GL(Q_0)$ and compute its dimension.
  Note that $h \in M_I$ if and only if $h^{-1}\phi\in H_I(F_0, G_0)$.
  We now identify $V_0$ with its image $\phi(V_0)$ and denote by $R_0\cong\C^{n-2r}$ its standard complement in $Q_0$.
  Thus $Q_0 \cong V_0 \op R_0$ and we can think of $h^{-1}\in\GL(Q_0)$ as a block matrix
  \[ h^{-1} = \begin{pmatrix} A & B \end{pmatrix} \]
  where $A\in\Hom(V_0,Q_0)$ and $B\in\Hom(R_0,Q_0)$.
  The condition $h^{-1}\phi\in H_I(F_0, G_0)$ amounts to demanding that $A\in H_I(F_0, G_0)$, while $B$ is unconstrained.
  Thus we can identify $M_I$ via $h\mapsto h^{-1}$ with the invertible elements in
  \[ H_I(F_0, G_0) \times \Hom(R_0,Q_0), \]
  which form a nonempty Zariski-open subset, and hence a smooth irreducible subvariety of $\GL(Q_0)$.
  It follows that $\flag_I^0(F_0,\phi) = M_I / B(G_0)$ is likewise a smooth irreducible subvariety, and
  \begin{align*}
  &\quad \flag_I^0(F_0,\phi)
  = \dim M_I - \dim B(G_0)
  = \dim I + (n-r)(n-2r) - \dim B(G_0)\\
  &= \dim\flag(Q_0) + \dim I - (n-r)r,
  \end{align*}
  where we have used \cref{eq:dim schubert cell} and that $\flag(Q_0) \cong \GL(Q_0)/B(G_0)$.
\end{proof}

\counterwithin{equation}{subsection}
\section{Intersections and Horn inequalities}\label{sec:horn necessary}

In this section, we study intersections of Schubert varieties.
Recall from \cref{def:intersecting} that given an $s$-tuple $\CE$ of flags on $W$, $\dim W = n$, and $\CI\in\Subsets(r,n,s)$, we had defined
\[ \Omega^0_\CI(\CE) = \bigcap_{k=1}^s \Omega_{I_k}^0(E_k) \quad\text{and}\quad \Omega_\CI(\CE) = \bigcap_{k=1}^s \Omega_{I_k}(E_k). \]
We are particularly interested in the intersecting $\CI$, denoted $\CI \in \Intersecting(r,n,s)$, for which $\Omega_\CI(\CE)\neq\emptyset$ for every $\CE$.

\subsection{Coordinates}\label{subsec:coordinates}

Without loss of generality, we may assume that $W=\C^n$, and we shall do so for the remainder of this article.
As before, we denote by $(e(1),\dots,e(n))$ the ordered standard basis of $\C^n$ and by $E_0$ the corresponding \emph{standard flag}.
Let $V_0 = E_0(r)$ be the standard $r$-dimensional subspace, with ordered basis $(e(1),\dots,e(r))$, and $Q_0$ the subspace with ordered basis $(\bar e(1),\dots,\bar e(n-r))$, where $\bar e(b) := e(r+b)$.
Thus $W=V_0\op Q_0$.
We denote the corresponding standard flags on $V_0$ and $Q_0$ by $F_0$ and $G_0$, respectively.
Note that $F_0=E_0^{V_0}$ and, if we identify $Q_0\cong W/V_0$, then $G_0=(E_0)_{W/V_0}$.
We further abbreviate the Grassmannian by $\grass(r,n):=\grass(r,\C^n)$, the parabolic by $P(r,n):=P(V_0,\C^n)$ and the Borel by $B(n):=B(E_0)$.
We write $\flag(n) := \flag(W)$ and $\flag^0_I(r,n) := \flag^0_I(V_0, W)$ for the set of flags with respect to which $V_0$ has position $I$; $\flag_I(r,n) := \flag_I(V_0,W)$ is its closure.
We recall from \cref{def:H_I for flags} that
\begin{align*}
  H_I(F_0, G_0) = \{ \phi\in\Hom(V_0,Q_0) : \phi(e(a))\subseteq\linspan\{\bar e(1),\dots,\bar e(I(a)-a)\} \},
\end{align*}
and \cref{lem:G_I precise} reads
\begin{equation}
\label{eq:G_I precise concrete}
  G_I(r,n) = P(r,n) \bigl\{ \begin{pmatrix}\id_{V_0} & 0 \\ \phi & \id_{Q_0}\end{pmatrix} : \phi \in H_I(F_0, G_0) \bigr\} w_I,
\end{equation}
where we have introduced $G_I(r,n) := G_I(V_0,E_0)$.

\subsection{Intersections and dominance}
We start by reformulating the intersecting property in terms of the dominance of certain morphisms of algebraic varieties.
This allows us to give a simple proof of \cref{lem:edim nonnegative}, which states that the expected dimension of an intersecting tuple is necessarily nonnegative. 

We caution that while $\Omega^0_\CI(\CE) \subseteq \Omega_\CI(\CE)$, the latter is \emph{not} necessarily the closure of the former:

\begin{example*}
  Let $W = \C^2$, $I_1 = \{1\}$, $I_2 = \{2\}$, and $E_1 = E_2$ the same flag on $W$.
  Since the Schubert cells $\Omega^0_{I_k}(\CE)$ partition the projective space $\P(W) = \grass(1, W)$, $\Omega^0_\CI(\CE) = \emptyset$ is empty, but $\Omega_\CI(\CE) = \{ E_1(1) \}$ is a point.
\end{example*}

It is also possible that $\Omega^0_\CI(\CE)$ or $\Omega_\CI(\CE)$ are nonempty for some $\CE$ but empty for generic $s$-tuples $\CE$:

\begin{example*}
  Let $W = \C^2$, $I_1 = I_2 = \{1\}$. 
  Then $\Omega^0_\CI(\CE) = \Omega_\CI(\CE) = E_1(1) \cap E_2(1)$, so the intersection is nonempty if and only if $E_1 = E_2$.
\end{example*}

We will later show the existence of a `good set' of sufficiently generic $\CE$ such that $\CI$ is intersecting if and only if $\Omega^0_\CI(\CE)\neq\emptyset$ for any single `good' $\CE$ (\cref{lem:good set}).
Here is a more interesting example:

\begin{example}\label{ex:interesting}
  Let $W = \C^6$, $s = 3$, and $\CI = (I_1, I_2, I_3)$ where all $I_k = \{2,4,6\}$.
  The triple $\CI$ is intersecting.
  Let
  \[ f(t) := e_1 + t e_2 + \frac {t^2} {2!} e_3 + \frac {t^3} {3!} e_4 + \frac {t^4} {4!} e_5 + \frac {t^5} {5!} e_6 \]
  and consider the one-parameter family of flags $E(t)$ with adapted basis $(f(t), \frac d{dt}f(t),\dots,\frac {d^5}{dt^5} f(t))$.
  We consider the 3-tuple $\CE = (E_1, E_2, E_3)$, where $E_1 := E(0)$ is the standard flag, $E_2 := E(1)$, and $E_3 := E(-1)$.
  Then the intersection $\Omega^0_\CI(\CE)$ consists of precisely two points:
  \begin{align*}
    V_1 &= \linspan \{ e_2 + \sqrt 5 e_1,   e_4 - 24 \sqrt 5 e_1 - 3 \sqrt 5 e_3,   e_6 - 24 \sqrt 5 e_3 + \sqrt 5 e_5 \}, \\
    V_2 &= \linspan \{ e_2 - \sqrt 5 e_1,   e_4 + 24 \sqrt 5 e_1 + 3 \sqrt 5 e_3,   e_6 + 24 \sqrt 5 e_3 - \sqrt 5 e_5 \},
  \end{align*}
  and coincides with $\Omega_{\mathcal I}(\mathcal E)$.
\end{example}

To study generic intersections of Schubert cells, it is useful to introduce the following maps:
Let $\CI\in\Subsets(r,n,s)$.
We define
\[
  \omega^0_\CI\colon \begin{cases}
    \GL(n) \times \flag^0_{I_1}(r,n)\times\dots\times\flag^0_{I_s}(r,n) \to \flag(n)^s \\
    (\gamma, E_1, \dots, E_s) \mapsto (\gamma E_1, \dots, \gamma E_s)
  \end{cases}
\]
and its extension
\begin{equation}
\label{eq:delta_I}
  \omega_\CI\colon \begin{cases}
    \GL(n) \times \flag_{I_1}(r,n)\times\dots\times\flag_{I_s}(r,n) \to \flag(n)^s \\
    (\gamma, E_1, \dots, E_s) \mapsto (\gamma E_1, \dots, \gamma E_s).
  \end{cases}
\end{equation}


The following lemma shows that the images of $\omega^0_\CI$ and $\omega_\CI$, respectively, characterize the $s$-tuples $\CE$ of flags for which the intersections $\Omega^0_\CI(\CE)$ and $\Omega_\CI(\CE)$ are nonempty:

\begin{lemma}
\label{lem:images}
  Let $\CI\in\Subsets(r,n,s)$.
  Then,
  \begin{align*}
    \im \omega^0_\CI &= \{ \CE \in \flag(n)^s : \Omega^0_\CI(\CE)\neq\emptyset \}, \\
    \im \omega_\CI &= \{ \CE \in \flag(n)^s : \Omega_\CI(\CE)\neq\emptyset \}.
  \end{align*}
  In particular, $\CI \in \Intersecting(r,n,s)$ if and only if $\omega_\CI$ is surjective.
\end{lemma}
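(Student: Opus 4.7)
The plan is to leverage the equivariance identity~\eqref{eq:position equivariance} together with transitivity of the $\GL(n)$-action on the Grassmannian to translate membership of a tuple $\CE$ in $\im\omega^0_\CI$ (resp.~$\im\omega_\CI$) into a statement about Schubert cells (resp.~varieties). I would start with the cell version: for any $\gamma\in\GL(n)$ and any flag $E_k$ on $W$, the condition $\gamma^{-1}E_k\in\flag^0_{I_k}(r,n)$ reads $\Pos(V_0,\gamma^{-1}E_k)=I_k$, which by~\eqref{eq:position equivariance} is the same as $\Pos(\gamma V_0,E_k)=I_k$, i.e., $\gamma V_0\in\Omega^0_{I_k}(E_k)$. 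Hence $\CE\in\im\omega^0_\CI$ iff there exists $\gamma\in\GL(n)$ with $\gamma V_0\in\bigcap_k\Omega^0_{I_k}(E_k)=\Omega^0_\CI(\CE)$, and by transitivity of the $\GL(n)$-action on $\grass(r,n)$ this is equivalent to $\Omega^0_\CI(\CE)\neq\emptyset$.

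For the variety version the same translation works once we know that $E\in\flag_I(V_0,W)$ iff $V_0\in\Omega_I(E)$. For the easy direction, the conditions $\dim(V_0\cap E(i))\geq\lvert[i]\cap I\rvert$ (for $i\in[n]$) are closed in $E$ and cut out exactly $\{E:\Pos(V_0,E)\leq I\}=\{E:V_0\in\Omega_I(E)\}$ (the latter equality by \cref{lem:schubert variety characterization}); this closed set contains $\flag^0_I(V_0,W)$, hence also its closure. For the reverse inclusion, given $E$ with $\Pos(V_0,E)=I'\leq I$, I would write $E=\gamma E_0$ for a fixed reference flag $E_0$; then~\eqref{eq:position equivariance} gives $\gamma^{-1}V_0\in\Omega^0_{I'}(E_0)\subseteq\Omega_I(E_0)$. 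I would then pick a sequence $V_k\to\gamma^{-1}V_0$ in $\Omega^0_I(E_0)$ and use the affine chart parametrization~\eqref{eq:affine chart of grassmannian} to lift to $\gamma_k\to\gamma$ in $\GL(n)$ with $\gamma_k^{-1}V_0=V_k$ (e.g., writing $V_k=(1+\phi_k)\gamma^{-1}V_0$ with $\phi_k\to0$ and setting $\gamma_k=\gamma(1+\phi_k)^{-1}$). Then $\gamma_k E_0\to E$ while $\Pos(V_0,\gamma_k E_0)=\Pos(V_k,E_0)=I$, so $\gamma_k E_0\in\flag^0_I(V_0,W)$ and therefore $E\in\flag_I(V_0,W)$.

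With this flag-closure equivalence in hand, the argument of the first paragraph applies verbatim with $\flag^0$ and $\Omega^0$ replaced by $\flag$ and $\Omega$ throughout, yielding the second equality. The final clause is then immediate: $\omega_\CI$ is surjective onto $\flag(n)^s$ iff $\Omega_\CI(\CE)\neq\emptyset$ for every $\CE\in\flag(n)^s$, which is precisely the condition $\CI\in\Intersecting(r,n,s)$ of \cref{def:intersecting}. The only nontrivial ingredient is the flag-closure equivalence of the second paragraph, which I would expect to be the main obstacle; however, it is essentially a transposition of \cref{lem:schubert variety characterization} via the equivariance identity and the affine charts on $\grass(r,n)$, so no new ideas are required.
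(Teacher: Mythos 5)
Your argument for the cell version is identical to the paper's: translate $E_k\in\gamma\flag^0_{I_k}(r,n)$ into $\gamma V_0\in\Omega^0_{I_k}(E_k)$ via~\eqref{eq:position equivariance} and use transitivity of $\GL(n)$ on $\grass(r,n)$. For the variety version the paper merely says it is proved in the same way, which tacitly requires the equivalence $E\in\flag_I(V_0,W)\Leftrightarrow V_0\in\Omega_I(E)$ (equivalently, $\flag_I(V_0,W)=\{E:\Pos(V_0,E)\leq I\}$); this flag-side analogue of \cref{lem:schubert variety characterization} is nowhere proved explicitly in the paper, and you are right to single it out as the one nontrivial ingredient. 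Your proof of it is correct and is essentially a transposed version of \cref{lem:schubert variety characterization}: the inclusion $\flag_I(V_0,W)\subseteq\{E:\Pos(V_0,E)\leq I\}$ follows from upper semicontinuity of $E\mapsto\dim(V_0\cap E(i))$ together with the observation that $\Pos(V_0,E)\leq I$ is equivalent to $\dim(V_0\cap E(i))\geq\lvert[i]\cap I\rvert$ for all $i$, while the reverse inclusion is obtained by taking a convergent sequence $V_m\to\gamma^{-1}V_0$ in $\Omega^0_I(E_0)$ (available by \cref{lem:schubert variety characterization}) and lifting it via the affine chart~\eqref{eq:affine chart of grassmannian} to group elements $\gamma_m\to\gamma$ so that $\gamma_m E_0\in\flag^0_I(V_0,W)$ converges to $E$. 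In short, you and the paper take the same route, but you supply a step that the paper treats as self-evident; this is a genuine, if small, tightening.
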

\begin{proof}
  If $\CE \in \im \omega^0_\CI$ then there exists $\gamma \in \GL(n)$ such that $E_k \in \gamma \flag^0_{I_k}(r,n)$ for $k\in[s]$.
  But
  \[
    E_k \in \gamma \flag^0_{I_k}(r,n)
    \Leftrightarrow \gamma^{-1} E_k \in \flag^0_{I_k}(r,n)
    \Leftrightarrow V_0 \in \Omega^0_{I_k}(\gamma^{-1} E_k)
    \Leftrightarrow \gamma V_0 \in \Omega^0_{I_k}(E_k),
  \]
  and therefore $\gamma V_0 \in \Omega^0_\CI(\CE)$.
  Conversely, if $V \in \Omega^0_\CI(\CE)$, then we write $V = \gamma V_0$ and obtain that $E_k \in \gamma \flag^0_{I_k}(r,n)$ for all $k$, and hence that $\CE\in\im\omega^0_\CI$.
  The result for $\im\omega_\CI$ is proved in the same way.
\end{proof}

We now use some basic algebraic geometry (see, e.g., \cite{perrin2007algebraic}).
Recall that a morphism $f\colon\CX\to\CY$ of irreducible algebraic varieties is called \emph{dominant} if its image is Zariski dense.
In this case, the image contains a nonempty Zariski-open subset $\CY_0$ such that the dimension of any irreducible component of the fibers $f^{-1}(y)$ for $y \in \CY_0$ is equal to $\dim \CX - \dim \CY$.
Furthermore, if $\CX_0 \subseteq \CX$ is a nonempty Zariski-open subset then $f$ is dominant if and only if its restriction $f$ to $\CX_0$ is dominant.

We also recall for future reference the following results:
If $\CX$ and $\CY$ are smooth (irreducible algebraic) varieties and $f\colon\CX\to\CY$ is dominant then the set of regular values (i.e., the points $y$ such that $df_x$ is surjective for all preimages $x\in f^{-1}(y)$) contains a Zariski-open set.
Also, if $df_x$ is surjective for every $x$ then the image by $f$ of any Zariski-open set in $\CX$ is a Zariski-open set in $\CY$.
In particular this is the case when $f\colon\mathcal V\to \mathcal B$ is a vector bundle.

In the present context, the maps $\omega^0_\CI$ and $\omega_\CI$ are morphisms of irreducible algebraic varieties and so the preceding discussion applies.
Furthermore, the domain of $\omega_\CI$ is the closure of the domain of $\omega^0_\CI$ in $\GL(n) \times \flag(n)^s$.
Therefore, $\omega_\CI$ is dominant if and only if $\omega^0_\CI$ is dominant.

\begin{lemma}
\label{lem:dominant}
  Let $\CI\in\Subsets(r,n,s)$.
  Then $\CI \in \Intersecting(r,n,s)$ if and only if $\omega_\CI$ or $\omega_\CI^0$ is dominant.
\end{lemma}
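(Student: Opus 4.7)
The plan is to combine Lemma \ref{lem:images} with closedness of $\im\omega_\CI$: if $\im\omega_\CI$ is Zariski-closed, then dominance forces $\im\omega_\CI = \flag(n)^s$ (since $\flag(n)^s$ is irreducible), which by Lemma \ref{lem:images} is equivalent to $\CI \in \Intersecting(r,n,s)$. The direction ``intersecting implies dominant'' is then trivial (surjective is dominant), and the excerpt already identifies dominance of $\omega_\CI$ with dominance of $\omega^0_\CI$.

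The main point, and the main obstacle, is therefore to show that $\im\omega_\CI$ is Zariski-closed. I would prove this via the incidence variety
\[ \CZ_\CI := \{ (V, \CE) \in \grass(r,n) \times \flag(n)^s : V \in \Omega_{I_k}(E_k) \text{ for all } k \in [s] \}. \]
Each Schubert variety $\Omega_{I_k}(E_k)$ is a closed subvariety of $\grass(r,n)$ cut out by rank conditions of the form $\dim(V \cap E_k(i)) \geq $ (an explicit integer determined by $I_k$); these conditions depend algebraically on $E_k$ as well, so $\CZ_\CI$ is a closed subvariety of $\grass(r,n)\times\flag(n)^s$. The projection $\pi\colon\CZ_\CI \to \flag(n)^s$ is proper since $\grass(r,n)$ is projective, hence $\im\pi$ is Zariski-closed, and by Lemma \ref{lem:images} it coincides with $\im\omega_\CI$.

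Putting these together concludes the argument. If $\omega_\CI$, or equivalently $\omega^0_\CI$, is dominant, then $\im\omega_\CI$ is both Zariski-dense and Zariski-closed in the irreducible variety $\flag(n)^s$, hence equal to $\flag(n)^s$, and Lemma \ref{lem:images} gives $\CI \in \Intersecting(r,n,s)$. Conversely, intersecting tuples yield surjective, hence dominant, maps, completing both implications. Everything past the closedness of $\CZ_\CI$ is formal; that closedness is the only content, and it is standard once one views Schubert varieties as defined by rank conditions that vary algebraically with the flag.
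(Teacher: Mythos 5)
Your proof is correct, and at its core it uses the same idea as the paper: the image of $\omega_\CI$ is automatically closed because it coincides with the image of a proper map out of a compact incidence variety, so dominance forces surjectivity. The packaging differs. The paper factors $\omega_\CI$ through $\bar\omega_\CI$ defined on the associated bundle $\GL(n)\times_{P(r,n)}\prod_k\flag_{I_k}(r,n)$ and then invokes compactness of that domain in the Euclidean topology to conclude the image is closed. You instead exhibit the incidence variety $\CZ_\CI\subseteq\grass(r,n)\times\flag(n)^s$ directly as a closed subvariety cut out by the rank conditions $\dim(V\cap E_k(I_k(a)))\geq a$ (which is the content of \cref{lem:schubert variety characterization}, rephrased), and apply properness of the projection off a projective factor. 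Under the identification of $\GL(n)\times_{P(r,n)}F$ with a subvariety of $\grass(r,n)\times\flag(n)^s$ sketched in the paper's notation section, your $\CZ_\CI$ is exactly the domain of $\bar\omega_\CI$ and your $\pi$ is $\bar\omega_\CI$, so the two arguments are the same up to this change of lens. Your framing is arguably cleaner and more intrinsically algebraic (rank conditions plus properness, no appeal to the Euclidean topology); the paper's framing constructs $\bar\omega_\CI$ once and reuses it immediately afterwards in the dimension count of \cref{lem:edim nonnegative} and in the proof of \cref{lem:good set}, which is why it opts for that presentation.
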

\begin{proof}
  On the one hand, \cref{lem:images} shows that $\CI$ is intersecting if and only if $\omega_\CI$ is surjective.
  On the other hand, we have just observed that $\omega_\CI^0$ is dominant if and only if $\omega_\CI$ is dominant.
  Thus it remains to show that $\omega_\CI$ is automatically surjective if it is dominant.
  For this, we observe that the space $\flag_{I_1}(r,n)\times\dots\times\flag_{I_s}(r,n)$ is left invariant by the diagonal action of the parabolic $P(r,n)$, as can be seen from~\eqref{eq:flag equivariance}.
  Thus $\omega_\CI$ factors over a map
  \begin{equation}
  \label{eq:delta I factorized}
    \bar\omega_\CI\colon \begin{cases}
      \GL(n) \times_{P(r,n)} \flag_{I_1}(r,n)\times\dots\times\flag_{I_s}(r,n) \to \flag(n)^s \\
      [\gamma, E_1, \dots, E_s] \mapsto (\gamma E_1, \dots, \gamma E_s).
    \end{cases}
  \end{equation}
  Clearly, $\omega_\CI$ and $\bar\omega_\CI$ have the same image.
  If $\bar\omega_\CI$ is dominant, then its image contains a nonempty Zariski-open set and therefore is dense in the Euclidean topology.
  But the domain of $\bar\omega_\CI$ is compact in the Euclidean topology and hence the image is also closed in the Euclidean topology.
  It follows that $\bar\omega_\CI$ is automatically surjective if $\bar\omega_\CI$ is dominant.
\end{proof}

A first, obvious condition for $\CI$ to be intersecting is therefore that the dimension of the domain of $\omega_\CI$ is no smaller than the dimension of the target space.
If we apply this argument to the factored map~\eqref{eq:delta I factorized}, which has the same image, we obtain that the expected dimension introduced in \cref{def:edim} is nonnegative:

\begin{lemma}
\label{lem:edim nonnegative}
  If $\CI\in\Intersecting(r,n,s)$ then
\printedimnonnegative{equation}{\label{eq:edim nonnegative}}
\end{lemma}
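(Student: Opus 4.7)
The plan is to exploit the dominance reformulation of the intersecting property (\cref{lem:dominant}) together with the factored morphism $\bar\omega_\CI$ from \eqref{eq:delta I factorized}. The point is that a dominant morphism between irreducible algebraic varieties must have domain of dimension at least that of the target, so a dimension count will immediately yield the desired inequality.

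First, I would compute the dimension of the domain of $\bar\omega_\CI$. Since $\GL(n) \times_{P(r,n)} Y$ is fibered over $\GL(n)/P(r,n) \cong \grass(r,n)$ with fiber $Y := \flag_{I_1}(r,n) \times \cdots \times \flag_{I_s}(r,n)$, we have
\[
  \dim\bigl(\GL(n)\times_{P(r,n)} Y\bigr) = r(n-r) + \sum_{k=1}^s \dim\flag_{I_k}(r,n).
\]
Next I would invoke \eqref{eq:codim flag vs grass}, which states that $\dim\flag(n) - \dim\flag_{I_k}(r,n) = r(n-r) - \dim I_k$, to rewrite each $\dim\flag_{I_k}(r,n) = \dim\flag(n) - (r(n-r) - \dim I_k)$. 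Substituting, the dimension of the domain becomes
\[
  s \cdot \dim\flag(n) + r(n-r) - \sum_{k=1}^s \bigl(r(n-r) - \dim I_k\bigr) = s\cdot\dim\flag(n) + \edim \CI.
\]

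Finally, I would appeal to \cref{lem:dominant}: if $\CI$ is intersecting then $\bar\omega_\CI$ (whose image agrees with that of $\omega_\CI$) is dominant, and a dominant morphism of irreducible varieties satisfies $\dim(\text{domain}) \geq \dim(\text{target})$. Since the target $\flag(n)^s$ has dimension $s\cdot\dim\flag(n)$, comparing with the expression above yields $\edim \CI \geq 0$.

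I do not expect any real obstacle here: the only mildly delicate points are (i) knowing that $\bar\omega_\CI$ has a well-defined domain whose dimension is the naive sum (which is already encoded in the setup $\GL(n)\times_{P(r,n)} (\cdots)$), and (ii) using that $\flag_{I_k}(r,n)$ and $\flag_{I_k}^0(r,n)$ have the same dimension, which is \eqref{eq:dim flag^0_I}. Everything else is bookkeeping.
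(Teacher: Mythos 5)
Your argument is correct and takes essentially the same route as the paper: factor $\omega_\CI$ through $\bar\omega_\CI$, invoke dominance from \cref{lem:dominant}, and then do the dimension count via \eqref{eq:codim flag vs grass} (and the equality $\dim\flag_{I_k}^0 = \dim\flag_{I_k}$ from \eqref{eq:dim flag^0_I}). The paper computes $\dim\CX - \dim\CY$ directly rather than each side separately, but the content is identical.
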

\begin{proof}
  Let $\CX := \GL(n) \times_{P(r,n)} \flag_{I_1}(r,n)\times\dots\times\flag_{I_s}(r,n)$ and $\CY := \flag(n)^s$.
  If $\CI$ is intersecting then the map $\bar\omega_\CI\colon\CX\to\CY$ in~\eqref{eq:delta I factorized} is dominant, hence $\dim\CX\geq\dim\CY$.
  But
  \begin{equation}
  \label{eq:edim as dim diff}
  \begin{aligned}
    &\quad \dim\CX-\dim\CY
    = (\dim \GL(n)/P(r,n)) + \sum_{k=1}^s (\dim\flag_{I_k}(r,n) - \dim \flag(n)) \\
    &= \dim \grass(r,n) - \sum_{k=1}^s (\dim \grass(r,n) - \dim I_k) = \edim\CI
  \end{aligned}
  \end{equation}
  where the first equality is obvious and the second is \cref{eq:codim flag vs grass}.
\end{proof}

At this point, we have established all facts that we used in \cref{sec:easy} to prove \cref{cor:belkale inductive}.
That is, the proof of \cref{cor:belkale inductive} is now complete.

We conclude this section by recording the following rules for the expected dimension,
\begin{align}
\label{eq:quotient edim}
  \edim\CI/\CJ &= \edim\CI + \edim\CJ - \edim\CI\CJ, \\
\label{eq:exp edim induction}
  \edim\CI^\CJ\CK - \edim \CK &= \edim\CI(\CJ\CK) - \edim \CJ\CK, \\
\label{eq:exp edim}
  \edim\CI^\CJ &= \edim\CI\CJ - \edim \CJ,
\end{align}
which hold for all
$\CI\in\Subsets(r,n,s)$,
$\CJ\in\Subsets(d,r,s)$, and
$\CK\in\Subsets(m,d,s)$.
\Cref{eq:quotient edim,eq:exp edim induction,eq:exp edim} are direct consequences of \cref{lem:quotient dim,lem:exp dim chain rule}.
\Cref{eq:exp edim induction} in particular will play a crucial role in \cref{subsec:kernel recurrence}, as we will use it to show that if $\CI$ satisfies the Horn inequalities and $\CJ$ is intersecting then so does $\CI^\CJ$.
This will be key to establishing Belkale's theorem on the sufficiency of the Horn inequalities by induction (\cref{thm:belkale}).

\subsection{Slopes and Horn inequalities}
We are now interested in proving a strengthened version of \cref{cor:belkale inductive} (see \cref{cor:belkale inductive stronger} below).
As a first step, we introduce the promised `good set' of $s$-tuples of flags which are sufficiently generic to detect when an $s$-tuple $\CI$ is intersecting:
Define in analogy to~\eqref{eq:delta I factorized} the map
\begin{equation*}
  \bar\omega^0_\CI \colon \begin{cases}
    \GL(n) \times_{P(r,n)}\flag^0_{I_1}(r,n)\times\cdots\times\flag^0_{I_s}(r,n) \to \flag(n)^s \\
    [\gamma, E_1, \dots, E_s] \mapsto (\gamma E_1, \dots, \gamma E_s)
  \end{cases}.
\end{equation*}

\begin{lemma}\label{lem:good set}
  There exists a nonempty Zariski-open subset $\Good(n, s) \subseteq \flag(n)^s$ that satisfies the following three properties for all $r\in[n]$:
  \begin{enumerate}[label=(\alph*)]
  \item\label{item:good set a} $\Good(n,s)$ consists of regular values (in the image) of~$\bar\omega^0_\CI$ for every $\CI\in\Intersecting(r,n,s)$.
  \item\label{item:good set b} For every $\CI\in\Subsets(r,n,s)$, the following are equivalent:
    \begin{enumerate}
    \item \label{item:good set i} $\CI \in \Intersecting(r, n, s)$.
    \item \label{item:good set ii} For all $\CE \in \Good(n, s)$, $\Omega^0_\CI(\CE) \neq \emptyset$.
    \item \label{item:good set iii} There exists $\CE \in \Good(n, s)$ such that $\Omega^0_{\CI}(\CE) \neq \emptyset$.
    \end{enumerate}
  \item\label{item:good set c} If $\CI\in\Intersecting(r,n,s)$, then for every $\CE\in\Good(n,s)$ the variety $\Omega^0_\CI(\CE)$ has the same number of irreducible components, each connected component is of dimension $\edim \CI$, and $\Omega^0_{\CI}(\CE)$ is dense in $\Omega_{\CI}(\CE)$.
  \end{enumerate}
\end{lemma}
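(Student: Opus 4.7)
The plan is to construct $\Good(n,s)$ as a finite intersection of Zariski-open subsets of $\flag(n)^s$, taking one contribution per $\CI \in \Subsets(r,n,s)$ as $r$ ranges over $[n]$. For each $\CI \in \Intersecting(r,n,s)$, \cref{lem:dominant} ensures that $\bar\omega^0_\CI$ is dominant; since its domain and target are smooth irreducible varieties, generic smoothness yields a nonempty Zariski-open $U_\CI \subseteq \flag(n)^s$ consisting of regular values of $\bar\omega^0_\CI$ lying in its image. Shrinking $U_\CI$ if necessary by a constructibility/upper-semicontinuity argument, I can also arrange that the number of irreducible components of the fiber $(\bar\omega^0_\CI)^{-1}(\CE)$ is constant over $U_\CI$. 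For each non-intersecting $\CI \in \Subsets(r,n,s)$, the map $\bar\omega_\CI$ fails to be dominant (\cref{lem:dominant}); since its domain is projective (a bundle over $\grass(r,n)$ with projective fiber $\prod_k \flag_{I_k}(r,n)$), the image $Z_\CI := \im \bar\omega_\CI$ is a proper Zariski-closed subset of $\flag(n)^s$ that in particular contains $\im \bar\omega^0_\CI$. I then set
\begin{equation*}
  \Good(n,s) := \bigcap_{r \in [n]} \biggl(\bigcap_{\CI \in \Intersecting(r,n,s)} U_\CI\biggr) \cap \biggl(\bigcap_{\CI \in \Subsets(r,n,s) \setminus \Intersecting(r,n,s)} \bigl(\flag(n)^s \setminus Z_\CI\bigr)\biggr),
\end{equation*}
a finite intersection of nonempty Zariski-open subsets of the irreducible variety $\flag(n)^s$, hence Zariski-open and nonempty. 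Property~(a) then holds by construction.

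Property~(b) is then immediate. For (i)$\Rightarrow$(ii), the inclusion $\Good(n,s) \subseteq U_\CI \subseteq \im \bar\omega^0_\CI$ forces $\Omega^0_\CI(\CE) \neq \emptyset$ via \cref{lem:images}. The implication (ii)$\Rightarrow$(iii) is trivial. For (iii)$\Rightarrow$(i), if $\CE \in \Good(n,s)$ with $\Omega^0_\CI(\CE) \neq \emptyset$ then $\CE \in \im \bar\omega^0_\CI$; if $\CI$ were not intersecting this would place $\CE$ in $Z_\CI$, contradicting $\CE \in \Good(n,s)$.

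For (c), the key observation is that the fiber $(\bar\omega^0_\CI)^{-1}(\CE)$ is in canonical bijection with $\Omega^0_\CI(\CE)$ via $[\gamma, E_1, \ldots, E_s] \mapsto \gamma V_0$. Hence for $\CE \in \Good(n,s) \subseteq U_\CI$, $\Omega^0_\CI(\CE)$ is smooth of pure dimension $\edim \CI$ (by \cref{eq:edim as dim diff}) with a number of irreducible components independent of $\CE$. Density of $\Omega^0_\CI(\CE)$ in $\Omega_\CI(\CE)$ I would derive from the stratification $\Omega_\CI(\CE) = \bigsqcup_{\CI' \leq \CI} \Omega^0_{\CI'}(\CE)$ provided by \cref{lem:schubert variety characterization}: for $\CI' \lneq \CI$ we have $\edim \CI' < \edim \CI$, so either $\CI'$ is non-intersecting (and $\Omega^0_{\CI'}(\CE) = \emptyset$) or, by the same argument applied to $\CI'$, the stratum has dimension $\edim \CI' < \edim \CI$. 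Since every irreducible component of $\Omega_\CI(\CE) = \bigcap_k \Omega_{I_k}(E_k)$ has dimension at least $\edim \CI$ (by the codimension-subadditivity argument underlying \cref{lem:dim intersection}, applied to the closed Schubert varieties), no such component can lie entirely in the smaller strata; hence every one meets $\Omega^0_\CI(\CE)$ and is the closure of a component of $\Omega^0_\CI(\CE)$. The main obstacle is this density step: it requires combining the codimension lower bound for every component of the closed intersection with the strict drop $\edim \CI' < \edim \CI$, and simultaneously verifying that the component-count can be pinned down as a Zariski-open condition uniformly across the finitely many intersecting $\CI$.
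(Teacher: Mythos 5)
Your proposal is correct and follows essentially the same route as the paper: construct $\Good(n,s)$ as a finite intersection of Zariski-open sets $U_\CI$ (regular values with constant fiber component count for intersecting $\CI$, complements of the image for non-intersecting $\CI$), then verify (b) from the inclusions and (c) via the identification of the $\bar\omega^0_\CI$-fiber with $\Omega^0_\CI(\CE)$ together with the Schubert stratification of $\Omega_\CI(\CE) \setminus \Omega^0_\CI(\CE)$ into lower-dimensional strata. The only superficial difference is that you observe $\im\bar\omega_\CI$ is closed because the domain is projective, whereas the paper equivalently takes the Zariski closure of $\im\omega^0_\CI$; these coincide. Your flagged concern about the density step is exactly what the paper does, and the dimension lower bound for components of the closed intersection (which the paper also uses without explicit justification) is the standard fact that intersections in a smooth ambient variety have components of codimension at most the sum of the codimensions, e.g.\ via the diagonal trick in $\grass(r,n)^s$.
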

\begin{proof}
  Let us construct $\Good(n,s)$ satisfying the properties above.
  Let $\CI\in\Subsets(r,n,s)$, where $r\in[n]$.
  If $\CI\not\in\Intersecting(r,n,s)$ then by \cref{lem:dominant} the map $\omega^0_\CI$ is not dominant, and we define $U_\CI$ as the complement of the Zariski-closure of $\im \omega^0_\CI$.
  Thus $U_\CI$ is a nonempty Zariski-open subset of $\flag(n)^s$.
  Otherwise, if $\CI\in\Intersecting(r,n,s)$ then $\omega^0_\CI$ is dominant by \cref{lem:dominant}. The map $\bar\omega^0_\CI$ has the same image as $\omega^0_\CI$ and is therefore also a dominant map between smooth irreducible varieties.
  Thus its image contains a nonempty Zariski-open subset $U_\CI$ of $\flag(n)^s$ consisting of regular values, such that the fibers $(\bar\omega^0_\CI)^{-1}(\CE)$ for $\CE \in U_{\CI}$ all have the same number of irreducible components, each of dimension equal to $\edim \CI$, by the calculation in~\eqref{eq:edim as dim diff}.
  We now define the good set as
  \[ \Good(n,s) := \bigcap_\CI U_\CI, \]
  where the intersection is over all $s$-tuples $\CI$, intersecting or not.
  As a finite intersection of nonempty Zariski-open subsets, $\Good(n,s)$ is again nonempty and Zariski-open.
  By construction, it satisfies property~\ref{item:good set a}.

  We now show that $\Good(n,s)$ satisfies~\ref{item:good set b}.
  To see that~\ref{item:good set i} implies~\ref{item:good set ii}, note that for any $\CI \in \Intersecting(r,n,s)$ and $\CE\in\Good(n,s)$,
  $\CE \in U_\CI \subseteq \im\bar\omega_\CI^0 = \im\omega_\CI^0$.
  Thus \cref{lem:images} shows that $\Omega^0_\CI(\CE)\neq\emptyset$.
  Clearly,~\ref{item:good set ii} implies~\ref{item:good set iii} since $\Good(n,s)$ is nonempty.
  Lastly, suppose that~\ref{item:good set iii} holds. By \cref{lem:images}, $\Omega^0_\CI(\CE)\neq\emptyset$ implies that $\CE\in\im\omega_\CI^0$.
  But $\CE\in\Good(n,s) \subseteq (\im\omega_\CI)^c \subseteq (\im\omega_\CI^0)^c$ unless $\CI$ is intersecting; this establishes~\ref{item:good set i}.
  

  Lastly, we verify ~\ref{item:good set c}.
  Observe that, for any $\CE\in\flag(n)^s$, the fiber $(\bar\omega^0_\CI)^{-1}(\CE)$ is equal to the set of $[\gamma, \gamma^{-1}E_1, \dots, \gamma^{-1}E_s]$ such that $\gamma^{-1} E_k \in \flag^0_{I_k}(r,n)$ for all $k\in[s]$.
  It can therefore by $\gamma \mapsto \gamma V_0$ be identified with $\Omega^0_\CI(\CE)$.
  Now assume that $\CI\in\Intersecting(r,n,s)$. As we vary $\CE\in\Good(n,s)$, $\CE\in U_\CI$ and so $(\bar\omega^0_\CI)^{-1}(\CE) \cong \Omega^0_\CI(\CE)$ has the same number of irreducible components, each of dimension $\edim\CI$.
  We still need to show that $\Omega_\CI^0(\CE)$ is dense in $\Omega_\CI(\CE)$.
  This will follow if we can show that $\Omega_\CI^0(\CE)$ meets any irreducible component $\CZ$ of $\Omega_\CI(\CE)$.
  Let us assume that this is not the case, so that $\CZ \subseteq \Omega_\CI(\CE) \setminus \Omega_\CI^0(\CE)$.
  But
  \[ \Omega_\CI(\CE) \setminus \Omega_\CI^0(\CE)
   = \bigcup_{k=1}^s \biggl( (\Omega_{I_k}(E_k) \setminus \Omega^0_{I_k}(E_k)) \cap \bigcap_{l\neq k} \Omega_{I_l}(E_l) \biggr)
   = \bigcup_{\substack{I'_1\leq I_1,\ \dots,\ I'_s\leq I_s \\ \exists k \in [s]: I'_k \neq I_k}} \Omega^0_{\CI'}
  \]
  by \cref{lem:schubert variety characterization}.
  That is, $\Omega_\CI(\CE) \setminus \Omega_\CI^0(\CE)$ is a union of varieties $\Omega^0_{\CI'}(\CE)$ with $\edim\CI'<\edim\CI$.
  If $\CI'$ is intersecting then any irreducible component of $\Omega^0_{\CI'}(\CE)$ has dimension equal to $\edim\CI'$.
  Otherwise, if $\CI'$ is not intersecting, then $\Omega^0_{\CI'}(\CE) = \emptyset$.
  It follows that any irreducible component of $\Omega_\CI(\CE) \setminus \Omega_\CI^0(\CE)$ has dimension strictly smaller than $\edim\CI$.
  But this is a contradiction, since the dimension of $\CZ$ is equal to at least $\edim\CI$.
\end{proof}

The following is a direct consequence of the equivalence between~\ref{item:good set i} and~\ref{item:good set iii} in \cref{lem:good set}:
\begin{equation}
\label{eq:intersecting via good}
  \Intersecting(r,n,s) = \{ \Pos(V,\CE) : V \subseteq \C^n, \dim V = r \}
\end{equation}
for every $\CE\in\Good(n,s)$.

We now study the numerical inequalities satisfied by intersecting $s$-tuples more carefully.
Recall that a weight $\theta$ for $\GL(r)$ is \emph{antidominant} if $\theta(1)\leq\dots\leq\theta(r)$.
For example, given a subset $I\subseteq[n]$ of cardinality $r$, the weight $\theta(a) := I(a)-a$ is antidominant.
It is convenient to introduce the following definition:

\begin{definition}\label{def:slope}
  Given an $s$-tuple $\vec\theta=(\theta_1,\dots,\theta_s)$ of antidominant weights for $\GL(r)$, we define the \emph{slope} of a tuple $\CJ\in\Subsets(d,r,s)$ as
  \[ \mu_{\vec\theta}(\CJ) := \frac1d \sum_{k=1}^s \sum_{a \in J_k} \theta_k(a) = \frac1d \sum_{k=1}^s (T_{J_k}, \theta_k). \]
  For any nonzero subspace $\{0\}\neq S\subseteq\C^r$ and $s$-tuple of flags $\CF$ on $\C^r$, we further define
  \[ \mu_{\vec\theta}(S,\CF) := \mu_{\vec\theta}(\Pos(S, \CF)). \]
  Here and in the following, we write $\Pos(S,\CF)$ for the $s$-tuple of positions $(\Pos(S,F_k))_{k\in[s]}$.
\end{definition}
Note that we can interpret $\mu_{\vec\theta}(\CJ)$ as a sum of averages of the nowhere decreasing functions $\theta_k$ for uniform choice of $a \in J_k$.

The following lemma asserts that there is a unique slope-minimizing subspace of maximal dimension:

\begin{lemma}[Harder--Narasimhan,~\cite{belkale2001local}]
\label{lem:slope}
  Let $\vec\theta$ be an $s$-tuple of antidominant weights for $\GL(r)$, and $\CF\in\flag(r)^s$.
  Let $m_* := \min_{\{0\}\neq S\subseteq\C^r} \mu_{\vec\theta}(S,\CF)$ and $d_* := \max \{ \dim S : \mu_{\vec\theta}(S,\CF) = m_* \}$.
  Then there exists a unique subspace $S_* \subseteq \C^r$ such that $\mu_{\vec\theta}(S_*,\CF) = m_*$ and $\dim S_* = d_*>0$.
\end{lemma}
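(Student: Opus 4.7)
The plan is first to observe that $m_*$ and $d_*$ are well-defined -- since $\mu_{\vec\theta}(S,\CF)$ depends on $S$ only through the $s$-tuple $(\Pos(S,F_k))_{k\in[s]}$, and there are only finitely many such tuples, the slope function takes only finitely many values on nonzero subspaces, so $m_*$ is attained; and $d_*$ is a well-defined positive integer because dimensions are bounded by $r$. Uniqueness will then follow from a Harder--Narasimhan style argument based on a subadditivity property of the slope under taking sums and intersections of subspaces.

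The key technical input is the slope inequality
\begin{equation*}
  \dim(S_1)\,\mu_{\vec\theta}(S_1,\CF) + \dim(S_2)\,\mu_{\vec\theta}(S_2,\CF) \geq \dim(S_1+S_2)\,\mu_{\vec\theta}(S_1+S_2,\CF) + \dim(S_1\cap S_2)\,\mu_{\vec\theta}(S_1\cap S_2,\CF),
\end{equation*}
valid for all nonzero $S_1,S_2\subseteq\C^r$, with the convention that the last summand vanishes when $S_1\cap S_2=\{0\}$. To prove it I fix $k\in[s]$ and set $J=\Pos(S_1,F_k)$, $K=\Pos(S_1\cap S_2, F_k^{S_1})$, $L=\Pos(S_1+S_2,F_k)$, $M=\Pos(S_2,F_k^{S_1+S_2})$, so that by the chain rule (\cref{lem:chain rule}) $\Pos(S_1\cap S_2,F_k)=JK$ and $\Pos(S_2,F_k)=LM$. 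The disjoint decompositions $J=JK\sqcup JK^c$ and $L=LM\sqcup LM^c$ reduce the $k$-th component of the slope inequality to $\sum_b \theta_k(JK^c(b))\geq \sum_b \theta_k(LM^c(b))$, which follows from \cref{cor:slope preliminary} together with the fact that the antidominant weight $\theta_k$ is nondecreasing. Summing over $k$ gives the slope inequality.

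For uniqueness, suppose for contradiction that $S_1\neq S_2$ are distinct subspaces of dimension $d_*$ with $\mu_{\vec\theta}(S_i,\CF)=m_*$. Then $\dim(S_1+S_2)>d_*$, and using $\dim S_1+\dim S_2=\dim(S_1+S_2)+\dim(S_1\cap S_2)$, the slope inequality rewrites as
\begin{equation*}
  (\dim(S_1+S_2)+\dim(S_1\cap S_2))\,m_* \geq \dim(S_1+S_2)\,\mu_{\vec\theta}(S_1+S_2,\CF)+\dim(S_1\cap S_2)\,\mu_{\vec\theta}(S_1\cap S_2,\CF).
\end{equation*}
Since $\mu_{\vec\theta}(S_1+S_2,\CF)\geq m_*$ and $\mu_{\vec\theta}(S_1\cap S_2,\CF)\geq m_*$ by the minimality of $m_*$ (the intersection term being vacuous when $S_1\cap S_2=\{0\}$), the right-hand side dominates the left-hand side, forcing $\mu_{\vec\theta}(S_1+S_2,\CF)=m_*$. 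But $\dim(S_1+S_2)>d_*$ then contradicts the maximality of $d_*$, so $S_1=S_2$.

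The main obstacle is the slope inequality; once it is in place, the Harder--Narasimhan contradiction is immediate. The technical delicacy there lies in matching up the positions of the four subspaces $S_1,\,S_2,\,S_1+S_2,\,S_1\cap S_2$ with respect to each flag $F_k$ -- which is precisely what \cref{cor:slope preliminary} arranges -- and then exploiting the antidominance of $\theta_k$ to turn the resulting positional inequality into the numerical inequality of slopes.
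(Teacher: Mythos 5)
Your proof is correct and follows the same essential route as the paper: the chain rule, \cref{cor:slope preliminary}, and antidominance of $\theta_k$ are the key ingredients in both. The one presentational difference is that you isolate the core numerical estimate as a standalone submodularity inequality
\[
  \dim(S_1)\,\mu_{\vec\theta}(S_1,\CF) + \dim(S_2)\,\mu_{\vec\theta}(S_2,\CF) \geq \dim(S_1+S_2)\,\mu_{\vec\theta}(S_1+S_2,\CF) + \dim(S_1\cap S_2)\,\mu_{\vec\theta}(S_1\cap S_2,\CF),
\]
and then close with an averaging argument; the paper instead derives the equivalent chain $m_* \geq \mu_{\vec\theta}(\CJ\CK^c) \geq \mu_{\vec\theta}(\CL\CM^c) > m_*$ inline, absorbing the strict maximality of $d_*$ into the last inequality. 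The two derivations are arithmetically equivalent (your slope inequality \emph{is} the paper's inequality $\mu_{\vec\theta}(\CJ\CK^c)\geq\mu_{\vec\theta}(\CL\CM^c)$ rearranged with the modular equation), but your modular packaging is arguably cleaner and makes the Harder--Narasimhan structure more transparent. One small point worth flagging: the direction you need from \cref{cor:slope preliminary} is $J_kK_k^c(b) \geq L_kM_k^c(b)$, which is what the corollary's proof actually establishes; the inequality as \emph{stated} in \cref{cor:slope preliminary} has a sign typo ($\leq$ should read $\geq$). You and the paper's proof of the present lemma both implicitly use the corrected direction.
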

\begin{proof}
  Existence is immediate, so it remains to show uniqueness.
  Thus suppose for sake of finding a contradiction that there are two such subspaces, $S_1\neq S_2$, such that $\mu_{\vec\theta}(S_j,\CF) = m_*$ and $\dim S_j = d_*$ for $j=1,2$.
  We note that $d_*>0$ and that the inclusions $S_1\cap S_2\subsetneq S_1$ and $S_2 \subsetneq S_1+S_2$ are strict.

  Let $\CJ = \Pos(S_1,\CF)$ and $\CK = \Pos(S_1\cap S_2, \CF^{S_1})$.
  Then $\Pos(S_1\cap S_2, \CF) = \CJ\CK$ by the chain rule (\cref{lem:chain rule}).
  Let us first assume that $S_1\cap S_2\neq\{0\}$, so that $\mu_{\vec\theta}(\CJ\CK)$ is well-defined.
  Then,
  \[ \mu_{\vec\theta}(\CJ\CK) = \mu_{\vec\theta}(S_1\cap S_2,\CF) \geq m_* = \mu_{\vec\theta}(S_1,\CF) = \mu_{\vec\theta}(\CJ), \]
  where the equalities hold by definition, and the inequality holds as $m_*$ is the minimal slope.
  On the other hand, note that $J_k = J_k K_k \cup J_k K_k^c$ for each $k\in[s]$, hence we can write
  \[ \mu_{\vec\theta}(\CJ) = \frac d {d_*} \mu_{\vec\theta}(\CJ\CK) + \frac {d_*-d} {d_*} \mu_{\vec\theta}(\CJ\CK^c), \]
  where $d := \dim S_1\cap S_2 < \dim S_1 = d_*$.
  It follows that
  \begin{equation}
  \label{eq:slope first ieq}
    m_* = \mu_{\vec\theta}(\CJ) \geq \mu_{\vec\theta}(\CJ\CK^c).
  \end{equation}
  If $S_1\cap S_2=\{0\}$ then $\CJ=\CJ\CK^c$ and so~\eqref{eq:slope first ieq} holds with equality.

  Likewise, let $\CL = \Pos(S_1 + S_2, \CF)$ and $\CM = \Pos(S_2, \CF^{S_1+S_2})$.
  Since $S_2\subsetneq S_1+S_2$, but $S_2$ was assumed to be a maximal-dimensional subspace with minimal slope, it follows that the slope of $S_1+S_2$ is strictly larger than $m_*$:
  \[ \mu_{\vec\theta}(\CL\CM) = \mu_{\vec\theta}(S_2, \CF) = m_* < \mu_{\vec\theta}(S_1 + S_2, \CF) = \mu_{\vec\theta}(\CL). \]
  Just as before, we decompose
  \[ \mu_{\vec\theta}(\CL) = \frac {d_*} {d'} \mu_{\vec\theta}(\CL\CM) + \frac {d' - d_*} {d'} \mu_{\vec\theta}(\CL\CM^c), \]
  where now $d' := \dim S_1+S_2 > \dim S_2 = d_* > 0$.
  Thus we obtain the strict inequality
  \begin{equation}
  \label{eq:slope second ieq}
    \mu_{\vec\theta}(\CL\CM^c) > \mu_{\vec\theta}(\CL\CM) = m_*.
  \end{equation}
  At last, we apply \cref{cor:slope preliminary}, which shows that $J_k K_k^c(b) \geq L_k M_k^c(b)$ for all $b$ and $k$, and hence
  \[ \mu_{\vec\theta}(\CJ\CK^c) \geq \mu_{\vec\theta}(\CL\CM^c). \]
  Together with~\eqref{eq:slope first ieq} and~\eqref{eq:slope second ieq}, we obtain the desired contradiction:
  \[ m_* \geq \mu_{\vec\theta}(\CJ\CK^c) \geq \mu_{\vec\theta}(\CL\CM^c) > m_*. \qedhere \]
\end{proof}

We will now use \cref{lem:good set,lem:slope} to show that the conditions in \cref{cor:belkale inductive} with $\edim\CJ = 0$ imply those for general intersecting $\CJ$.

\begin{definition}
\label{def:lambda_I}
Let $I \subseteq [n]$ be a subset of cardinality $r$. We define $\lambda_I\in\Lambda_+(r)$ by
\[ \lambda_I(a) := a - I(a) \qquad (a\in[r]). \]
Any highest weight $\lambda$ with $\lambda(1)\leq 0$, $\lambda(r)\geq r-n$ can be written in this form.
Moreover, if $I^c$ denotes the complement of $I$ in $[n]$ then the dominant weight $\lambda_{I^c}\in\Lambda_+(n-r)$ can be written as
\begin{equation}
\label{eq:complement}
  \lambda_{I^c}(b) = b - I^c(b)
= -\#\{ a \in [r] : I(a) < I^c(b) \}
= -\#\{ a \in [r] : I(a) - a < b \}.
\end{equation}
\end{definition}

\begin{remark*}
This equation has a pleasant interpretation in terms of Young diagrams.
Consider the Young diagram $Y_I$ corresponding to $\lambda_I^*$, which has $I(r+1-a) - (r+1-a)$ boxes in its $a$-th row.
By definition, its transpose $Y_I^t$ is the Young diagram such that the number of boxes in the $b$-th row is equal to the number of boxes in the $b$-th column of $T_I$.
Thus~\eqref{eq:complement} asserts that $Y_I^t = r\one_{n-r} + \lambda_{I^c}$, i.e., the two Young diagrams $Y_I^t$ and $Y_{I^c}$ (the latter with rows in reverse order) make up a rectangle of size $r\times(n-r)$.
\end{remark*}

\begin{lemma}
\label{lem:I to lambdaprime}
  Let $\CI\in\Subsets(r,n,s)$.
  Set $\lambda_k = \lambda_{I_k} + (n-r)\one_r$ for $k\in[s-1]$ and $\lambda_s = \lambda_{I_s}$.
  Then we have that $\edim\CI=-\sum_{k=1}^s \lvert\lambda_k\rvert$.
  More generally, for every $\CJ\in\Subsets(d,r,s)$,
  \[ \edim\CI\CJ-\edim\CJ
  = -\sum_{k=1}^s (T_{J_k}, \lambda_k)
  = d \mu_{-\vec\lambda}(\CJ), \]
  where we recall that $(T_J,\xi)=\sum_{j\in J} \xi(j)$ for any $J\subseteq[r]$ and $\xi\in i\mathfrak t(r)$.
\end{lemma}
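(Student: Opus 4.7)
The plan is to prove the more general identity by direct bookkeeping from the definitions, and then recover the first claim as the special case $\CJ=([r],\dots,[r])$.

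The key numerical fact I will use is that, for $I\subseteq[n]$ of cardinality $r$ and $J\subseteq[r]$ of cardinality $d$, unpacking \cref{def:lambda_I} gives
\[ \dim IJ - \dim J = \sum_{b=1}^d \bigl(I(J(b))-J(b)\bigr) = -\sum_{b=1}^d \lambda_I(J(b)) = -(T_J,\lambda_I), \]
with the $J=[r]$ case specializing to $\dim I = -\lvert\lambda_I\rvert$. First I would expand $\edim\CI\CJ$ and $\edim\CJ$ from \cref{def:edim} and observe that the $d(n-d)$ and $d(r-d)$ contributions combine into $d(n-r)$ terms, yielding
\[ \edim\CI\CJ - \edim\CJ = -(s-1)d(n-r) + \sum_{k=1}^s\bigl(\dim I_kJ_k - \dim J_k\bigr). \]
Substituting the key identity above turns this into $-(s-1)d(n-r) - \sum_{k=1}^s (T_{J_k},\lambda_{I_k})$.

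It then remains to match this expression with $-\sum_k (T_{J_k},\lambda_k)$. Since $\lambda_k=\lambda_{I_k}+(n-r)\one_r$ for $k\in[s-1]$ and $\lambda_s=\lambda_{I_s}$, each of the first $s-1$ pairings satisfies $(T_{J_k},\lambda_k)=(T_{J_k},\lambda_{I_k})+d(n-r)$, so that
\[ \sum_{k=1}^s (T_{J_k},\lambda_k) = (s-1)d(n-r) + \sum_{k=1}^s (T_{J_k},\lambda_{I_k}), \]
which is precisely the offset produced by the $\edim$ computation; comparing the two expressions proves the second equality. The rewriting as $d\,\mu_{-\vec\lambda}(\CJ)$ is then immediate from \cref{def:slope}. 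For the first assertion, I take $J_k=[r]$ for every $k\in[s]$, so that $\CI\CJ=\CI$, $\edim\CJ=0$, and $(T_{[r]},\lambda_k)=\lvert\lambda_k\rvert$, whence $\edim\CI=-\sum_k\lvert\lambda_k\rvert$.

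The entire argument is linear bookkeeping, so there is no substantive obstacle. The only subtle point is the asymmetry between the last factor $k=s$ (where no shift is applied) and the factors $k<s$ (shifted by $(n-r)\one_r$): the purpose of this asymmetric convention is precisely to absorb the $(s-1)d(n-r)$ discrepancy that arises when comparing $\edim$'s computed relative to $[n]$ versus $[r]$.
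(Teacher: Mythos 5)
Your proof is correct and takes essentially the same route as the paper: a direct expansion of the expected dimensions that produces the constant $d(n-r)(1-s)$ plus $\sum_k\sum_{a\in J_k}(I_k(a)-a)$, which is then identified with $-\sum_k(T_{J_k},\lambda_{I_k})$ and converted to $-\sum_k(T_{J_k},\lambda_k)$ via the $(n-r)\one_r$ shift. The only cosmetic difference is that you route the bookkeeping through the identity $\dim I_kJ_k-\dim J_k=-(T_{J_k},\lambda_{I_k})$ and spell out the specialization $J_k=[r]$, while the paper records the same computation as a single chain of equalities and merely notes the first claim is the special case.
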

\begin{proof}
  It suffices to prove the second statement, which follows from
  \begin{align*}
  &\quad \edim\CI\CJ-\edim\CJ
  = d(n-r)(1-s) + \sum_{k=1}^s \sum_{a\in J_k} \bigl( I_k(a) - a \bigr) \\
  &= d(n-r)(1-s) - \sum_{k=1}^s (T_{J_k}, \lambda_{I_k})
  = - \sum_{k=1}^s (T_{J_k}, \lambda_k)
  = d \mu_{-\vec\lambda}(\CJ). \qedhere
  \end{align*}
\end{proof}

It follows that minimizing $\mu_{-\vec\lambda}(\CJ)$ and $\frac1d\bigl(\edim\CI\CJ - \edim\CJ\bigr)$ as a function of $\CJ$ are equivalent.
We then have the following result:

\begin{proposition}
\label{prp:belkale strong weak}
  Let $\CI\in\Subsets(r,n,s)$ such that $\edim\CI\geq0$ and, for any $0<d<r$ and $\CJ\in\Intersecting(d,r,s)$ with $\edim\CJ=0$ we have that $\edim\CI\CJ\geq0$.
  Then we have for any $0<d<r$ and $\CJ\in\Intersecting(d,r,s)$ that
  \[ \edim\CI\CJ\geq\edim\CJ. \]
\end{proposition}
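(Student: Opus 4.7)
The plan is to rewrite the desired inequality as a nonnegativity statement about slopes, and then to exhibit a slope-minimizing intersecting tuple whose expected dimension is forced to be zero. Following \cref{lem:I to lambdaprime}, set $\lambda_k := \lambda_{I_k} + (n-r)\one_r$ for $k\in[s-1]$ and $\lambda_s := \lambda_{I_s}$, and let $\vec\theta := -\vec\lambda$, which is antidominant since each $\lambda_k$ is dominant. The lemma gives
\[
  \edim\CI\CJ - \edim\CJ = d\,\mu_{\vec\theta}(\CJ)
\]
for every $\CJ\in\Subsets(d,r,s)$, so it suffices to prove $\mu_{\vec\theta}(\CJ)\geq 0$ for every intersecting $\CJ$ with $0<d<r$. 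In the same language, the hypothesis $\edim\CI\geq 0$ becomes $\mu_{\vec\theta}(([r])^s)\geq 0$, and the hypothesis $\edim\CI\CJ\geq 0$ for $\CJ\in\Intersecting(d,r,s)$ with $\edim\CJ=0$ and $0<d<r$ becomes $\mu_{\vec\theta}(\CJ)\geq 0$ for this same class of tuples.

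Next I would fix a generic $s$-tuple of flags $\CF\in\Good(r,s)$ on $\C^r$ and apply the Harder--Narasimhan lemma (\cref{lem:slope}) to $\vec\theta$ and $\CF$. This produces a unique subspace $S_*\subseteq\C^r$ of maximal dimension $d_*>0$ achieving the minimum slope $m_*:=\min_{\{0\}\neq S\subseteq\C^r}\mu_{\vec\theta}(S,\CF)$. Set $\CJ_*:=\Pos(S_*,\CF)$, which by \eqref{eq:intersecting via good} lies in $\Intersecting(d_*,r,s)$ and satisfies $\mu_{\vec\theta}(\CJ_*)=m_*$. Since every intersecting tuple $\CJ\in\Intersecting(d,r,s)$ with $d>0$ is realized as $\Pos(V,\CF)$ for some nonzero subspace $V\subseteq\C^r$, again by \eqref{eq:intersecting via good}, one has $\mu_{\vec\theta}(\CJ)=\mu_{\vec\theta}(V,\CF)\geq m_*$; it therefore suffices to show $m_*\geq 0$.

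The decisive step---and the main obstacle---is to show that $\edim\CJ_*=0$. By \cref{lem:good set},~\ref{item:good set c}, $\Omega^0_{\CJ_*}(\CF)$ is nonempty and each of its irreducible components has dimension $\edim\CJ_*$. However, every $V\in\Omega^0_{\CJ_*}(\CF)$ satisfies $\dim V=d_*$ and $\mu_{\vec\theta}(V,\CF)=\mu_{\vec\theta}(\CJ_*)=m_*$, so the uniqueness clause of \cref{lem:slope} forces $V=S_*$. Hence $\Omega^0_{\CJ_*}(\CF)=\{S_*\}$ reduces to a single point, and $\edim\CJ_*=0$ follows.

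To conclude I would split on $d_*$. If $d_*=r$ then $S_*=\C^r$ and $m_*=\mu_{\vec\theta}(([r])^s)=\frac{1}{r}\edim\CI\geq 0$ by the first hypothesis. If $0<d_*<r$ then $\CJ_*\in\Intersecting(d_*,r,s)$ has $\edim\CJ_*=0$, so the second hypothesis yields $\edim\CI\CJ_*\geq 0$, and \cref{lem:I to lambdaprime} gives $d_*\,m_*=\edim\CI\CJ_*-\edim\CJ_*\geq 0$, whence $m_*\geq 0$. Either way $\mu_{\vec\theta}(\CJ)\geq m_*\geq 0$ for every intersecting $\CJ$ with $0<d<r$, which is the required inequality.
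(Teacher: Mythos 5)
Your proof is correct and follows essentially the same strategy as the paper's: translating the expected-dimension inequalities into slope nonnegativity via \cref{lem:I to lambdaprime}, fixing a good $s$-tuple of flags, applying the Harder--Narasimhan \cref{lem:slope} to produce the unique minimizing subspace $S_*$, using uniqueness and \cref{lem:good set} to force $\edim\CJ_*=0$, and then invoking the hypothesis. The only cosmetic difference is that the paper argues by contradiction (assuming some intersecting $\CJ$ has negative slope) whereas you argue directly that $m_*\geq 0$ via a clean case split on $d_*\in\{r\}$ versus $0<d_*<r$, which handles the $d_*=r$ edge case up front rather than ruling it out at the end.
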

\begin{proof}
  Suppose for sake of finding a contradiction that there exists $\CJ\in\Intersecting(d,r,s)$ with $0<d<r$ and $\edim\CI\CJ < \edim\CJ$, so that $\mu_{-\vec\lambda}(\CJ) < 0$ according to \cref{lem:I to lambdaprime}.
  Fix some $\CF\in\Good(r,s)$.
  Then $\Omega^0_\CJ(\CF)\neq\emptyset$ by \cref{lem:good set},~\ref{item:good set ii}.
  Thus there exists a subspace $\{0\}\neq S\subseteq\C^r$ such that $\mu_{-\vec\lambda}(S,\CF) = \mu_{-\vec\lambda}(\CJ) < 0$.

  Now let $S_*$ be the unique subspace of minimal slope $m_*<0$ and maximal dimension $d_*>0$ from \cref{lem:slope} and denote by $\CJ_* := \Pos(S_*,\CF)$ its $s$-tuple of positions.
  The uniqueness statement implies that $\Omega^0_{\CJ_*}(\CF) = \{S_*\}$, since slope and dimension are fully determined by the position.
  Moreover, $\CJ_*$ is intersecting by~\eqref{eq:intersecting via good}, and therefore $\edim \CJ_* = \dim \Omega^0_{\CJ_*}(\CF) = 0$ by \cref{lem:good set}.
  Thus we have found an $s$-tuple $\CJ_* \in \Intersecting(d_*,r,s)$ with $d_*>0$, $\edim\CJ_* = 0$, and
  \[ \edim\CI\CJ_* = \edim\CI\CJ_* - \edim\CJ_* = d_* m_* < 0, \]
  where we have used \cref{lem:I to lambdaprime} once again in the last equality.
  Since $\edim\CI\geq 0$, this also implies that $d_*<r$.
  This is the desired contradiction.
\end{proof}

\Cref{prp:belkale strong weak} will be useful to prove Belkale's \cref{thm:belkale} in \cref{sec:horn sufficient} below, since it allows us to work with a larger set of inequalities.

\begin{remark*}
  The proof of \cref{prp:belkale strong weak} shows that we may in fact restrict to $\CJ$ such that $\Omega^0_\CJ(\CF)$ is a point for all $s$-tuples of good flags $\CF\in\Good(r,s)$ -- or also to those for which $\Omega_\CJ(\CF)$ is a point, which is equivalent by the last statement in \cref{lem:good set}.
  See the remark after \cref{cor:horn and saturation} for the implications of this on the description of the Kirwan cone.
\end{remark*}

We also record the following corollary which follows together with and improves over \cref{cor:belkale inductive}.

\begin{corollary}
\label{cor:belkale inductive stronger}
  If $\CI \in \Intersecting(r,n,s)$ then for any $0<d<r$ and any $s$-tuple $\CJ \in \Intersecting(d,r,s)$ we have that $\edim\CI\CJ \geq \edim\CJ$.
\end{corollary}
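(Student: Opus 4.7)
The plan is to deduce this stronger statement essentially for free, by combining the weaker Corollary~\ref{cor:belkale inductive} (which concerns only nonnegativity of $\edim \CI\CJ$) with Proposition~\ref{prp:belkale strong weak} (which upgrades that nonnegativity to the desired $\edim\CI\CJ \geq \edim\CJ$). The point is that the hypotheses of Proposition~\ref{prp:belkale strong weak} are exactly what Corollary~\ref{cor:belkale inductive} already delivers when $\CI$ is known to be intersecting, so no new geometric input is required.

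More concretely, suppose $\CI \in \Intersecting(r,n,s)$. First, by Lemma~\ref{lem:edim nonnegative} we have $\edim\CI \geq 0$. Second, applying Corollary~\ref{cor:belkale inductive} directly, we obtain $\edim\CI\CJ \geq 0$ for every $\CJ \in \Intersecting(d,r,s)$ with $0 < d < r$; this in particular covers the subclass of such $\CJ$ with $\edim \CJ = 0$. Thus $\CI$ satisfies precisely the two hypotheses of Proposition~\ref{prp:belkale strong weak}, and its conclusion reads $\edim\CI\CJ \geq \edim\CJ$ for every $\CJ \in \Intersecting(d,r,s)$ with $0 < d < r$, which is the statement to be proved.

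There is no real obstacle here: the difficult work is already encapsulated in the Harder--Narasimhan-type uniqueness lemma (Lemma~\ref{lem:slope}) and the good-set construction (Lemma~\ref{lem:good set}), both of which feed into Proposition~\ref{prp:belkale strong weak}. The only thing one should double-check when writing out the proof is that the quantifiers match: Proposition~\ref{prp:belkale strong weak} takes as input nonnegativity of $\edim\CI\CJ$ only for $\CJ$ with $\edim\CJ=0$, whereas Corollary~\ref{cor:belkale inductive} supplies it for \emph{all} intersecting $\CJ$, so the hypothesis is certainly met. The corollary therefore follows in one line.
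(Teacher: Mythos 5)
Your argument is correct and is exactly how the paper intends the corollary to be obtained: the paper states it "follows together with and improves over" Corollary~\ref{cor:belkale inductive}, which is precisely your combination of Lemma~\ref{lem:edim nonnegative} and Corollary~\ref{cor:belkale inductive} to verify the hypotheses of Proposition~\ref{prp:belkale strong weak}. Your quantifier check is also right, and no further justification is needed.
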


We remark that for $d=r$ there is only one $s$-tuple, $\CJ = ([r],\dots,[r])$, and it is intersecting and satisfies $\edim\CJ=0$.
In this case, $\edim\CI\CJ-\edim\CJ=\edim\CI$, and so we may safely allow for $d=r$ in \cref{cor:belkale inductive,prp:belkale strong weak,cor:belkale inductive stronger}.

We conclude this section with some simple examples of the Horn inequalities of \cref{cor:belkale inductive stronger}.
We refer to \cref{app:examples horn} for lists of all Horn triples $\CI=(I_1,I_2,I_3)$ up to $n=4$.

\begin{example}[$r=1$]
\label{ex:base case}
  The only condition for $\CI\in\Intersecting(1,n,s)$ is the dimension condition, $\edim\CI\geq0$.
  Indeed, the Grassmannian $\grass(1,n)$ is the projective space $\P(\C^n)$, whose Schubert varieties are given by $\Omega_{\{i\}}(E) = \{ [v] \in \P(\C^n) : v \in E(i) \}$.
  Thus $\CI = (\{i_1\},\dots,\{i_s\})$ is intersecting if and only if for any $s$-tuple of flags $\CE$, $E_1(i_1)\cap\dots\cap E_s(i_s)\neq\{0\}$.
  By linear algebra, it is certainly sufficient that $\sum_{k=1}^s (n - i_k) \leq n-1$, which is equivalent to $\edim\CI\geq0$.
  This also establishes \cref{thm:belkale} in the case $r=1$.
\end{example}

\begin{example*}[$s=2$, $r=2$]
  Let $\CI = (I_1,I_2)$. Then the condition $\edim\CI\geq0$ is $I_1(1) + I_1(2) + I_2(1) + I_2(2) \geq 2n+2$.
  However, there are two additional conditions coming from the $\CJ\in\Intersecting(1,2,2)$ with $\edim\CJ=0$.
  By the preceding example, there are two such pairs, $(\{1\},\{2\})$ and $(\{2\},\{1\})$.
  The corresponding conditions are $I_1(1) + I_2(2) \geq n+1$ and $I_1(2) + I_2(1) \geq n+1$.

  For example, if $n=4$ then $\CI = (\{1,4\},\{2,4\})$ satisfies all Horn inequalities.
  On the other hand, $\CI = (\{1,4\},\{2,3\})$ fails one the Horn inequalities.
  Indeed, if we consider $\CJ=(\{1\},\{2\})$ then $\CI\CJ=(\{1\},\{3\})$ is such that $\edim\CI\CJ = -1 < 0$.
\end{example*}

\counterwithin{equation}{subsection}
\section{Sufficiency of Horn inequalities}\label{sec:horn sufficient}

In this section we prove that the Horn inequalities are also sufficient to characterize intersections of Schubert varieties.

\subsection{Tangent maps}
In \cref{lem:dominant}, we established that an $s$-tuple $\CI$ is intersecting if and only if the corresponding morphism $\omega_\CI$ defined in~\eqref{eq:delta_I} is dominant.
Now it is a general fact that a morphism $f\colon\CX\to\CY$ between \emph{smooth} and irreducible varieties is dominant if and only if there exists a point $p\in\CX$ where the differential $T_pf$ is surjective.
This will presently allow us to reduce the intersecting of Schubert varieties to an infinitesimal question about tangent maps.
Later, in \cref{sec:invariants}, we will also use the determinant of the tangent map to construct explicit nonzero tensor product invariants and establish the saturation property.

\begin{lemma}\label{lem:differential concrete}
  Let $\CI\in\Subsets(r,n,s)$.
  Then $\CI\in\Intersecting(r,n,s)$ if and only if there exist $\vec g=(g_1,\dots,g_s)\in\GL(V_0)^s$ and $\vec h=(h_1,\dots,h_s)\in\GL(Q_0)^s$ such that the linear map
  \begin{equation}
  \label{eq:differential concrete}
    \Delta_{\CI,\vec g,\vec h}\colon\begin{cases}
    \Hom(V_0, Q_0)\times H_{I_1}(F_0, G_0)\times\dots\times H_{I_s}(F_0, G_0)\to \Hom(V_0,Q_0)^s \\
    (\zeta,\phi_1,\dots,\phi_s)\mapsto(\zeta+h_1\phi_1g_1^{-1},\dots,\zeta+h_s\phi_s g_s^{-1})
    \end{cases}
  \end{equation}
  is surjective.
\end{lemma}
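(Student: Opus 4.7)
The plan is to invoke \cref{lem:dominant}: $\CI\in\Intersecting(r,n,s)$ iff the morphism $\omega^0_\CI$ is dominant. Since $\omega^0_\CI$ is a morphism of smooth irreducible algebraic varieties (each factor $\flag^0_{I_k}(r,n)$ is a single $P(r,n)$-orbit, hence smooth and irreducible), dominance is equivalent to surjectivity of its differential at some point. The task thus reduces to computing this differential at a convenient point and matching it up with the map $\Delta_{\CI,\vec g,\vec h}$ for an appropriate choice of $\vec g,\vec h$.

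By the $\GL(n)$-equivariance of $\omega^0_\CI$ (left multiplication on the $\GL(n)$-factor, diagonal left action on $\flag(n)^s$), it suffices to work at points of the form $p=(1,E_1,\dots,E_s)$ with each $E_k\in\flag^0_{I_k}(V_0,W)$. Because $\flag^0_{I_k}(V_0,W)$ is a single $P(r,n)$-orbit (\cref{lem:G_I coarse}), the tangent space $T_{E_k}\flag^0_{I_k}(r,n)$ equals $(\p(r,n)+\mathfrak b(E_k))/\mathfrak b(E_k)$ inside $T_{E_k}\flag(n)=\gl(n)/\mathfrak b(E_k)$. Differentiating the curve $t\mapsto(e^{t\xi},E_1(t),\dots,E_s(t))$ at $t=0$ then shows that the differential of $\omega^0_\CI$ at $p$ sends $(\xi,\eta_1,\dots,\eta_s)$ to $(\bar\xi+\eta_k)_{k=1}^s$, where $\bar\xi$ denotes the class of $\xi\in\gl(n)$ modulo $\mathfrak b(E_k)$. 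Its surjectivity is therefore equivalent to surjectivity of the induced normal map
\[
\gl(n)/\p(r,n)\longrightarrow\bigoplus_{k=1}^s\gl(n)/\bigl(\p(r,n)+\mathfrak b(E_k)\bigr).
\]

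Under the standard identification $\gl(n)/\p(r,n)\cong\Hom(V_0,Q_0)$ coming from the orbit map $\gamma\mapsto\gamma V_0$, the subspace $(\p(r,n)+\mathfrak b(E_k))/\p(r,n)$ corresponds to $T_{V_0}\Omega^0_{I_k}(E_k)$, since the Schubert cell $\Omega^0_{I_k}(E_k)$ is the $B(E_k)$-orbit through $V_0$. By \cref{eq:tangent space schubert cell} this tangent space equals $H_{I_k}(F_k,G_k)$ for the induced flags $F_k:=E_k^{V_0}$ on $V_0$ and $G_k:=(E_k)_{W/V_0}$ on $Q_0\cong W/V_0$. The surjectivity condition thus becomes: for every $(\beta_k)\in\Hom(V_0,Q_0)^s$, there exists $\zeta\in\Hom(V_0,Q_0)$ with $\beta_k-\zeta\in H_{I_k}(F_k,G_k)$ for all $k$. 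Writing $F_k=g_kF_0$ and $G_k=h_kG_0$ with $g_k\in\GL(V_0)$, $h_k\in\GL(Q_0)$, \cref{lem:H_I borel invariance} gives $H_{I_k}(F_k,G_k)=h_kH_{I_k}(F_0,G_0)g_k^{-1}$, so the criterion translates exactly into the surjectivity of $\Delta_{\CI,\vec g,\vec h}$ in~\eqref{eq:differential concrete}.

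To close both implications, we observe that every prescribed pair $(F_k,G_k)\in\flag(V_0)\times\flag(Q_0)$ is realised by some $E_k\in\flag^0_{I_k}(V_0,W)$: one builds an adapted basis of $E_k$ by placing a basis of $V_0$ adapted to $F_k$ at positions $I_k$ and a basis of $Q_0$ adapted to $G_k$ at positions $I_k^c$, in the spirit of \cref{lem:chain rule}. Hence $\CI$ is intersecting iff the differential of $\omega^0_\CI$ is surjective at some admissible $p$, iff $\Delta_{\CI,\vec g,\vec h}$ is surjective for some $\vec g,\vec h$. I expect the main obstacle to be keeping straight the cascade of tangent-space identifications, in particular recognising the normal space $\gl(n)/(\p(r,n)+\mathfrak b(E_k))$ as $\Hom(V_0,Q_0)/H_{I_k}(F_k,G_k)$ via the two-step quotient through $\p(r,n)$, and then correctly transporting the Borel-invariance of \cref{lem:H_I borel invariance} to absorb the flag change $(F_0,G_0)\mapsto(F_k,G_k)$ into the factors $g_k,h_k$ appearing in $\Delta_{\CI,\vec g,\vec h}$.
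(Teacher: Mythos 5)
Your proof is correct and takes essentially the same approach as the paper: both invoke \cref{lem:dominant} to reduce to surjectivity of the differential of $\omega^0_\CI$, use $\GL(n)$-equivariance to move to a convenient base point $(1,\CE)$, and then identify the resulting normal map with $\Delta_{\CI,\vec g,\vec h}$ via \cref{eq:tangent space schubert cell} and \cref{lem:H_I borel invariance}. The paper performs the same computation after lifting to the map $\GL(n)\times\prod_k G_{I_k}(r,n)\to\GL(n)^s$ and checking surjectivity block by block using the parametrization of \cref{lem:G_I precise}, whereas you stay on the flag varieties and pass through the quotient tangent spaces $\gl(n)/\bigl(\p(r,n)+\mathfrak b(E_k)\bigr)$; the content and logical structure are the same.
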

\begin{proof}
  Using the isomorphisms $\flag^0_{I_k}(r,n) = G_{I_k}(r,n) E_0 \cong G_{I_k}(r,n)/B(n)$ (\cref{def:G_I,subsec:coordinates}) and $\flag(n) \cong \GL(n)/B(n)$, we find that $\omega^0_\CI$ is dominant if and only if
  \begin{equation}
  \label{eq:dominant cover}
    \GL(n) \times G_{I_1}(r,n) \times \dots \times G_{I_s}(r,n) \to \GL(n)^s, (\gamma, \gamma_1, \dots, \gamma_s) \mapsto (\gamma \gamma_1, \dots, \gamma \gamma_s)
  \end{equation}
  is dominant.
  This is again a morphism between smooth and irreducible varieties and thus dominance is equivalent to surjectivity of the differential at some point $(\gamma,\gamma_1,\dots,\gamma_s)$.
  The map~\eqref{eq:dominant cover} is $\GL(n)$-equivariant on the left and $B(n)^s$-equivariant on the right.
  By the former, we may assume that $\gamma=1$, and by the latter that $\gamma_k = p_k w_{I_k}$ for some $p_k = \bigl(\begin{smallmatrix}g_k & b_k \\ 0 & h_k\end{smallmatrix}\bigr)$, since $G_{I_k}(r,n) = P(r, n) w_{I_k} B(n)$ according \cref{lem:G_I coarse}.

  We now compute the differential. 
  Thus we consider an arbitrary curve $1 + \varepsilon X$ tangent to $\gamma=1$, where $X \in \gl(n)$, and curves $(1 + \varepsilon Y_k) p_k w_{I_k}$ through the $\gamma_k = p_k w_k$, where $Y_k \in \gl(n)$. If we write
  $Y_k = \bigl(\begin{smallmatrix}A_k & B_k \\ C_k & D_k \end{smallmatrix}\bigr)$
  with $A_k \in \gl(r)$ etc., then we see from~\eqref{eq:G_I precise concrete} that $(1 + \varepsilon Y_k) p_k w_{I_k}$ is tangent to $G_{I_k}(r,n)$ precisely if $h^{-1}_k C_k g_k \in H_{I_k}(F_0, G_0)$, that is, if $C_k \in h_k H_{I_k}(F_0,G_0) g_k^{-1}$.
  Lastly, the calculation $(1 + \varepsilon X)(1 + \varepsilon Y_k) \gamma_k = \gamma_k + \varepsilon (X + Y_k) \gamma_k + O(\varepsilon^2)$ shows that the differential of~\eqref{eq:dominant cover} at $(g,\gamma_1,\dots,\gamma_s)$ can be identified with $(X, Y_1, \dots, Y_s) \mapsto (X + Y_1, \dots, X + Y_s)$.

  We may check for surjectivity block by block.
  Since there are no constraints on the $A_k$, $B_k$, and $D_k$, it is clear that the differential is surjective on the three blocks corresponding to $\p(r,n)$.
  Thus we only need to check surjectivity on the last block of the linear map, corresponding to $\Hom(V_0, Q_0)$.
  This block can plainly be identified with~\eqref{eq:differential concrete}, since the $C_k$ are constrained to be elements of $h_k H_{I_k}(F_0,G_0) g_k^{-1}$.
  Thus we obtain that $\omega_\CI$ is dominant if and only if~\eqref{eq:differential concrete} is surjective.
\end{proof}

\begin{remark}\label{rem:delta^0_I factorized differential}
  The map $\Delta_{\CI,\vec g,\vec h}$ can be identified with the differential of $\bar\delta^0_\CI$ at the point $[1, \CE]$, where $E_k\in\flag^0_{I_k}(r,n)$ is such that $(E_k)^{V_0} = g_k \cdot F_0$ and $(E_k)_{Q_0} = h_k \cdot G_0$ for $k\in[s]$.
  This follows from the proof of \cref{lem:differential concrete} and justifies calling $\Delta_{\CI,\vec g,\vec h}$ a \emph{tangent map}.
\end{remark}

By the rank-nullity theorem and using \cref{lem:dim schubert cell}, the kernel of the linear map $\Delta_{\CI,\vec g,\vec h}$ defined in~\eqref{eq:differential concrete} is of dimension at least
\begin{equation}
\label{eq:rank nullity}
\begin{aligned}
  &\quad \dim \bigl( \Hom(V_0,Q_0)\times H_{I_1}(F_0, G_0)\times\dots\times H_{I_s}(F_0, G_0) \bigr) - \dim \Hom(V_0,Q_0)^s \\
  &= r(n-r)(1-s) + \sum_{k=1}^s \dim I_k = \edim\CI,
\end{aligned}
\end{equation}
and $\Delta_{\CI,\vec g,\vec h}$ is surjective if and only if equality holds.
On the other hand, it is immediate that
\begin{equation}
\label{eq:ker Delta}
  \ker \Delta_{\CI,\vec g,\vec h} = \bigcap_{k=1}^s h_k H_{I_k}(F_0, G_0) g_k = \bigcap_{k=1}^s H_{I_k}(F_k, G_k),
\end{equation}
where $F_k = g_k F_0$ and $G_k = h_k G_0$.
As we vary $g_k$ and $h_k$, the $F_k$ and $G_k$ are arbitrary flags on $V_0$ and $Q_0$, respectively.
Thus we obtain the following characterization:

\begin{definition}
  Let $\CI\in\Subsets(r,n,s)$.
  We define the \emph{true dimension} of $\CI$ as
  \begin{equation}
  \label{eq:tdim}
    \tdim\CI := \min_{\CF,\CG} \dim H_\CI(\CF,\CG) = \min_{\vec g, \vec h} \dim\ker\Delta_{\CI,\vec g,\vec h},
  \end{equation}
  where the first side minimization is over all $s$-tuples of flags $\CF$ on $V_0$ and $\CG$ on $Q_0$,
  the second one over $\vec g\in\GL(r)^s$, $\vec h\in\GL(n-r)^s$,
  and where
  \[ H_\CI(\CF,\CG) := \bigcap_{k=1}^s H_{I_k}(F_k, G_k) \subseteq \Hom(V_0,Q_0). \]
\end{definition}

\begin{corollary}
\label{cor:tdim edim}
  Let $\CI\in\Subsets(r,n,s)$.
  Then we have $\tdim\CI\geq\edim\CI$, with equality if and only if $\CI\in\Intersecting(r,n,s)$.
\end{corollary}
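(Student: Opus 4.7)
The plan is to read off both statements directly from the rank-nullity computation in~\eqref{eq:rank nullity} and from the characterization of intersecting tuples in \cref{lem:differential concrete}. Concretely, for any $\vec g\in\GL(r)^s$ and $\vec h\in\GL(n-r)^s$, the linear map $\Delta_{\CI,\vec g,\vec h}$ goes from a space of dimension $r(n-r)+\sum_{k=1}^s\dim I_k$ to a space of dimension $sr(n-r)$, so rank-nullity yields
\begin{equation*}
  \dim\ker\Delta_{\CI,\vec g,\vec h}\;\geq\; r(n-r)(1-s)+\sum_{k=1}^s\dim I_k \;=\; \edim\CI,
\end{equation*}
with equality exactly when $\Delta_{\CI,\vec g,\vec h}$ is surjective. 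Taking the minimum over $\vec g,\vec h$ on the left gives $\tdim\CI\geq\edim\CI$.

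For the equality case, observe that $\tdim\CI=\edim\CI$ means there exist $\vec g,\vec h$ for which the bound above is saturated, equivalently for which $\Delta_{\CI,\vec g,\vec h}$ is surjective. By \cref{lem:differential concrete}, the existence of such $\vec g,\vec h$ is precisely the condition $\CI\in\Intersecting(r,n,s)$. This gives the ``if and only if'' assertion and completes the proof.

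There is no real obstacle here: the two directions of the equivalence are already packaged in the preceding lemma and in the observation~\eqref{eq:rank nullity}. The only care needed is to match the two presentations of the kernel in~\eqref{eq:ker Delta}, namely that as $(\vec g,\vec h)$ ranges over $\GL(r)^s\times\GL(n-r)^s$ the flags $F_k=g_kF_0$ and $G_k=h_kG_0$ range over all $s$-tuples of flags on $V_0$ and $Q_0$ respectively, which is what justifies the identification of the two minima in the definition of $\tdim\CI$.
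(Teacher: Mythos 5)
Your argument is correct and is exactly the paper's own reasoning: the paper states the corollary as an immediate consequence of \eqref{eq:rank nullity}, \eqref{eq:ker Delta}, and \cref{lem:differential concrete}, which is precisely what you spell out. The remark at the end about matching the two presentations of the kernel via $F_k=g_kF_0$, $G_k=h_kG_0$ is the right point of care and is also how the paper's definition of $\tdim\CI$ is set up.
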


We note that for the purpose of computing true dimensions we may always assume that $F_1$ and $G_1$ are the standard flags on $V_0$ and $Q_0$, respectively (by equivariance).

\begin{example*}[s=2,r=2,n=4]
  We verify the example at the end of \cref{sec:horn necessary} by using \cref{cor:tdim edim}.
  We first consider $\CI=(\{1,4\},\{2,4\})$.
  Then $\edim\CI=1$. 
  To bound $\tdim\CI$, we let $\CF=(F_1,F_2)$ and $\CG=(G_1,G_2)$, where $F_1$ is the standard flag on $V_0$, $F_2$ the flag with adapted basis $(e(1)+e(2), e(2))$, and $G_1=G_2$ the standard flags on $Q_0$.
  Then
  \[
      H_\CI(\CF,\CG)
    = \{ \begin{pmatrix} 0 & * \\ 0 & * \end{pmatrix} \} \cap H_{I_2}(F_2, G_2)
    = \{ \begin{pmatrix} 0 & * \\ 0 & 0 \end{pmatrix} \}
  \]
  is one-dimensional, which shows that $\tdim\CI\leq1$.
  Since always $\tdim\CI\geq\edim\CI$, it follows that, in fact, $\tdim\CI=\edim\CI$ and so $\CI$ is intersecting.

  We now consider $\CI=(\{1,4\}, \{2,3\})$.
  Then $\edim\CI=0$. 
  Let $\CF$ and $\CG$ be pairs of flags on $V_0$ and $Q_0$, respectively.
  Without loss of generality, we shall assume that $F_1$ and $G_1$ are the standard flags. Then
  \[
      H_\CI(\CF,\CG)
    = \{ \begin{pmatrix} 0 & * \\ 0 & * \end{pmatrix} \} \cap H_{I_2}(F_2,G_2)
    = \C \begin{pmatrix} 0 & x \\ 0 & y \end{pmatrix},
  \]
  where $\C \bigl(\begin{smallmatrix}x \\ y\end{smallmatrix}\bigr) := G_2(1)$.
  Indeed, $H_{I_2}(F_2,G_2)$ consists of those linear maps that map any vector in $V_0$ into $G_2(1)$.
  In particular, $H_\CI(\CF,\CG)$ is one-dimensional for any choice of $F_2$ and $G_2$.
  Thus $\tdim\CI=1>0=\edim\CI$, and we conclude that $\CI$ is not intersecting.
\end{example*}

\begin{example*}[s=2,r=3,n=6]
  Let $\CI=(\{3,4,6\},\{2,4,5\})$.
  Then $\edim\CI=3$. 
  We now establish that $\CI$ is intersecting by verifying that $\tdim\CI=3$.
  Again we choose $F_1$ and $G_1$ to be the standard flags on $V_0$ and $Q_0$, respectively, while $F_2$ and $G_2$ are defined as follows in terms of adapted bases:
  \begin{align*}
    F_2\colon \quad &e(1) + z_{21} e(2) + z_{31} e(3),\quad e(2) + z_{32} e(3),\quad e(3),\\
    G_2\colon \quad &\bar e(1) + u_{21} \bar e(2) + u_{31} \bar e(3),\quad\bar e(2) + u_{32} \bar e(3),\quad\bar e(3).
  \end{align*}
  Then a basis for $H_\CI(\CF,\CG)$ is on the open set where $u_{31}u_{32}\neq0$ given by
  \begin{align*}
    \phi_1 &= \begin{pmatrix} -z_{21} u_{32} & u_{32} & 0 \\
      -z_{21} ( u_{32} u_{21} - u_{31} ) & u_{32} u_{21} - u_{31} & 0 \\
      0 & 0 & 0
    \end{pmatrix}, \\
    \phi_2 &= \begin{pmatrix} z_{31} u_{32} & 0 & 0 \\
      z_{31} ( u_{32} u_{21} - u_{31} ) & 0 & u_{31} \\
      0 & 0 & u_{32} u_{31}
    \end{pmatrix},
    \qquad
    \phi_3 = \begin{pmatrix}
      0 & 0 & 1 \\
      0 & 0 & u_{21} \\
      0 & 0 & u_{31}
    \end{pmatrix}
  \end{align*}
  as can be checked by manual inspection.
\end{example*}

\subsection{Kernel dimension and position}
\label{subsec:kdim kpos}

Let us consider a tuple $\CI\in\Subsets(r,n,s)$, where we always assume that $r\in[n]$.
To prove sufficiency of the Horn inequalities, we aim to use \cref{cor:tdim edim}, which states that $\tdim\CI\geq\edim\CI$, with equality if and only if $\CI$ is intersecting.

If $\tdim\CI=0$ then, necessarily, $\tdim\CI=\edim\CI=0$, since $\edim\CI$ is nonnegative by assumption (part of the Horn inequalities). Hence in this case $\CI$ is intersecting.

Thus the interesting case is when $\tdim\CI>0$.
To study the spaces $H_\CI(\CF,\CG)$ in a unified fashion, we consider the space
\[ \Pa(\CI) := \{ (\CF,\CG,\phi) \in \flag(V_0)^s \times \flag(Q_0)^s \times \Hom(V_0,Q_0) : \phi\in H_\CI(\CF,\CG) \}. \]
We caution that $\Pa(\CI)$ is not in general irreducible, as the following example shows:

\begin{example*}
  Let $s=2$, $n=3$, $r=1$, and consider $I_1 = I_2 = \{2\}$.
  There is only a single flag on $V_0 \cong \C$, while any flag $G$ on $Q_0 \cong \C^2$ is determined by a line $L = G(1)\in\P(\C^2)$.
  Thus we can identify $\Pa(\CI) \cong \{ (L_1, L_2, \phi) \in \P(\C^2)^2 \times \Hom(\C^1,\C^2) : \phi(e(1)) \in L_1 \cap L_2 \}$.
  If we consider the map $(L_1,L_2,\phi)\mapsto(L_1,L_2)$, then the fiber for any $L_1=L_2$ is a one-dimensional line, while for any $L_1\neq L_2$ the fiber is just $\phi=0$.
  In particular, we note that $\Pa(\CI)$ is not irreducible.
\end{example*}

We now restrict to those $(\CF,\CG)$ such that the intersection $H_\CI(\CF,\CG)$ is of dimension $\tdim\CI$.
Thus we introduce
\begin{align*}
  \Pt(\CI) &:= \{ (\CF,\CG,\phi) \in \Pa(\CI) : \dim H_\CI(\CF,\CG) = \tdim\CI \}, \\
  \Bt(\CI) &:= \{ (\CF,\CG) \in \flag(V_0)^s \times \flag(Q_0)^s : \dim H_\CI(\CF,\CG) = \tdim \CI \}.
\end{align*}
The subscripts in $\Pt(\CI)$ and $\Bt(\CI)$ stands for the true dimension, $\tdim\CI$.
We use similar subscripts throughout this section when we fix various other dimensions and positions.

Since $\tdim\CI$ is the minimal possible dimension, this is the generic case.
Moreover, this restriction makes $\Pt(\CI)$ irreducible, as it is a vector bundle over $\Bt(\CI)$.
We record this in the following lemma:

\begin{lemma}
\label{lem:P_t zopen}
  The space $\Pa(\CI)$ is a closed subvariety of $\flag(V_0)^s\times\flag(Q_0)^s\times\Hom(V_0,Q_0)$, and $\Pt(\CI)$ is a nonempty Zariski-open subset of $\Pa(\CI)$.
  Moreover, $\Bt(\CI)$ is a nonempty Zariski-open subset of $\flag(V_0)^s\times\flag(Q_0)^s$, and the map $(\CF,\CG,\phi)\mapsto(\CF,\CG)$ turns $\Pt(\CI)$ into a vector bundle over $\Bt(\CI)$.
  In particular, $\Pt(\CI)$ is an irreducible and smooth variety.
\end{lemma}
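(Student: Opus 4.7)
The plan is to realize $H_\CI(\CF,\CG)$ as the fiber of a kernel-of-vector-bundle construction over the base $\mathcal B := \flag(V_0)^s \times \flag(Q_0)^s$ and then deduce all four assertions from standard facts about morphisms of vector bundles. First, for the closedness of $\Pa(\CI)$: the defining conditions $\phi(F_k(a)) \subseteq G_k(I_k(a)-a)$ for $k \in [s]$ and $a \in [r]$ are algebraic incidence conditions on $\mathcal B \times \Hom(V_0,Q_0)$, each Zariski-closed, so $\Pa(\CI)$ is cut out as their common zero locus and hence is a closed subvariety.

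Next, I would set up the morphism of vector bundles over $\mathcal B$,
\[
  \Phi: \mathcal B \times \Hom(V_0, Q_0) \longrightarrow \bigoplus_{k=1}^s \bigoplus_{a=1}^r \Hom\bigl(F_k(a),\, Q_0/G_k(I_k(a)-a)\bigr),
\]
sending $\phi$ to the tuple obtained by restricting $\phi$ to each $F_k(a)$ and reducing modulo $G_k(I_k(a)-a)$; on the right-hand side we use the tautological sub- and quotient bundles pulled back from the flag factors. By construction, the fiber of $\ker \Phi$ over a point $(\CF,\CG)$ is exactly $H_\CI(\CF,\CG)$, so the function $(\CF,\CG) \mapsto \dim H_\CI(\CF,\CG)$ is upper semi-continuous on $\mathcal B$. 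The locus where it attains its minimum $\tdim \CI$ --- equivalently, where $\Phi$ has maximal rank --- is therefore a nonempty Zariski-open subset of $\mathcal B$, and this locus is exactly $\Bt(\CI)$. Then $\Pt(\CI)$ is the preimage of $\Bt(\CI)$ in $\Pa(\CI)$ under the projection to $\mathcal B$, hence Zariski-open in $\Pa(\CI)$, and it is nonempty since $\Pa(\CI)$ always contains the zero section (the map $\phi = 0$ lies in every $H_\CI(\CF,\CG)$).

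For the vector bundle structure: on the constant-rank locus $\Bt(\CI)$, the kernel of a morphism of vector bundles is itself a vector subbundle, so $\ker \Phi|_{\Bt(\CI)}$ is a subbundle of the trivial bundle $\Bt(\CI) \times \Hom(V_0, Q_0)$ of constant rank $\tdim\CI$. Its total space is exactly $\Pt(\CI)$, and the projection $(\CF,\CG,\phi) \mapsto (\CF,\CG)$ is the bundle projection. Finally, since $\mathcal B$ is a product of smooth irreducible flag varieties, it is itself smooth and irreducible; the same is therefore true of its nonempty open subset $\Bt(\CI)$, and hence of the total space of any vector bundle over it. No single step is a real obstacle; the only care needed is in formulating $\Phi$ as a bona fide morphism of algebraic vector bundles using the tautological bundles on the flag factors, after which the standard constant-rank-kernel construction does the rest of the work.
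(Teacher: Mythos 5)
Your proof is correct; the paper states this lemma without proof (treating it as a standard fact about the kernel of a morphism of vector bundles), and your argument via the tautological sub- and quotient bundles on the flag factors, together with upper semicontinuity of kernel dimension and the constant-rank kernel theorem, is exactly the intended one. One small point worth making explicit: nonemptiness of $\Bt(\CI)$ is automatic because $\tdim\CI$ is defined as the minimum of $\dim H_\CI(\CF,\CG)$ over all pairs of flags, so the minimizing locus is nonempty by construction.
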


In particular:
\begin{equation}
\label{eq:dim Pt}
  \dim \Pt(\CI) = s \bigl( \dim \flag(V_0) + \dim \flag(Q_0) \bigr) + \tdim\CI
\end{equation}

Belkale's insight is now to consider the behavior of generic kernels of maps $\phi\in H_\CI(\CF,\CG)$, where $(\CF,\CG)\in \Bt(\CI)$.
We start with the following definition:

\begin{definition}
  Let $\CI\in\Subsets(r,n,s)$.
  We define the \emph{kernel dimension} of $\CI$ as
  \[ \kdim\CI := \min \{ \dim\ker\phi : \phi\in H_\CI(\CF, \CG) \text{ where } (\CF,\CG) \in \Bt(\CI) \} \]
\end{definition}

There are two special cases that we can treat right away.
If $\kdim\CI=r$ then any morphism in $H_\CI(\CF,\CG)$ for $(\CF,\CG)\in \Bt(\CI)$ is zero, and hence $\tdim\CI=0$.
This is the case that we had discussed initially and we record this observation for future reference:

\begin{lemma}
\label{lem:kdim r intersecting}
  Let $\CI\in\Subsets(r,n,s)$ such that $\edim\CI\geq0$.
  If $\kdim\CI=r$ then $\tdim\CI=\edim\CI=0$, and hence $\CI\in\Intersecting(r,n,s)$.
\end{lemma}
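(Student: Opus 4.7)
The plan is to unpack the definition of $\kdim \CI$ and combine it with \cref{cor:tdim edim}. The crucial observation is that any $\phi \in H_\CI(\CF,\CG)$ is a homomorphism $V_0 \to Q_0$ with $\dim V_0 = r$, so $\dim \ker \phi \leq r$, with equality if and only if $\phi = 0$.

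First I would argue that the hypothesis $\kdim \CI = r$ forces $H_\CI(\CF,\CG) = \{0\}$ for every $(\CF,\CG) \in \Bt(\CI)$. Indeed, $\kdim \CI$ is by definition the minimum of $\dim \ker \phi$ over all such $(\CF,\CG)$ and all $\phi \in H_\CI(\CF,\CG)$; if this minimum equals $r$, then every such $\phi$ must have kernel of dimension exactly $r$, which (since $\dim V_0 = r$) means $\phi$ is the zero map.

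From this it follows immediately that $\tdim \CI = \dim H_\CI(\CF,\CG) = 0$ for any $(\CF,\CG) \in \Bt(\CI)$. By \cref{cor:tdim edim} we have the chain $0 = \tdim \CI \geq \edim \CI \geq 0$, where the rightmost inequality is the standing hypothesis. Hence $\tdim \CI = \edim \CI = 0$, and the equality clause of \cref{cor:tdim edim} then yields $\CI \in \Intersecting(r,n,s)$.

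The argument is essentially a tautological consequence of the definitions together with \cref{cor:tdim edim}, so I do not anticipate any serious obstacle. The only subtle point worth spelling out is why $\kdim \CI = r$ forces \emph{every} element of $H_\CI(\CF,\CG)$ to vanish for generic $(\CF,\CG)$, rather than just giving the existence of one zero element; but this is automatic because $r$ is the maximum possible value of $\dim\ker\phi$ for $\phi\colon V_0\to Q_0$, so a minimum of $r$ is simultaneously a maximum.
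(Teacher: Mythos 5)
Your proposal is correct and matches the paper's argument exactly: the paper records this lemma after observing in the surrounding prose that $\kdim\CI=r$ forces every $\phi\in H_\CI(\CF,\CG)$ with $(\CF,\CG)\in\Bt(\CI)$ to vanish, so $\tdim\CI=0$, and then the earlier remark that $\tdim\CI=0$ together with $\edim\CI\geq0$ and \cref{cor:tdim edim} forces $\tdim\CI=\edim\CI=0$ and $\CI\in\Intersecting(r,n,s)$.
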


Likewise, the case where $\kdim\CI=0$ can easily be treated directly.
The idea is to compute the dimension of $\Pt(\CI)$ in a second way and compare the result with~\eqref{eq:dim Pt}.

\begin{lemma}
\label{lem:dim P_t zero}
  Let $\CI\in\Subsets(r,n,s)$.
  If $\kdim\CI=0$ then
  \[ \dim \Pt(\CI) = s \bigl( \dim \flag(V_0) + \dim \flag(Q_0) \bigr) + \edim\CI. \]
\end{lemma}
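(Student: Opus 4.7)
The plan is to compute $\dim\Pt(\CI)$ in two ways and squeeze the results together. On the one hand, \cref{lem:P_t zopen} realizes $\Pt(\CI)$ as a vector bundle of rank $\tdim\CI$ over the nonempty open subset $\Bt(\CI)\subseteq\flag(V_0)^s\times\flag(Q_0)^s$, giving
\[
\dim\Pt(\CI) = s\bigl(\dim\flag(V_0)+\dim\flag(Q_0)\bigr) + \tdim\CI \geq s\bigl(\dim\flag(V_0)+\dim\flag(Q_0)\bigr) + \edim\CI,
\]
where the inequality is $\tdim\CI\geq\edim\CI$ from \cref{cor:tdim edim}. Only the reverse inequality then requires work.

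For the upper bound I would consider the projection $\pi\colon(\CF,\CG,\phi)\mapsto(\CF,\phi)$ from $\Pt(\CI)$ into $\flag(V_0)^s\times\Hom(V_0,Q_0)$. The assumption $\kdim\CI=0$ furnishes a point $(\CF_0,\CG_0,\phi_0)\in\Pt(\CI)$ with $\phi_0$ injective; by irreducibility of $\Pt(\CI)$, the locus $U'\subseteq\Pt(\CI)$ where $\phi$ is injective is then dense open, so $\dim U'=\dim\Pt(\CI)$. The fiber of $\pi$ over $(\CF,\phi)$ is an open subset of $\prod_{k=1}^s\flag^0_{I_k}(F_k,\phi)$, further restricted by the open condition $(\CF,\CG)\in\Bt(\CI)$. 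The key numerical input comes from \cref{lem:image flags}: the existence of $(G_0)_k\in\flag^0_{I_k}((F_0)_k,\phi_0)$ witnesses nonemptiness, which by that lemma forces $I_k(a)\geq 2a$ for all $a\in[r]$ and $k\in[s]$; hence \emph{every} $\flag^0_{I_k}(F_k,\phi)$ with $\phi$ injective is smooth irreducible of dimension $\dim\flag(Q_0)+\dim I_k-r(n-r)$.

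Putting the pieces together, the standard dimension inequality for morphisms of irreducible varieties yields
\begin{align*}
\dim\Pt(\CI) = \dim U' &\leq s\dim\flag(V_0) + r(n-r) + \sum_{k=1}^s\bigl(\dim\flag(Q_0) + \dim I_k - r(n-r)\bigr) \\
&= s\bigl(\dim\flag(V_0)+\dim\flag(Q_0)\bigr) + \edim\CI,
\end{align*}
after collecting terms using $\dim\Hom(V_0,Q_0)=r(n-r)$ and \cref{def:edim}. Combined with the lower bound, both estimates collapse to equality, and as a bonus one also obtains $\tdim\CI=\edim\CI$.

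The main obstacle is the fiber-dimension bookkeeping for $\pi$: the open condition $(\CF,\CG)\in\Bt(\CI)$ may cut a strictly smaller open subset out of each product $\prod_k\flag^0_{I_k}(F_k,\phi)$, so the fiber is not literally that product. Since only an upper bound on fiber dimensions is needed this is harmless, but one must check cleanly that $\kdim\CI=0$ places the injective $\phi_0$ inside $\Pt(\CI)$ (not merely in some $H_\CI(\CF,\CG)$ outside $\Bt(\CI)$)---which is immediate from the definition of $\kdim$, but deserves to be recorded.
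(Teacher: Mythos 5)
Your argument is correct and uses the same essential ingredients as the paper's proof: the Zariski-openness of the injective locus, the projection $(\CF,\CG,\phi)\mapsto(\CF,\phi)$, and \cref{lem:image flags} to control the fiber dimension. The paper arranges these differently---it works with $\Pk:=\{(\CF,\CG,\phi)\in\Pa(\CI):\ker\phi=0\}$, shows this is a fiber bundle over $\flag(V_0)^s\times\Hom^\times(V_0,Q_0)$, and computes $\dim\Pk$ exactly, then identifies $\dim\Pk=\dim\Pt(\CI)$ because both are nonempty irreducible open subsets of $\Pa(\CI)$ that overlap. Your squeeze avoids having to establish the fiber-bundle structure of $\pi$ (and the potentially awkward intersection with $\Bt(\CI)$ inside each fiber) by accepting a one-sided bound; the cost is invoking the lower bound from \cref{eq:dim Pt} and \cref{cor:tdim edim}, which the paper instead deduces as a corollary. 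Net effect is the same, and your observation that $\tdim\CI=\edim\CI$ falls out is exactly the paper's \cref{cor:kdim zero intersecting}.
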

\begin{proof}
  We first note that $\kdim\CI=0$ implies that there exists an injective map $\phi\in H_\CI(\CF,\CG)$ for some $(\CF,\CG) \in \Bt(\CI)$.
  In particular, $I_k(a)-a\geq a$ for all $k\in[s]$ and $a\in[r]$ (a fact that we use further below in the proof).
  Now define
  \[ \Pk := \{ (\CF,\CG,\phi) \in \Pa(\CI) : \dim\ker\phi=0 \} \]
  Then $\Pk$ is a nonempty Zariski-open subset of $\Pa(\CI)$ that intersects $\Pt(\CI)$.
  By \cref{lem:P_t zopen}, the latter is irreducible.
  Thus it suffices to show that $\Pk$ is likewise irreducible and to compute its dimension.

  For this, we consider the map
  \[ \pi\colon \Pk\to \Mk:=\flag(V_0)^s\times\Hom^\times(V_0,Q_0), \quad (\CF,\CG,\phi)\mapsto(\CF,\phi) \]
  where we write $\Hom^\times(V_0,Q_0)$ for the Zariski-open subset of injective linear maps in $\Hom(V_0,Q_0)$.
  The fibers of $\pi$ are given by
  \[ \pi^{-1}(\CF,\phi) \cong \prod_{k=1}^s \flag_{I_k}^0(F_k, \phi) \]
  which according to \cref{lem:image flags} are smooth irreducible varieties of dimension
  $s \dim \flag(Q_0) - s r(n-r) + \sum_{k=1}^s \dim I_k$.
  It is not hard to see that $\pi$ gives $\Pk$ the structure of a fiber bundle over $\Mk$.
  Therefore, $\Pk$ is irreducible.
  Moreover, the space $\Mk$ has dimension $s \dim \flag(V_0) + r(n-r)$.
  By adding the dimension of the fibers, we obtain that the dimension of $\Pk$, and hence of $\Pt(\CI)$, is indeed the one claimed in the lemma.
\end{proof}

\begin{corollary}
\label{cor:kdim zero intersecting}
  Let $\CI\in\Subsets(r,n,s)$.
  If $\kdim\CI=0$ then $\tdim\CI=\edim\CI$, and hence $\CI\in\Intersecting(r,n,s)$.
\end{corollary}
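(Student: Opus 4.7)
The plan is to compare two expressions for $\dim\Pt(\CI)$. By \cref{lem:P_t zopen}, $\Pt(\CI)$ is a vector bundle over the nonempty Zariski-open subset $\Bt(\CI)\subseteq\flag(V_0)^s\times\flag(Q_0)^s$, with fiber $H_\CI(\CF,\CG)$, which by definition of $\Bt(\CI)$ has dimension $\tdim\CI$. This yields the formula recorded in \cref{eq:dim Pt}, namely
\[ \dim\Pt(\CI) = s\bigl(\dim\flag(V_0)+\dim\flag(Q_0)\bigr) + \tdim\CI. \]
On the other hand, under the hypothesis $\kdim\CI=0$, the immediately preceding \cref{lem:dim P_t zero} gives the alternative formula
\[ \dim\Pt(\CI) = s\bigl(\dim\flag(V_0)+\dim\flag(Q_0)\bigr) + \edim\CI. \]

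Equating these two expressions, the flag-variety contributions cancel and we are left with $\tdim\CI=\edim\CI$. By \cref{cor:tdim edim}, this equality is exactly the criterion for $\CI$ to be intersecting, so $\CI\in\Intersecting(r,n,s)$, as desired.

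Essentially all the substantive work has been done in \cref{lem:dim P_t zero}: the fiber-bundle description there was engineered to produce a dimension count that uses $\edim\CI$ (through the dimension formula for $\flag^0_{I_k}(F_k,\phi)$ in \cref{lem:image flags}), while the tautological vector-bundle structure over $\Bt(\CI)$ produces a count that uses $\tdim\CI$. The corollary is then just the observation that both counts describe the same variety, so there is no remaining obstacle; no hypothesis on $\edim\CI$ is even needed, since the produced equality $\tdim\CI=\edim\CI$ is strictly stronger than the inequality $\tdim\CI\geq\edim\CI$ of \cref{cor:tdim edim}.
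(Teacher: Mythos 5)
Your argument is exactly the paper's: compare the dimension formula~\eqref{eq:dim Pt} (from the vector-bundle structure of $\Pt(\CI)$ over $\Bt(\CI)$) with the one in \cref{lem:dim P_t zero} (valid when $\kdim\CI=0$), deduce $\tdim\CI=\edim\CI$, and invoke \cref{cor:tdim edim}. Correct, and no meaningful deviation from the paper's one-line proof.
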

\begin{proof}
  This follows directly by comparing \cref{eq:dim Pt,lem:dim P_t zero}.
\end{proof}

We now consider the general case, where $0<d:=\kdim\CI<r$.
We first note that the kernel dimension is attained generically.
Thus we define
\begin{align*}
  \Pkt(\CI) &:= \{ (\CF,\CG,\phi) \in \Pt(\CI) : \dim\ker\phi = \kdim\CI \}, \\
  \Bkt(\CI) &:= \{ (\CF,\CG) : \exists\phi \text{~s.th.~} (\CF,\CG,\phi) \in \Pkt(\CI) \} \subseteq \Bt(\CI),
\end{align*}
where the subscripts denote that we fix both the true dimension as well as the kernel dimension.
We have the following lemma:

\begin{lemma}
\label{lem:P_kt zopen}
  The set $\Pkt(\CI)$ is a nonempty Zariski-open subset of $\Pt(\CI)$, hence also irreducible.
  Moreover, $\Bkt(\CI)$ is a nonempty Zariski-open subset of $\flag(V_0)^s\times\flag(Q_0)^s$.
\end{lemma}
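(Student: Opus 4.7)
The plan is to exploit the fact that the kernel dimension of a linear map is upper semicontinuous in the Zariski topology, so that the minimum value is attained on a Zariski-open subset, combined with the vector bundle structure from \cref{lem:P_t zopen} to push the openness from $\Pkt(\CI)$ down to $\Bkt(\CI)$.

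First I would handle $\Pkt(\CI)$. Recall that for any $m \geq 0$, the condition $\dim \ker \phi \leq m$ on $\phi \in \Hom(V_0, Q_0)$ is equivalent to $\rank \phi \geq r - m$, which is the non-vanishing of at least one $(r-m) \times (r-m)$ minor. This is a Zariski-open condition. Now, by the very definition of $\kdim\CI$ as a minimum over $\phi \in H_\CI(\CF,\CG)$ with $(\CF,\CG) \in \Bt(\CI)$, every point $(\CF,\CG,\phi)$ of $\Pt(\CI)$ satisfies $\dim \ker \phi \geq \kdim\CI$, and so
\[
  \Pkt(\CI) = \{ (\CF,\CG,\phi) \in \Pt(\CI) : \dim \ker \phi \leq \kdim\CI \}
\]
is cut out of $\Pt(\CI)$ by a Zariski-open condition. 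Nonemptiness is immediate from the definition of $\kdim\CI$ (the minimum is attained at some point). Since $\Pt(\CI)$ is irreducible by \cref{lem:P_t zopen}, the nonempty Zariski-open subset $\Pkt(\CI)$ is also irreducible.

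Next I would deduce the statement about $\Bkt(\CI)$. By construction, $\Bkt(\CI)$ is the image of $\Pkt(\CI)$ under the projection
\[
  \pi \colon \Pt(\CI) \to \Bt(\CI), \quad (\CF,\CG,\phi) \mapsto (\CF,\CG),
\]
which is a vector bundle by \cref{lem:P_t zopen}. By the general fact recalled in \cref{subsec:coordinates} (namely, that a morphism with everywhere surjective differential sends Zariski-open subsets to Zariski-open subsets, a property satisfied in particular by vector bundle projections), $\pi(\Pkt(\CI)) = \Bkt(\CI)$ is Zariski-open in $\Bt(\CI)$. Combining this with the fact that $\Bt(\CI)$ is itself a nonempty Zariski-open subset of $\flag(V_0)^s \times \flag(Q_0)^s$ (again by \cref{lem:P_t zopen}), we conclude that $\Bkt(\CI)$ is a nonempty Zariski-open subset of $\flag(V_0)^s \times \flag(Q_0)^s$, with nonemptiness inherited from that of $\Pkt(\CI)$.

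There is essentially no obstacle here: the argument is a textbook semicontinuity of rank plus transport through a vector bundle. The slight subtlety worth flagging is that one must use that the inequality $\dim \ker \phi \geq \kdim\CI$ holds automatically on $\Pt(\CI)$ (so no lower semicontinuity statement is needed); this is why \cref{lem:P_t zopen} was stated in terms of $\Pt(\CI)$ rather than the ambient $\Pa(\CI)$.
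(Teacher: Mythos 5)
Your argument is correct and is essentially the paper's proof, just spelled out in more detail: the paper likewise characterizes $\Pkt(\CI)$ via the nonvanishing of minors (your rank semicontinuity observation) and deduces openness of $\Bkt(\CI)$ from the vector bundle structure of $\Pt(\CI)\to\Bt(\CI)$. Your added remark that the inequality $\dim\ker\phi\geq\kdim\CI$ holds automatically on $\Pt(\CI)$, so only the open condition $\dim\ker\phi\leq\kdim\CI$ is needed, is a helpful clarification that the paper leaves implicit.
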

\begin{proof}
  The first claim holds since $\Pkt(\CI)$ can be defined by the nonvanishing of certain minors.
  The second claim now follows as $\Bkt(\CI)$ is the image of the Zariski-open subset $\Pkt(\CI)$ of the vector bundle $\Pt(\CI)\to\Bt(\CI)$.
\end{proof}


Belkale's insight is to consider the positions of generic kernels for an induction:

\begin{definition}\label{def:kerpos}
  Let $\CI\in\Subsets(r,n,s)$. 
  Then we define the \emph{kernel position} of $\CI$ as the tuple $\CJ\in\Subsets(d,r,s)$ defined by
  \[ J_k(b) := \min \{ \Pos(\ker\phi,F_k)(b) \;:\; (\CF,\CG,\phi) \in \Pkt(\CI) \} \]
  for $b \in [d]$ and $k \in [s]$.
  We write $\kPos(\CI) = \CJ$.
\end{definition}

The goal in the remainder of this subsection is to prove the following equality:
\[ \tdim\CI = \edim\CJ + \edim\CI/\CJ, \]
where $\CJ=\kPos(\CI)$.
This will again be accomplished by computing the dimension of $\Pt(\CI)$ in a second way and comparing the result with~\eqref{eq:dim Pt}.
Specifically, we consider the spaces
\begin{align*}
  \Pkpt(\CI) &:= \{ (\CF,\CG,\phi) \in \Pkt(\CI) : \Pos(\ker\phi, \CF) = \kPos(\CI) \}, \\
  \Bkpt(\CI) &:= \{ (\CF,\CG) : \exists\phi \text{~s.th.~} (\CF,\CG,\phi) \in \Pkpt(\CI) \} \subseteq \Bkt(\CI).
\end{align*}
Then $\Pkpt(\CI)$ is Zariski-open in $\Pkt(\CI)$, since it can again be defined by demanding that certain minors are nonzero. 
We obtain the following lemma, the second claim in which is proved as before:

\begin{lemma}
\label{lem:P_kpt zopen}
  Let $\CI\in\Subsets(r,n,s)$ such that $0<\kdim\CI<r$.
  Then $\Pkpt(\CI)$ is a nonempty Zariski-open subset of $\Pkt(\CI)$, hence also irreducible.
  Moreover, $\Bkpt(\CI)$ is a nonempty Zariski-open subset of $\flag(V_0)^s\times\flag(Q_0)^s$.
\end{lemma}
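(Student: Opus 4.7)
Plan. I would follow the paper's indication and express $\Pkpt(\CI)$ as the locus in $\Pkt(\CI)$ cut out by the non-vanishing of certain minors. Since $\dim\ker\phi=d:=\kdim(\CI)$ is constant on $\Pkt(\CI)$ by construction, the assignment $(\CF,\CG,\phi)\mapsto\ker\phi$ is a regular morphism to $\grass(d,V_0)$, and hence each function $(\CF,\CG,\phi)\mapsto\dim(\ker\phi\cap F_k(j))$ is upper semicontinuous for $k\in[s]$ and $j\in[r]$. By \cref{def:schubert pos}, the inequality $\Pos(\ker\phi,F_k)(b)\geq J_k(b)$ is equivalent to $\dim(\ker\phi\cap F_k(J_k(b)-1))<b$, the non-vanishing of suitable $b\times b$ minors computed in bases adapted to $F_k$, and is therefore Zariski-open.

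By the definition of $J_k(b)$ as a componentwise minimum over $\Pkt(\CI)$, the above inequalities $\Pos(\ker\phi,F_k)(b)\geq J_k(b)$ hold on every triple in $\Pkt(\CI)$, with equality being precisely the defining condition for $\Pkpt(\CI)$. The irreducibility of $\Pkt(\CI)$ from \cref{lem:P_kt zopen} forces $\kPos(\CI)$ to agree with the generic value of $\Pos(\ker\phi,\CF)$ on $\Pkt(\CI)$: otherwise the componentwise minimum would be strictly smaller than the componentwise generic value in some entry, and the equality locus would be a proper closed subset rather than the whole generic open locus. Accepting this, $\Pkpt(\CI)$ is exactly the Zariski-open and nonempty locus where each strict inequality $\dim(\ker\phi\cap F_k(J_k(b)-1))<b$ holds, and its irreducibility is then inherited from $\Pkt(\CI)$.

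For the second claim about $\Bkpt(\CI)$, the restriction of the vector bundle projection $\Pt(\CI)\to\Bt(\CI)$ from \cref{lem:P_t zopen} to $\Pkt(\CI)$ gives a smooth surjective morphism $\Pkt(\CI)\to\Bkt(\CI)$, and such morphisms take Zariski-open subsets to Zariski-open subsets. Applying this to $\Pkpt(\CI)$ produces $\Bkpt(\CI)$ as the desired nonempty Zariski-open subset of $\flag(V_0)^s\times\flag(Q_0)^s$.

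The main obstacle is the justification in the middle paragraph: explaining why, on the irreducible variety $\Pkt(\CI)$, the componentwise minimum of the componentwise-lower-semicontinuous position function $\Pos(\ker\phi,\CF)$ necessarily coincides with its generic value, so that the equality locus $\Pkpt(\CI)$ is Zariski-open rather than merely closed. This is the structural fact underpinning the openness claim, and the rest of the argument is a routine packaging of minor-nonvanishing with the vector-bundle structure already established.
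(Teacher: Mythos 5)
Your plan correctly identifies the minor-nonvanishing mechanism and the role of irreducibility, but the gap you flag at the end is real and, as the definition is written, not fillable. Since $\dim(\ker\phi\cap F_k(j))$ is upper semicontinuous, the set $\{\Pos(\ker\phi,F_k)(b)\geq a\}=\{\dim(\ker\phi\cap F_k(a-1))<b\}$ is Zariski-open, so on the irreducible $\Pkt(\CI)$ the \emph{generic} value of each $\Pos(\ker\phi,F_k)(b)$ is its \emph{maximum}, not its minimum. A concrete instance: take $r=2$, $n=4$, $s=2$, $\CI=(\{1,4\},\{2,3\})$, so $\tdim\CI=1$ and the $I_1$-constraint forces $\phi|_{F_1(1)}=0$; whenever $\dim\ker\phi=1$ one gets $\ker\phi=F_1(1)$, hence $\Pos(\ker\phi,F_2)(1)$ is $1$ on the proper closed locus $\{F_1(1)=F_2(1)\}$ and $2$ elsewhere — so the componentwise minimum is $1$ but the generic value is $2$. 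Read literally, the ``$\min$'' in \cref{def:kerpos} would make $\Pkpt(\CI)$ a proper \emph{closed} subvariety and the lemma false; for the lemma (and for \cref{cor:kerpos intersecting}, which needs $\Bkpt(\CI)$ to be dense) to hold, $\kPos(\CI)$ must be the componentwise \emph{maximum}, i.e.\ the generically attained tuple.

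Once you replace $\min$ by $\max$, your proof goes through: by maximality one has $\Pos(\ker\phi,F_k)(b)\leq J_k(b)$ everywhere on $\Pkt(\CI)$, so $\Pkpt(\CI)=\bigcap_{k,b}\{\dim(\ker\phi\cap F_k(J_k(b)-1))<b\}$; each factor is Zariski-open (nonvanishing of minors) and nonempty (the value $J_k(b)$ is attained), and a finite intersection of nonempty Zariski-opens in the irreducible $\Pkt(\CI)$ is again nonempty, open, and irreducible. Your argument for $\Bkpt(\CI)$ via the vector-bundle projection is then fine. Note, though, that your sentence ``$\Pkpt(\CI)$ is exactly the Zariski-open and nonempty locus where each strict inequality $\dim(\ker\phi\cap F_k(J_k(b)-1))<b$ holds'' is only correct under the ``$\max$'' reading; under the ``$\min$'' reading these inequalities hold on all of $\Pkt(\CI)$ and cut out nothing at all, which is an internal sign that the intended definition must be the maximum.
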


\begin{corollary}
\label{cor:kerpos intersecting}
  Let $\CI\in\Subsets(r,n,s)$ such that $0<\kdim\CI<r$.
  Then $\kPos(\CI)\in\Intersecting(d,r,s)$.
\end{corollary}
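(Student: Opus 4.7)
The plan is to exhibit, for some suitably generic tuple $\CF\in\flag(V_0)^s$, a subspace $S\subseteq V_0$ of dimension $d=\kdim\CI$ with $\Pos(S,\CF)=\CJ$, and then conclude that $\CJ\in\Intersecting(d,r,s)$ via the characterization~\eqref{eq:intersecting via good} coming from \cref{lem:good set}.

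The source of $S$ is already furnished by \cref{lem:P_kpt zopen}: the variety $\Pkpt(\CI)$ is nonempty, and for any point $(\CF,\CG,\phi)\in\Pkpt(\CI)$ the kernel $S:=\ker\phi$ has dimension $d$ and satisfies $\Pos(S,F_k)=J_k$ for every $k\in[s]$, that is, $S\in\Omega^0_\CJ(\CF)$. In other words, every $\CF$ in the image of the projection $\Pkpt(\CI)\to\flag(V_0)^s$ already carries a witnessing kernel; it remains to show that such $\CF$ can be chosen in the good set.

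To this end, I would argue that the tuples $\CF$ arising as the first component of some $(\CF,\CG,\phi)\in\Pkpt(\CI)$ form a Zariski-dense subset of $\flag(V_0)^s$. By \cref{lem:P_kpt zopen}, $\Bkpt(\CI)$ is a nonempty Zariski-open, hence dense, subset of the irreducible variety $\flag(V_0)^s\times\flag(Q_0)^s$, so the projection $\Bkpt(\CI)\to\flag(V_0)^s$ is a dominant morphism of irreducible varieties, and its image therefore contains a nonempty Zariski-open subset $U\subseteq\flag(V_0)^s$.

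Identifying $V_0\cong\C^r$, both $U$ and $\Good(r,s)$ are nonempty Zariski-open subsets of the irreducible variety $\flag(V_0)^s\cong\flag(r)^s$, so their intersection is nonempty. For any $\CF\in U\cap\Good(r,s)$ there exist $\CG$ and $\phi$ with $(\CF,\CG,\phi)\in\Pkpt(\CI)$, and the previous paragraph yields $S=\ker\phi\in\Omega^0_\CJ(\CF)$. By the implication \ref{item:good set iii}$\Rightarrow$\ref{item:good set i} of \cref{lem:good set}, this forces $\CJ\in\Intersecting(d,r,s)$. I do not anticipate any real obstacle; the only ingredient beyond unraveling definitions is the standard fact that a dominant morphism between irreducible varieties has image containing a nonempty Zariski-open subset of the target.
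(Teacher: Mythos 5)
Your proof is correct and takes essentially the same approach as the paper: both start from \cref{lem:P_kpt zopen} (the openness and nonemptiness of $\Bkpt(\CI)$), project to $\flag(V_0)^s$ to get a dense set of $\CF$ for which $\Omega^0_\CJ(\CF)\neq\emptyset$, and conclude that $\CJ$ is intersecting. The only cosmetic difference is the final step: the paper invokes \cref{lem:images,lem:dominant} directly (Zariski density of the projected image already suffices there), while you instead intersect with $\Good(r,s)$ and invoke \ref{item:good set iii}$\Rightarrow$\ref{item:good set i} of \cref{lem:good set}, which requires the mildly stronger (but still standard) observation that the projected image contains a Zariski-open subset.
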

\begin{proof}
  According to \cref{lem:P_kpt zopen}, $\Bkpt(\CI)$ is a nonempty Zariski-open subset of $\flag(V_0)^s\times\flag(Q_0)^s$,
  hence Zariski-dense. 
  It follows that its image under the projection $(\CF,\CG)\mapsto\CF$ is likewise Zariski-dense.
  For any such $\CF$, there exists a $\CG$ and $\phi$ such that $(\CF,\CG,\phi) \in \Pkpt(\CI)$, and hence $\ker\phi \in \Omega^0_{\kPos(\CI)}(\CF)$; in particular, $\Omega^0_{\kPos(\CI)}(\CF)$ is nonempty.
  Thus \cref{lem:images,lem:dominant} show that $\kPos(\CI)$ is intersecting.
\end{proof}

We now compute the dimension of $\Pkpt(\CI)$.
As in the proof of \cref{lem:dim P_t zero}, it will be useful to consider an auxiliary space where we do \emph{not} enforce the true dimension:
\begin{equation*}
  \Pkp(\CI) := \{ (\CF,\CG,\phi) \in \Pa(\CI) : \Pos(\ker\phi,\CF) = \kPos(\CI) \}
\end{equation*}
Note that constraint on the position of the kernel implies that its dimension is $\kdim\CI$.

\begin{lemma}
\label{lem:dim P_kpt nonzero}
  Let $\CI\in\Subsets(r,n,s)$ such that $0<\kdim\CI<r$.
  Then $\Pkp(\CI)$ is nonempty, smooth, irreducible, and satisfies
  \[ \dim \Pkp(\CI) = s \bigl( \dim \flag(V_0) + \dim \flag(Q_0) \bigr) + \edim\CJ + \edim\CI/\CJ, \]
  where $\CJ := \kPos(\CI)$.
\end{lemma}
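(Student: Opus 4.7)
The plan is to decompose $\Pkp(\CI)$ as an iterated fiber bundle via the forgetful map $\rho\colon \Pkp(\CI) \to \CZ$ sending $(\CF,\CG,\phi) \mapsto (\CF,\ker\phi)$, where $\CZ := \{(\CF,S) \in \flag(V_0)^s \times \grass(d,V_0) : \Pos(S,\CF) = \CJ\}$. The key input is \cref{lem:exp vs composition}: it identifies the fiber of $\rho$ over $(\CF,S)$ with the set of pairs $(\CG,\bar\phi)$, where $\bar\phi\colon V_0/S \hookrightarrow Q_0$ is an injective linear map and $\bar\phi \in H_{I_k/J_k}((F_k)_{V_0/S}, G_k)$ for each $k\in[s]$. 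This reduces the dimension count to separate pieces that are each controlled by results already established.

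I would analyze $\CZ$ by further projecting to $\grass(d,V_0)$; the fiber over $S$ is the smooth irreducible product $\prod_k \flag^0_{J_k}(S,V_0)$, each factor being a single $P(S,V_0)$-orbit. Combining \cref{eq:dim flag^0_I} with the standard identity $\dim\flag(V_0) = \dim\flag(S) + \dim\flag(V_0/S) + d(r-d)$ yields
\[ \dim\CZ = d(r-d) + s\bigl(\dim\flag(V_0) - d(r-d)\bigr) + \sum_{k=1}^s \dim J_k = s\dim\flag(V_0) + \edim\CJ. \]
For the fibers of $\rho$, I would project onto the Zariski-open subset of injective $\bar\phi \in \Hom(V_0/S, Q_0)$, of dimension $(r-d)(n-r)$, whose inner fiber over $\bar\phi$ is the smooth irreducible product $\prod_k \flag^0_{I_k/J_k}((F_k)_{V_0/S}, \bar\phi)$, with dimension supplied by \cref{lem:image flags} (applied with ambient dimension $n-d$ and subspace dimension $r-d$). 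Summing the pieces yields fiber dimension $s\dim\flag(Q_0) + \edim\CI/\CJ$, and hence the claimed total.

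Nonemptyness follows from $\Pkpt(\CI) \subseteq \Pkp(\CI)$ together with \cref{lem:P_kpt zopen}; smoothness and irreducibility follow from the iterated fiber bundle structure, with local triviality coming from transitivity of the relevant group actions on each layer. The main technical obstacle I anticipate is verifying that \cref{lem:image flags} is uniformly applicable across all $(\CF,S) \in \CZ$, i.e.\ that $(I_k/J_k)(a) \geq 2a$ for every $k$ and $a$; this I would deduce from the concrete triple in $\Pkp(\CI)$ provided by \cref{lem:P_kpt zopen}, which yields an injective $\bar\phi$ satisfying all flag conditions at one basepoint, and then transport to arbitrary $(\CF,S) \in \CZ$ by the $\GL(V_0)\times\GL(Q_0)$-equivariance of the whole construction.
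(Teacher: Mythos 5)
Your proof is correct and is essentially the same as the paper's: both decompose $\Pkp(\CI)$ as a tower of fiber bundles over $\grass(d,V_0)$, with factors $\prod_k\flag^0_{J_k}(S,V_0)$, $\Hom^\times(V_0/S,Q_0)$, and $\prod_k\flag^0_{I_k/J_k}((F_k)_{V_0/S},\bar\phi)$, the last controlled by \cref{lem:image flags}. The only cosmetic difference is where you place the $\Hom^\times(V_0/S,Q_0)$ factor in the tower (you put it in the fiber of $\rho$; the paper puts it at the middle level inside $\Mkp$), and your worry about verifying $(I_k/J_k)(a)\geq 2a$ is easier than you suggest, since it is a purely numerical condition independent of $(\CF,S)$ and follows immediately from the existence of any single injective $\bar\phi$ in $H_{\CI/\CJ}$, which you already have from $\Pkpt(\CI)\neq\emptyset$.
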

\begin{proof}
  Clearly, $\Pkp(\CI)$ is nonempty since it contains $\Pkpt(\CI)$.
  We now introduce
  \[ \Mkp := \{ (\CF,\phi) \in \flag(V_0)^s \times \Hom(V_0,Q_0) : \Pos(\ker\phi,\CF) = \kPos(\CI) \} \]
  and consider the map
  \[ \pi\colon \Pkp(\CI)\to\Mkp, \quad (\CF,\CG,\phi)\mapsto(\CF,\phi). \]
  Its fibers are given by
  \[ \pi^{-1}(\CF,\phi) \cong \prod_{k=1}^s \{ G_k \in \flag(Q_0) : \phi \in H_{I_k}(F_k, G_k) \} \]
  To understand the right-hand side, define $S:=\ker\phi$ and let $\bar\phi\colon V_0/S\to Q_0$ the corresponding injective map.
  By \cref{lem:exp vs composition}, $\phi \in H_{I_k}(F_k, G_k)$ if and only if $\bar\phi \in H_{I_k/J_k}((F_k)_{V_0/S}, G_k)$, that is, $G_k \in \flag_{I_k/J_k}^0((F_k)_{V_0/S}, \bar\phi)$ as introduced in \cref{def:flags for injective morphism}.
  Thus we find that the fibers of $\pi$ can be identified as
  \[ \pi^{-1}(\CF,\phi) \cong \prod_{k=1}^s \flag_{I_k/J_k}^0((F_k)_{V_0/S}, \bar\phi). \]
  By \cref{lem:image flags}, the $k$-th factor on the right-hand side is a smooth irreducible variety of dimension $\dim \flag(Q_0) - (r-d)(n-r) + \dim I_k/J_k$, where $d := \dim\ker\phi = \kdim\CI$.
  It is not hard to see that $\pi$ is a fiber bundle, and we will show momentarily that $\Mkp$ is irreducible.
  Hence
  \begin{equation}
  \label{eq:dim P_kpt half}
    \dim \Pkp(\CI) = \dim \Mkp + s \dim \flag(Q_0) - s(r-d)(n-r) + \sum_{k=1}^s \dim I_k/J_k.
  \end{equation}
  It remains to show that $\Mkp$ is smooth and irreducible and to compute its dimension.
  For this, we consider the map
  \[ \tau \colon \Mkp\to \grass(d,V_0), (\CF,\phi) \mapsto \ker\phi. \]
  Since $\phi$ can be specified in terms of the kernel $S:=\ker\phi$ and the injection $\bar\phi\colon V_0/S\to Q_0$, it is clear that the fibers of $\tau$ are given by
  \[ \tau^{-1}(S) = \Hom^\times(V_0/S, Q_0) \times \prod_{k=1}^s \flag^0_{J_k}(S, V_0). \]
  Since $\tau$ is likewise a fiber bundle, we obtain that $\Mkp$ is smooth and irreducible and, using~\eqref{eq:dim flag^0_I}, that
  \begin{align*}
    &\quad \dim \Mkp = \dim \grass(d,V_0) + (r-d)(n-r) + \sum_{k=1}^s \dim\flag^0_{J_k}(S, V_0) \\
    &= d(r-d) + (r-d)(n-r) + s \bigl( \dim \flag(S) + \dim \flag(V_0/S) \bigr) + \sum_{k=1}^s \dim J_k \\
    &= d(r-d)(1-s) + (r-d)(n-r) + s \dim \flag(V_0) + \sum_{k=1}^s \dim J_k.
  \end{align*}
  By plugging this result into~\eqref{eq:dim P_kpt half} and simplifying, we obtain the desired result.
\end{proof}

\begin{corollary}
  Let $\CI\in\Subsets(r,n,s)$ such that $0<\kdim\CI<r$, and $\CJ=\kPos(\CI)$.
  Then,
  \begin{equation}
  \label{eq:tdim via kpos}
    \tdim\CI = \edim\CJ + \edim\CI/\CJ
  \end{equation}
\end{corollary}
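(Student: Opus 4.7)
The plan is to compute the dimension of the space $\Pkpt(\CI)$ in two different ways and compare the results, mirroring the strategy used in \cref{cor:kdim zero intersecting} but now with the refinement supplied by $\kPos(\CI)$.

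First I would observe the chain of inclusions $\Pkpt(\CI) \subseteq \Pkt(\CI) \subseteq \Pt(\CI)$. By \cref{lem:P_kpt zopen}, $\Pkpt(\CI)$ is a nonempty Zariski-open subset of $\Pkt(\CI)$, and by \cref{lem:P_kt zopen}, $\Pkt(\CI)$ is a nonempty Zariski-open subset of $\Pt(\CI)$. Since $\Pt(\CI)$ is irreducible (\cref{lem:P_t zopen}), it follows that $\Pkpt(\CI)$ is also irreducible and
\[ \dim \Pkpt(\CI) = \dim \Pt(\CI) = s\bigl(\dim\flag(V_0) + \dim\flag(Q_0)\bigr) + \tdim\CI, \]
using \cref{eq:dim Pt}.

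Next I would compare $\Pkpt(\CI)$ with the auxiliary space $\Pkp(\CI)$. By construction, $\Pkpt(\CI) = \Pkp(\CI) \cap \Pt(\CI)$: both sides impose the kernel position condition $\Pos(\ker\phi,\CF) = \kPos(\CI)$ together with the true-dimension condition $\dim H_\CI(\CF,\CG) = \tdim\CI$. The latter condition is Zariski-open on $\Pa(\CI)$ (the minimum is achieved on an open set), so $\Pkpt(\CI)$ is a Zariski-open subset of $\Pkp(\CI)$, and it is nonempty by the very definition of $\kPos(\CI)$. Since $\Pkp(\CI)$ is irreducible by \cref{lem:dim P_kpt nonzero}, we conclude
\[ \dim \Pkpt(\CI) = \dim \Pkp(\CI) = s\bigl(\dim\flag(V_0) + \dim\flag(Q_0)\bigr) + \edim\CJ + \edim\CI/\CJ. \]

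Equating the two expressions for $\dim \Pkpt(\CI)$ and cancelling the common term $s(\dim\flag(V_0) + \dim\flag(Q_0))$ yields the desired identity $\tdim\CI = \edim\CJ + \edim\CI/\CJ$. The only nontrivial point is the verification that $\Pkpt(\CI)$ is open (rather than merely locally closed) in $\Pkp(\CI)$; but this is immediate from the semicontinuity of $(\CF,\CG) \mapsto \dim H_\CI(\CF,\CG)$ and the fact that $\tdim\CI$ is the generic (minimal) value of this function. All the heavy lifting about dimensions of fibers and bundle structures was already carried out in \cref{lem:dim P_kpt nonzero}, so no further geometric input is needed.
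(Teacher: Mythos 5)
Your proof is correct and follows essentially the same approach as the paper: you compute $\dim\Pkpt(\CI)$ in two ways by observing it is nonempty Zariski-open in both $\Pt(\CI)$ and $\Pkp(\CI)$, which are irreducible, and then compare~\eqref{eq:dim Pt} with \cref{lem:dim P_kpt nonzero}. The only cosmetic difference is that you reach openness in $\Pt(\CI)$ via the intermediate $\Pkt(\CI)$ while the paper observes directly that $\Pkpt(\CI)$ is open in $\Pa(\CI)$ and hence in both subvarieties; the content is identical.
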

\begin{proof}
  Recall that $\Pkp(\CI) \subseteq \Pa(\CI) \supseteq \Pt(\CI)$.
  Moreover,
  \[ \Pkpt(\CI) = \Pkp(\CI)\cap\Pt(\CI) \subseteq \Pa(\CI). \]
  All three varieties $\Pkpt(\CI)$, $\Pkp(\CI)$, $\Pt(\CI)$ are irreducible (\cref{lem:P_t zopen,lem:P_kpt zopen,lem:dim P_kpt nonzero}).
  Moreover, $\Pkpt(\CI)$ is nonempty and Zariski-open in $\Pa(\CI)$, hence in both $\Pkp(\CI)$ and $\Pt(\CI)$.
  It follows that
  \[ \dim\Pkp(\CI) = \dim\Pkpt(\CI) = \dim\Pt(\CI). \]
  We now obtain~\eqref{eq:tdim via kpos} via \cref{lem:dim P_kpt nonzero,eq:dim Pt}.
\end{proof}

\begin{remark*}
  Purbhoo~\cite{purbhoo2006two} asserts that if $\CJ$ denotes the kernel position of $\CI$ then $\CI/\CJ$ is intersecting.
  However, we believe that the proof given therein is incomplete, as it is not clear that the map $(\CF,\CG,\phi)\mapsto(\CF_{V/S},\CG)$ is dominant
  (cf.\ the remark at~\cite{purbhooweb}).
  The following argument suggests that the situation is somewhat more delicate.
\end{remark*}

\subsection{The kernel recurrence}
\label{subsec:kernel recurrence}

To conclude the proof in the case that $0<\kdim\CI<r$, we need to understand the right-hand side of~\eqref{eq:tdim via kpos} some more.
We start with the calculation
\begin{equation}
\label{eq:tdim minus edim first}
  \tdim\CI - \edim\CI
  = \edim\CJ - (\edim\CI\CJ - \edim\CJ)
  = \edim\CJ - \edim\CI^\CJ,
\end{equation}
where the first equality is due to \cref{eq:tdim via kpos,eq:quotient edim} and the second is \cref{eq:exp edim}.

The last missing ingredient is to understand the expected dimension of the kernel position, $\edim\CJ$.

\begin{lemma}
\label{lem:sherman upper bound}
  Let $\CI\in\Subsets(r,n,s)$ such that $0<\kdim\CI<r$, and let $\CJ:=\kPos(\CI)$.
  Then we have $\edim\CJ \leq \tdim\CI^\CJ$.
\end{lemma}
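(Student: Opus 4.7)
The strategy has three parts. The first step is to construct a linear injection using \cref{lem:exp vs composition}. Pick any $(\CF,\CG,\phi)\in\Pkpt(\CI)$ and set $S=\ker\phi$; then $\dim S=d$, $\Pos(S,\CF)=\CJ$, and the induced morphism $\bar\phi\colon V_0/S\to Q_0$ is injective. The second statement of \cref{lem:exp vs composition} then yields a linear map
\[ H_\CJ(\CF^S,\CF_{V_0/S})\longrightarrow H_{\CI^\CJ}(\CF^S,\CG),\qquad \psi\mapsto\bar\phi\psi, \]
which is injective because $\bar\phi$ is. Since $\CJ=\kPos(\CI)$ is intersecting by \cref{cor:kerpos intersecting}, \cref{cor:tdim edim} gives $\tdim\CJ=\edim\CJ$, so the domain has dimension at least $\edim\CJ$ for any choice of flags. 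Therefore $\dim H_{\CI^\CJ}(\CF^S,\CG)\geq\edim\CJ$ for every $(\CF,\CG,\phi)\in\Pkpt(\CI)$.

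To convert this pointwise estimate into a bound on $\tdim\CI^\CJ$, I use upper semicontinuity of $(\CF',\CG')\mapsto\dim H_{\CI^\CJ}(\CF',\CG')$ on $\flag(\C^d)^s\times\flag(\C^{n-r})^s$, which holds because the space is cut out by the vanishing of minors. Its generic value coincides with the minimum $\tdim\CI^\CJ$, so it suffices to show that the pairs $(\CF^S,\CG)$ arising in the first step form a Zariski-dense subset of $\flag(\C^d)^s\times\flag(\C^{n-r})^s$ (after some choice of identification $S\cong\C^d$); combined with the pointwise bound this forces $\tdim\CI^\CJ\geq\edim\CJ$.

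The density is extracted from the fibration structure in the proof of \cref{lem:dim P_kpt nonzero}. Fix a $d$-dimensional subspace $S\subseteq V_0$. In the fiber of $\tau\colon\Mkp\to\grass(d,V_0)$ over $S$, a flag $F_k\in\flag^0_{J_k}(S,V_0)$ is specified by the independent data of a flag $F_k^S$ on $S$ and a flag $(F_k)_{V_0/S}$ on $V_0/S$, and every such pair is realised by some $F_k$. The further fibration $\pi\colon\Pkp(\CI)\to\Mkp$ lets us choose $\CG$ freely subject to $\bar\phi\in H_{\CI/\CJ}(\CF_{V_0/S},\CG)$. Since $\Pkp(\CI)$ is nonempty, some configuration admits an injective $\bar\phi$, and injectivity is Zariski-open, so generic $(\CF_{V_0/S},\CG)$ also admit an injective $\bar\phi$. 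Consequently the map $(\CF,\CG,\phi)\mapsto(\CF^S,\CG)$ from $\Pkpt(\CI)$ has Zariski-dense image in $\flag(S)^s\times\flag(Q_0)^s$, as required. The main obstacle is precisely this last density step, which requires combining the two fibrations of \cref{lem:dim P_kpt nonzero} with the open-ness of injectivity for $\bar\phi$; the remainder of the argument is a direct application of \cref{lem:exp vs composition}, \cref{cor:kerpos intersecting}, \cref{cor:tdim edim}, and upper semicontinuity.
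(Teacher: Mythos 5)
Your overall strategy is the same as the paper's: use \cref{lem:exp vs composition} to inject $H_\CJ(\CF^S,\CF_{V_0/S})$ into $H_{\CI^\CJ}(\CF^S,\CG)$, then argue that the pairs $(\CF^S,\CG)$ arising from $\Pkp(\CI)$ (or $\Pkpt(\CI)$) are spread out enough to hit the generic locus where $\dim H_{\CI^\CJ}$ equals $\tdim\CI^\CJ$. Your chain of inequalities $\edim\CJ\leq\tdim\CJ\leq\dim H_\CJ\leq\dim H_{\CI^\CJ}$ is exactly the paper's, and the reduction to a density statement is also the same (the paper phrases it as dominance of a morphism $\pi\colon \Pkp(\CI)\to K(d,V_0)$ into the bundle over $\grass(d,V_0)$ with fiber $\flag(S)^s\times\flag(Q_0)^s$, which cleanly avoids the awkward choice of identification $S\cong\C^d$ that you flag).

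However, the density step — which you rightly identify as the main obstacle — is not actually established by the argument you give. Observing that $F_k^S$ and $(F_k)_{V_0/S}$ can be varied independently, and that $\bar\phi$ ranges over injective maps, is not enough: for fixed $(\CF_{V_0/S},\bar\phi)$, the flags $G_k$ are confined to the constrained sets $\flag^0_{I_k/J_k}((F_k)_{V_0/S},\bar\phi)$, and you never show that as $(\CF_{V_0/S},\bar\phi)$ varies these sweep out a dense subset of $\flag(Q_0)^s$. The sentence ``some configuration admits an injective $\bar\phi$, and injectivity is Zariski-open, so generic $(\CF_{V_0/S},\CG)$ also admit an injective $\bar\phi$'' does not do this: injectivity of $\bar\phi$ is automatic for all of $\Pkp(\CI)$, and openness of the injectivity condition says nothing about which $\CG$ can appear. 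The paper closes this gap with a concrete two-step argument: first, the projection $\Pkp(\CI)\to\flag(Q_0)^s$, $(\CF,\CG,\phi)\mapsto\CG$, has image containing a nonempty Zariski-open $U$ (this follows from $\Bkpt(\CI)$ being nonempty and Zariski-open, \cref{lem:P_kpt zopen}); second, for any target $(S,\tilde\CF,\CG)$ with $\CG\in U$, a preimage is produced explicitly — take a preimage $(\CF_0,\CG,\phi_0)$ of $\CG$, translate by $g\in\GL(V_0)$ so that the kernel becomes $S$, and then act by an $s$-tuple $\vec h$ with each $h_k$ stabilizing $S$, acting trivially on $V_0/S$, and matching the flag on $S$ (this leaves $\phi$ and $\CG$ in the correct incidence by \cref{lem:exp vs composition}). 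You should incorporate this group-action construction, or at least the first step invoking \cref{lem:P_kpt zopen}, to make the density claim rigorous.
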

\begin{proof}
  For any $(\CF,\CG,\phi)\in \Pkp(\CI)$, the space $H_\CJ(\CF^{\ker\phi}, \CF_{V_0/\ker\phi})$ injects into $H_{\CI^\CJ}(\CF^{\ker\phi},\CG)$ by composition with the injective map $\bar\phi\colon V_0/\ker\phi\to Q_0$ induced by $\phi$ (\cref{lem:exp vs composition}). Thus,
  \[ \edim\CJ\leq\tdim\CJ\leq \dim H_\CJ(\CF^{\ker\phi},\CF_{V_0/\ker\phi})\leq\dim H_{\CI^\CJ}(\CF^{\ker\phi},\CG), \]
  where the first inequality is always true (\cref{cor:tdim edim}), the second holds by definition of the true dimension and the third follows from the injection.
  It thus suffices to prove that there exists $(\CF,\CG,\phi)\in\Pkp(\CI)$ such that $\dim H_{\CI^\CJ}(\CF^S,\CG)\leq\tdim\CI^\CJ$.

  For this, let $K(d,V_0)$ denote the fiber bundle over $\grass(d,V_0)$ with fiber over $S\in\grass(d,V_0)$ given by $\flag(S)^s\times\flag(Q_0)^s$.
  It is an irreducible algebraic variety and we denote its elements by $(S,\tilde\CF,\CG)$.
  We consider the morphism
  \[ \pi\colon \Pkp(\CI)\to K(d,V_0), \quad (\CF,\CG,\phi)\mapsto(\ker\phi,\CF^{\ker\phi},\CG). \]
  For any $(\CF,\CG,\phi)\in\Pkp(\CI)$, $\dim\ker\phi=d$ and $\Pos(\ker\phi,\CF)=\CJ$, hence $\pi$ is indeed a morphism.

  We first prove that $\pi$ is dominant.
  Note that, as a consequence of \cref{lem:P_kpt zopen}, the map $\Pkp(\CI)\to\flag(Q_0)^s, (\CF,\CG,\phi)\mapsto\CG$ contains a nonempty Zariski-open subset $U \subseteq \flag(Q_0)^s$.
  We now show that the image of $\pi$ contains all elements $(S,\tilde\CF,\CG)$ with $S\in\grass(d,V_0)$, $\tilde\CF\in\flag(S)^s$ and $\CG\in U$.
  For this, let $(\CF_0,\CG,\phi_0)\in\Pkp(\CI)$ be the preimage of some arbitrary $\CG\in U$.
  Let $S_0 :=\ker\phi_0$ and choose some $g \in \GL(V_0)$ such that $g \cdot S_0 = S$.
  Using the corresponding diagonal action, $\CF:=g\cdot\CF_0$ and $\phi:=g\cdot\phi_0$, we obtain that $(\CF,\CG,\phi)\in\Pkp(\CI)$ and $\ker\phi=S$.
  Given $\tilde\CF\in\flag(S)^s$, we now choose $\vec{h}\in\GL(V_0)^s$ such that
  $h_k S \subseteq S$,
  $h_k \cdot F_k^S = \tilde F_k$,
  and $h_k$ acts trivially on $V_0/S$ for all $k\in[s]$.
  Then $\Pos(S, \vec{h} \cdot \CF) = \Pos(S, \CF) = \CJ$, which shows that $(\vec{h} \cdot \CF)^S = \vec{h} \cdot \CF^S = \tilde\CF$.
  Moreover, $(\vec{h} \cdot \CF)_{V_0/S} = \CF_{V_0/S}$.
  Thus $\phi\in H_\CI(\CF, \CG)$ implies that $\phi\in H_\CI(\vec{h} \cdot \CF, \CG)$ by \cref{lem:exp vs composition}.
  Together, we find that the triple $(\vec{h}\cdot\CF,\CG,\phi)$ is in $\Pkp(\CI)$ and mapped by $\pi$ to $(S,\tilde\CF,\CG)$.
  We thus obtain that $\pi$ is dominant.

  To conclude the proof, we note that the subset $W\subseteq K(d,V_0)$ consisting of those $(S,\tilde\CF,\CG)$ with $\dim H_{\CI^\CJ}(\tilde\CF,\CG)=\tdim\CI^\CJ$ is a nonempty Zariski-open subset, and hence Zariski-dense since $K(d,V_0)$ is irreducible.
  For each fixed choice of $S$, this is the claim in \cref{lem:P_t zopen} for $B_t(\CI)$, with $\CI^\CJ$ instead of $\CI$.
  The `parametrized version' is proved in the same way.
  Since $\pi$ is dominant, the preimage $\pi^{-1}(W)$ is a nonempty Zariski-open subset of $\Pkp(\CI)$.
  In particular, any $(\CF,\CG,\pi)\in\pi^{-1}(W)\subseteq\Pkp(\CI)$ satisfies $\dim H_{\CI^\CJ}(\CF^S,\CG)\leq\tdim\CI^\CJ$.
\end{proof}

We thus obtain the following fundamental recurrence relation, due to Sherman~\cite{sherman2015geometric}, as a consequence of \cref{eq:tdim minus edim first,lem:sherman upper bound}:
\begin{equation}
\label{eq:sherman recurrence}
  \tdim\CI-\edim\CI \leq \tdim\CI^\CJ-\edim\CI^\CJ
\end{equation}

Now we have assembled all ingredients to prove Belkale's theorem:

\printbelkaletheorem{theorem}{\label{thm:belkale}}{, restated}
\begin{proof} 
  We proceed by induction on $r$.
  The base case, $r=1$, is \cref{ex:base case}.
  Thus we have $\Intersecting(1,n,s) = \Horn(1,n,s)$ for all $n\geq1$.

  Now let $r>1$.
  By the induction hypothesis, $\Horn(d,n',s) = \Intersecting(d,n',s)$ for all $0<d<r$ and $d\leq n'$.
  In particular, $\Horn(r,n,s)$ from \cref{def:horn} can be written in the following form:
  \begin{align*}
    &\quad \Horn(r,n,s) \\
    &= \{ \CI : \edim\CI\geq0, \;\;\forall \CJ\in\Intersecting(d,r,s),0<d<r, \;\edim\CJ=0: \edim\CI\CJ\geq0\} \\
    &= \{ \CI : \edim\CI\geq0, \;\;\forall \CJ\in\Intersecting(d,r,s),0<d<r, \;\edim\CI\CJ\geq\edim\CJ\}
  \end{align*}
  where the second equality is due to \cref{prp:belkale strong weak}.
  Hence it is a direct consequence of \cref{cor:belkale inductive} that $\Intersecting(r,n,s) \subseteq \Horn(r,n,s)$.
  We now prove the converse.

  Thus let $\CI\in\Horn(r,n,s)$.
  Let $d:=\kdim\CI$. If $d=0$ or $d=r$ then we know from \cref{lem:kdim r intersecting,cor:kdim zero intersecting}, respectively, that $\CI$ is intersecting.
  We now discuss the case where $0<d<r$.
  By \cref{eq:sherman recurrence}, we have that
  \[ \tdim\CI - \edim\CI \leq \tdim\CI^\CJ - \edim\CI^\CJ, \]
  where $\CJ := \kPos(\CI)$ denotes the kernel position of $\CI$.
  If we can show that $\CI^\CJ$ is intersecting then the right-hand side is zero by \cref{cor:tdim edim}, hence so is the left-hand side, since $\tdim\CI-\edim\CI\geq 0$, and thus $\CI$ is intersecting, which is what we set out to prove.

  To see that $\CI^\CJ$ is intersecting, we note that $\Intersecting(d,n-r+d,s) = \Horn(d,n-r+d,s)$ by the induction hypothesis, hence it remains to verify that $\CI^\CJ$ satisfies the Horn inequalities.
  Let $\CK\in\Horn(m,d,s)=\Intersecting(m,d,s)$ for any $0<m\leq d$, where we have used the induction hypothesis one last time.
  Thus $\CJ\CK\in\Intersecting(m,r,s)$ by \cref{cor:kerpos intersecting,lem:chain rule intersecting}.
  It follows that
  \begin{align*}
    \edim\CI^\CJ\CK - \edim\CK
  = \edim\CI(\CJ\CK) - \edim\CJ\CK
  \geq 0
  \end{align*}
  where the first step is~\eqref{eq:exp edim induction} and the second step holds because by assumption $\CI\in\Horn(r,n,s)$ and $\CJ\CK\in\Intersecting(m,r,s)=\Horn(m,r,s)$, as explained above.
  We remark that these inequalities include $\edim \CI^\CJ\geq0$ (corresponding to $m=d$).
  Thus we have shown that $\CI^\CJ$ satisfies the Horn inequalities.
  This is what remained to be proved.
\end{proof}

\counterwithin{equation}{subsection}
\section{Invariants and Horn inequalities}\label{sec:invariants}

In this section, we show that the Horn inequalities not only characterize intersections, but also the existence of corresponding nonzero invariants and, thereby, the Kirwan cone for the eigenvalues of sums of Hermitian matrices.

\subsection{Borel-Weil construction}\label{subsec:borel-weil}
For any dominant weight $\lambda\in\Lambda_+(r)$ there exists an irreducible representation $L(\lambda)$ of $\GL(r)$ with highest weight $\lambda$, unique up to isomorphism.
Following Borel and Weil, it can be constructed as follows:

For any weight $\mu\in\Lambda(r)$, let us denote by $\chi_\mu\colon B(r)\to\C^*$ the character of $B(r)$ such that $\chi_\mu(t) = t^\mu = t(1)^{\mu(1)}\cdots t(r)^{\mu(r)}$ for all $t\in H(r)\subseteq B(r)$.
Here, we recall that $B(r)$ is the group of upper-triangular invertible matrices and $H(r) \subseteq B(r)$ the Cartan subgroup, which consists of invertible matrices $t\in\GL(r)$ that are diagonal in the standard basis, with diagonal entries $t(1),\dots,t(r)$.
Lastly, we write $\one_r=(1,\dots,1)\in\Lambda(r)$ for the highest weight of the determinant representation of $\GL(r)$, denoted $\det_r$.
It is clear that $L(\lambda + k \one_r) = L(\lambda) \ot \det_r^k$ for any $\lambda\in\Lambda_+(r)$ and $k\in\Z$.

\begin{definition}
\label{def:borel weil}
  Let $\lambda\in\Lambda_+(r)$.
  Then we define the \emph{Borel-Weil realization} of $L(\lambda)$ as
  \[ L_{BW}(\lambda) = \{ s\colon \GL(r) \to \C \text{ holomorphic} \;:\; s(gb) = s(g)\chi_{\lambda^*}(b) \quad \forall g\in\GL(r), b\in B(r) \} \]
  with the action of $\GL(r)$ given by $(g \cdot s)(h) := s(g^{-1}h)$.
  We recall that $\lambda^*=(-\lambda(r),\dots,-\lambda(1))$.
\end{definition}

The Borel-Weil theorem asserts that $L_{BW}(\lambda)$ is an irreducible $\GL(r)$-representation of highest weight $\lambda$.
Note that, by definition, a holomorphic function is in $L_{BW}(\lambda)$ if it is a highest weight vector of weight $\lambda^*$ with respect to the \emph{right} multiplication representation, $(g \star s)(h) := s(h g)$.

The space $L_{BW}(\lambda)$ can also be interpreted as the space of holomorphic sections of the $\GL(r)$-equivariant line bundle $\CL_{BW}(\lambda) := \GL(r) \times_{B(r)} \C_{-\lambda^*}$ over $\flag(r)\cong\GL(r)/B(r)$, where we write $\C_\mu$ for the one-dimensional representation of $B(r)$ given by the character $\chi_\mu$.

It is useful to observe that we have a $\GL(r)$-equivariant isomorphism
\begin{equation}\label{eq:borel weil dual iso}
  L(\lambda)^* \to L_{BW}(\lambda^*), \quad f \mapsto (s_f \colon \GL(r)\to\C, \; g \mapsto f(g \cdot v_\lambda))
\end{equation}
where $v_\lambda$ denotes a fixed highest weight vector in $L(\lambda)$.

The tensor product of several Borel-Weil representations can again be identified with a space of functions. E.g., if $\lambda\in\Lambda_+(r)$ and $\lambda'\in\Lambda_+(r')$ then
\begin{align*}
&L_{BW}(\lambda) \ot L_{BW}(\lambda') \cong \{ s \colon \GL(r)\times\GL(r')\to\C \text{ holomorphic}, \\
&\qquad s(gb,g'b') = s(g,g')\chi_{\lambda^*}(b)\chi_{{\lambda'}^*}(b') \quad \forall g\in\GL(r),g'\in\GL(r'), b\in B(r),b'\in B(r') \}.
\end{align*}
We will use this below to obtain a nonzero vector in a tensor product space by exhibiting a corresponding holomorphic function with the appropriate equivariance properties.

\subsection{Invariants from intersecting tuples}\label{subsec:invariants from intersecting tuples}

Let us consider the tangent map~\eqref{eq:differential concrete},
\[ \Delta_{\CI,\vec g,\vec h}\colon\begin{cases}
    \Hom(V_0,Q_0)\times H_{I_1}(F_0,G_0)\times\dots\times H_{I_s}(F_0,G_0)\to \Hom(V_0,Q_0)^s \\
    (\zeta,\phi_1,\dots,\phi_s)\mapsto(\zeta+h_1\phi_1g_1^{-1},\dots,\zeta+h_s\phi_s g_s^{-1})
    \end{cases} \]
If $\edim\CI=0$ then~\eqref{eq:rank nullity} implies that the dimension of the domain and target space are the same.
Thus we may consider the determinant of $\Delta_{\CI,\vec g,\vec\ d}$, as in the following definition:

\begin{definition}
\label{def:determinant}
  Let $\CI\in\Subsets(r,n,s)$ such that $\edim\CI=0$.
  Then we define the \emph{determinant function} as the holomorphic function
  \[ \delta_\CI\colon\begin{cases}
    \GL(r)^s \times \GL(n-r)^s \rightarrow \C, \\
    (\vec g, \vec h) \mapsto \det\Delta_{\CI,\vec g,\vec h}
  \end{cases} \]
  where the determinant is evaluated with respect to two arbitrary bases.
\end{definition}

Here, and throughout the following, we identify $V_0\cong\C^r$ and $Q_0 \cong\C^{n-r}$, so that $\GL(V_0)\cong\GL(r)$ and $\GL(Q_0)\cong\GL(n-r)$ and the discussion in \cref{subsec:borel-weil} is applicable.

If $\CI$ is intersecting then also $\tdim\CI=\edim\CI$ by \cref{cor:tdim edim}.
Hence by~\eqref{eq:tdim} there exist $\vec g,\vec h$ such that $\delta_\CI(\vec g,\vec h)\neq 0$.
That is, $\delta_\CI$ is a nonvanishing holomorphic function of $\GL(r)^s \times \GL(n-r)^s$.
Our goal is to show that $\delta_\CI$ can be interpreted as an invariant in a tensor product of irreducible $\GL(r)\times\GL(n-r)$-representations.

We now consider the representation of $\GL(r)\times\GL(n-r)$ on $\Hom(V_0,Q_0)$ given by $(a,d)\cdot\phi := d\phi a^{-1}$.
Since $\Hom(V_0, Q_0) = V_0^* \ot Q_0$, it is clear that for $g\in\GL(V_0)$, $g'\in\GL(Q_0)$,
\begin{equation}
\label{eq:base change H}
  \det\Bigl( \Hom(V_0, Q_0)\ni\phi \mapsto g'\phi g^{-1}\in\Hom(V_0, Q_0) \Bigr) = \det(g)^{-(n-r)} \det(g')^r.
\end{equation}
We now restrict to the subspaces $H_I(F_0, G_0)$:

\begin{lemma}
\label{lem:base change H_I}
  Let $I \subseteq [n]$ be a subset of cardinality $r$.
  Then $H_I(F_0, G_0) \subseteq \Hom(V_0,Q_0)$ is $B(r) \times B(n-r)$-stable.
  Furthermore, for $b\in B(r)$ and $b'\in B(n-r)$ we have
  \[ \det \Bigl( H_I(F_0, G_0)\ni\phi \mapsto b'\phi b^{-1} \in H_I(F_0, G_0) \Bigr) = \chi_{\lambda_I}(b) \chi_{\lambda_{I^c} + r\one_{n-r}}(b'), \]
  where we recall that $\lambda_I$ was defined in \cref{def:lambda_I}.
\end{lemma}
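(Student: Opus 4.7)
My plan is to work with a natural basis of $H_I(F_0,G_0)$ and first verify stability, then reduce the determinant computation to the torus by a triangularity argument, and finally match the resulting exponents to $\lambda_I$ and $\lambda_{I^c}+r\one_{n-r}$ using the relation in equation~\eqref{eq:complement}.

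First I would write down the standard basis of $H_I(F_0,G_0)$, namely the elementary maps $E_{a,j}$ for $a\in[r]$ and $j\in[I(a)-a]$ defined by $E_{a,j}(e(a'))=\delta_{a,a'}\bar e(j)$. Counting these gives dimension $\sum_a(I(a)-a)=\dim I$, which matches \cref{eq:dim schubert cell}. Stability under $B(r)\times B(n-r)$ is then a direct check (or a special case of \cref{lem:H_I borel invariance} applied to the standard flags), since for upper-triangular $b, b'$ the image $b'E_{a,j}b^{-1}$ only involves $E_{a',k}$ with $a'\geq a$ and $k\leq j\leq I(a)-a\leq I(a')-a'$ by the monotonicity~\eqref{eq:nondecreasing}.

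Next, I would order the basis lexicographically in $(a,j)$ with $a$ increasing and, within each $a$, $j$ decreasing, so that the matrix of $\phi\mapsto b'\phi b^{-1}$ in this basis is triangular. The diagonal entry at $(a,j)$ comes only from the diagonal parts $b(a), b'(j)$ of $b$ and $b'$, yielding $b(a)^{-1}b'(j)$. Hence
\[
  \det\bigl(\phi\mapsto b'\phi b^{-1}\bigr)
  = \prod_{a=1}^{r}\prod_{j=1}^{I(a)-a} b(a)^{-1} b'(j)
  = \Bigl(\prod_{a=1}^{r} b(a)^{-(I(a)-a)}\Bigr)\Bigl(\prod_{j=1}^{n-r} b'(j)^{m(j)}\Bigr),
\]
where $m(j):=\#\{a\in[r]:I(a)-a\geq j\}$. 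This already depends only on the diagonal entries, which incidentally confirms that the determinant character factors through $B/N\cong H$ (as it must, being a homomorphism into the abelian group $\C^*$).

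The first factor is immediately $\chi_{\lambda_I}(b)$, since $\lambda_I(a)=a-I(a)$. For the second factor, I would apply~\eqref{eq:complement}: since $\lambda_{I^c}(j)=-\#\{a\in[r]:I(a)-a<j\}$, we get $m(j)=r-\#\{a:I(a)-a<j\}=r+\lambda_{I^c}(j)$, hence $\prod_j b'(j)^{m(j)}=\chi_{\lambda_{I^c}+r\one_{n-r}}(b')$. This gives the claimed formula. The only mildly subtle step is the combinatorial identity $m(j)=r+\lambda_{I^c}(j)$, but this is essentially the Young-diagram transposition already recorded in the remark following \cref{def:lambda_I}.
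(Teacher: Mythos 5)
Your proof is correct and follows essentially the same path as the paper's: both work in the elementary-matrix basis of $H_I(F_0,G_0)$, arrive at the same product $\prod_{a=1}^r\prod_{j=1}^{I(a)-a} b(a)^{-1}b'(j)$, and match it to the characters via~\eqref{eq:complement}. The only minor difference is that the paper reduces to the torus by invoking the abstract fact that unipotent groups have no nontrivial characters, whereas you make this explicit by ordering the basis so the matrix is triangular and reading off the diagonal entries; both are standard and equivalent justifications.
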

\begin{proof}
  For the first claim, we use \cref{lem:H_I borel invariance}:
  Since the flag $F_0$ is stabilized by $B(r)$ and the flag $G_0$ is stabilized by $B(n-r)$, it is clear that $H_I(F_0, G_0)$ is stable under the action of $B(r)\times B(n-r)$.

  For the second claim, we note that unipotent elements always act by representation matrices of determinant one.
  Hence it suffices to verify the formula for the determinant for $t \in H(r)$ and $t' \in H(n-r)$.
  For this, we work in the weight basis of $H_I(F_0, G_0)$ given by the elementary matrices $E_{b,a}$ that send $e(a) \mapsto \bar e(b)$, where $a\in[r]$ and $b\in[I(a)-a]$, and all other basis vectors to zero.
  Then:
  \begin{align*}
  &\quad \det (H_I(F_0, G_0)\ni\phi \mapsto t'\phi t^{-1} \in H_I(F_0, G_0)) \\
  &= \prod_{a=1}^r \prod_{b=1}^{I(a)-a} t'(b) t(a)^{-1}
  = \left( \prod_{a=1}^r t(a)^{a-I(a)} \right) \left( \prod_{b=1}^{n-r} t'(b)^{r - \#\{ a : I(a)-a<b\}} \right) \\
  &= t^{\lambda_I} {t'}^{r\one_{n-r} + \lambda_{I^c}},
  \end{align*}
  where we have used~\eqref{eq:complement} in the last step.
\end{proof}

We now show that the $\delta_\CI$ can be interpreted as an invariant:

\begin{theorem}
\label{thm:invariant}
  Let $\CI\in\Subsets(r,n,s)$ such that $\edim\CI=0$, and let $\delta_\CI$ denote the corresponding determinant function (\cref{def:determinant}).
  Then $\delta_\CI$ belongs to $\bigotimes_{k=1}^s \bigl( L_{BW}(\lambda_{I_k}^*) \ot L_{BW}(\lambda_{I_k^c}^* - r\one_{n-r}) \bigr)$.
  Moreover, it transforms under the diagonal action of $\GL(r)\times\GL(n-r)$ by the character $\det_r^{(n-r)(s-1)} \ot \det_{n-r}^{r(1-s)}$.
\end{theorem}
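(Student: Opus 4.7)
The plan is to verify the two equivariance claims by reading them off directly from the definition $\delta_\CI(\vec g, \vec h) = \det\Delta_{\CI, \vec g, \vec h}$. Each claim reduces to performing a change of variables in the tangent map~\eqref{eq:differential concrete} and then computing the resulting Jacobian via \cref{lem:base change H_I} and~\eqref{eq:base change H}. Holomorphy of $\delta_\CI$ is automatic, since it is polynomial in the matrix entries of the $g_k, h_k$ and their inverses.

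First I would establish the Borel right-equivariance in the $g_k$- and $h_k$-variables that places $\delta_\CI$ in the claimed tensor product. Fix $b \in B(r)$ and observe that the substitution $g_k \mapsto g_k b$ in~\eqref{eq:differential concrete} changes only the $k$-th output component to $\zeta + h_k(\phi_k b^{-1}) g_k^{-1}$. Since right multiplication by $b^{-1}$ stabilizes $H_{I_k}(F_0, G_0)$ by \cref{lem:H_I borel invariance}, it is a valid change of variables on the domain whose Jacobian on the $k$-th factor is $\chi_{\lambda_{I_k}}(b)$ by \cref{lem:base change H_I}. Hence $\delta_\CI(\ldots, g_k b, \ldots, \vec h) = \chi_{\lambda_{I_k}}(b)\,\delta_\CI(\vec g, \vec h)$, which is exactly the equivariance defining $L_{BW}(\lambda_{I_k}^*)$, using $(\lambda_{I_k}^*)^* = \lambda_{I_k}$. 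The parallel argument for $h_k \mapsto h_k b'$ with $b'\in B(n-r)$ uses left multiplication by $b'$ on $\phi_k$ and produces the other factor $\chi_{\lambda_{I_k^c}+r\one_{n-r}}(b')$ from \cref{lem:base change H_I}; since $(\lambda_{I_k^c}^* - r\one_{n-r})^* = \lambda_{I_k^c}+r\one_{n-r}$, this places $\delta_\CI$ in $L_{BW}(\lambda_{I_k^c}^* - r\one_{n-r})$ as a function of $h_k$. Both weights are dominant because $\lambda_I$ and $\lambda_{I^c}$ are antidominant.

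For the diagonal action, I would compute $(a,d)\cdot\delta_\CI(\vec g, \vec h) = \delta_\CI(a^{-1}\vec g, d^{-1}\vec h)$ by means of the identity
\[
  \zeta + d^{-1} h_k \phi_k g_k^{-1} a \;=\; d^{-1}\bigl(d\zeta a^{-1} + h_k \phi_k g_k^{-1}\bigr) a,
\]
which factors $\Delta_{\CI, a^{-1}\vec g, d^{-1}\vec h}$ as $L^{\op s} \circ \Delta_{\CI, \vec g, \vec h} \circ R$, where $L(\psi) := d^{-1}\psi a$ acts on each of the $s$ target copies of $\Hom(V_0, Q_0)$ and $R$ acts on the domain only by $\zeta \mapsto d\zeta a^{-1}$. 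Equation~\eqref{eq:base change H} gives $\det L = \det(a)^{n-r}\det(d)^{-r}$ and $\det R = \det(a)^{-(n-r)}\det(d)^r$, so that altogether
\[
  \delta_\CI(a^{-1}\vec g, d^{-1}\vec h) = \det(L)^s \det(R)\,\delta_\CI(\vec g, \vec h) = \det(a)^{(s-1)(n-r)}\det(d)^{(1-s)r}\,\delta_\CI(\vec g, \vec h),
\]
which is precisely the claimed character $\det_r^{(n-r)(s-1)} \ot \det_{n-r}^{r(1-s)}$. The main obstacle is bookkeeping rather than any conceptual hurdle: one must carefully track every sign and exponent, remember the $\mu\leftrightarrow\mu^*$ duality baked into the Borel-Weil convention $s(gb) = s(g)\chi_{\mu^*}(b)$, and distinguish the $s$ output copies of $\Hom(V_0, Q_0)$ in the codomain from the single $\zeta$-coordinate in the domain.
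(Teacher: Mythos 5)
Your proposal is correct and follows the same strategy as the paper: both proofs read off the Borel and diagonal equivariance by factoring $\Delta_{\CI,\vec g,\vec h}$ into the appropriate composition of automorphisms and computing determinants via \cref{lem:base change H_I} and~\eqref{eq:base change H}. The only cosmetic difference is that you treat the Borel factors one $k$ at a time while the paper composes with all $s$ automorphisms simultaneously; the computation is identical.
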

\begin{proof}
  For the first claim, we note that if $\vec{g'}\in B(r)^s$, $\vec{h'}\in B(n-r)^s$ then we can write $\Delta_{\CI,\vec g\vec{g'},\vec h\vec{h'}}$ as a composition of $\Delta_{\CI,\vec g,\vec h}$ with the automorphisms on $H_{I_k}(F_0, G_0)$ that send $\phi_k \mapsto h'_k \phi_k (g'_k)^{-1}$.
  Using \cref{lem:base change H_I}, we obtain
  \[ \delta_\CI(\vec g\vec{g'}, \vec h\vec{h'}) = \delta_\CI(\vec g, \vec h) \prod_{k=1}^s \chi_{\lambda_I}(g'_k) \chi_{\lambda_{I_k^c} + r\one_{n-r}}(h'_k). \]
  In view of the discussion at the end of \cref{subsec:borel-weil} this establishes the first claim.

  For the second claim, let $g\in\GL(r)$ and $g'\in\GL(n-r)$.
  Thus $\Delta_{\CI,g^{-1} \vec g,{g'}^{-1} \vec h}$ maps $(\zeta, \phi_1, \dots, \phi_s)$ to
  \begin{align*}
  &\quad(\zeta + g'^{-1} h_1 \phi_1 g_1^{-1} g, \dots, \zeta + g'^{-1} h_s \phi_s g_s^{-1} g) \\
  &= g'^{-1} (g' \zeta g^{-1} + h_1 \phi_1 g_1^{-1}, \dots, g' \zeta g^{-1} + h_s \phi_s g_s^{-1}) g
  \end{align*}
  Thus we can write $\Delta_{\CI,g^{-1} \vec g,{g'}^{-1} \vec h}$ as a composition of three maps:
  The automorphism $\zeta \mapsto g' \zeta g^{-1}$ of $\Hom(V_0,Q_0)$, the map $\Delta_{\CI,\vec g,\vec h}$ and the automorphism $\vec\phi \mapsto g'^{-1}\vec\phi g$ on $\Hom(V_0,Q_0)^s$.
  Thus, using \cref{eq:base change H},
  \begin{align*}
    ((g,g') \cdot \delta_\CI)(\vec g, \vec h)
  = \delta_\CI(g^{-1} \vec g, {g'}^{-1} \vec h)
  = \det(g)^{-(n-r)(1-s)} \det(g')^{r(1-s)} \delta_\CI(\vec g, \vec h),
  \end{align*}
  which establishes the second claim.
\end{proof}


If $\CI$ is intersecting then we had argued before that $\delta_\CI$ is nonzero.
By dualizing and simplifying, we obtain the following corollary of \cref{thm:invariant}:

\begin{corollary}
\label{cor:invariant}
  Let $\CI\in\Intersecting(r,n,s)$ and $\edim\CI=0$. Then,
  \[ \bigl( {\det}_r^{(s-1)(n-r)} \ot \bigotimes_{k=1}^s L(\lambda_{I_k}) \bigr)^{\GL(r)} \neq 0 \quad\text{and}\quad \bigl( {\det}_{n-r}^r \ot \bigotimes_{k=1}^s L(\lambda_{I^c_k}) \bigr)^{\GL(n-r)} \neq 0. \]
\end{corollary}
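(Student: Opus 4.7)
My plan is to deduce both nonvanishings directly from Theorem~\ref{thm:invariant}, by (i) noting that $\delta_\CI$ is actually nonzero when $\CI$ is intersecting, (ii) decoupling the $\GL(r)$- and $\GL(n-r)$-actions so that each tensor factor carries a nonzero vector of the prescribed character, and (iii) trading the Borel-Weil realizations for the familiar representations $L(\lambda)$ via~\eqref{eq:borel weil dual iso}.

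For (i), $\CI\in\Intersecting(r,n,s)$ combined with $\edim\CI=0$ forces $\tdim\CI=0$ by Corollary~\ref{cor:tdim edim}, so the tangent map $\Delta_{\CI,\vec g,\vec h}$ has trivial kernel for some $\vec g,\vec h$; its source and target having equal dimension (since $\edim\CI=0$), it is actually invertible, whence $\delta_\CI(\vec g,\vec h)\neq 0$. Writing
\[ W:=\bigotimes_{k=1}^s L_{BW}(\lambda_{I_k}^*),\qquad U:=\bigotimes_{k=1}^s L_{BW}(\lambda_{I_k^c}^*-r\one_{n-r}), \]
Theorem~\ref{thm:invariant} places $\delta_\CI$ in $W\otimes U$ as a nonzero eigenvector for the diagonal $\GL(r)\times\GL(n-r)$-action with character $\det_r^{(n-r)(s-1)}\otimes\det_{n-r}^{r(1-s)}$.

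For (ii), I restrict this action to $\GL(r)\times\{1\}$, under which $U$ is a trivial representation. Thus $\delta_\CI$ is a nonzero $\GL(r)$-eigenvector in $W\otimes U$ with eigencharacter $\det_r^{(n-r)(s-1)}$, and the corresponding eigenspace factors as $W_0\otimes U$ with $W_0\subseteq W$ the $\det_r^{(n-r)(s-1)}$-eigenspace of $W$. Since $U\neq 0$ and $\delta_\CI\neq 0$, the space $W_0$ is nonzero, equivalently $(W\otimes\det_r^{-(n-r)(s-1)})^{\GL(r)}\neq 0$. Applying the symmetric argument with $\{1\}\times\GL(n-r)$ gives $(U\otimes\det_{n-r}^{-r(1-s)})^{\GL(n-r)}\neq 0$.

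For (iii), the identity $L_{BW}(\mu)\cong L(\mu^*)^*$ from~\eqref{eq:borel weil dual iso} combined with the elementary rules $(\nu-c\one)^*=\nu^*+c\one$ and $L(\lambda+c\one_r)\cong L(\lambda)\otimes\det_r^c$ yields
\[ W\cong\Bigl(\bigotimes_{k=1}^s L(\lambda_{I_k})\Bigr)^*,\qquad U\cong\Bigl(\bigotimes_{k=1}^s L(\lambda_{I_k^c})\Bigr)^*\otimes\det_{n-r}^{-rs}. \]
Substituting into the two invariance statements above, dualizing (using $M^G\neq 0\iff (M^*)^G\neq 0$ in finite dimension) and combining the determinantal shifts $-rs-r(1-s)=-r$ (which becomes $+r$ after dualization) produces exactly the two nonvanishing invariants stated in the corollary. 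The only part that requires care is keeping the $*$-reversals and the determinantal twists consistent across the Borel-Weil identifications; apart from this bookkeeping the argument is essentially formal, with Theorem~\ref{thm:invariant} doing all the real work.
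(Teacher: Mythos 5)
Your proposal is correct and follows exactly the route the paper indicates: the paper's "proof" is just the one sentence "By dualizing and simplifying, we obtain the following corollary of Theorem~\ref{thm:invariant}," and you have supplied precisely that dualization and simplification, with the right bookkeeping. The three ingredients — nonvanishing of $\delta_\CI$ from $\tdim\CI=\edim\CI=0$, separation of the $\GL(r)$- and $\GL(n-r)$-eigenconditions using that each factor acts trivially on the complementary Borel--Weil space, and the translation $L_{BW}(\mu)\cong L(\mu^*)^*$ together with $(\nu-c\one)^*=\nu^*+c\one$ and $L(\lambda+c\one)\cong L(\lambda)\otimes\det^c$ — all check out, and the determinantal exponent $-rs-r(1-s)=-r$ (hence $+r$ after dualizing) matches the statement.
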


Let us correspondingly define
\begin{align*}
  c(\CI) := \dim \bigl( {\det}_r^{(s-1)(n-r)} \ot \bigotimes_{k=1}^s L(\lambda_{I_k}) \bigr)^{\GL(r)}.
\end{align*}
Then \cref{cor:invariant} states that, if $\CI$ is intersecting and $\edim\CI=0$ then $c(\CI)>0$.
This relationship between generic intersections of Schubert cells and tensor product multiplicities can be made quantitative.
While we do not use this in the following \cref{subsec:kirwan} to describe the Kirwan cone and prove the saturation property for tensor product multiplicities, we will give a brief sketch later on in \cref{subsec:invariants and intersections} and use it to establish the Fulton conjecture.

\subsection{Kirwan cone and saturation}
\label{subsec:kirwan}

We now show that the existence of nonzero invariants is characterized by the Horn inequalities.
For this, recall that we defined $c(\vec\lambda)$ as the dimension of the space of $\GL(r)$-invariants in the tensor product $\bigotimes_{k=1}^s L(\lambda_k)$.
Thus, if we define $\lambda_k = \lambda_{I_k} + (n-r)\one_r$ for $k\in[s-1]$ and $\lambda_s = \lambda_{I_s}$, then \cref{cor:invariant} shows that
\begin{equation}
\label{eq:tensor product invariant}
  c(\vec\lambda) = c(\CI) > 0
\end{equation}
whenever $\CI$ is intersecting and $\edim\CI=0$.
Here, we have somewhat arbitrarily selected the first $s-1$ highest weights $\lambda_1,\dots,\lambda_{s-1}$ to have nonnegative entries no larger than $n-r$, while $\lambda_s$ has nonpositive entries no smaller than $r-n$.
Conversely, any $s$-tuple of highest weights $\vec{\lambda}$ with these properties can be obtained in this way from some $\CI\in\Subsets(r,n,s)$ (recall discussion below \cref{def:lambda_I}).

\begin{proposition}
\label{prp:horn implies invariant}
  Let $\vec\lambda\in\Lambda_+(r)^s$ be an $s$-tuple such that $\sum_{k=1}^s \lvert\lambda_k\rvert = 0$, and for any $0<d<r$ and any $s$-tuple $\CJ\in\Horn(d,r,s)$ with $\edim\CJ=0$ we have that $\sum_{k=1}^s (T_{J_k},\lambda_k) \leq 0$.
  Then $c(\vec\lambda)>0$.
\end{proposition}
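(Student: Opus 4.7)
The plan is to translate the problem from highest weights to Schubert positions and then invoke \cref{thm:belkale} together with \cref{cor:invariant}. Specifically, the goal is to construct an integer $n\geq r$ and a tuple $\CI\in\Subsets(r,n,s)$ with $\edim\CI=0$ satisfying $\lambda_k=\lambda_{I_k}+(n-r)\one_r$ for $k<s$ and $\lambda_s=\lambda_{I_s}$. Under this identification $\bigotimes_{k=1}^s L(\lambda_k)\cong \bigotimes_{k=1}^s L(\lambda_{I_k})\otimes{\det}_r^{(s-1)(n-r)}$, so $c(\vec\lambda)=c(\CI)$, and it then suffices to prove $\CI\in\Intersecting(r,n,s)$.

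The first step is to normalize. Any shift $\lambda_k\mapsto\lambda_k+c_k\one_r$ with $\sum_k c_k=0$ preserves $c(\vec\lambda)$, the trace identity $\sum_k|\lambda_k|=0$, and every inequality $\sum_k(T_{J_k},\lambda_k)\leq 0$ (since each $|J_k|=d$ is constant and $d\sum_k c_k=0$). I take $c_k=-\lambda_k(r)$ for $k<s$ and $c_s=\sum_{k<s}\lambda_k(r)$, which makes $\lambda_k\geq 0$ for $k<s$. To arrange $\lambda_s\leq 0$ I apply the hypothesis to $\CJ=(\{r\},\dots,\{r\},\{1\})$; by \cref{ex:base case} this tuple lies in $\Horn(1,r,s)$ and has $\edim\CJ=0$, so the resulting inequality reads $\lambda_s(1)+\sum_{k<s}\lambda_k(r)\leq 0$, which is exactly what is needed after the shift. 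I then fix $n$ large enough that $\lambda_k(1)\leq n-r$ for $k<s$ and $\lambda_s(r)\geq r-n$, and define $I_k\subseteq[n]$ by $I_k(a):=a+(n-r)-\lambda_k(a)$ for $k<s$ and $I_s(a):=a-\lambda_s(a)$. Dominance of each $\lambda_k$ makes $I_k$ strictly increasing, the box constraints place $I_k\subseteq[n]$, and the identifications $\lambda_{I_k}(a)=a-I_k(a)$ hold by construction. A direct application of \cref{lem:I to lambdaprime} gives $\edim\CI=-\sum_k|\lambda_k|=0$.

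The second step is verifying $\CI\in\Horn(r,n,s)$, after which \cref{thm:belkale} promotes this to $\CI\in\Intersecting(r,n,s)$. The condition $\edim\CI\geq 0$ is already known. For the inductive condition, let $0<d<r$ and $\CJ\in\Horn(d,r,s)$ with $\edim\CJ=0$. Then \cref{lem:I to lambdaprime} yields
\[ \edim\CI\CJ = \edim\CJ+(\edim\CI\CJ-\edim\CJ) = 0-\sum_{k=1}^s(T_{J_k},\lambda_k)\geq 0, \]
the final inequality being the Horn inequality supplied by the hypothesis. Hence $\CI\in\Horn(r,n,s)$, \cref{thm:belkale} gives $\CI\in\Intersecting(r,n,s)$, and \cref{cor:invariant} produces a nonzero invariant in ${\det}_r^{(s-1)(n-r)}\otimes\bigotimes_{k=1}^s L(\lambda_{I_k})$, which by the identification above is a nonzero element of $\bigl(\bigotimes_{k=1}^s L(\lambda_k)\bigr)^{\GL(r)}$. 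Thus $c(\vec\lambda)>0$.

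The main obstacle is essentially bookkeeping rather than geometric: choosing the shift and the integer $n$ so that $\vec\lambda$ indexes a genuine tuple $\CI$ while preserving the Horn hypotheses. The real geometric content---that Horn-type inequalities force Schubert intersections to be nonempty, and that nonempty generic intersections produce invariants---is already encapsulated in \cref{thm:belkale} and \cref{cor:invariant}; \cref{lem:I to lambdaprime} then serves as the precise dictionary converting Horn inequalities on $\vec\lambda$ into expected-dimension inequalities for $\CI$, making the two sides of the correspondence fit together cleanly.
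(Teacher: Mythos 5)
Your proof is correct and follows the same route as the paper: translate $\vec\lambda$ into a tuple $\CI$ with $\edim\CI=0$ via \cref{lem:I to lambdaprime}, verify $\CI\in\Horn(r,n,s)$, and invoke \cref{thm:belkale} followed by \cref{cor:invariant}. You are in fact more explicit than the paper in the normalization step---the paper simply asserts that suitable shifts by multiples of $\one_r$ exist, whereas you correctly observe that arranging $\lambda_k(r)\geq0$ for $k<s$ and $\lambda_s(1)\leq0$ while keeping $\sum_k c_k=0$ requires exactly the Horn inequality for $\CJ=(\{r\},\dots,\{r\},\{1\})$, which you check belongs to $\Horn(1,r,s)$ with $\edim\CJ=0$ (and for $r=1$ the needed inequality follows directly from the trace condition).
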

\begin{proof}
  By adding/removing suitable multiples of $\one_r$, the highest weight of the determinant representation, we may assume that $\lambda_1(r),\dots,\lambda_{s-1}(r)\geq0$ and $\lambda_s(1)\leq0$.
  Let $n:=r+q$, where $q:=\max\{\lambda_1(1),\dots,\lambda_{s-1}(1),-\lambda_s(r)\}$.
  Then $\vec\lambda$ is associated to an $s$-tuple $\CI\in\Subsets(r,n,s)$ as in \cref{lem:I to lambdaprime}.

  We now show that $\edim\CI=0$ and that $\CI$ is intersecting.
  The former follows from the first statement in \cref{lem:I to lambdaprime}, which gives that $\edim\CI=-\sum_{k=1}^s \lvert\lambda_k\rvert=0$.
  To see that $\CI$ is intersecting, we may use \cref{thm:belkale} and show instead that $\CI$ satisfies the Horn inequalities $\edim\CI\CJ\geq0$ for any $\CJ\in\Horn(d,r,s)$ with $\edim\CJ=0$ and $0<d<r$.
  But the second statement in \cref{lem:I to lambdaprime} implies that these are equivalent to the linear inequalities $\sum_{k=1}^s (T_{J_k},\lambda_k)\leq0$, which hold by assumption.
  Thus $\CI$ is indeed intersecting and satisfies $\edim\CI=0$.
  Now~\eqref{eq:tensor product invariant} shows that $c(\vec\lambda)=c(\CI)>0$.
\end{proof}

At last we can prove the saturation property and characterize of the Kirwan cone in terms of Horn inequalities.

\printhorncorollary{corollary}{\label{cor:horn and saturation}}{, restated}
\begin{proof}
The two statements are closely interlinked. For clarity, we give separate proofs that do not refer to each other.

(a) Any $\vec\xi\in\Kirwan(r,s)$ satisfies the Horn inequalities (\cref{cor:klyachko kirwan}).
We now observe that $\Kirwan(r,s)$ is a closed subset of $C_+(r)^s$ which, moreover, is invariant under rescaling by nonnegative real numbers.
Thus it suffices to prove the converse only for $\vec\lambda\in\Lambda_+(r)^s$.
For this, we use that if $\vec\lambda$ satisfies the Horn inequalities then $c(\vec\lambda)>0$ by \cref{prp:horn implies invariant}, hence $\vec\lambda\in\Kirwan(r,s)$ by \cref{prp:kempf-ness}.

(b) Let $\vec\lambda\in\Lambda_+(r)^s$.
If $c(\vec\lambda)>0$ then $\vec\lambda\in\Kirwan(r,s)$ by \cref{prp:kempf-ness}.
Conversely, if $\vec\lambda\in\Kirwan(r,s)$ then it satisfies the Horn inequalities by \cref{cor:klyachko kirwan}, hence $c(\vec\lambda)>0$ by \cref{prp:horn implies invariant}.
\end{proof}

\begin{remark}\label{rem:redundant}
  As follows from the discussion below \cref{prp:belkale strong weak}, the Kirwan cone is in fact already defined by those $\CJ$ such that $\Omega_\CJ(\CG)$ is a point for all $\CG\in\Good(r,s)$.
  Ressayre has shown that the corresponding inequalities are irredundant and can be computed by an inductive algorithm~\cite{ressayre2011cohomology}.
  Demanding that $\Omega_\CJ(\CG)$ is a point for all good $\CG$ is a more stringent requirement than $\edim\CJ=0$, and indeed the set of inequalities $\edim\CI\CJ\geq0$ for $\CJ\in\Horn(d,r,s)$ with $\edim\CJ=0$ is in general still redundant.
  However, from a practical point of view we prefer the latter criterion since it is much easier to check numerically.
\end{remark}

\subsection{Invariants and intersection theory}\label{subsec:invariants and intersections}
We now explain how the relationship between generic intersections of Schubert cells and tensor product multiplicities can be made more quantitative.
Specifically, we shall relate the dimension $c(\CI)$ of the space of $\GL(r)$-invariants to the number of points in a generic intersection $\Omega_\CI(\CE)$, as in the following definition:

\begin{definition}
  Let $\CI\in\Subsets(r,n,s)$ such that $\edim\CI=0$.
  We define the corresponding \emph{intersection number} as
  \[ \cint(\CI) := \#\Omega^0_\CI(\CE) = \#\Omega_\CI(\CE), \]
  where $\CE$ is an arbitrary $s$-tuple of flags in $\Good(n,s)$.
  By \cref{lem:good set}, the right-hand side is finite and independent of the choice of $\CE$ in $\Good(n,s)$.
  Moreover, $\cint(\CI)>0$ if and only if $\CI$ is intersecting.
\end{definition}

In \cref{subsec:invariants from intersecting tuples} we showed that if $\CI$ is intersecting then $c(\CI)>0$.
Indeed, in this case the determinant function $\delta_\CI$ on $\GL(r)^s\times\GL(n-r)^s$ is nonzero, so that for some suitable $\vec h\in\GL(n-r)^s$ the function
\begin{equation}\label{eq:invariant in one argument}
  \delta_{\CI,\vec h} \colon \GL(r)^s \to \C, \quad \delta_{\CI,\vec h}(\vec g) := \delta_\CI(\vec g, \vec h)
\end{equation}
is a nonzero vector in $\bigotimes_{k=1}^s L_{BW}(\lambda^*_{I_k})$ that transforms as the character $\det_r^{(n-r)(s-1)}$ with respect to the diagonal action of $\GL(r)$.

In the following we show that, as we vary $\vec h$, the functions $\delta_{\CI,\vec h}$ span a vector space of dimension at least $\cint(\CI)$, which will imply that $c(\CI)\geq\cint(\CI)$.
More precisely, we shall construct elements $(\vec g_\alpha, \vec h_\alpha)\in\GL(r)^s\times\GL(n-r)^s$ for $\alpha\in[\cint(\CI)]$ such that $\delta_\CI(\vec g_\alpha, \vec h_\alpha)\neq0$ while $\delta_\CI(\vec g_\alpha, \vec h_\beta)=0$ if $\alpha\neq\beta$.
The construction, due to Belkale~\cite{belkale2004invariant}, depends on a choice of good flags $\CE$ and goes as follows.

Let $\CE$ be an $s$-tuple of good flags and consider the intersection
\[ \Omega^0_\CI(\CE) = \{ V_1, \dots, V_{\cint(\CI)} \}. \]
Let $\gamma_\alpha\in\GL(n)$ such that $V_\alpha = \gamma_\alpha \cdot V_0$ for each $\alpha\in[\cint(\CI)]$, and consider the $s$-tuple of flags $\CE_\alpha=(E_{\alpha,1},\dots,E_{\alpha,s})$ defined by $E_{\alpha,k} = \gamma_\alpha^{-1} \cdot E_k$.
Then $\bar\omega^0_\CI([\gamma_\alpha, \CE_\alpha]) = \CE$.
According to \cref{lem:good set}, $\CE$ is a regular value of $\bar\omega^0_\CI$, since $\CI$ is intersecting.
Since $\edim\CI=0$, this implies that the differential of $\bar\omega^0_\CI$ is bijective at $[\gamma_\alpha, \CE_\alpha]$, and, by equivariance, so is its differential at $[1, \CE_\alpha]$.
By \cref{rem:delta^0_I factorized differential}, its determinant is precisely $\delta_\CI(\vec g_\alpha, \vec h_\alpha)$, where $\vec g_\alpha=(g_{\alpha,1},\dots,g_{\alpha,s})\in\GL(r)$ and $\vec h_\alpha=(h_{\alpha,1},\dots,h_{\alpha,s})\in\GL(n-r)$ are such that $g_{\alpha,k} \cdot F_0 = (E_{\alpha,k})^{V_0}$ and $h_{\alpha,k} \cdot G_0 = (E_{\alpha,k})_{Q_0}$ for all $\alpha$ and $k$.
In particular, $\delta_\CI(\vec g_\alpha, \vec h_\alpha)\neq0$.

Using $\edim\CI=0$, \cref{eq:rank nullity,eq:ker Delta} imply that
\begin{equation}\label{eq:delta_I vs H_I}
  \delta_\CI(\vec g,\vec h)\neq0 \;\Leftrightarrow\; \dim H_\CI(\vec g \cdot F_0, \vec h \cdot G_0)=0.
\end{equation}
Then we have the following lemma:

\begin{lemma}
\label{lem:criss cross}
  Let $\CI$ be intersecting, $\edim\CI=0$, and $\CE\in\Good(n,s)$.
  As above, choose $\gamma_\alpha$, $\vec g_\alpha$ and $\vec h_\alpha$ for $\alpha\in[\cint(\CI)]$.
  Define $\delta_{\CI,\alpha}(\vec g) := \det\Delta_{\CI,\vec g,\vec h_\alpha}$.
  Then $\delta_{\CI,\alpha}(\vec g_\alpha)\neq0$ for all $\alpha$, while $\delta_{\CI,\beta}(\vec g_\alpha)=0$ for all $\alpha\neq\beta$.
\end{lemma}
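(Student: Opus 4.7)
The plan is to invoke the equivalence \eqref{eq:delta_I vs H_I}: since $\edim\CI=0$, nonvanishing of $\delta_\CI(\vec g,\vec h)$ is equivalent to $H_\CI(\vec g\cdot F_0,\vec h\cdot G_0)=\{0\}$. The diagonal case $\alpha=\beta$ has already been handled in the paragraph preceding the lemma: since $\CE\in\Good(n,s)$ is a regular value of $\bar\omega^0_\CI$ and $\edim\CI=0$, the tangent map at $[1,\CE_\alpha]$ is bijective, and its determinant is precisely $\delta_\CI(\vec g_\alpha,\vec h_\alpha)=\delta_{\CI,\alpha}(\vec g_\alpha)$. The new content of the lemma is therefore the second statement: for $\alpha\neq\beta$, I must produce a nonzero element of $H_\CI(\vec g_\alpha\cdot F_0,\vec h_\beta\cdot G_0)$.

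First I would pass to a coordinate-free picture by unwinding the definitions $E_{\alpha,k}=\gamma_\alpha^{-1}E_k$, $g_{\alpha,k}\cdot F_0=(E_{\alpha,k})^{V_0}$, and $h_{\beta,k}\cdot G_0=(E_{\beta,k})_{Q_0}$: the isomorphism $\gamma_\alpha\colon V_0\to V_\alpha$ carries $\vec g_\alpha\cdot F_0$ to the induced flag $(E_k)^{V_\alpha}$ on $V_\alpha$, and the induced isomorphism $\bar\gamma_\beta\colon Q_0\cong W/V_0\to W/V_\beta$ carries $\vec h_\beta\cdot G_0$ to $(E_k)_{W/V_\beta}$. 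Consequently, the assignment $\psi\mapsto\bar\gamma_\beta\circ\psi\circ\gamma_\alpha^{-1}$ identifies $H_\CI(\vec g_\alpha\cdot F_0,\vec h_\beta\cdot G_0)$ with the space of linear maps $\phi\colon V_\alpha\to W/V_\beta$ obeying
\[
  \phi\bigl(E_k(I_k(a))\cap V_\alpha\bigr) \subseteq (E_k)_{W/V_\beta}\bigl(I_k(a)-a\bigr) \qquad (k\in[s],\; a\in[r]).
\]

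I would then exhibit as the desired element the canonical map $\phi_0\colon V_\alpha\hookrightarrow W\twoheadrightarrow W/V_\beta$ given by inclusion followed by the quotient projection onto $W/V_\beta$. Since $\dim V_\alpha=\dim V_\beta=r$, the hypothesis $\alpha\neq\beta$ forces $V_\alpha\not\subseteq V_\beta$, whence $\phi_0\neq 0$. For the Schubert conditions, $\phi_0(E_k(I_k(a))\cap V_\alpha)\subseteq (E_k(I_k(a))+V_\beta)/V_\beta$; and because $V_\beta\in\Omega^0_{I_k}(E_k)$, we have $\dim(E_k(I_k(a))\cap V_\beta)=\#([I_k(a)]\cap I_k)=a$, so the right-hand side has dimension exactly $I_k(a)-a$, and therefore coincides with $(E_k)_{W/V_\beta}(I_k(a)-a)$ by \cref{lem:induced flags}. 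Thus $\phi_0$ lies in $H_\CI$, contradicting $H_\CI(\vec g_\alpha\cdot F_0,\vec h_\beta\cdot G_0)=\{0\}$ and yielding $\delta_{\CI,\beta}(\vec g_\alpha)=0$.

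I do not expect a substantial obstacle: once the transport to $V_\alpha$ and $W/V_\beta$ is set up, $\phi_0$ is the obvious ``first-order motion of $V_\alpha$ toward $V_\beta$'', and the Schubert conditions reduce to the single dimension count above. The only delicate point is that $V_\alpha$ and $V_\beta$ are moved by two different group elements $\gamma_\alpha$ and $\gamma_\beta$, so that the two sides of the $H_\CI$ condition must be transported separately; this is precisely what makes the off-diagonal contributions vanish.
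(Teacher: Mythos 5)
Your proof is correct and takes essentially the same approach as the paper: the paper's nonzero element $\phi_{\alpha,\beta}\colon v\mapsto\gamma_\alpha^{-1}\gamma_\beta v+V_0$ (up to a harmless swap of $\alpha$ and $\beta$) is precisely your canonical map $V_\alpha\hookrightarrow W\twoheadrightarrow W/V_\beta$ transported back to $\Hom(V_0,Q_0)$ via $\gamma_\alpha$, $\bar\gamma_\beta$. Your coordinate-free phrasing is a clean reformulation rather than a different argument.
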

\begin{proof}
  We only need to consider the case that $\alpha\neq \beta$.
  In view of~\eqref{eq:delta_I vs H_I}, it suffices to show that $H_\CI((\CE_\alpha)^{V_0}, (\CE_\beta)_{Q_0})\neq\{0\}$.
  For this, we define the map
  \[ \phi_{\alpha,\beta}\colon V_0\to \C^n/V_0\cong Q_0, \; v \mapsto (\gamma_\alpha)^{-1} \gamma_\beta v + V_0, \]
  which is nonzero since $\gamma_\alpha V_0=V_\alpha\neq V_\beta=\gamma_\beta V_0$.
  Then $\phi_{\alpha,\beta}$ is a nonzero element in $H_\CI((\CE_\alpha)^{V_0}, (\CE_\beta)_{Q_0})$, since
  \[ \phi_{\alpha,\beta}((E_{\beta,k})^{V_0}(a))
  = \phi_{\alpha,\beta}(E_{\beta,k}(I_k(a)))
  = E_{\alpha,k}(I_k(a)) + V_0
  = (E_{\alpha,k})_{Q_0}(I_k(a)-a)
  \]
  for all $a\in[r]$ and $k\in[s]$, using that $\CI=\Pos(V_0,\CE_\alpha)$.
\end{proof}

\Cref{lem:criss cross} shows that the functions $\delta_{\CI,1},\dots,\delta_{\CI,\cint(\CI)}$ are linearly independent. 
If we identify them with $\GL(r)$-invariants as before, 
we obtain the following corollary:

\begin{corollary}
\label{cor:quantitative invariants}
  Let $\edim\CI=0$. 
  Then, $c(\CI) \geq \cint(\CI)$.
\end{corollary}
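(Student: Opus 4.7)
The plan is to exhibit $\cint(\CI)$ linearly independent vectors inside a $\GL(r)$-character eigenspace whose dimension equals $c(\CI)$, and conclude by a dimension comparison. The candidate vectors are the functions $\delta_{\CI,\alpha}$ defined just before \cref{lem:criss cross}, one for each point $V_\alpha$ of the generic intersection $\Omega^0_\CI(\CE) = \{V_1,\dots,V_{\cint(\CI)}\}$ associated with any fixed $\CE \in \Good(n,s)$.

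The first step is to check that each $\delta_{\CI,\alpha}$ lies in the correct ambient space with the correct transformation law. For any fixed $\vec h \in \GL(n-r)^s$, the slice $\vec g \mapsto \delta_\CI(\vec g, \vec h)$ is a holomorphic function on $\GL(r)^s$ and, by \cref{thm:invariant}, it belongs to $\bigotimes_{k=1}^s L_{BW}(\lambda_{I_k}^*)$ and is scaled by $\det_r^{(n-r)(s-1)}$ under the diagonal $\GL(r)$-action, because the $\GL(r)$- and $\GL(n-r)$-transformations listed in \cref{thm:invariant} factor. Denote the corresponding character eigenspace by $\CV \subseteq \bigotimes_{k=1}^s L_{BW}(\lambda_{I_k}^*)$.

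The second step is to identify $\dim \CV = c(\CI)$. By the Borel-Weil isomorphism~\eqref{eq:borel weil dual iso} we have $\bigotimes_{k=1}^s L_{BW}(\lambda_{I_k}^*) \cong \bigl(\bigotimes_{k=1}^s L(\lambda_{I_k})\bigr)^*$, and an element of this dual scaling by $\chi := \det_r^{(n-r)(s-1)}$ under the dual action is exactly a $\GL(r)$-equivariant map $\bigotimes_{k=1}^s L(\lambda_{I_k}) \to \chi^{-1}$. Since $\dim \Hom_{\GL(r)}(M, \chi^{-1}) = \dim(M \otimes \chi)^{\GL(r)}$ for every finite-dimensional representation $M$ (both count the multiplicity of the trivial representation in $M \otimes \chi$), this gives $\dim \CV = c(\CI)$.

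Finally, linear independence of $\delta_{\CI,1},\dots,\delta_{\CI,\cint(\CI)}$ inside $\CV$ is immediate from \cref{lem:criss cross}: if $\sum_\alpha c_\alpha \delta_{\CI,\alpha} = 0$, then evaluating at $\vec g_\beta$ kills every summand with $\alpha\neq\beta$, leaving $c_\beta \delta_{\CI,\beta}(\vec g_\beta) = 0$, and hence $c_\beta = 0$ since $\delta_{\CI,\beta}(\vec g_\beta)\neq 0$. Therefore $c(\CI) = \dim \CV \geq \cint(\CI)$. The main subtlety—though not really an obstacle, with \cref{thm:invariant} and \cref{lem:criss cross} already in hand—is the character-signs bookkeeping in the second step that translates the determinant-function picture into the invariant space defining $c(\CI)$.
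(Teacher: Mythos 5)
Your proof is correct and follows essentially the same route as the paper: the nonzero sections $\delta_{\CI,\alpha}$ produced from the $\cint(\CI)$ points of a good intersection, placed in the $\det_r^{(n-r)(s-1)}$-eigenspace of $\bigotimes_k L_{BW}(\lambda_{I_k}^*)$ via \cref{thm:invariant}, are shown linearly independent by \cref{lem:criss cross}, and the dimension of that eigenspace is identified with $c(\CI)$ via the Borel--Weil duality~\eqref{eq:borel weil dual iso}. You have simply spelled out the character bookkeeping that the paper dispatches with the phrase ``identify them with $\GL(r)$-invariants as before.''
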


In fact, it is a classical result that
\begin{equation}
\label{eq:c=cint}
  c(\CI) = \cint(\CI)
\end{equation}
(see, e.g.,~\cite{fulton1997young}).
Thus \cref{cor:quantitative invariants} shows that we can produce a basis of the tensor product invariants from Belkale's determinants $\delta_{\CI,\vec h}(\vec g) = \det\Delta_{\CI,\vec g,\vec h}$.
These invariants can be identified with the construction of Howe, Tan and Willenbring~\cite{howe2005basis}, as described in~\cite{vergne2014inequalities}.

\counterwithin{equation}{subsection}
\section{Proof of Fulton's conjecture}\label{sec:fulton}
We now revisit the conjecture by Fulton which states that if $c(\vec\lambda)=1$ for an $s$-tuple of highest weights then $c(N\vec\lambda)=1$ for all $N\geq1$.
We note that its converse is also true and holds as a direct consequence of the saturation property and the bound $c(N\vec\lambda)\geq c(\vec\lambda)$, which follows from the semigroup property of the Littlewood-Richardson coefficients.
Fulton's conjecture was first proved by Knutson, Tao and Woodward~\cite{knutson2004honeycomb}.
We closely follow Belkale's geometric proof~\cite{belkale2004invariant,belkale2006geometric,belkale2007geometric}, in its simplified form due to Sherman~\cite{sherman2015geometric}, which in turn was in part inspired by the technique of Schofield~\cite{schofield1992general}.


\subsection{Nonzero invariants and intersections}\label{subsec:nonzero}
Let $c(\vec\lambda)=1$.
Equivalently, $c(\vec\lambda^*)=1$ and so there exists a nonzero $\GL(r)$-invariant holomorphic function $f$ in $L_{BW}(\lambda^*_1)\ot\dots\ot L_{BW}(\lambda^*_s)$, which is unique up to rescaling.

Suppose for a moment that there exists a nowhere vanishing function $g$ in $L_{BW}(\lambda^*_1)\ot\dots\ot L_{BW}(\lambda^*_s)$ (not necessarily $\GL(r)$-invariant).
In this case, if $g'$ is any other holomorphic function in $L_{BW}(\lambda^*_1)\ot\dots\ot L_{BW}(\lambda^*_s)$, then $g'/g$ is right $B(r)^s$-invariant and therefore descends to a holomorphic function on $\flag(r)^s$.
But this is a compact space, hence any such function is constant.
It then follows that each $L_{BW}(\lambda^*_k)$ is one-dimensional and hence that the $\lambda_k$ are just characters, i.e., $\lambda_k = m_k \one_r$ and $L(\lambda_k)=\det_r^{m_k}$ for some $m_k\in\Z$.
In this case, Fulton's conjecture is certainly true.

We now consider the nontrivial case when $f$ has zeros.
For any function in $L_{BW}(\lambda^*_1)\ot\dots\ot L_{BW}(\lambda^*_s)$, the zero set is right $B(r)^s$-stable.
Accordingly, we shall write $f(\CF)=0$ for the condition that $f(\vec g)=0$, where $\CF=\vec g \cdot F_0$, and consider
\[ Z_f := \{ \CF \in \flag(r)^s : f(\CF) = 0 \}. \]
Without loss of generality, we may assume that there exists an $s$-tuple $\CI$ with $\edim\CI=0$ that is related to $\vec\lambda$ as in \cref{lem:I to lambdaprime}, i.e.,
\begin{equation}\label{eq:lambda vs CI}
  \lambda_k = \lambda_{I_k} + (n-r) \one_r \text{ for $k\in[s-1]$}, \quad \lambda_s = \lambda_{I_s}
\end{equation}
(otherwise we may add/remove suitable multiples of $\one_r$, as in the proof of \cref{prp:horn implies invariant}).
Now recall from~\eqref{eq:invariant in one argument} that the functions $\delta_{\CI,\vec h} = \delta_\CI(-, \vec h)$ are in $\bigotimes_{k=1}^s L_{BW}(\lambda^*_{I_k})$ and transform as the character $\det_r^{(n-r)(s-1)}$ with respect to the diagonal action of $\GL(r)$.
It follows that each $\tilde\delta_{\CI,\vec h}(\vec g) := \det_r^{-(n-r)}(g_1) \cdots \det_r^{-(n-r)}(g_{s-1}) \delta_{\CI,\vec h}(\vec g)$  must be proportional to $f$.
Hence,
\begin{equation}\label{eq:delta decomposed}
  \delta_\CI(\vec g,\vec h) = {\det}_r^{(n-r)}(g_1) \cdots {\det}_r^{(n-r)}(g_{s-1}) f(\vec g) \hat f(\vec h),
\end{equation}
for some function $\hat f\colon\GL(n-r)^s\to\C$, which is nonzero due to~\eqref{eq:c=cint}.
In view of~\eqref{eq:delta_I vs H_I}, we obtain the following lemma:

\begin{lemma}\label{lem:Z vs H_I}
  Let $f$, $\CI$ as above.
  If $\CF\in Z_f$ then $H_\CI(\CF,\CG)\neq\{0\}$ for all $\CG\in\flag(Q_0)^s$.
  Conversely, if $\hat f(\CG)\neq0$ then $H_\CI(\CF,\CG)\neq\{0\}$ implies that $\CF\in Z_f$.
\end{lemma}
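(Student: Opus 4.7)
The plan is to extract the statement directly from two ingredients already at hand: the factorization~\eqref{eq:delta decomposed} of the determinant function as
\[ \delta_\CI(\vec g,\vec h) = c(\vec g)\, f(\vec g)\, \hat f(\vec h), \qquad c(\vec g) := {\det}_r^{n-r}(g_1)\cdots{\det}_r^{n-r}(g_{s-1}), \]
where the character $c(\vec g)$ never vanishes; and the equivalence~\eqref{eq:delta_I vs H_I}, which applies since $\edim \CI = 0$ and asserts that $\delta_\CI(\vec g,\vec h)\neq 0$ if and only if $H_\CI(\vec g\cdot F_0,\vec h\cdot G_0)=\{0\}$.

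First, I would record the (routine) point that the conditions appearing in the lemma are well-posed on the flag varieties: both $f\in\bigotimes_k L_{BW}(\lambda_k^*)$ and (by~\eqref{eq:delta decomposed} together with \cref{thm:invariant}) $\hat f\in\bigotimes_k L_{BW}(\lambda_{I_k^c}^*-r\one_{n-r})$ transform under the relevant right Borel actions by characters, so their zero sets descend to $\flag(r)^s$ and $\flag(Q_0)^s$ respectively. Thus $\CF\in Z_f$ means $f(\vec g)=0$ for some (equivalently, any) $\vec g$ with $\vec g\cdot F_0=\CF$, and similarly for $\hat f(\CG)\neq 0$.

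For the forward direction, assume $\CF = \vec g\cdot F_0\in Z_f$, so $f(\vec g)=0$. Then~\eqref{eq:delta decomposed} gives $\delta_\CI(\vec g,\vec h)=0$ for every $\vec h$, and~\eqref{eq:delta_I vs H_I} yields $H_\CI(\CF,\CG)\neq\{0\}$ for every $\CG\in\flag(Q_0)^s$. For the converse, fix $\CG=\vec h\cdot G_0$ with $\hat f(\vec h)\neq 0$ and suppose $H_\CI(\CF,\CG)\neq\{0\}$. By~\eqref{eq:delta_I vs H_I}, $\delta_\CI(\vec g,\vec h)=0$; since $c(\vec g)$ and $\hat f(\vec h)$ are both nonzero, the factorization~\eqref{eq:delta decomposed} forces $f(\vec g)=0$, i.e.\ $\CF\in Z_f$.

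There is no genuine obstacle here: the lemma is a formal consequence of the factorization of $\delta_\CI$ together with the dimension criterion~\eqref{eq:delta_I vs H_I}. The only small thing to verify carefully is the well-definedness observation above, which is immediate from the transformation laws built into \cref{thm:invariant}.
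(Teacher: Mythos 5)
Your proof is correct and follows exactly the route the paper intends: the paper states the lemma immediately after~\eqref{eq:delta_I vs H_I} and~\eqref{eq:delta decomposed} with only the phrase ``In view of~\eqref{eq:delta_I vs H_I},'' leaving the two-line argument implicit, and your write-up simply spells out that argument, correctly noting that the determinant prefactor $c(\vec g)$ never vanishes so the vanishing of $\delta_\CI(\vec g,\vec h)$ is controlled by $f(\vec g)$ and $\hat f(\vec h)$ alone.
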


For sake of finding a contradiction, let us assume that $c(N\vec\lambda)>1$ for some $N$.
Then there exists an invariant $f' \in L_{BW}(N\lambda_1^*)\ot\dots\ot L_{BW}(N\lambda_s^*)$ that is linearly independent from $f^N$.

\begin{lemma}\label{lem:algebraic independence}
  Let $\CL$ be a holomorphic line bundle over a smooth irreducible variety.
  Then two linearly independent holomorphic sections $f_1$, $f_2$ are automatically algebraically independent.
\end{lemma}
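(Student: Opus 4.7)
The plan is to reduce algebraic independence of the two sections to an integrality statement about the total ring of sections of powers of $\CL$, and then to exploit the fact that a homogeneous polynomial in two variables splits over $\C$ as a product of linear forms. Concretely, I would form the graded ring $R := \bigoplus_{n\geq0} H^0(X,\CL^{\otimes n})$ and interpret the claim as the statement that the $\C$-algebra homomorphism $\C[T_1,T_2]\to R$ sending $T_i\mapsto f_i$ has trivial kernel on each graded piece (equivalently, that the meromorphic ratio $f_1/f_2$ is transcendental over~$\C$).

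The essential input is that $R$ is an integral domain. To see this, I would pick any nonempty Zariski-open affine $U\subseteq X$ over which $\CL$ is trivial, choose a nowhere-vanishing $\sigma\in H^0(U,\CL)$, and use it to identify $\bigoplus_n H^0(U,\CL^{\otimes n})$ with $\CO(U)[t]$ via $\tau\cdot\sigma^n\mapsto \tau\,t^n$. Since $X$ is irreducible, $\CO(U)$ is a domain, hence so is $\CO(U)[t]$. The restriction map $R\to \CO(U)[t]$ is injective, because a nonzero section of $\CL^{\otimes n}$ has zero locus that is a proper closed subvariety of $X$, and thus cannot contain the dense open $U$; this passes through grading, so $R$ embeds into an integral domain and is itself a domain.

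To conclude, I would take a hypothetical nonzero $P\in\C[T_1,T_2]$ with $P(f_1,f_2)=0$ and decompose it into homogeneous components $P=\sum_d P_d$. Because the monomials $T_1^a T_2^{d-a}$ of degree $d$ evaluate into the $d$-th graded piece $H^0(X,\CL^{\otimes d})$ of $R$, the relation $P(f_1,f_2)=0$ in $R$ forces $P_d(f_1,f_2)=0$ in $H^0(X,\CL^{\otimes d})$ for every $d$ separately. Choosing any $d$ with $P_d\neq0$ and factoring $P_d(T_1,T_2)=c\prod_j(\alpha_j T_1+\beta_j T_2)$ as a product of linear forms over the algebraically closed field $\C$, the integral domain property of $R$ forces some factor $\alpha_j f_1+\beta_j f_2$ to vanish, directly contradicting the linear independence of $f_1$ and $f_2$.

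There is no real obstacle here; the only delicate point is fixing an interpretation of algebraic independence for sections of a nontrivial line bundle, and the graded-ring viewpoint handles it uniformly once one observes that the product of sections of $\CL^{\otimes a}$ and $\CL^{\otimes b}$ lies in sections of $\CL^{\otimes (a+b)}$.
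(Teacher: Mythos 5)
Your proof is correct and follows essentially the same route as the paper's: reduce to a homogeneous relation by comparing degrees (sections of the same power $\CL^{\ot d}$), factor the homogeneous polynomial over $\C$ into linear forms, and conclude that some linear combination $\alpha f_1 + \beta f_2$ vanishes, contradicting linear independence. The one place you go further is in making explicit the fact that $\bigoplus_n H^0(X,\CL^{\ot n})$ is an integral domain (via restriction to a trivializing affine open and irreducibility of $X$); the paper leaves this step implicit when it asserts that ``one of the factors has to vanish identically.'' Spelling this out is a genuine improvement in rigor, but it is not a different argument.
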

\begin{proof}
  Let us suppose that $f_1$ and $f_2$ satisfy a nontrivial relation $\sum_{i,j} c_{i,j} f_1^i f_2^j=0$.
  Each $f_1^i f_2^j$ is a section of the line bundle $\CL^{\ot (i+j)}$.
  The relation holds degree by degree, and so we may assume that $i+j$ is the same for each nonzero $c_{i,j}$.
  But any homogeneous polynomial in two variables is a product of linear factors.
  Thus we have $\prod_i (a_i f_1 + b_i f_2) = 0$ for some $a_i, b_i \in \C$, and one of the factors has to vanish identically.
  This shows that $f_1$ and $f_2$ are linearly dependent, in contradiction to our assumption.
\end{proof}

\Cref{lem:algebraic independence} implies that $f^N$ and $f'$, and therefore $f$ and $f'$ are algebraically independent.
As a consequence, there exists a nonempty Zariski-open subset of $\CF \in Z_f$ such that $f'(\CF) \neq 0$.

Our strategy in the below will be as follows.
As before, we consider the kernel position $\CJ$ of a generic map $0\neq\phi\in H_\CI(\CF,\CG)$, with now $\CF$ varying in $Z_f$.
Although $\CJ$ is not necessarily intersecting, the condition $f'(\CF)\neq0$ will be sufficient to show that the tuple $\CI^\CJ$ is intersecting.
In \cref{subsec:sherman fulton} we will then prove Sherman's refined version of his recurrence relation~\eqref{eq:sherman recurrence}, which will allow us to show that $H_\CI(\CF,\CG)=\{0\}$ for generic $\CF\in Z_f$.
In view of \cref{lem:Z vs H_I}, this will give a contradiction.


We first prove a general lemma relating semistable vectors and moment maps.
Let $M$ be a complex vector space equipped with a $\GL(r)$-representation and $U(r)$-invariant Hermitian inner product $\braket{\cdot,\cdot}$, complex linear in the second argument, and denote by $\rho_M\colon\gl(r)\to\gl(M)$ the Lie algebra representation.
We define the corresponding \emph{moment map} $\Phi_M\colon\P(M)\to i\mathfrak u(r)$ by
\[ \tr\bigl(\Phi_M([m]) A\bigr) = \frac {\braket{m, \rho_M(A) m}} {\lVert m \rVert^2} \]
for all $A\in\gl(r)$; cf.\ \cref{eq:moment map equivariance}.

\begin{lemma}
\label{lem:moma nonpositive}
  Let $A\in i\mathfrak u(r)$ and $0\neq m\in M$. If $\exp(At) \cdot m\not\to0$ as $t\to-\infty$ then
  \[ \lim_{t\to-\infty} \tr\bigl(\Phi_M([\exp(At) \cdot m]) A\bigr) \leq0. \]
\end{lemma}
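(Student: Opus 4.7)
The plan is to diagonalize the $\rho_M(A)$-action and analyze the one-parameter orbit term by term. Since $A \in i\mathfrak{u}(r)$, it is Hermitian, and because the inner product on $M$ is $U(r)$-invariant, $\rho_M(A)$ is a Hermitian operator on $M$. I would therefore decompose $M$ into the orthogonal direct sum of its $\rho_M(A)$-eigenspaces with real eigenvalues $\alpha$, and write
\[ m = \sum_\alpha m_\alpha, \qquad \exp(At) \cdot m = \sum_\alpha e^{\alpha t} m_\alpha, \]
where the sum is finite since $M$ is finite-dimensional.

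Next I would introduce $\alpha_* := \min \{ \alpha : m_\alpha \neq 0 \}$. The orthogonality of the eigenspaces gives
\[ \lVert \exp(At) \cdot m \rVert^2 = \sum_\alpha e^{2\alpha t} \lVert m_\alpha \rVert^2 = e^{2\alpha_* t} \bigl( \lVert m_{\alpha_*}\rVert^2 + \sum_{\alpha > \alpha_*} e^{2(\alpha - \alpha_*) t} \lVert m_\alpha \rVert^2 \bigr). \]
For $t \to -\infty$, the bracketed expression converges to $\lVert m_{\alpha_*}\rVert^2 > 0$. Thus the hypothesis $\exp(At) \cdot m \not\to 0$ is equivalent to $\alpha_* \leq 0$; this is the first key observation.

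For the second step I would compute the numerator similarly:
\[ \braket{\exp(At) \cdot m, \rho_M(A) \exp(At) \cdot m} = \sum_\alpha \alpha\, e^{2\alpha t} \lVert m_\alpha \rVert^2 = e^{2\alpha_* t} \bigl( \alpha_* \lVert m_{\alpha_*}\rVert^2 + \sum_{\alpha > \alpha_*} \alpha\, e^{2(\alpha - \alpha_*) t} \lVert m_\alpha \rVert^2 \bigr). \]
Dividing by the norm squared, the common factor $e^{2\alpha_* t}$ cancels, and the remaining quotient tends as $t \to -\infty$ to $\alpha_* \lVert m_{\alpha_*}\rVert^2 / \lVert m_{\alpha_*}\rVert^2 = \alpha_*$. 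By the defining property of the moment map,
\[ \lim_{t \to -\infty} \tr\bigl(\Phi_M([\exp(At) \cdot m]) A\bigr) = \alpha_* \leq 0. \]

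There is really no hard step here; the only subtlety is making sure the pair $(U(r)$-invariance, $A \in i\mathfrak{u}(r))$ is used to conclude that $\rho_M(A)$ is Hermitian (so that its eigenspaces are orthogonal and the eigenvalues real), after which the computation is an elementary asymptotic analysis in which the slowest-decaying term $e^{2\alpha_* t}$ dominates.
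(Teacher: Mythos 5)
Your proof is correct and follows the same approach as the paper: decompose $m$ into eigenvectors of the Hermitian operator $\rho_M(A)$, observe that $\exp(At)\cdot m\not\to0$ is equivalent to the lowest eigenvalue appearing in the decomposition being nonpositive, and then compute the limit of the Rayleigh quotient as $t\to-\infty$, which equals that lowest eigenvalue. Your write-up is just a bit more explicit about the orthogonality of eigenspaces and the factoring-out of the dominant term $e^{2\alpha_*t}$.
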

\begin{proof}
  Write $m=\sum_{i=1}^k m_i$ where the $m_i$ are nonzero eigenvectors of $\rho_M(A)$, with eigenvalues $\theta_1<\dots<\theta_k$.
  Then,
  \[ \exp(At) \cdot m = \sum_{i=1}^k e^{\theta_it} m_i \not\to 0 \]
  as $t\to-\infty$ if and only if $\theta_1\leq0$.
  In this case,
  \[ \lim_{t\to-\infty} \tr\bigl(\Phi_M([\exp(At) \cdot m]) A\bigr)
  = \lim_{t\to-\infty} \frac {\sum_i \theta_i e^{2\theta_it} \lVert m_i\rVert^2} {\sum_i e^{2\theta_it} \lVert m_i\rVert^2}
  = \theta_1\leq0.
  \qedhere \]
\end{proof}

We now relate the position of subspaces to components of the moment map:

\begin{lemma}\label{lem:single slope}
  Let $\lambda\in\Lambda_+(r)$, $F = g \cdot F_0$ a flag on $V_0$, $S$ a nonzero subspace of $\C^r$, and $P_S$ the orthogonal projector. Then,
  \[ \lim_{t\to-\infty} \tr\bigl(\Phi_{L(\lambda)}([\exp(P_St) g \cdot v_\lambda]) P_S\bigr) = \braket{T_J, \lambda}, \]
  where $J=\Pos(S,F)$.
\end{lemma}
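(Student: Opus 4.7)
My plan is to identify the moment map image with an explicit Hermitian matrix attached to a flag, reduce the limit to a convergence statement in the flag variety, and perform the trace computation in the limit. Any $g \in \GL(r)$ factors as $g = ub$ with $u \in U(r)$, $b \in B(r)$, and $[g \cdot v_\lambda] = [u \cdot v_\lambda]$ since $v_\lambda$ is a $B(r)$-eigenvector; applying \cref{eq:moment map equivariance} at $u$,
\[ \Phi_{L(\lambda)}([g\cdot v_\lambda]) = u \lambda u^* =: X_F, \qquad F := gF_0 = uF_0, \]
where $X_F \in \CO_\lambda$ is the unique Hermitian matrix with spectrum $\lambda$ and eigenflag $F$. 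Replacing $g$ by $\exp(P_St)g$ gives $F_t := \exp(P_St)F$, so the quantity in question equals $\lim_{t\to-\infty} \tr(X_{F_t}P_S)$. By continuity of the map $F\mapsto X_F$ on $\flag(r)$, it suffices to identify a limiting flag $F_\infty := \lim_{t\to-\infty}F_t$ and evaluate $\tr(X_{F_\infty}P_S)$.

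I would next show that
\[ F_\infty(j) = (F(j)\cap S) \oplus \pi_{S^\perp}(F(j)) \qquad (j\in[r]), \]
an orthogonal direct sum of a subspace of $S$ and one of $S^\perp$, of total dimension $j$ since $\pi_{S^\perp}|_{F(j)}$ has kernel $F(j)\cap S$. For the convergence, decompose $F(j) = (F(j)\cap S)\oplus W$ with $W$ the orthogonal complement of $F(j)\cap S$ inside $F(j)$: the group element $\exp(P_St)$ fixes $F(j)\cap S$ setwise (it acts as $e^t$ on $S$), while it sends $W$ to the subspace $\{e^tw_S + w_{S^\perp} : w\in W\}$, which converges in $\grass(\dim W, \C^r)$ to $\pi_{S^\perp}(W) = \pi_{S^\perp}(F(j))$ as $t\to-\infty$ by a standard basis argument.

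This limiting flag satisfies $\dim F_\infty(j)\cap S = \dim F(j)\cap S$ for all $j$, so $\Pos(S,F_\infty) = J$, and $F_\infty$ is compatible with the orthogonal splitting $\C^r = S\oplus S^\perp$ in the sense that each $F_\infty(j)$ decomposes as a direct sum of subspaces of $S$ and $S^\perp$. Hence $X_{F_\infty}$ preserves both $S$ and $S^\perp$, and admits an orthonormal eigenbasis $h(1),\dots,h(r)$ adapted to $F_\infty$ with $X_{F_\infty}h(a) = \lambda(a)h(a)$, such that $h(a)\in S$ exactly when $a\in J$ (these being the indices at which $\dim F_\infty(a)\cap S$ strictly exceeds $\dim F_\infty(a-1)\cap S$). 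Therefore
\[ \tr(X_{F_\infty}P_S) = \tr(X_{F_\infty}|_S) = \sum_{b=1}^d \lambda(J(b)) = \braket{T_J,\lambda}. \]
The main technical point is the Grassmannian limit, which is routine; the remaining identifications reduce to linear algebra packaged in the lemmas of \cref{sec:linalg}.
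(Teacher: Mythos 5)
Your proof is correct and follows the same overall strategy as the paper's: replace the moment-map quantity by $\tr(X_{F_t} P_S)$ with $F_t = \exp(P_St)F$ the eigenflag, compute the limiting flag $F_\infty$, and evaluate the trace there. The difference is organizational. The paper first normalizes using the $U(S_0)\times U(R_0)$-action (which commutes with $P_{S_0}$) so that $F^{S_0}$ and $F_{V_0/S_0}$ become standard, identifies the limit concretely as the permutation flag $w_J F_0$, and then the trace drops out of~\eqref{eq:moment map equivariance} in one line. You skip the normalization and instead give a coordinate-free description of the limiting flag, $F_\infty(j) = (F(j)\cap S)\oplus\pi_{S^\perp}(F(j))$, verify $\Pos(S,F_\infty)=J$ and that each graded piece of $F_\infty$ lies entirely in $S$ or in $S^\perp$, and read off $\tr(X_{F_\infty}|_S)=\sum_{a\in J}\lambda(a)$ from the resulting eigenbasis. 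Both are valid; yours avoids introducing the shuffle permutation $w_J$ at the cost of a slightly longer linear-algebra argument about how the orthogonal decomposition interacts with the flag. One small point worth making explicit in a write-up: the map $F\mapsto X_F$ on $\flag(r)$ is well-defined and continuous even when $\lambda$ has repeated entries (it factors through the partial flag variety $\GL(r)/P_\lambda \cong \CO_\lambda$), so passing the limit through $F\mapsto X_F$ is legitimate.
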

\begin{proof}
  Let $d=\#J$.
  We may assume that $S=S_0$ is generated by the first $d$ vectors $e(1),\dots,e(d)$ of the standard basis of $V_0$, and also that $g=u$ is unitary.
  Thus $P_{S_0}$ is the diagonal matrix with $d$ ones and $r-d$ zeros, and we need to show that
  \[ \lim_{t\to-\infty} \tr\bigl(\Phi_{L(\lambda)}([\exp(P_{S_0}t) u \cdot v_\lambda]) P_{S_0}\bigr) = \braket{T_J, \lambda}. \]
  Let $R_0$ denote the orthogonal complement of $S_0$ in $V_0$.
  The action of $U(S_0) \times U(R_0)$ commutes with $P_{S_0}$ and hence we can assume that $F^{S_0}$ is the standard flag on $S_0$, while $F_{V_0/S_0}$ has the adapted basis $e(J^c(b))+S_0$ for $b\in[r-d]$.
  Thus we see that $\lim_{t\to-\infty} \exp(P_{S_0}t) F = w_J F_0$.
  It follows that $\lim_{t\to-\infty} [\exp(P_{S_0}t) u \cdot v_\lambda] = [w_J \cdot v_\lambda]$ and hence, using~\eqref{eq:moment map equivariance}, that
  \[ \lim_{t\to-\infty} \tr\bigl(\Phi_{L(\lambda)}([\exp(P_{S_0}t) u \cdot v_\lambda]) P_{S_0}\bigr)
   = \frac {\braket{v_\lambda, \rho_\lambda(w_J^{-1} P_{S_0} w_J) v_\lambda}} {\lVert v_\lambda\rVert^2}
   = \braket{T_J, \lambda}. \qedhere \]
\end{proof}

We now use the preceding lemma to obtain from any nonzero invariant an $s$-tuple of flags with nonnegative slope:

\begin{lemma}\label{lem:semistable slope}
  Let $p \in (L(N\lambda_1)\ot\cdots\ot L(N\lambda_s))^*$ a $\GL(r)$-invariant homogeneous polynomial such that $p(g_1\cdot v_{N\lambda_1}\ot\cdots\ot g_s\cdot v_{N\lambda_s})\neq0$, and define $\CF=(g_1 F_0,\dots,g_s F_0)$.
  Then $\sum_{k=1}^s (T_{J_k}, \lambda_k)\leq0$ for all $\CJ=\Pos(S,\CF)$, where $S$ is an arbitrary nonzero subspace of $\C^r$.
\end{lemma}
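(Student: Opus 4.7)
The plan is to apply \cref{lem:moma nonpositive} to the tensor product representation $M := L(N\lambda_1) \ot \cdots \ot L(N\lambda_s)$ at the vector
\[
  m := (g_1 \cdot v_{N\lambda_1}) \ot \cdots \ot (g_s \cdot v_{N\lambda_s}),
\]
with the Hermitian matrix $A := P_S$. First I would observe that $\GL(r)$-invariance of $p$ implies $p(\exp(P_S t) \cdot m) = p(m) \neq 0$ for all $t \in \R$. In particular $\exp(P_S t) \cdot m$ cannot tend to $0$ as $t \to -\infty$, so the hypothesis of \cref{lem:moma nonpositive} is satisfied and gives
\[
  \lim_{t\to-\infty} \tr\bigl(\Phi_M([\exp(P_S t) \cdot m]) P_S\bigr) \leq 0.
\]

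Next I would identify the moment map of a tensor product at a product state as the sum of the factorwise moment maps. Since $\GL(r)$ acts diagonally, $\exp(P_S t) \cdot m$ remains a pure tensor for every $t$, and a direct computation from the definition of $\Phi$ (using that the norm of a product vector factorizes) yields
\[
  \Phi_M\bigl([\exp(P_S t) \cdot m]\bigr) = \sum_{k=1}^s \Phi_{L(N\lambda_k)}\bigl([\exp(P_S t) g_k \cdot v_{N\lambda_k}]\bigr).
\]
Substituting this decomposition and applying \cref{lem:single slope} to each factor (with $\lambda$ replaced by $N\lambda_k$ and using that $\Pos(S, g_k F_0) = J_k$ by definition of $\CJ$), the limit $t \to -\infty$ of the $k$-th term equals $\braket{T_{J_k}, N\lambda_k}$.

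Combining the two observations, we obtain
\[
  0 \geq \lim_{t\to-\infty} \tr\bigl(\Phi_M([\exp(P_S t) \cdot m]) P_S\bigr) = \sum_{k=1}^s \braket{T_{J_k}, N\lambda_k} = N \sum_{k=1}^s (T_{J_k}, \lambda_k),
\]
and dividing by $N > 0$ gives the desired inequality $\sum_{k=1}^s (T_{J_k}, \lambda_k) \leq 0$.

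The only step that requires genuine verification (rather than invoking previously proven results) is the factorization of the tensor product moment map on product states. This is elementary but is the key bookkeeping: everything else reduces to plugging \cref{lem:single slope} into \cref{lem:moma nonpositive}, after using $\GL(r)$-invariance of $p$ to rule out that the orbit escapes to $0$.
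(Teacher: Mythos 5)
Your proof is correct and follows essentially the same route as the paper's: use $\GL(r)$-invariance of $p$ to rule out $\exp(P_S t)\cdot m\to 0$, invoke \cref{lem:moma nonpositive}, and then apply \cref{lem:single slope} term by term. The only difference is that you explicitly isolate the factorization of the tensor-product moment map on pure tensors as the needed intermediate step, which the paper glosses over when it passes from $\Phi_M$ to the sum over factors.
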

\begin{proof}
  Consider the representation $M=L(N\lambda_1)\ot\cdots\ot L(N\lambda_s)$ with its moment map $\Phi_M$, and $m := g_1\cdot v_{N\lambda_1}\ot\cdots\ot g_s\cdot v_{N\lambda_s}$.
  Let $P_S$ denote the orthogonal projector onto the subspace $S$.
  As $p$ is $\GL(r)$-invariant, $p(\exp(P_St) \cdot m)=p(m)\neq0$, which implies that $\exp(P_St) \cdot m\not\to0$ as $t\to-\infty$.
  Thus \cref{lem:moma nonpositive} implies that
  \begin{equation*}
    \lim_{t\to-\infty} \tr\bigl(\Phi_M([\exp(P_St) \cdot m]) P_S\bigr) \leq 0.
  \end{equation*}
  On the other hand, \cref{lem:single slope} shows that the left-hand side of this inequality is equal to
  \[
    \sum_{k=1}^s \lim_{t\to-\infty} \tr\bigl(\Phi_{L(N\lambda_k)}([\exp(P_St) g_k \cdot v_{N\lambda_k}]) P_S\bigr)
    = \sum_{k=1}^s \lim_{t\to-\infty} (T_{J_k}, \lambda_k). \qedhere
  \]
\end{proof}

\begin{corollary}\label{cor:refined intersecting}
  Let $\vec\lambda$ and $\CI$ as in~\eqref{eq:lambda vs CI}, $f' \in (L_{BW}(N\lambda_1^*) \ot \cdots \ot L_{BW}(N\lambda_s^*))^{\GL(r)}$.
  Let $\CF\in\flag(V_0)^s$, $\{0\}\neq S\subseteq\C^r$, and $\CJ=\Pos(S,\CF)$.
  If $f'(\CF)\neq0$ then $\CI^\CJ$ is intersecting.
\end{corollary}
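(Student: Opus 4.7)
The strategy is to verify that $\CI^\CJ$ satisfies the Horn inequalities and then invoke \cref{thm:belkale}. The key input is \cref{lem:semistable slope}: via the Borel--Weil duality \eqref{eq:borel weil dual iso}, the invariant $f'$ corresponds to a $\GL(r)$-invariant polynomial $p$ on $L(N\lambda_1)\ot\cdots\ot L(N\lambda_s)$, and writing $\CF=(g_1F_0,\dots,g_sF_0)$ the hypothesis $f'(\CF)\neq 0$ becomes $p(g_1\cdot v_{N\lambda_1}\ot\cdots\ot g_s\cdot v_{N\lambda_s})\neq 0$. Applying \cref{lem:semistable slope} then yields, for every nonzero subspace $T\subseteq\C^r$,
\[
  \sum_{k=1}^s \bigl(T_{\Pos(T,F_k)},\,\lambda_k\bigr) \leq 0,
\]
call this inequality $(\ast)$. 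Note that each $\lambda_k$ is dominant, since $\lambda_{I_k}(a)=a-I_k(a)$ is weakly decreasing and adding multiples of $\one_r$ preserves dominance.

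The base inequality $\edim\CI^\CJ\geq 0$ follows from $(\ast)$ applied to $T=S$: \cref{lem:I to lambdaprime} turns it into $\edim\CI\CJ\geq\edim\CJ$, and then \eqref{eq:exp edim} rewrites the left-hand side as $\edim\CI^\CJ$. For the remaining Horn inequalities defining $\Horn(d,n-r+d,s)$, fix $\CK\in\Horn(m,d,s)$ with $0<m<d:=\dim S$ and $\edim\CK=0$. By \cref{thm:belkale} the tuple $\CK$ is intersecting, so $\Omega_\CK(\CF^S)\neq\emptyset$; pick some $T$ in this intersection. Since $m>0$, $T\neq\{0\}$, and setting $\CL:=\Pos(T,\CF^S)$, \eqref{eq:schubert variety characterization} gives $L_k(b)\leq K_k(b)$ componentwise, while the chain rule \eqref{eq:chain rule pos} gives $\Pos(T,\CF)=\CJ\CL$. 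Feeding this $T$ into $(\ast)$ thus yields $\sum_k(T_{(\CJ\CL)_k},\lambda_k)\leq 0$, and the dominance of $\lambda_k$ combined with $(\CJ\CL)_k(b)=J_k(L_k(b))\leq J_k(K_k(b))=(\CJ\CK)_k(b)$ monotonically propagates this to $\sum_k(T_{(\CJ\CK)_k},\lambda_k)\leq 0$. Translating via \cref{lem:I to lambdaprime} and \eqref{eq:exp edim induction} gives $\edim(\CI^\CJ)\CK=\edim\CI(\CJ\CK)-\edim\CJ\CK\geq 0$, so $\CI^\CJ\in\Horn(d,n-r+d,s)$, and \cref{thm:belkale} concludes.

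The main subtle point I anticipate is the closure-versus-open-cell gap: the subspace $T$ produced from the intersectedness of $\CK$ lives a priori only in the Schubert \emph{variety} $\Omega_\CK(\CF^S)$, so its actual position $\CL$ may be strictly componentwise smaller than $\CK$. The remedy is precisely the dominance of each $\lambda_k$, which makes the slope pairing monotone in its set argument, allowing a bound established at $\CJ\CL$ to propagate to the needed bound at $\CJ\CK$. Aside from this point, everything is linear-algebraic bookkeeping among the edim identities \eqref{eq:exp edim}, \eqref{eq:exp edim induction} and the chain rule.
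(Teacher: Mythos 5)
Your proof is correct and follows essentially the same route as the paper's own argument: invoke Borel--Weil duality to convert $f'$ into an invariant polynomial, apply \cref{lem:semistable slope} to get the slope inequality for every nonzero subspace, then verify the Horn inequalities for $\CI^\CJ$ by producing a subspace $T$ in the closed Schubert intersection and using the monotonicity of the slope pairing (coming from dominance of the $\lambda_k$) to pass from the actual position $\CJ\CL$ to the target position $\CJ\CK$, finishing with \eqref{eq:exp edim induction} and \cref{lem:I to lambdaprime}. The paper bundles the $m=d$ case into the same computation and tests against all intersecting $\CK$ rather than only those with $\edim\CK=0$, but these are inconsequential bookkeeping differences.
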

\begin{proof}
  Write $\CF=(g_1 \cdot F_0, \dots, g_s \cdot F_0)$ for suitable $g_1,\dots,g_s \in\GL(r)$.
  Then, using~\eqref{eq:borel weil dual iso}, there exists a $\GL(r)$-invariant homogeneous polynomial $p\in(L(N\lambda_1)\ot\cdots\ot L(N\lambda_s))^*$ such that $p(g_1 \cdot v_{N\lambda_1} \ot\cdots\ot g_s \cdot v_{N\lambda_s})=f'(g_1,\dots,g_s)\neq0$.
  Thus the assumptions of \cref{lem:semistable slope} are satisfied.

  We now show that $\CI^\CJ$ is intersecting.
  For this, we use \cref{thm:belkale} and verify the Horn inequalities.
  Thus let $0<m\leq d=\dim S$ and $\CK\in\Horn(m,d,s)=\Intersecting(m,d,s)$:
  Since $\CK$ is intersecting, there exists some subspace $S'\in\Omega_\CK(\CF^S)$.
  Hence $S'\in\Omega_{\CJ\CK}(\CF)$ by the chain rule~\eqref{eq:chain rule pos}.
  According to \cref{lem:schubert variety characterization}, $\CJ'=\Pos(S',\CF)$ is such that $J'_k(a) \leq J_k K_k(a)$ for all $k\in[s]$ and $a\in[m]$.
  Thus we obtain the first inequality in
  \begin{align*}
    \edim\CI^\CJ\CK - \edim\CK
  = \edim\CI(\CJ\CK) - \edim\CJ\CK
  = -\sum_{k=1}^s (T_{J_k K_k}, \lambda_k)
  \geq -\sum_{k=1}^s (T_{J'_k}, \lambda_k)
  \geq 0;
  \end{align*}
  the first equality is~\eqref{eq:exp edim}, the second is \cref{lem:I to lambdaprime}, and the last inequality is \cref{lem:semistable slope}, applied to $S'$.
  This concludes the proof.
\end{proof}

\subsection{Sherman's refined lemma}
\label{subsec:sherman fulton}

We now study the behavior of $\dim H_\CI(\CF, \CG)$ in more detail.
We proceed as in \cref{sec:horn sufficient}, but for a \emph{fixed} $s$-tuple of flags $\CF\in\flag(V_0)^s$.
Specifically, we consider the following refinement of the true dimension~\eqref{eq:tdim} for fixed $\CF$:
\[ \tdim_\CF \CI := \min_{\CG} \dim H_\CI(\CF,\CG) \]
Thus we study the variety
\[ \PF(\CI) := \{ (\CG,\phi) \in \flag(Q_0)^s \times \Hom(V_0,Q_0) : \phi\in H_\CI(\CF,\CG) \}. \]
Restricting to those $\CG$ such that $\dim H_\CI(\CF,\CG)=\tdim_\CF \CI$, we obtain open sets $\BFt(\CI)\subseteq\flag(Q_0)^s$ and $\PFt(\CI)\subseteq\PF(\CI)$.
Let $\kdim_\CF(\CI)$ denote the minimal (and hence generic) dimension of $\ker\phi$ for $(\CG,\phi)\in\PFt(\CI)$.
The following lemma is proved just like \cref{cor:kdim zero intersecting}:  

\begin{lemma}\label{lem:kdim_F zero}
  If $\kdim_\CF\CI=0$ then $\tdim_\CF\CI=\edim \CI$.
\end{lemma}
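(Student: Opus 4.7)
The plan is to directly mimic the proof of \cref{cor:kdim zero intersecting}, which went via \cref{lem:dim P_t zero} by computing $\dim \Pt(\CI)$ in two ways. In the present $\CF$-fixed setting, I would compute $\dim \PFt(\CI)$ analogously, in two ways, and then compare.

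For the first computation, the $\CF$-fixed analogue of \cref{lem:P_t zopen} goes through verbatim: the projection $(\CG,\phi)\mapsto\CG$ exhibits $\PFt(\CI)$ as a vector bundle over the nonempty Zariski-open $\BFt(\CI)\subseteq\flag(Q_0)^s$, with fiber $H_\CI(\CF,\CG)$ of constant dimension $\tdim_\CF\CI$. In particular, $\PFt(\CI)$ is irreducible of dimension $s\dim\flag(Q_0)+\tdim_\CF\CI$.

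For the second, I would introduce the auxiliary locally closed subvariety
\[ \mathrm{P}_{\CF,\mathrm{k}}(\CI) := \{(\CG,\phi)\in\PF(\CI) : \phi \text{ is injective}\} \]
and project it to $\Hom^\times(V_0,Q_0)$ by $(\CG,\phi)\mapsto\phi$, in direct analogy with the auxiliary map $\Pk\to\Mk$ in the proof of \cref{lem:dim P_t zero}. The fiber over $\phi$ is $\prod_k \flag^0_{I_k}(F_k,\phi)$, which by \cref{lem:image flags} is smooth irreducible of dimension $\dim\flag(Q_0)-r(n-r)+\dim I_k$ whenever it is nonempty. Nonemptiness amounts to the purely combinatorial condition $I_k(a)\geq 2a$, which is independent of $\phi$. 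The hypothesis $\kdim_\CF\CI=0$ provides at least one element $(\CG_0,\phi_0)\in\PFt(\CI)$ with $\phi_0$ injective, which in particular makes every $\flag^0_{I_k}(F_k,\phi_0)$ nonempty; hence $I_k(a)\geq 2a$ holds. The projection then exhibits $\mathrm{P}_{\CF,\mathrm{k}}(\CI)$ as a fiber bundle, hence irreducible, of dimension $r(n-r)+s\dim\flag(Q_0)-sr(n-r)+\sum_k\dim I_k=s\dim\flag(Q_0)+\edim\CI$.

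To conclude, I would observe that both $\mathrm{P}_{\CF,\mathrm{k}}(\CI)$ and $\PFt(\CI)$ are irreducible and Zariski-open in $\PF(\CI)$, and that their intersection is nonempty since it contains $(\CG_0,\phi_0)$. Any nonempty Zariski-open subset of an irreducible variety is dense, so their dimensions must coincide, giving $\tdim_\CF\CI=\edim\CI$, as desired. The only delicate point—the main obstacle—is verifying that $\pi$ really gives a fiber bundle with irreducible fibers of constant dimension; this reduces to checking the combinatorial condition $I_k(a)\geq 2a$ uniformly in $\phi$, which as indicated is forced by the single injective $\phi_0$ supplied by the hypothesis $\kdim_\CF\CI=0$, after which \cref{lem:image flags} applies uniformly.
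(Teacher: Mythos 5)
Your proof is correct and follows exactly the route the paper indicates: the paper's own proof of \cref{lem:kdim_F zero} is literally just the remark that it ``is proved just like \cref{cor:kdim zero intersecting},'' and your argument transcribes the proof of \cref{lem:dim P_t zero} and the comparison in \cref{cor:kdim zero intersecting} to the $\CF$-fixed setting, with the vector bundle $\PFt(\CI)\to\BFt(\CI)$ on one side and the fiber bundle over $\Hom^\times(V_0,Q_0)$ on the other. The observation that $\kdim_\CF\CI=0$ supplies the injective $\phi_0$ needed to make the combinatorial condition $I_k(a)\geq 2a$ hold uniformly is the right way to see that \cref{lem:image flags} applies across the whole base.
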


Let us now assume that $\kdim_\CF\CI>0$. Let $\kPos_\CF(\CI)$ denote the kernel position, defined as in \cref{def:kerpos} but for fixed $\CF$.
We thus obtain an irreducible variety $\PFkpt(\CI)$ over a Zariski-open subset $\BFkpt(\CI)$ of $\BFt(\CI)$.
To compute its dimension, we again define $\PFkp(\CI)\subseteq\PF(\CI)$, where we fix the kernel dimension and position, but \emph{not} the dimension of $H_\CI(\CF,\CG)$.
In contrast to \cref{lem:dim P_kpt nonzero}, the variety $\PFkp(\CI)$ is in general neither smooth nor irreducible.
However, we can describe it similarly as before:
We first constrain $S=\ker\phi$ to be in $\Omega^0_\CJ(\CF)$ (which may not be irreducible), then $\phi$ is determined by $\bar\phi\in\Hom^\times(V_0/S,Q_0)$ and $\CG$ by $G_k\in\flag^0_{I_k/J_k}((F_k)_{V_0/S},\bar\phi)$.
Thus we obtain for each irreducible component $C \subseteq\Omega^0_\CJ(\CF)$ a corresponding irreducible component $\PFkpC(\CI)$.
In particular, there exists some component $C_\CF$ such that $\PFkpCF(\CI)$ is the closure of $\PFkpt(\CI)$ in $\PFkp(\CI)$, namely the irreducible component containing the elements $S=\ker\phi$ for $(\phi,\CG)$ varying in the irreducible variety $\PFkpt(\CI)$.
As a consequence, $\dim\PFkpt(\CI) = \dim\PFkpCF(\CI)$, and so we obtain, using completely analogous dimension computations, the following refinement of~\eqref{eq:tdim minus edim first}:
\begin{equation}\label{eq:tdim minus edim refined}
  \tdim_\CF \CI - \edim\CI = \dim C_\CF - \edim\CI^\CJ
\end{equation}
Indeed, when we apply~\eqref{eq:tdim minus edim refined} to generic $\CF\in\flag(V_0)^s$ then $\CJ$ is intersecting and $\dim C_\CF=\edim\CJ$, so we recover~\eqref{eq:tdim minus edim first}.
We now instead apply the above to generic $\CF$ in a component of the zero set $Z_f$ of the unique nonzero invariant~$f$.
Thus we obtain the following variant of the key recursion relation~\eqref{eq:sherman recurrence}:

\begin{lemma}[Sherman]\label{lem:sherman refined}
  Let $f,\CI$ as above in \cref{subsec:nonzero}, and $Z\subseteq Z_f$ an irreducible component such that $\kdim_\CF\CI\neq0$ for all $\CF\in Z$.
  Then there exists $\CJ$ and a nonempty Zariski-open subset of $\CF\in Z$ such that $\kPos_\CF\CI=\CJ$ and
  \[ \tdim_\CF \CI - \edim \CI \leq \tdim \CI^\CJ - \edim \CI^\CJ. \]
\end{lemma}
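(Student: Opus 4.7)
The plan is to combine the dimension formula~\eqref{eq:tdim minus edim refined} with a tangent space bound on $\dim C_\CF$, adapting the argument of \cref{lem:sherman upper bound} to the restricted setting in which $\CF$ is forced to vary in the proper subvariety $Z\subseteq\flag(V_0)^s$. First, upper semicontinuity of the relevant dimensions gives a nonempty Zariski-open subset $Z_0\subseteq Z$ on which both $d:=\kdim_\CF\CI\geq 1$ and the tuple $\kPos_\CF\CI$ are constant; shrink $Z_0$ further so that the irreducible components of $\Omega^0_\CJ(\CF)$ and in particular the distinguished component $C_\CF$ (the closure of the image of $\ker\colon\PFkpt(\CI)\to\grass(d,V_0)$) vary in an algebraic family. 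Write $\CJ$ for the common value of $\kPos_\CF\CI$. By~\eqref{eq:tdim minus edim refined} the claimed inequality is equivalent to
\[ \dim C_\CF \leq \tdim\CI^\CJ \]
for generic $\CF\in Z_0$.

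To bound $\dim C_\CF$ from above, fix a generic $\CF\in Z_0$ and pick $(\CG_0,\phi_0)\in\PFkpt(\CI)$; set $S_0:=\ker\phi_0$. Shrinking $\PFkpt(\CI)$ by a further Zariski-open condition, we may assume $S_0$ is a smooth point of $C_\CF$, so
\[ \dim C_\CF = \dim T_{S_0} C_\CF \leq \dim T_{S_0}\Omega^0_\CJ(\CF) = \dim H_\CJ(\CF^{S_0},\CF_{V_0/S_0}), \]
where the last equality is~\eqref{eq:tangent space schubert cell} applied in each factor. Since $\phi_0$ has kernel exactly $S_0$, the induced $\bar\phi_0\colon V_0/S_0\hookrightarrow Q_0$ is injective, and \cref{lem:exp vs composition} shows that left composition with $\bar\phi_0$ yields an injection $H_\CJ(\CF^{S_0},\CF_{V_0/S_0})\hookrightarrow H_{\CI^\CJ}(\CF^{S_0},\CG_0)$. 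Hence
\[ \dim C_\CF \leq \dim H_{\CI^\CJ}(\CF^{S_0},\CG_0). \]

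The main obstacle is to ensure, after one final shrinking, that this last dimension is exactly $\tdim\CI^\CJ$, i.e.\ that $(\CF^{S_0},\CG_0)$ lies in the generic Zariski-open locus $\Bt(\CI^\CJ)\subseteq\flag(S_0)^s\times\flag(Q_0)^s$ for the true dimension of $\CI^\CJ$. Mirroring the end of the proof of \cref{lem:sherman upper bound}, the strategy is to form the parameter space
\[ \tilde{\mathrm P}:=\{(\CF,\CG,\phi) : \CF\in Z_0,\ (\CG,\phi)\in\PFkp(\CI)\text{ with }\kPos=\CJ\} \]
and the morphism $\tilde{\mathrm P}\to K(d,V_0)$, $(\CF,\CG,\phi)\mapsto(\ker\phi,\CF^{\ker\phi},\CG)$, and to show that its image meets the nonempty Zariski-open subset of $K(d,V_0)$ where $\dim H_{\CI^\CJ}(\tilde\CF,\CG)=\tdim\CI^\CJ$. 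Because $\CF$ is now confined to the proper subvariety $Z$, one cannot directly invoke the full dominance argument of \cref{lem:sherman upper bound}; the technical crux of Sherman's refinement is to exploit the remaining degrees of freedom — independent variation of $\CG$ and of the injection $\bar\phi$ for a fixed generic $\CF\in Z_0$, together with the action on the fibers above $S_0$ of the parabolic stabilizing $S_0$ inside $\GL(V_0)$ — to hit the required open locus, after which the tangent space bound completes the proof.
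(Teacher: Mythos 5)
Your setup matches the paper's: you correctly reduce, via~\eqref{eq:tdim minus edim refined}, to showing $\dim C_\CF \le \tdim\CI^\CJ$ for generic $\CF\in Z$, use smoothness of a generic point of $C_\CF$ and~\eqref{eq:tangent space schubert cell} to bound $\dim C_\CF \le \dim H_\CJ(\CF^{S_0},\CF_{V_0/S_0})$, and invoke \cref{lem:exp vs composition} to push this into $H_{\CI^\CJ}(\CF^{S_0},\CG_0)$; you also correctly identify the morphism to $K(d,V_0)$ whose dominance would finish the job. However, you then explicitly punt on the dominance argument, which is the entire nontrivial content of the lemma — the passage describing ``independent variation of $\CG$ and of the injection $\bar\phi$ \dots together with the action on the fibers above $S_0$ of the parabolic stabilizing $S_0$'' is a gesture, not a proof.

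Two specific gaps. First, the group you need is not the parabolic $P(S_0,V_0)$: acting by the full parabolic changes the induced flag on $V_0/S_0$, and then \cref{lem:exp vs composition} no longer guarantees $\phi\in H_\CI(\vec h\cdot\CF,\CG)$. The paper instead uses the smaller connected group $G\subseteq\GL(V_0)^s$ of $\vec h$ with $h_k S_0\subseteq S_0$ and $h_k$ acting \emph{trivially} on $V_0/S_0$, precisely so that $\phi$ survives. Second — and this is the crux you are missing — you never argue that $\vec h\cdot\CF$ stays in the component $Z$. The paper does this by first arranging $\CG$ generic with $\hat f(\CG)\neq 0$ (from the decomposition~\eqref{eq:delta decomposed}), then noting $d<r$ (which itself requires \cref{lem:Z vs H_I}) forces $\phi\neq 0$, hence $H_\CI(\vec h\cdot\CF,\CG)\neq\{0\}$, hence $\vec h\cdot\CF\in Z_f$ by \cref{lem:Z vs H_I}, and finally that the orbit of the irreducible group $G$ cannot leave the component $Z$. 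Without this chain, there is no reason the map $\tilde{\mathrm P}\to K(d,V_0)$ is dominant — which is exactly why, as you yourself note, the argument of \cref{lem:sherman upper bound} does not transfer directly. The separate translation of $S_0$ to an arbitrary $S$ via the diagonal $\GL(V_0)$-action (which preserves $Z_f$ and, by connectedness, $Z$) also needs to be spelled out. As written, your proposal leaves the decisive step unproved.
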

\begin{proof}
  We choose $d$ and $\CJ$ as the kernel dimension and position for generic $\CF\in Z$.
  We note that $d<r$, since $d=r$ would imply that $H_\CI(\CF,\CG)=\{0\}$, in contradiction to \cref{lem:Z vs H_I}.
  Let $U\subseteq Z$ denote the Zariski-open subset such that $\kPos_\CF\CI=\CJ$ for all $\CF\in U$.
  We proceed as in \cref{lem:sherman upper bound}.
  Let
  \begin{align*}
    \CX &:= \{ (\CF,\CG,\phi) : \CF \in U, (\CG,\phi) \in \PFkpCF(\CI) \}, \\
    \CY &:= \{ (S,\tilde\CF,\CG) : S \in \grass(d, V_0), \tilde\CF \in \flag(S)^s, \CG\in\flag(Q_0)^s \}.
  \end{align*}
  Both $\CX$ and $\CY$ are irreducible varieties and we have a morphism
  \[ \pi\colon\CX\to\CY, (\CF,\CG,\phi) \mapsto (\ker\phi,\CF^{\ker\phi},\CG). \]
  As before, we argue that $\pi$ is dominant.
  By construction, the image of $\CX$ by the map $(\CF,\CG,\phi)\mapsto\CG$ contains a Zariski-open subset $U'$ of $\flag(Q_0)^s$.
  We may also assume that $\hat f(\CG)\neq0$ for all $\CG\in U'$, where $\hat f$ is the map from~\eqref{eq:delta decomposed}.
  We now show that the image of $\pi$ contains all elements $(S,\tilde\CF,\CG)$ with $S\in\grass(d,V_0)$, $\tilde\CF\in\flag(S)^s$, and $\CG\in U'$.
  For this, let $(\CF_0,\CG,\phi_0)\in\CX$ be the preimage of some arbitrary $\CG\in U'$.
  Let $S_0 :=\ker\phi_0$ and choose some $g \in \GL(V_0)$ such that $g \cdot S_0 = S$.
  Using the corresponding diagonal action, define $\CF := g \cdot \CF_0$ and $\phi := g \cdot \phi_0$.
  Then $(\CF,\CG,\phi)\in\CX$, since $Z$ is stable under the diagonal action of $\GL(V_0)$, and $\ker\phi=S$.
  Now consider the group $G \subseteq \GL(V_0)^s$ consisting of all elements $\vec h\in\GL(V_0)^s$ such that $h_k S\subseteq S$ and $h_k$ acts trivially on $V_0/S$ for all $k\in[s]$.
  Note that $G$ is an irreducible algebraic group.
  By construction, $\phi\in H_\CI(\vec h\cdot\CF,\CG)$, while $d<r$ implies that $\phi\neq0$.
  This means that $H_\CI(\vec h\cdot\CF,\CG)\neq 0$, and so we obtain from \cref{lem:Z vs H_I} that $\vec h\cdot\CF\in Z_f$.
  It follows that, in fact, $\vec h\cdot\CF\in Z$, as it is obtained by the action of the irreducible algebraic group $G$ on $\CF\in Z$, and so stays in the same irreducible component.
  For given $\tilde\CF\in\flag(S)^s$, we now choose $\vec h\in G$ such that $h_k \cdot F^S_k = \tilde F_k$ for $k\in[s]$.
  Then $(\vec h\cdot\CF,\CG,\phi)\in\CX$ is a preimage of $(S,\tilde\CF,\CG)$, and we conclude that $\pi$ is dominant.

  As before, the dominance implies that we can find a nonempty Zariski-open set of $(\CF,\CG,\phi)\in\CX$ such that $\dim H_{\CI^\CJ}(\CF^{\ker\phi},\CG)=\tdim\CI^\CJ$.
  We may assume in addition that $\ker\phi \in C_\CF$ is a smooth point.
  For any $\CF$ in this set, $\bar\phi$ injects $H_\CJ(\CF^{\ker\phi}, \CF_{V_0/\ker\phi})$ into $H_{\CI^\CJ}(\CF^{\ker\phi},\CG)$ (\cref{lem:exp vs composition}).
  Thus,
  \[ \dim C_\CF = \dim T_{\ker\phi} C_\CF \leq H_\CJ(\CF^{\ker\phi},\CF_{V_0/\ker\phi})\leq\dim H_{\CI^\CJ}(\CF^{\ker\phi},\CG) = \tdim\CI^\CJ, \]
  where in the first inequality we have used that the intersection $\Omega^0_\CJ(\CF)$ is not necessarily transversal at $S$ and so the tangent space of the intersection is in general only a subspace of the intersection of the tangent spaces~\eqref{eq:tangent space schubert cell}.
  In view of~\eqref{eq:tdim minus edim refined}, we obtain the desired inequality.
\end{proof}

\begin{theorem}[Belkale]
  Let $c(\vec\lambda)=1$.
  Then $c(N\vec\lambda)=1$ for all $N>1$.
\end{theorem}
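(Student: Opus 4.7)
The plan is to argue by contradiction, combining Sherman's refined recurrence \cref{lem:sherman refined} with the vanishing criterion \cref{cor:refined intersecting}. First I would reduce to the interesting case in which the unique (up to scalar) nonzero invariant $f\in L_{BW}(\lambda_1^*)\ot\cdots\ot L_{BW}(\lambda_s^*)$ has nonempty zero set $Z_f$, and fix an $s$-tuple $\CI$ with $\edim\CI=0$ attached to $\vec\lambda$ via~\eqref{eq:lambda vs CI}. Assuming $c(N\vec\lambda)\geq 2$, we obtain a second $\GL(r)$-invariant $f'\in L_{BW}(N\lambda_1^*)\ot\cdots\ot L_{BW}(N\lambda_s^*)$ linearly independent from $f^N$. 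By \cref{lem:algebraic independence} the sections $f^N$ and $f'$, and therefore $f$ and $f'$, are algebraically independent, and as argued in \cref{subsec:nonzero} this forces $U_2:=\{\CF\in Z:f'(\CF)\neq 0\}$ to be a nonempty Zariski-open subset of every irreducible component $Z\subseteq Z_f$.

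Next I would fix such a $Z$ and observe that $\kdim_\CF\CI>0$ for every $\CF\in Z$: by \cref{lem:Z vs H_I}, $H_\CI(\CF,\CG)\neq\{0\}$ for all $\CG$, so $\tdim_\CF\CI>0$, whereupon \cref{lem:kdim_F zero} rules out $\kdim_\CF\CI=0$. Sherman's \cref{lem:sherman refined} therefore applies and furnishes a tuple $\CJ$ together with a nonempty Zariski-open subset $U_1\subseteq Z$ on which $\kPos_\CF\CI=\CJ$ and
\[ \tdim_\CF\CI - \edim\CI \;\leq\; \tdim\CI^\CJ - \edim\CI^\CJ. \]
Since $Z$ is irreducible, $U_1\cap U_2$ is nonempty; pick $\CF$ in the intersection and any $(\CG,\phi)\in\PFt(\CI)$ realizing the generic kernel $S:=\ker\phi$, which by construction has dimension $d=\kdim_\CF\CI>0$ and position $\Pos(S,\CF)=\CJ$. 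The hypotheses of \cref{cor:refined intersecting} are satisfied, so $\CI^\CJ\in\Intersecting(d,n-r+d,s)$, and then \cref{cor:tdim edim} collapses the right-hand side of Sherman's inequality to zero, forcing $\tdim_\CF\CI\leq\edim\CI=0$ in direct contradiction to $\tdim_\CF\CI>0$.

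The whole argument is essentially formal once the refined recurrence is in place; the substantive step — already carried out in \cref{subsec:sherman fulton} — is the dominance of the projection $(\CF,\CG,\phi)\mapsto(\ker\phi,\CF^{\ker\phi},\CG)$ on triples with $\CF$ restricted to the single component $Z\subseteq Z_f$, where one slides $\CF$ using the algebraic subgroup of $\GL(V_0)^s$ that stabilizes $S$ and acts trivially on $V_0/S$, and invokes \cref{lem:Z vs H_I} with a witness $0\neq\phi\in H_\CI(\vec h\cdot\CF,\CG)$ to certify that the translates remain in $Z_f$ and hence in the same irreducible component $Z$. The only bookkeeping point I would highlight in writing it up is that the subspace $S$ to which \cref{cor:refined intersecting} is applied is nonzero (which follows from $d>0$) and sits in $\Omega^0_\CJ(\CF)$ (which is exactly the defining property of $\kPos_\CF\CI$); modulo these remarks the contradiction is immediate.
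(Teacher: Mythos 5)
Your proof is correct and follows essentially the same line of argument as the paper's: reduce to the nontrivial case via \cref{lem:Z vs H_I} and \eqref{eq:lambda vs CI}, produce $f'$ from $c(N\vec\lambda)\geq2$ and algebraic independence, apply Sherman's refined recurrence \cref{lem:sherman refined} on a component $Z\subseteq Z_f$, collapse the right-hand side via \cref{cor:refined intersecting} and \cref{cor:tdim edim}, and contradict \cref{lem:Z vs H_I}. The only (harmless) deviations are that you argue up front that $\kdim_\CF\CI>0$ holds for all $\CF\in Z$, where the paper instead handles the $\kdim_\CF\CI=0$ case separately and derives the same contradiction at the end, and that you assert $f'$ is nonzero on \emph{every} component of $Z_f$ whereas only \emph{some} component is justified (and needed).
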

\begin{proof}
  Let $f$, $\CI$ as in \cref{subsec:nonzero} and recall that $\edim\CI=0$.
  Assume for sake of finding a contradiction that $c(N\vec\lambda)\neq1$ for some $N>0$.
  Then there exists another invariant $f'$, as in \cref{subsec:nonzero}, such that $f'(\CF)\neq0$ for a nonempty Zariski-open subset of some irreducible component $Z\subseteq Z_f$.
  If $\kdim_\CF\CI=0$ for some $\CF\in Z$ then $\tdim_\CF\CI=0$ by \cref{lem:kdim_F zero}.
  Otherwise, we may apply \cref{lem:sherman refined} to the component $Z$.
  We find that there exists some $\CJ$ and another Zariski-open subset of $\CF$ in $Z$ such that $\kPos_\CF\CI=\CJ$ and
  \begin{equation}\label{eq:sherman for belkale}
    \tdim_\CF \CI - \edim \CI \leq \tdim \CI^\CJ - \edim \CI^\CJ.
  \end{equation}
  As a consequence, there exists some $\CF\in Z_f$ for which all three of the properties $f'(\CF)\neq0$, $\kPos_\CF\CI=\CJ$ and~\eqref{eq:sherman for belkale} hold true.
  By \cref{cor:refined intersecting}, the first two properties imply that $\CI^\CJ=0$, and hence the right-hand side of~\eqref{eq:sherman for belkale} is equal to zero by \cref{cor:tdim edim}.
  This again implies that $\tdim_\CF\CI=0$.

  It follows that in either case there exist some $\CG$ such that $H_\CI(\CF,\CG)=\{0\}$.
  According to \cref{lem:Z vs H_I}, this can only be if $\CF\not\in Z_f$.
  But $\CF\in Z_f$. This is the desired contradiction.
\end{proof}

\begin{remark*}
  It likewise holds that $c(\vec\lambda)=2$ implies that $c(N\vec\lambda)=N+1$~\cite{ikenmeyer2012small,sherman2015geometric}.
  However, in general it is \emph{not} true that $c(\vec\lambda)=c$ implies $c(N\vec\lambda) = O(N^{c-1})$.
  Belkale has a found a counterexample for $c=6$.
\end{remark*}

\section*{Acknowledgements}
We would like to thank Velleda Baldoni and Vladimir Popov for interesting discussions.
We also thank Prakash Belkale and Cass Sherman for their helpful remarks.
We would also like to thank an anonymous referee for their suggestions.
MW acknowledges financial support by the Simons Foundation and AFOSR grant FA9550-16-1-0082.

\appendix
\counterwithin{equation}{section}
\section{Horn triples in low dimensions}\label{app:examples horn}

In this appendix, we list all Horn triples $\CJ\in\Horn(d,r,3)$ for $d<r\leq4$, as defined in \cref{def:horn}, as well as the expected dimensions $\edim\CJ$. The triples with $\edim\CJ=0$ are highlighted in bold.

\begin{example}[$d=1$]\label{ex:horn 1}
  As discussed in \cref{ex:base case}, only the dimension condition $\edim\CJ\geq0$ is necessary.
  The following are the triples in $\Horn(1,r,3)$ (up to permutations):
  \begin{center}
  \begin{longtable}{rcccr}
    \toprule\toprule
    r & $J_1$ & $J_2$ & $J_3$ & $\edim\CJ$ \\
    \midrule
    \endfirsthead
    \toprule
    r & $J_1$ & $J_2$ & $J_3$ & $\edim\CJ$ \\
    \midrule
    \endhead
2 & \bf \{1\} & \bf \{2\} & \bf \{2\} & \bf 0 \\
& \{2\} & \{2\} & \{2\} & 1 \\
\midrule
3 & \bf \{1\} & \bf \{3\} & \bf \{3\} & \bf 0 \\
& \bf \{2\} & \bf \{2\} & \bf \{3\} & \bf 0 \\
& \{2\} & \{3\} & \{3\} & 1 \\
& \{3\} & \{3\} & \{3\} & 2 \\
\midrule
4 & \bf \{1\} & \bf \{4\} & \bf \{4\} & \bf 0 \\
& \bf \{2\} & \bf \{3\} & \bf \{4\} & \bf 0 \\
& \{2\} & \{4\} & \{4\} & 1 \\
& \bf \{3\} & \bf \{3\} & \bf \{3\} & \bf 0 \\
& \{3\} & \{3\} & \{4\} & 1 \\
& \{3\} & \{4\} & \{4\} & 2 \\
& \{4\} & \{4\} & \{4\} & 3 \\
    \bottomrule
    \bottomrule
  \end{longtable}
  \end{center}
\end{example}

\begin{example}[$d=2$]\label{ex:horn 2}
  The dimension condition $\edim\CJ\geq0$ reads
  \[ \left( J_1(1) + J_1(2) \right) + \left( J_2(1) + J_2(2) \right) + \left( J_3(1) + J_3(2) \right) \geq 4r+1. \]
  In addition, we have to satisfy three Horn inequalities, corresponding to $\CK = (\{1\},\{2\},\{2\})$ and its permutations, which are the only elements in $\Horn(1,2,3)$ with $\dim\CK=0$. The resulting Horn inequalities, $\edim\CJ\CK\geq0$, are
  \begin{align*}
    J_1(1) + J_2(2) + J_3(2) &\geq 2r+1, \\
    J_1(2) + J_2(1) + J_3(2) &\geq 2r+1, \\
    J_1(2) + J_2(2) + J_3(1) &\geq 2r+1.
  \end{align*}
  Thus we obtain the following triples in $\Horn(2,r,3)$ (up to permutations):
  \begin{center}
  \begin{longtable}{rcccr}
    \toprule\toprule
    r & $J_1$ & $J_2$ & $J_3$ & $\edim\CJ$ \\
    \midrule
    \endfirsthead
    \toprule
    r & $J_1$ & $J_2$ & $J_3$ & $\edim\CJ$ \\
    \midrule
    \endhead
3 & \bf \{1, 2\} & \bf \{2, 3\} & \bf \{2, 3\} & \bf 0 \\
& \bf \{1, 3\} & \bf \{1, 3\} & \bf \{2, 3\} & \bf 0 \\
& \{1, 3\} & \{2, 3\} & \{2, 3\} & 1 \\
& \{2, 3\} & \{2, 3\} & \{2, 3\} & 2 \\
\midrule
4 & \bf \{1, 2\} & \bf \{3, 4\} & \bf \{3, 4\} & \bf 0 \\
& \bf \{1, 3\} & \bf \{2, 4\} & \bf \{3, 4\} & \bf 0 \\
& \{1, 3\} & \{3, 4\} & \{3, 4\} & 1 \\
& \bf \{1, 4\} & \bf \{1, 4\} & \bf \{3, 4\} & \bf 0 \\
& \bf \{1, 4\} & \bf \{2, 4\} & \bf \{2, 4\} & \bf 0 \\
& \{1, 4\} & \{2, 4\} & \{3, 4\} & 1 \\
& \{1, 4\} & \{3, 4\} & \{3, 4\} & 2 \\
& \bf \{2, 3\} & \bf \{2, 3\} & \bf \{3, 4\} & \bf 0 \\
& \bf \{2, 3\} & \bf \{2, 4\} & \bf \{2, 4\} & \bf 0 \\
& \{2, 3\} & \{2, 4\} & \{3, 4\} & 1 \\
& \{2, 3\} & \{3, 4\} & \{3, 4\} & 2 \\
& \{2, 4\} & \{2, 4\} & \{2, 4\} & 1 \\
& \{2, 4\} & \{2, 4\} & \{3, 4\} & 2 \\
& \{2, 4\} & \{3, 4\} & \{3, 4\} & 3 \\
& \{3, 4\} & \{3, 4\} & \{3, 4\} & 4 \\
    \bottomrule\bottomrule
  \end{longtable}
  \end{center}
\end{example}

\begin{example}\label{ex:horn 3}
  We find the following triples in $\Horn(3,4,3)$ (up to permutations):
  \begin{center}
  \begin{longtable}{rcccr}
    \toprule\toprule
    r & $J_1$ & $J_2$ & $J_3$ & $\edim\CJ$ \\
    \midrule
    \endfirsthead
    \toprule
    r & $J_1$ & $J_2$ & $J_3$ & $\edim\CJ$ \\
    \midrule
    \endhead
4 & \bf \{1, 2, 3\} & \bf \{2, 3, 4\} & \bf \{2, 3, 4\} & \bf 0 \\
& \bf \{1, 2, 4\} & \bf \{1, 3, 4\} & \bf \{2, 3, 4\} & \bf 0 \\
& \{1, 2, 4\} & \{2, 3, 4\} & \{2, 3, 4\} & 1 \\
& \bf \{1, 3, 4\} & \bf \{1, 3, 4\} & \bf \{1, 3, 4\} & \bf 0 \\
& \{1, 3, 4\} & \{1, 3, 4\} & \{2, 3, 4\} & 1 \\
& \{1, 3, 4\} & \{2, 3, 4\} & \{2, 3, 4\} & 2 \\
& \{2, 3, 4\} & \{2, 3, 4\} & \{2, 3, 4\} & 3 \\
    \bottomrule\bottomrule
  \end{longtable}
  \end{center}
\end{example}

\begin{remark*}
It is not an accident that both $\Horn(1,4,3)$ and $\Horn(3,4,3)$ have the same number of elements.
In fact, we can identify $\Intersecting(d,r,s)\cong\Intersecting(r-d,r,s)$ via $J_k \mapsto \{ r+1-a : a \in J_k^c \}$.
This can be seen by using the canonical isomorphism $\grass(d,\C^r)\cong\grass(r-d,(\C^r)^*)$.
However, the corresponding Horn inequalities are distinct (see below).
\end{remark*}

\section{Kirwan cones in low dimensions}\label{app:examples kirwan}

In this appendix, we list necessary and sufficient conditions on highest weights $\lambda,\mu,\nu\in\Lambda_+(r)$ such that $\left(L(\lambda)\ot L(\mu)\ot L(\nu) \right)^{U(r)}\neq\{0\}$, up to $r=4$.
That is, these conditions describe the Kirwan cones as in \cref{cor:horn and saturation}.
We use the abbreviation $\Horn_0(d,r,s)$ for the set of Horn triples in $\CJ\in\Horn(d,r,s)$ such that $\edim\CJ=0$ (highlighted bold in \cref{app:examples horn}).

\begin{example}[$r=1$]
  Clearly, the only condition is $\lambda(1)+\mu(1)+\nu(1)=0$.
\end{example}

\begin{example}[$r=2$]
  We always have the Weyl chamber inequalities $\lambda(1)\geq\lambda(2)$, $\mu(1)\geq\mu(2)$, and $\nu(1)\geq\nu(2)$, and the equation
  \[ \left( \lambda(1)+\lambda(2) \right) + \left( \mu(1)+\mu(2) \right) + \left( \nu(1)+\nu(2) \right) = 0. \]
  Using \cref{ex:horn 1}, we obtain three Horn inequalities, namely
  \[ \lambda(1) + \mu(2) + \nu(2) \leq 0, \]
  corresponding to the triple $(\{1\},\{2\},\{2\})\in\Horn_0(d,r,s)$, and its permutations.
  These are the well-known conditions for the existence of nonzero invariants in a triple tensor product of irreducible $U(2)$-representations.
  We remark that the Weyl chamber inequalities are redundant.
\end{example}

\begin{example}[$r=3$]
  In addition to the Weyl chamber inequalities and $\lvert\lambda\rvert+\lvert\mu\rvert+\lvert\nu\rvert=0$, we obtain the following two inequalities from $\Horn_0(1,3,3)$ and \cref{ex:horn 1},
  \begin{align*}
    \lambda(1) + \mu(3) + \nu(3) &\leq 0, \\
    \lambda(2) + \mu(2) + \nu(3) &\leq 0,
  \end{align*}
  and the following from $\Horn_0(2,3,3)$ and \cref{ex:horn 2},
  \begin{align*}
    \left( \lambda(1) + \lambda(2) \right) + \left( \mu(2) + \mu(3) \right) + \left( \nu(2) + \nu(3) \right) &\leq 0, \\
    \left( \lambda(1) + \lambda(3) \right) + \left( \mu(1) + \mu(3) \right) + \left( \nu(2) + \nu(3) \right) &\leq 0,
  \end{align*}
  as well as their permutations.
\end{example}

\begin{example}[$r=4$]
  Again we have the Weyl chamber inequalities and $\lvert\lambda\rvert+\lvert\mu\rvert+\lvert\nu\rvert=0$.
  We have the following two inequalities and their permutations from $\Horn_0(1,4,3)$ and \cref{ex:horn 1},
  \begin{align*}
    \lambda(1) + \mu(4) + \nu(4) &\leq 0, \\
    \lambda(2) + \mu(3) + \nu(4) &\leq 0, \\
    \lambda(3) + \mu(3) + \nu(3) &\leq 0,
  \end{align*}
  the following six and their permutations from $\Horn_0(2,4,3)$ and \cref{ex:horn 2},
  \begin{align*}
    \left( \lambda(1) + \lambda(2) \right) + \left( \mu(3) + \mu(4) \right) + \left( \nu(3) + \nu(4) \right) &\leq 0, \\
    \left( \lambda(1) + \lambda(3) \right) + \left( \mu(2) + \mu(4) \right) + \left( \nu(3) + \nu(4) \right) &\leq 0, \\
    \left( \lambda(1) + \lambda(4) \right) + \left( \mu(1) + \mu(4) \right) + \left( \nu(3) + \nu(4) \right) &\leq 0, \\
    \left( \lambda(1) + \lambda(4) \right) + \left( \mu(2) + \mu(4) \right) + \left( \nu(2) + \nu(4) \right) &\leq 0, \\
    \left( \lambda(2) + \lambda(3) \right) + \left( \mu(2) + \mu(3) \right) + \left( \nu(3) + \nu(4) \right) &\leq 0, \\
    \left( \lambda(2) + \lambda(3) \right) + \left( \mu(2) + \mu(4) \right) + \left( \nu(2) + \nu(4) \right) &\leq 0,
  \end{align*}
  and the following three and their permutations from $\Horn_0(3,4,3)$ and \cref{ex:horn 3},
  \begin{align*}
    \left( \lambda(1) + \lambda(2) + \lambda(3) \right) + \left( \mu(2) + \mu(3) + \mu(4) \right) + \left( \nu(2) + \nu(3) + \nu(4) \right) &\leq 0, \\
    \left( \lambda(1) + \lambda(2) + \lambda(4) \right) + \left( \mu(1) + \mu(3) + \mu(4) \right) + \left( \nu(2) + \nu(3) + \nu(4) \right) &\leq 0, \\
    \left( \lambda(1) + \lambda(3) + \lambda(4) \right) + \left( \mu(1) + \mu(3) + \mu(4) \right) + \left( \nu(1) + \nu(3) + \nu(4) \right) &\leq 0.
  \end{align*}
\end{example}

\begin{remark*}
In low dimensions, all Horn triples with $\edim\CJ=0$ are such that the intersection is one point, i.e., $c(\CI)=\cint(\CI)=1$.
This implies that the equations are irredundant~\cite{belkale2006geometric,ressayre2010geometric} (cf.\ \cref{rem:redundant}), and it can also be explicitly checked in the examples above.
In general, however, this is not the case, and so the Horn inequalities are still redundant.
An example of such a Horn triple is the one given in \cref{ex:interesting}.
\end{remark*}


\bibliographystyle{amsplain}
\bibliography{belkale}

\end{document}